\documentclass[reqno]{amsart}

\usepackage[margin=1.4in]{geometry}
\usepackage{amssymb, amsmath, amsthm, amsfonts, amscd}
\usepackage{mathrsfs,mathtools}
\usepackage[shortlabels]{enumitem}
\usepackage{tikz, tikz-cd}
\usepackage[all]{xy}
\usepackage[new]{old-arrows}
\usepackage{pst-node}
\usepackage{varwidth}
\usepackage{graphicx}
\usepackage{subfig}
\usepackage{tabularx, booktabs}
\usepackage{multirow, multicol}
\usepackage{xcolor}
\usepackage{mathrsfs}
\usepackage{yfonts, euscript}
\usepackage{csquotes, dirtytalk}
\usepackage[pagewise]{lineno} 
\usepackage[colorlinks=true, allcolors=blue]{hyperref}

\usetikzlibrary{arrows, arrows.meta}

\usepackage{listings}
\usepackage{xcolor}

\newtheorem{thm}{{\bf Theorem}}[section]
\newtheorem{lemma}[thm]{{\bf Lemma}}
\newtheorem{prop}[thm]{{\bf Proposition}}

\newtheorem*{rmk}{{\bf Remark}}

\newtheorem*{ex}{{\bf Example}}

\newtheorem*{thmA}{\bf Theorem A}
\newtheorem*{thmB}{\bf Theorem B}

\newcommand{\RNum}[1]{\uppercase\expandafter{\romannumeral #1\relax}}

\newcommand{\GL}{\,\mathrm{GL}\,}
\newcommand{\SL}{\,\mathrm{SL}\,}
\newcommand{\SU}{\,\mathrm{SU}\,}
\newcommand{\PGL}{\,\mathrm{PGL}\,}

\newcommand{\diag}{{\rm diag}}

\begin{document}


\title[Automorphisms of Twisted Chevalley Groups of type ${}^2 A_3$ and ${}^2 A_4$]{Automorphisms of Twisted Chevalley Groups \\ of type ${}^2 A_3$ and ${}^2 A_4$ over Local Rings}


\author{Elena I. Bunina}
\address{Department of Mathematics,
    	Bar–Ilan University, Ramat Gan, Israel}
\email{\href{mailto:helenbunina@gmail.com}{helenbunina@gmail.com}}
\thanks{}

\author{Deep H. Makadiya}
\address{Department of Mathematics,
    	Bar–Ilan University, Ramat Gan, Israel}
\email{\href{mailto:deepmakadia25.dm@gmail.com}{deepmakadia25.dm@gmail.com}}
\thanks{}


\subjclass[2020]{20G35}
\keywords{Twisted Chevalley groups, Automorphisms, Isomorphisms}


\begin{abstract}
    This paper is part of an ongoing series devoted to the classification of automorphisms and isomorphisms of twisted Chevalley groups over commutative rings. 
    In our previous work~\cite{EB&DM1}, we established that over local rings containing $1/2$, all automorphisms of twisted Chevalley groups and their elementary subgroups of type ${}^2A_{\ell}$ ($\ell \geq 5$) are standard.

    In the present paper, we address the remaining low-rank cases and provide a complete classification of the automorphisms of groups of types ${}^2A_3$ and ${}^2A_4$ over local rings $R$ containing $1/2$, across all isogeny types.

    Let $R$ be a local ring containing $1/2$ and equipped with an involution $\theta$. 
    In contrast to the higher-rank cases, automorphisms of the elementary twisted groups of types ${}^{2}A_3$ and ${}^{2}A_4$ over $R$ do not always admit a standard description. 
    Instead, a new exceptional phenomenon arises, precisely parametrized by the space
    \[
        \mathcal E(R,\theta)=
        \begin{cases}
        \operatorname{Ann}_R(J)\cap R^-_\theta,
            & \text{if } R/J\cong\mathbf F_3,\\[2mm]
        \{0\},
            & \text{if } R/J\not\cong\mathbf F_3.
        \end{cases}
    \]
    For every parameter $\delta\in\mathcal E(R,\theta)$, we explicitly construct an exceptional automorphism $\Theta_{\delta}^{(n)}$ of the elementary twisted group $E'_{\pi,\sigma}(A_n,R)$ for $n\in\{3,4\}$. 
    We prove that every automorphism of this elementary group factors as the composition of a strictly inner automorphism, a diagonal automorphism, a ring automorphism commuting with $\theta$, and an exceptional automorphism $\Theta_{\delta}^{(n)}$ for some $\delta\in\mathcal E(R,\theta)$.

    For the full twisted Chevalley groups, the situation differs. 
    The exceptional automorphisms extend to the full group precisely in the simply connected case of type ${}^{2}A_3$, where the full group is $\operatorname{SU}_4(R,Q_4)$ and coincides with its elementary subgroup. 
    For all other isogeny types of ${}^{2}A_3$ and ${}^{2}A_4$, every automorphism of the full group is standard; more precisely, it is a composition of a strictly inner automorphism, a diagonal automorphism, a ring automorphism commuting with $\theta$, and a central automorphism.
\end{abstract}


\maketitle 
\tableofcontents


\section{Introduction}\label{sec:intro}

One of the central themes in the theory of classical and Chevalley groups is that the abstract group structure remembers much of the geometry from which it was constructed. 
An abstract automorphism of such a group is therefore expected to recover the underlying field or ring, the relevant form or root datum, and the natural semilinear symmetries. 
Over fields, this principle lies behind the classical description of automorphisms of linear, symplectic, orthogonal, and unitary groups, established through the work of Dieudonn\'e, Steinberg, and others; see \cite{Dieudonne,RS} and the references therein.

For unitary groups, the reconstruction problem features an additional layer: the coefficient ring comes equipped with an involution, and both must be recovered from the abstract group. 
The structure theory of unitary groups over rings with involution was developed through several interacting approaches. 
Vaserstein studied the stabilization of unitary and orthogonal groups over rings with involution \cite{VasersteinUnitary}; Bak introduced the general framework of form rings and unitary $K$-theory \cite{Bak}; and Hahn and O'Meara gave a systematic treatment of classical and unitary groups, their Steinberg groups, $K$-groups, and isomorphism theory \cite{HahnOMeara}. 
The isomorphism problem for unitary groups over associative rings containing $1/2$ was further explored by Golubchik and Mikhalev \cite{GolubchikMikhalevUnitary}. 
These results illustrate that unitary groups over rings form a substantial theory in their own right, rather than merely a formal variation of the linear case. 

For untwisted Chevalley groups over commutative rings, fundamental questions concerning automorphisms, normal subgroups, congruence subgroup problems, and related structural properties have been investigated extensively; we refer, among many other works, to \cite{EA0,EA2,EA3,EA4,EA&KS,GT,LV,AK,NV1,EB24} and the references therein. 
In particular, the automorphisms of untwisted groups are thoroughly understood: they are built from ring, inner, graph, and central components, subject to the constraints of the root system and the specific isogeny type. 

By contrast, the corresponding theory for twisted Chevalley groups over rings remains considerably less complete. 
The structural foundations necessary to tackle the automorphism problem in this setting have only recently been established: normal subgroups and congruence properties in \cite{SG&DM1}, normalizers in \cite{SG&DM2}, and triangular and unitriangular factorizations in \cite{SG&DM3}.

Following these developments, we initiated a series devoted to the complete classification of automorphisms and isomorphisms of twisted Chevalley groups over commutative rings. 
In our previous work \cite{EB&DM1}, we proved that all automorphisms of twisted Chevalley groups and their elementary subgroups of type ${}^2A_{\ell}$ ($\ell \geq 5$) over local rings containing $1/2$ are standard. 
The present work completes the classification for this family by resolving the low-rank cases ${}^2A_3$ and ${}^2A_4$.

The cases ${}^2A_3$ and ${}^2A_4$ were set aside in our previous paper because the arguments used for high-rank normalization do not apply in these low ranks. 
The present paper reveals that this failure is not a mere technicality, but reflects a genuine structural difference.
In ranks $3$ and $4$, a nontrivial first-order exceptional deformation can survive in the elementary group. 
More precisely, when the residue field is isomorphic to $\mathbf F_3$, any nonzero element in the anti-invariant part of the annihilator of the Jacobson radical yields a nonstandard automorphism. 
Whether this deformation extends to the full Chevalley group depends on the chosen isogeny type.

These exceptional automorphisms are not merely examples: they are the only obstruction to standardness. 
We prove that every automorphism of the elementary group is a product of a standard automorphism and one exceptional automorphism.

\medskip

The basic terminology and preliminary results needed throughout this paper are developed in detail in \cite{EB&DM1}. 
While we refer the reader to the preliminary section of that work for further background, we have made every effort to keep the present paper as self-contained as possible. 

Throughout this paper, $R$ denotes a commutative local ring in which $2$ is invertible. 
We let $J$ denote its Jacobson radical and $k \coloneqq R/J$ its residue field. 
We assume that $R$ is equipped with a non-trivial involution $\theta \colon R \to R$ and write $\bar r \coloneqq \theta(r)$ for all $r \in R$.

We define the submodules of symmetric and skew-symmetric elements of $R$ by
\[
    R_{\theta} = \{r \in R \mid \bar r = r\}
    \qquad \text{and} \qquad
    R_{\theta}^{-} = \{r \in R \mid \bar r = -r\}.
\]
More generally, for any $\theta$-invariant ideal $I \subseteq R$, we set
\[
    I_{\theta} = I \cap R_{\theta}
    \qquad \text{and} \qquad
    I_{\theta}^{-} = I \cap R_{\theta}^{-}.
\]

Consider the Chevalley group $G_{\pi}(A_n,R)$ associated with a representation $\pi$ of the Lie algebra of type $A_n$, where $n\in\{3,4\}$. 

Let $\rho$ be the nontrivial graph automorphism of the $A_n$ Dynkin diagram. 
Assuming that the weight lattice $\Lambda_\pi$ of $\pi$ is $\rho$-invariant, $\rho$ induces a graph automorphism of $G_{\pi}(A_n,R)$, which we also denote by $\rho$. 
The involution $\theta$ of $R$ likewise induces an automorphism of $G_{\pi}(A_n,R)$, also denoted by $\theta$. 

Set $\sigma \coloneqq \rho\circ\theta$. 
The fixed-point subgroup of $G_{\pi}(A_n,R)$ under $\sigma$ is the twisted Chevalley group $G_{\pi,\sigma}(A_n,R)$, and its elementary subgroup is denoted by $E'_{\pi,\sigma}(A_n,R)$.

For brevity, we write
\[
    E_\pi^{(n)} \coloneqq E'_{\pi,\sigma}(A_n,R)
    \qquad \text{and} \qquad
    G_\pi^{(n)} \coloneqq G_{\pi,\sigma}(A_n,R).
\]

We now consider the possible isogeny types for these groups. 
For $A_3$, there are exactly three such types, corresponding to the lattices
\[
    \Lambda_r, \qquad
    \Lambda_{\mathrm{mid}} \coloneqq \Lambda_r+\mathbb Z\omega_2,
    \qquad
    \Lambda_{\mathrm{sc}},
\]
namely the adjoint, intermediate, and simply connected types, respectively. 
For $A_4$, only the adjoint and simply connected forms exist. 
In our standard matrix realizations, the simply connected groups are precisely the classical special unitary groups:
\[
    G_{\mathrm{sc}}^{(3)}=\operatorname{SU}_4(R,Q_4),
    \qquad
    G_{\mathrm{sc}}^{(4)}=\operatorname{SU}_5(R,Q_5).
\]

We recall from \cite[Section~2]{EB&DM1} the definitions of \emph{inner}, \emph{strictly inner}, \emph{diagonal}, \emph{ring}, and \emph{central automorphisms} of $G^{(n)}_{\pi}$ and $E^{(n)}_{\pi}$. 
An automorphism of $G^{(n)}_{\pi}$ or $E^{(n)}_{\pi}$ is called \emph{standard} if it is a composition of inner, diagonal, ring, and central automorphisms.

We next introduce the exceptional automorphisms that appear in the low-rank cases. 
Consider the exceptional parameter space $\mathcal E(R,\theta)$ (see Section~\ref{subsec:expl_prm_spc}), intrinsically defined by
\[
    \mathcal E(R,\theta)=
    \begin{cases}
        \operatorname{Ann}_R(J)\cap R^-_\theta,
            & R/J\cong\mathbf F_3,\\[2mm]
        \{0\},
            & R/J\not\cong\mathbf F_3.
    \end{cases}
\]
For every $\delta\in\mathcal E(R,\theta)$, we construct in Sections~\ref{subsec:expl_elt} and~\ref{subsec:expl_aut} automorphisms
\[
    \Theta_{\delta}^{(n)}\in\operatorname{Aut}(E_\pi^{(n)}),
    \qquad n\in\{3,4\}.
\]
They satisfy
\[
    \Theta_{\delta}^{(n)}\circ\Theta_{\varepsilon}^{(n)}
    =
    \Theta_{\varepsilon}^{(n)}\circ\Theta_{\delta}^{(n)}
    =
    \Theta_{\delta+\varepsilon}^{(n)},
    \qquad
    \Theta_{0}^{(n)}=\operatorname{id}_{E^{(n)}_{\pi}},
    \qquad
    \left(\Theta_{\delta}^{(n)}\right)^{-1}
    =
    \Theta_{-\delta}^{(n)}.
\]
If $\delta\neq0$, then $\Theta_{\delta}^{(n)}$ is nonstandard; see Proposition~\ref{prop:exceptional-nonstandard}. 

\medskip

We can now state the main theorems of the paper. 
The first gives the complete classification of the automorphisms of the elementary subgroups across all isogeny types.

\begin{thmA}
    Let $R$ be a commutative local ring with a non-trivial involution $\theta$ such that $1/2\in R$. 
    Let $n\in\{3,4\}$, and let $\pi$ be any isogeny type described above. 
    Then every automorphism $\varphi\in\operatorname{Aut}(E_\pi^{(n)})$ has the form
    \[
        \varphi=i_g\circ d\circ\mu\circ\Theta_{\delta}^{(n)},
    \]
    where $i_g$ is a strictly inner automorphism, $d$ is a diagonal automorphism, $\mu$ is a ring automorphism commuting with $\theta$, and $\delta\in\mathcal E(R,\theta)$.
    
    Conversely, every composition of this form is an automorphism of $E_\pi^{(n)}$.
\end{thmA}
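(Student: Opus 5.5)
The plan follows the Bunina-style strategy of \cite{EB&DM1}: reduce modulo the radical, normalize the image of a torus, and then correct an automorphism that is identical modulo $J$. The converse direction is immediate. Strictly inner, diagonal and ring automorphisms commuting with $\theta$ preserve $E_\pi^{(n)}$, and $\Theta^{(n)}_\delta$ is an automorphism by its construction in Section~\ref{subsec:expl_aut}. So all the work is in the direct statement.

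\textbf{Step 1: normalize modulo $J$.} Let $\varphi\in\operatorname{Aut}(E_\pi^{(n)})$. I would first use the fact that $1/2\in R$ to produce involutions in the image of the torus, namely the elements $h_\alpha(-1)$ and their twisted analogues. Their images $\varphi(h_\alpha(-1))$ are again involutions. Over a local ring with $1/2$, involutions can be simultaneously diagonalized by an element $g$ of the full group over an extension, exactly as in \cite[Sections~3--4]{EB&DM1}. After replacing $\varphi$ by $i_{g^{-1}}\circ\varphi$, the images of these involutions are the standard ones, and the centralizers of the involutions determine the root subgroups up to the Weyl group. Composing with a Weyl element, which is strictly inner, reduces to the case where $\varphi$ preserves each twisted root subgroup $X_\alpha$ modulo $J$. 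The induced map on root subgroups then yields ring maps on $R$ and on the quadratic extension parts. This recovers an automorphism $\mu$ commuting with $\theta$ and a diagonal twist $d$. After composing with $(d\mu)^{-1}$, we may assume that $\varphi$ is the identity modulo $J$ and sends $X_\alpha(t)$ to $X_\alpha(t)\cdot(\text{element of }E(R,J))$.

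\textbf{Step 2: linearize the deviation.} Write
\[
\varphi(x_\alpha(t))=x_\alpha(t)\,y_\alpha(t),
\qquad y_\alpha(t)\in G_\pi^{(n)}(R,J).
\]
I would expand $y_\alpha(t)$ in root coordinates and impose the Steinberg commutator relations, together with the action of the normalized torus. In high rank, \cite{EB&DM1} shows that this system forces $y_\alpha$ to be given by conjugation by an element of $G(R,J)$, which can be absorbed into $i_g$. In ranks $3$ and $4$ there are too few roots, so the same system leaves extra freedom. The weights of the torus action on the coefficients must agree modulo $J$. When $k\neq\mathbf F_3$, an element $\lambda\in k^*$ with $\lambda^2\neq1$ separates the weights, so every component vanishes and the high-rank argument goes through. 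When $k\cong\mathbf F_3$, all units are $\pm1$ modulo $J$, so a cross-term coefficient survives. I expect this to be a single coefficient $\delta$ coupling a short root to the long root in the ${}^2A_3$ or ${}^2A_4$ pattern.

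\textbf{Step 3: identify the survivor with $\Theta_\delta^{(n)}$.} I would then show that compatibility with the remaining relations forces three things: $\delta J=0$, $\bar\delta=-\delta$, and the deformation coincides with $\Theta_\delta^{(n)}$. Compatibility with multiplicativity in $t$ forces $\delta J=0$, because the term must be additive and also killed by the radical parts. Compatibility with $\sigma$-invariance forces $\bar\delta=-\delta$. Composing with $\Theta_{-\delta}^{(n)}$ then leaves a map of the high-rank type, and this map is inner by Step 2.

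I expect the main obstacle to be Step 2 over $k\cong\mathbf F_3$. There the torus gives no separation, and one must do an explicit commutator computation in the rank-$3$ and rank-$4$ unitary Steinberg relations to show that the solution space is exactly the inner deformations plus $\mathcal E(R,\theta)$. My first approach would be to work in the $4\times4$ and $5\times5$ matrix realizations and to handle all isogeny types simultaneously by computing in $\operatorname{SU}_n$ and then passing to the quotient.
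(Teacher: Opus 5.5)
The converse direction is fine, but the direct direction has genuine gaps. The first is in Step~1, which assumes more than is available at that point. Reduction modulo $J$ (via the characteristic subgroup $N_J^{(n)}$) and Steinberg's theorem give only $\overline\varphi=i_{\bar g}\circ\mu_k$ with $\mu_k\in\operatorname{Aut}(k)$. Knowing that $\varphi$ preserves each root subgroup modulo $J$ produces no map $R\to R$. The existence of a lift $\mu\in\operatorname{Aut}(R)$ of $\mu_k$ is an \emph{output} of the rigidity analysis, not an input. Moreover, when $\delta\neq0$ the image $\Theta_\delta^{(3)}(u_1)=x_{-3}(\delta)\,u_1\,x_3(\delta)$ does not lie in $x_1(R)$. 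So $\mu(t)$ cannot be read off a root coordinate before the shape of the deformation is known, and you cannot compose with $(d\mu)^{-1}$ to reach a map that is the identity mod $J$.

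The paper proceeds in the opposite order:
\begin{itemize}
\item it lifts $\bar g$ to $g_1\in G^{(n)}(R)$;
\item it normalizes only $\mathbb Z[1/2]$-points ($w_i$, $h_i$, $u_i$, which $\mu_k$ fixes) by a matrix $C\in\operatorname{GL}_{n+1}(R,J)$;
\item it makes $[C]$ unitary over a faithfully flat local extension $S$;
\item it descends $C$ back to $R$ (Lemma~\ref{lem:matrix-algebra-descent}), a step your plan lacks entirely;
\item only at the end does it assemble $\mu$ from parameter maps $\nu$ on $R_\theta$ and $\eta$ on $R_\theta^-$.
\end{itemize}
Your isogeny reduction also runs the wrong way. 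An automorphism of $E_\pi^{(n)}$ induces one of $E_{\mathrm{ad}}^{(n)}$, because the center is characteristic; it does not induce one of $E_{\mathrm{sc}}^{(n)}$. The paper proves the adjoint case, lifts $i_{\bar g}$ to $i_g\circ d$ on $E_\pi^{(n)}$, and kills the leftover central homomorphism $x\mapsto x^{-1}\psi(x)$ using perfectness of $E_\pi^{(n)}$.

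Step~2 is where the theorem actually lives, and the proposal supplies no mechanism for it. Linearizing $y_\alpha(t)\in G(R,J)$ is unjustified, since $J^2\neq0$ in general. A torus-weight separation argument needs control of $\varphi(h(\lambda))$ for arbitrary $\lambda$, which you have not normalized and which involves the unknown ring map.

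In the paper, rigidity comes from exact finite relations after the Weyl normalization. In type ${}^2A_3$, the relations $(w_2X_2)^3=I_4$, $(X_1w_1)^3=I_4$, $[X_2,X_1]=X_3X_4^{-1}$ and unitarity force $a^3=1$, $be=a$, $b+e=a+1$ and $(a-1)(a-e)=0$. These give $\xi^2=\xi\bar\xi=0$ and $3\delta=0$. In type ${}^2A_4$ the rigidity is a Nakayama argument on $I/J_SI$, resting on a computer-verified rank-$50$ linear system.

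Step~3 is likewise not an argument. ``Multiplicativity forces $\delta J=0$'' and ``the remainder is of high-rank type, hence inner'' are exactly what is unavailable in these ranks. The paper's route is:
\begin{itemize}
\item $\delta(\nu(t^2s)-\nu(s))=0$ from the symmetric torus, via the projective trace ratio $\kappa$, using only that the image of the torus element commutes with the normalized involution;
\item $\delta\eta(\xi)=0$ from a separate commutator relation, such as $[x_1(\xi),x_{-2}(1)]=1$;
\item an upgrade of these to $\delta R_\theta^-=0$ and $\delta I_\theta(R)=0$, using $R=\mu(R)+\delta\mu(R)$;
\item only then Lemma~\ref{lemma:exnl-prmr-space}, which gives $\delta J=0$ and $R/J\cong\mathbf F_3$.
\end{itemize}
The identification $\psi=\Theta_\delta^{(n)}\circ\mu$ is then checked directly on root subgroups, not by appeal to a high-rank argument.
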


For the full groups, exceptional automorphisms occur only in one isogeny type.

\begin{thmB}
    Let $R$, $n$, and $\pi$ be as in Theorem~A.
    \begin{enumerate}
        \item For the simply connected full group of type ${}^{2}A_3$, every automorphism $\varphi\in\operatorname{Aut}(G^{(3)}_{\mathrm{sc}})$ has the form
        \[
            \varphi=i_g\circ d\circ\mu\circ\Theta_{\delta}^{(3)},
        \]
        where $i_g$ is a strictly inner automorphism, $d$ is a diagonal automorphism, $\mu$ is a ring automorphism commuting with $\theta$, and $\delta\in\mathcal E(R,\theta)$.
        
        \item For
        \[
            (n,\pi)\in
            \left\{
                (3,\mathrm{mid}),
                (3,\mathrm{ad}),
                (4,\mathrm{sc}),
                (4,\mathrm{ad})
            \right\},
        \]
        every automorphism of $G_\pi^{(n)}$ is standard. More precisely, every $\varphi\in\operatorname{Aut}(G_\pi^{(n)})$ has the form
        \[
            \varphi=i_g\circ d\circ\mu\circ\tau,
        \]
        where $i_g$ is a strictly inner automorphism, $d$ is a diagonal automorphism, $\mu$ is a ring automorphism commuting with $\theta$, and $\tau$ is a central automorphism.
    \end{enumerate}
    
    Conversely, every composition of the indicated form is an automorphism of the corresponding full group.
\end{thmB}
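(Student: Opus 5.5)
The plan is to reduce Theorem~B to Theorem~A by restriction to the elementary subgroup, and then control how far an automorphism of the full group can differ from its restriction. Let $\varphi\in\operatorname{Aut}(G_\pi^{(n)})$. Over a local ring the elementary subgroup $E_\pi^{(n)}$ is characteristic in $G_\pi^{(n)}$: by the normal-subgroup and normalizer results of \cite{SG&DM1,SG&DM2}, it coincides with the commutator subgroup $[G_\pi^{(n)},G_\pi^{(n)}]$ (using $1/2\in R$ and, if $R/J\cong\mathbf F_3$, a direct check in the relevant small rank). Hence $\varphi$ restricts to an automorphism of $E_\pi^{(n)}$. By Theorem~A this restriction equals $i_g\circ d\circ\mu\circ\Theta_\delta^{(n)}$.

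Inner, diagonal and ring automorphisms extend to $G_\pi^{(n)}$, since $g$ and the diagonal element lie in the normalizer of $E_\pi^{(n)}$, which normalizes $G_\pi^{(n)}$ by \cite{SG&DM2}. So $\psi:=(i_g\circ d\circ\mu)^{-1}\circ\varphi$ is an automorphism of $G_\pi^{(n)}$ whose restriction to $E_\pi^{(n)}$ is $\Theta_\delta^{(n)}$.

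\emph{Case (1).} In the simply connected ${}^2A_3$ case, $G^{(3)}_{\mathrm{sc}}=\operatorname{SU}_4(R,Q_4)=E^{(3)}_{\mathrm{sc}}$, since over a local ring $\operatorname{SU}$ is generated by root elements. Then $\psi=\Theta_\delta^{(3)}$ and we are done. The converse follows from the converse part of Theorem~A.

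\emph{Case (2).} For the remaining types, the key step is to show that $\Theta_\delta^{(n)}$ with $\delta\neq0$ does not extend to $G_\pi^{(n)}$. Suppose it extends to some $\psi$. For each $x\in G_\pi^{(n)}$, conjugation by $x$ normalizes $E_\pi^{(n)}$, so
\[
\psi(x)\,\Theta_\delta(y)\,\psi(x)^{-1}=\Theta_\delta(xyx^{-1})
\qquad\text{for all } y\in E_\pi^{(n)}.
\]
I would choose $x$ a torus element of $G_\pi^{(n)}$ not in $E_\pi^{(n)}\cdot Z$. Such elements exist precisely for the types mid, ad and $(4,\mathrm{sc})$. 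For $(4,\mathrm{sc})$ I would use $\operatorname{SU}_5$ elements $\diag(\dots)$ whose determinant-type invariant is not a norm in an appropriate sense; for the others, the torus is bigger than its elementary part.

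Next, test the displayed identity on root elements $y=x_\alpha(t)$ with $t\in J$ or $t$ a unit, using the explicit formula for $\Theta_\delta$ from Sections~\ref{subsec:expl_elt}--\ref{subsec:expl_aut}. I expect $\Theta_\delta$ to be a first-order twist by $\delta$, supported where the residue field is $\mathbf F_3$. Compatibility then forces $\delta\cdot(\chi(x)-1)=0$, or some similar relation, for a character value $\chi(x)$ that can be chosen to be a unit with $\chi(x)-1$ a unit (here $k=\mathbf F_3$). This gives $\delta=0$.

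Once $\delta=0$, $\psi$ is identity on $E_\pi^{(n)}$. A standard argument then shows $\psi$ is central: for any $x\in G_\pi^{(n)}$, the element $\psi(x)x^{-1}$ centralizes $E_\pi^{(n)}$, and the centralizer of $E$ in $G$ is the center (from \cite{SG&DM2}). So $\psi(x)=x\,\tau_0(x)$ with $\tau_0\colon G\to Z(G)$ a homomorphism, i.e.\ $\psi$ is a central automorphism $\tau$. The converse is that compositions of such maps are automorphisms; for central maps this requires bijectivity, which is part of the definition recalled from \cite{EB&DM1}.

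The main obstacle is the non-extension computation in case (2). It requires pinning down exactly which torus elements of $G_\pi^{(n)}$ lie outside $E\cdot Z$, and tracking how $\Theta_\delta$ interacts with conjugation by them. The $(4,\mathrm{sc})$ case is subtle, since there $G$ and $E$ differ only through $\operatorname{SU}_5/E\operatorname{SU}_5$. If no suitable torus element is available there, I would instead argue that $\Theta_\delta$ fails to preserve a relation satisfied in $G$ but not formally in $E$, using a nonelementary element and its commutators with root subgroups.
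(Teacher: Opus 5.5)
Your architecture matches the paper's. You restrict $\varphi$ to the characteristic subgroup $E_\pi^{(n)}$, apply Theorem~A, and strip off the extendable factor $i_g\circ d\circ\mu$. For part~(1) you use $G^{(3)}_{\mathrm{sc}}=E^{(3)}_{\mathrm{sc}}$. For part~(2) you show that $\Theta_\delta^{(n)}$ does not extend, and then use $C_G(E)=Z(G)$ to see that what remains is central.

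The gap is the non-extension step, which is the only new content of part~(2). You only \emph{expect} that testing $\psi(x)\Theta_\delta(y)\psi(x)^{-1}=\Theta_\delta(xyx^{-1})$ on root elements forces $\delta(\chi(x)-1)=0$. As proposed, this does not go through. The element $\psi(x)$ is an unknown element of $G_\pi^{(n)}$, and spectral conjugation invariants cannot see $\delta$. Since $\delta^2=3\delta=0$, the matrices $x_2(s)$, $\Theta^{(3)}_{\delta}(x_2(s))$ and $\Theta^{(3)}_{-\delta}(x_2(s))$ all have characteristic polynomial $(X-1)^4$. Likewise $x_1(t)$ and $\Theta^{(4)}_{\pm\delta}(x_1(t))$ all have $(X-1)^5$. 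Some structural input is needed to control the unknown conjugator.

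The paper supplies two ingredients that your proposal lacks.

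\textbf{Diagonal conjugation rescales the parameter.} For a diagonal automorphism $i_{h(\chi)}$ one has $i_{h(\chi)}\circ\Theta_\delta^{(n)}\circ i_{h(\chi)}^{-1}=\Theta^{(n)}_{\chi\delta}$, with $\chi=\chi(\alpha_2)$ for $n=3$ and $\chi=\chi(\alpha_1)$ for $n=4$ (Proposition~\ref{prop:action_of_stnrd_aut_on_exp}). This eliminates the unknown: on $E_\pi^{(n)}$ it gives $i_{\psi(x)x^{-1}}=\Theta_\delta\circ i_x\circ\Theta_{-\delta}\circ i_x^{-1}=\Theta_{(1-\chi)\delta}$.

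\textbf{Non-standardness.} $\Theta^{(n)}_\varepsilon$ is not of the form $i_g\circ\mu$ with $g\in G^{(n)}_{\mathrm{ad}}$ unless $\varepsilon=0$ (Proposition~\ref{prop:exceptional-nonstandard}). This is proved with the projective trace ratio on the eigenspaces of $h_2$ (respectively $H$). The conjugator must preserve these eigenspaces because it centralizes $w_2$ (respectively $w_1$), hence also $h_2=w_2^2$ (respectively $H=w_1^2$).

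Together these give exactly your guessed relation $(1-\chi)\delta=0$.

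Your criterion for choosing $x$ is also off. What matters is not $x\notin E\cdot Z$ but $\chi(x)\equiv-1\pmod J$ for that specific root. If $\chi(x)\equiv1$, the relation is vacuous because $J\delta=0$. The paper uses $\operatorname{diag}(1,1,-1,-1)$, which lies in $G^{(3)}_\pi$ precisely for $\pi\neq\mathrm{sc}$. It also uses $\operatorname{diag}(1,-1,1,-1,1)\in\operatorname{SU}_5(R)$, which handles $(4,\mathrm{sc})$ and $(4,\mathrm{ad})$ uniformly, so your fallback for $(4,\mathrm{sc})$ is unnecessary. Both elements have $\chi=-1$. The fact that $x\notin E_\pi^{(n)}$ then comes out as a consequence rather than being an input.
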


The exceptional automorphisms fail to extend to the full groups in part~(2) of Theorem~B because of a special torus element in $G^{(n)}_{\pi}$ that does not lie in the elementary subgroup $E^{(n)}_{\pi}$. 
This element forces the exceptional parameter to vanish; see Proposition~\ref{prop:exceptional-does-not-extend}. 
In contrast, in part~(1), the simply connected group $G^{(3)}_{\mathrm{sc}}$ is realized as $\operatorname{SU}_4(R,Q_4)$ and coincides with its elementary subgroup. 
Thus, Theorem~A yields nontrivial exceptional automorphisms of the classical group $\operatorname{SU}_4(R,Q_4)$.

The paper is organized as follows. 
In Section~\ref{sec:matrix-realizations}, we describe the standard matrix realizations of the groups $G^{(n)}_{\pi}$ for $n\in\{3,4\}$, together with the relative root elements generating the corresponding elementary subgroups $E_\pi^{(n)}$, the Weyl representatives, the torus elements, and the relevant matrix-algebra generation properties. 
In Section~\ref{sec:exceptional_aut}, we construct the exceptional automorphisms and analyze their main properties. 
In Section~\ref{sec:aut_ele_adj_local}, we classify the automorphisms of the adjoint elementary groups. 
Finally, in Section~\ref{sec:arbitrary-isogeny-types}, we extend the classification to the elementary and full twisted groups of all isogeny types, thereby proving Theorems~A and~B.


\section{Standard Matrix Realizations}\label{sec:matrix-realizations}

Throughout the paper, $R$ denotes a commutative local ring such that $2 \in R^\times$. 
Let $J$ denote the Jacobson radical of $R$, and let $k \coloneqq R/J$ be its residue field. 
We assume that $R$ admits a non-trivial involution and fix one, denoted by $\theta \colon R \to R$. 
For $r \in R$, we write $\bar{r} \coloneqq \theta(r)$.

We define the submodules of symmetric and skew-symmetric elements in $R$ by
\[
    R_{\theta} = \{ r \in R \mid \bar{r} = r \} \qquad \text{and} \qquad R_{\theta}^{-} = \{ r \in R \mid \bar{r} = -r \}.
\]
Similarly, for any $\theta$-invariant ideal $I \subseteq R$, we write
\[
    I_{\theta} = I \cap R_{\theta} \qquad \text{and} \qquad I_{\theta}^{-} = I \cap R_{\theta}^{-}.
\]

In this section, we describe the standard matrix realizations of the twisted Chevalley groups of types ${}^{2}A_3$ and ${}^{2}A_4$, as well as their elementary subgroups. We refer the reader to \cite{EB&DM1} for precise definitions and a more detailed exposition.

To keep our presentation self-contained, we will explicitly set down the properties and formulas required for our current work. 
In particular, we record the distinguished elementary generators, Weyl elements, and torus elements that will play a major role in our later calculations, and we outline the key algebraic relations they satisfy.


\subsection{Type \texorpdfstring{${}^{2}A_3$}{2A3}}

Let $\Phi$ be a root system of type $A_3$ equipped with the standard simple system
\[
    \Delta = \{ \alpha_1 \coloneqq \varepsilon_1 - \varepsilon_2, \ \alpha_2 \coloneqq \varepsilon_2 - \varepsilon_3, \ \alpha_3 \coloneqq \varepsilon_3 - \varepsilon_4 \}.
\]

The non-trivial graph automorphism $\rho$ of $\Phi$ acts by folding the Dynkin diagram, namely
\[
    \rho(\alpha_1) = \alpha_3, \qquad \rho(\alpha_2) = \alpha_2, \qquad \rho(\alpha_3) = \alpha_1.
\]
This action yields the induced twisted root system $\Phi_\rho$ (see \cite{EB&DM1} for the precise definition), which is explicitly described by
\[
    \Phi_\rho = \{\pm[\alpha_1] \coloneqq \{\alpha_1, \alpha_3\}, \ \pm[\alpha_2] \coloneqq \{ \alpha_2\}, \ \pm([\alpha_1]+[\alpha_2]), \pm(2[\alpha_1]+[\alpha_2]) \},
\]
which forms a root system of type $B_2$:

\begin{center}
    \begin{tikzpicture}[scale=1.5, >=stealth, thick]
        \draw[->] (0,0) -- (1,0) node[right, xshift=2pt] {$[\alpha_1]$};
        \draw[->] (0,0) -- (0,1) node[above, yshift=2pt] {$[\alpha_1] + [\alpha_2]$};
        \draw[->] (0,0) -- (-1,0) node[left, xshift=-2pt] {$-[\alpha_1]$};
        \draw[->] (0,0) -- (0,-1) node[below, yshift=-2pt] {$-[\alpha_1]-[\alpha_2]$};
        
        \draw[->] (0,0) -- (1,1) node[above right] {$2[\alpha_1] + [\alpha_2]$};
        \draw[->] (0,0) -- (-1,1) node[above left] {$[\alpha_2]$};
        \draw[->] (0,0) -- (-1,-1) node[below left] {$-2[\alpha_1] - [\alpha_2]$};
        \draw[->] (0,0) -- (1,-1) node[below right] {$-[\alpha_2]$};
    \end{tikzpicture}
\end{center}

For convenience, we introduce the shorthand notation:
\[
    \beta_1 \coloneqq [\alpha_1], \quad \beta_2 \coloneqq [\alpha_2], \quad \beta_3 \coloneqq [\alpha_1] + [\alpha_2], \quad \beta_4 \coloneqq 2[\alpha_1] + [\alpha_2].
\]
In this setup, the short roots $\pm \beta_1$ and $\pm \beta_3$ are of type $A_1^2$, whereas the long roots $\pm \beta_2$ and $\pm \beta_4$ are of type $A_1$.

\smallskip

Before constructing the twisted groups, we briefly recall the Chevalley groups $G_{\pi}(A_3, R)$ associated with different isogeny types. Let $\mathcal{L}$ be a complex semisimple Lie algebra of type $A_3$, and let $\pi$ be a representation of $\mathcal{L}$ whose weight lattice $\Lambda_{\pi}$ satisfies
\[
    \Lambda_r \subseteq \Lambda_{\pi} \subseteq \Lambda_{\mathrm{sc}},
\]
where $\Lambda_r$ and $\Lambda_{\mathrm{sc}}$ denote the root lattice and the simply connected (fundamental) weight lattice of type $A_3$, respectively.

Since the quotient $\Lambda_{\mathrm{sc}} / \Lambda_{r} \cong \mathbb{Z}_{4}$ for type $A_3$, there are exactly three choices for $\Lambda_\pi$:
\[
    \Lambda_{r}, \qquad \Lambda_{\mathrm{mid}} \coloneqq \Lambda_r + \mathbb{Z} \, \omega_2, \qquad \Lambda_{\mathrm{sc}}.
\]
These lattices correspond to three distinct isogeny types for the Chevalley groups over $R$:
\[
    G_{\mathrm{ad}}(A_3, R) \cong \PGL_4(R), \qquad G_{\mathrm{mid}}(A_3, R), \qquad G_{\mathrm{sc}} (A_3, R) \cong \SL_4(R),
\]
respectively. Recall that over a local ring $R$, we naturally have $\PGL_4(R) \cong \GL_4(R) / R^{\times}$.

\smallskip

We now define the corresponding twisted Chevalley groups. The ring involution $\theta$ on $R$ extends naturally to an automorphism of $G_{\pi}(A_3, R)$, which we still denote by $\theta$. Similarly, the graph automorphism $\rho$ induces an automorphism on $G_{\pi}(A_3, R)$, denoted by $\rho$. 

Since these two automorphisms commute, we can define the involution $\sigma \coloneqq \rho \circ \theta = \theta \circ \rho$ on $G_{\pi}(A_3, R)$. The twisted Chevalley group $G_{\pi}^{(3)} \coloneqq G_{\pi, \sigma}(A_3, R)$ is then defined as the subgroup of fixed points under $\sigma$. 

For the simply connected and adjoint cases, these twisted groups admit the following concrete matrix realizations:
\begin{align*} 
    G_{\mathrm{sc}}^{(3)} &= G_{\mathrm{sc}, \sigma}(A_3, R) \cong \SU_4(R) \coloneqq \{ A \in \SL_4(R) \mid A Q_4 \bar{A}^{t} = Q_4 \}, \\ 
    G_{\mathrm{ad}}^{(3)} &= G_{\mathrm{ad}, \sigma}(A_3, R) \cong \{ [A] \in \PGL_4(R) \mid A Q_4 \bar{A}^{t} = \lambda \, Q_4 \text{ for some } \lambda \in R^{\times} \}, 
\end{align*}
where the bar denotes the entrywise application of the involution $\theta$, and the matrix $Q_4$ is defined by
\[
    Q_4 = \begin{pmatrix} 
        0 & 0 & 0 & 1 \\ 
        0 & 0 & -1 & 0 \\ 
        0 & 1 & 0 & 0 \\ 
        -1 & 0 & 0 & 0 
    \end{pmatrix}, \qquad 
    Q_4^{t} = -Q_4, \qquad 
    Q_4^2 = -I_4.
\]

\smallskip

Next, we introduce the standard elementary matrices, along with their corresponding Weyl and torus elements, for the group $\SU_4(R)$. For $t \in R$ and $s \in R_\theta$, the positive relative root elements are given by:
\begin{align*}
    x_1(t) &\coloneqq x_{\beta_1}(t) = I_4 + t E_{12} + \bar{t} \, E_{34}, &
    x_2(s) &\coloneqq x_{\beta_2}(s) = I_4 + s E_{23}, \\
    x_3(t) &\coloneqq x_{\beta_3}(t) = I_4 - t E_{13} + \bar{t} \, E_{24}, & 
    x_4(s) &\coloneqq x_{\beta_4}(s) = I_4 - s E_{14}.
\end{align*}
The negative root elements $x_{-i}(\cdot)$ are obtained analogously by transposing the corresponding matrix units. 

Furthermore, we define the Weyl and torus elements by
\[
    w_i(t) \coloneqq w_{\beta_i}(t) = x_{i}(t) \, x_{-i}(-t^{-1}) \, x_{i}(t)
    \qquad \text{and} \qquad
    h_i(t) \coloneqq h_{\beta_i}(t) = w_{i}(t) \, w_{i}(-1),
\]
where $t \in R^{\times}$ for $i \in \{1, 3\}$, and $t \in R_{\theta}^{\times}$ for $i \in \{2, 4\}$. In particular, for appropriate $t$, we have
\begin{align*}
    w_1(t) &= \begin{pmatrix}
        0 & t & 0 & 0 \\
        -1/t & 0 & 0 & 0 \\
        0 & 0 & 0 & \bar{t} \\
        0 & 0 & -1/\bar{t} & 0
    \end{pmatrix}, & w_2(t) &= \begin{pmatrix}
        1 & 0 & 0 & 0 \\
        0 & 0 & t & 0 \\
        0 & -1/t & 0 & 0 \\
        0 & 0 & 0 & 1
    \end{pmatrix}, \\
    w_3(t) &= \begin{pmatrix}
        0 & 0 & -t & 0 \\
        0 & 0 & 0 & \bar{t} \\
        1/t & 0 & 0 & 0 \\
        0 & -1/\bar{t} & 0 & 0
    \end{pmatrix}, & w_4(t) &= \begin{pmatrix}
        0 & 0 & 0 & -t \\
        0 & 1 & 0 & 0 \\
        0 & 0 & 1 & 0 \\
        1/t & 0 & 0 & 0
    \end{pmatrix};
\end{align*}
and
\begin{align*}
    h_1(t) &= \diag(t,1/t,\bar{t},1/\bar{t}), & h_2(t) &= \diag(1,t,1/t,1), \\
    h_3(t) &= \diag(t, \bar{t}, 1/t, 1/\bar{t}), & h_4(t) &= \diag(t,1,1,1/t).
\end{align*}

The elementary simply connected twisted Chevalley group is the subgroup generated by these relative root elements:
\[
    E_{\mathrm{sc}}^{(3)} \coloneqq E'_{\mathrm{sc},\sigma}(A_3,R)
    = \big\langle x_{\pm1}(t), \, x_{\pm3}(t), \, x_{\pm2}(s), \, x_{\pm4}(s) \mid t \in R, \ s \in R_\theta \big\rangle \subseteq G_{\mathrm{sc}}^{(3)}.
\]
More generally, for an arbitrary lattice $\Lambda_{\pi}$, the corresponding elementary twisted Chevalley group
\[
    E_{\pi}^{(3)} \coloneqq E'_{\pi,\sigma}(A_3, R)
\]
is defined simply as the natural image of $E_{\mathrm{sc}}^{(3)}$ inside $G_{\pi}^{(3)}$.

To streamline our computations, we first simplify the notation for our most recurring elements by setting:
\[
    u_i \coloneqq x_i(1), \qquad 
    w_i \coloneqq w_i(1), \qquad
    h_i \coloneqq h_i(-1).
\]
With this setup, we record the following basic relations that will be used frequently throughout the paper:
\begin{gather*}
    w_1^2 = w_3^2 = h_1 = h_3 = - I_4, \\
    w_2^2 = - w_4^2 = h_2 = -h_4 = \diag(1,-1,-1,1), \\
    w_2 u_1 w_2^{-1} = u_3, \qquad w_1 u_2 w_1^{-1} = u_4.
\end{gather*}

It is convenient to unify our parameter domains by defining:
\[
    \mathcal{R}^{(3)}_i = \begin{cases}
        R & \text{if } i \in \{1, 3\}, \\
        R_\theta & \text{if } i \in \{2, 4\},
    \end{cases}
    \qquad \text{and} \qquad 
    \big(\mathcal{R}^{(3)}_i\big)^\times = \begin{cases}
        R^{\times} & \text{if } i \in \{1, 3\}, \\
        R_\theta^{\times} & \text{if } i \in \{2, 4\}.
    \end{cases}
\]

Finally, to conclude this section, we establish a structural lemma that will be needed later.

\begin{lemma} \label{lemma:A3-generates-matrix-algebra}
    The matrices $u_{\pm 1}$ and $u_{\pm 2}$ generate the full matrix algebra $M_4(R)$ over $R$.
\end{lemma}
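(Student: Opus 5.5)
The plan is to show that the $R$-subalgebra $\mathcal A\subseteq M_4(R)$ generated by $u_{\pm1}=I_4\pm\ldots$ and $u_{\pm2}$ contains all sixteen matrix units $E_{ij}$. Since $I_4\in\mathcal A$, subtracting the identity shows that $\mathcal A$ contains the nilpotent parts
\[
    X_1 \coloneqq u_1-I_4 = E_{12}+E_{34},\quad X_{-1}\coloneqq u_{-1}-I_4=E_{21}+E_{43},\quad X_2\coloneqq E_{23},\quad X_{-2}\coloneqq E_{32},
\]
using $\bar 1 = 1$. It therefore suffices to show that these four matrices generate $M_4(R)$ as an $R$-algebra.

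First, I isolate the diagonal idempotents by forming products of the $X$'s. We have $X_2X_{-2}=E_{22}$ and $X_{-2}X_2=E_{33}$. Next, $X_1X_{-1}=E_{11}+E_{33}$, so $E_{11}=X_1X_{-1}-E_{33}\in\mathcal A$. Similarly $X_{-1}X_1=E_{22}+E_{44}$ gives $E_{44}\in\mathcal A$. With all four $E_{ii}$ in hand, I separate the two summands of $X_{\pm1}$: $E_{12}=E_{11}X_1$, $E_{34}=E_{33}X_1$, $E_{21}=E_{22}X_{-1}$ and $E_{43}=E_{44}X_{-1}$. Together with $E_{23}$ and $E_{32}$, this gives all $E_{i,i+1}$ and $E_{i+1,i}$. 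The remaining units then follow from $E_{ij}E_{jk}=E_{ik}$ by chaining along the path $1$–$2$–$3$–$4$. Hence $\mathcal A=M_4(R)$, because the $E_{ij}$ span $M_4(R)$ over $R$.

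There is no real obstacle here; the only point needing care is separating the two summands of $X_{\pm1}$, and the idempotents $E_{22},E_{33}$ coming from $X_{\pm2}$ are exactly what make this possible. The argument does not use that $R$ is local or that $1/2\in R$. If instead "generate" is meant in the sense of spanning, the statement would still follow, since the algebra generated consists of $R$-linear combinations of words in these matrices.
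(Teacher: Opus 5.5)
Your proof is correct and follows essentially the same route as the paper: subtract $I_4$, obtain $E_{22}$ and $E_{33}$ from the products of $X_{\pm2}$, use these idempotents to split the two summands of $X_{\pm1}$ into adjacent matrix units, and then chain the adjacent units to get every $E_{ij}$. The only cosmetic difference is that you first produce $E_{11}$ and $E_{44}$ from $X_1X_{-1}$ and $X_{-1}X_1$. The paper instead multiplies $X_{\pm1}$ directly by $E_{22}$ or $E_{33}$ on the appropriate side (for example $X_1E_{22}=E_{12}$ and $X_{-1}E_{33}=E_{43}$).
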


\begin{proof}
    Set 
    \begin{align*}
        A & = u_{1}-I_4 = E_{12}+E_{34}, &
        B & = u_{-1}-I_4 = E_{21}+E_{43},\\
        C & = u_{2}-I_4 = E_{23}, &
        D & = u_{-2}-I_4 = E_{32}.
    \end{align*}
    Then $CD=E_{22}$ and $DC=E_{33}$.
    Hence
    \[
        A E_{22} = E_{12},
        \quad
        E_{33} A = E_{34},
        \quad
        E_{22} B = E_{21},
        \quad
        B E_{33} = E_{43}.
    \]
    It follows that all adjacent matrix units and all diagonal matrix units belong to the generated algebra. Their products give every $E_{ij}$, so the generated algebra is $M_4(R)$.
\end{proof} 


\subsection{Type \texorpdfstring{${}^{2}A_4$}{2A4}}

We now detail the parallel construction for the root system of type $A_4$. Let $\Phi$ be a root system of type $A_4$ equipped with the standard simple roots
\[
    \Delta = \{ \alpha_1 \coloneqq \varepsilon_1 - \varepsilon_2, \ \alpha_2 \coloneqq \varepsilon_2 - \varepsilon_3, \ \alpha_3 \coloneqq \varepsilon_3 - \varepsilon_4, \ \alpha_4 \coloneqq \varepsilon_4 - \varepsilon_5 \}.
\]

The non-trivial graph automorphism $\rho$ of $\Phi$ acts on the simple roots by
\[
    \rho(\alpha_1) = \alpha_4, \qquad \rho(\alpha_2) = \alpha_3, \qquad \rho(\alpha_3) = \alpha_2, \qquad \rho(\alpha_4) = \alpha_1.
\]
This induces the corresponding twisted root system $\Phi_\rho$, given by
\[
    \Phi_\rho = \{\pm[\alpha_1] \coloneqq \{\alpha_1, \alpha_4\}, \ \pm[\alpha_2] \coloneqq \{ \alpha_2, \alpha_3, \alpha_2+\alpha_3 \}, \ \pm([\alpha_1]+[\alpha_2]), \ \pm([\alpha_1]+2[\alpha_2]) \},
\]
and naturally realizes a root system of type $B_2$:

\begin{center}
    \begin{tikzpicture}[scale=1.5, >=stealth, thick]
        \draw[->] (0,0) -- (1,0) node[right, xshift=2pt] {$[\alpha_2]$};
        \draw[->] (0,0) -- (0,1) node[above, yshift=2pt] {$[\alpha_1] + [\alpha_2]$};
        \draw[->] (0,0) -- (-1,0) node[left, xshift=-2pt] {$-[\alpha_2]$};
        \draw[->] (0,0) -- (0,-1) node[below, yshift=-2pt] {$-[\alpha_1]-[\alpha_2]$};
        
        \draw[->] (0,0) -- (1,1) node[above right] {$[\alpha_1] + 2[\alpha_2]$};
        \draw[->] (0,0) -- (-1,1) node[above left] {$[\alpha_1]$};
        \draw[->] (0,0) -- (-1,-1) node[below left] {$-[\alpha_1] - 2[\alpha_2]$};
        \draw[->] (0,0) -- (1,-1) node[below right] {$-[\alpha_1]$};
    \end{tikzpicture}
\end{center}

For brevity, we write
\[
    \beta_1 \coloneqq [\alpha_1], \quad \beta_2 \coloneqq [\alpha_2], \quad \beta_3 \coloneqq [\alpha_1] + [\alpha_2], \quad \beta_4 \coloneqq [\alpha_1] + 2[\alpha_2].
\]
Within this framework, the long roots $\pm \beta_1$ and $\pm \beta_4$ are of type $A_1^2$, while the short roots $\pm \beta_2$ and $\pm \beta_3$ are of type $A_2$.

\smallskip

Before defining the twisted groups, we establish the Chevalley groups $G_{\pi}(A_4, R)$ for the available isogeny types. Let $\mathcal{L}$ be a complex semisimple Lie algebra of type $A_4$, and let $\pi$ be a representation with weight lattice $\Lambda_{\pi}$ satisfying
\[
    \Lambda_r \subseteq \Lambda_{\pi} \subseteq \Lambda_{\mathrm{sc}}.
\]
Because the quotient $\Lambda_{\mathrm{sc}} / \Lambda_{r} \cong \mathbb{Z}_{5}$ is of prime order for type $A_4$, there are no intermediate lattices. Thus, there are exactly two choices for $\Lambda_\pi$:
$\Lambda_{r}$ and $\Lambda_{\mathrm{sc}}$.
These correspond to the two possible isogeny types for the Chevalley groups over $R$:
\[
    G_{\mathrm{ad}}(A_4, R) \cong \PGL_5(R) \qquad \text{and} \qquad G_{\mathrm{sc}} (A_4, R) \cong \SL_5(R).
\]
As is standard over a local ring $R$, we have $\PGL_5(R) \cong \GL_5(R) / R^{\times}$.

\smallskip

We now define the corresponding twisted Chevalley groups. As in the previous case, the ring involution $\theta$ and the graph automorphism $\rho$ both induce commuting automorphisms on $G_{\pi}(A_4, R)$. We define the involution $\sigma \coloneqq \rho \circ \theta = \theta \circ \rho$. 

The twisted Chevalley group $G_{\pi}^{(4)}:= G_{\pi, \sigma}(A_4, R)$ is the subgroup of fixed points under $\sigma$. For the simply connected and adjoint cases, these groups admit the following matrix realizations:
\begin{align*} 
    G_{\mathrm{sc}}^{(4)} = G_{\mathrm{sc}, \sigma}(A_4, R) &\cong \SU_5(R) \coloneqq \{ A \in \SL_5(R) \mid A Q_5 \bar{A}^{t} = Q_5 \}, \\ 
    G_{\mathrm{ad}}^{(4)} = G_{\mathrm{ad}, \sigma}(A_4, R) &\cong \{ [A] \in \PGL_5(R) \mid A Q_5 \bar{A}^{t} = \lambda \, Q_5 \text{ for some } \lambda \in R^{\times} \}, 
\end{align*}
where the bar denotes entrywise application of the involution $\theta$, and the matrix $Q_5$ is given by
\[
    Q_5 = \begin{pmatrix} 
        0 & 0 & 0 & 0 & 1 \\ 
        0 & 0 & 0 & -1 & 0 \\ 
        0 & 0 & 1 & 0 & 0 \\ 
        0 & -1 & 0 & 0 & 0 \\
        1 & 0 & 0 & 0 & 0 
    \end{pmatrix}, \qquad 
    Q_5^{t} = Q_5, \qquad 
    Q_5^2 = I_5.
\]

Next, we introduce the standard elementary matrices, along with their corresponding Weyl and torus elements, for the group $\SU_5(R)$. 
For the long relative roots, the parameter space is simply $R$. For $t \in R$, we define the elementary matrices corresponding to the positive long roots as:
$$
    x_1(t) \coloneqq x_{\beta_1}(t) = I_5 + t E_{12} + \bar{t} \, E_{45}, \quad
    x_4(t) \coloneqq x_{\beta_4}(t) = I_5 + t E_{14} + \bar{t} \, E_{25}.
$$
For the short roots, the relevant parameter space is $\mathcal{A}(R)$, defined by
\[
    \mathcal{A}(R) = \{(t,u) \in R^2 \mid t \bar{t} = u + \bar{u} \}.
\]
We equip this set with the Heisenberg-type group operation
\[
    (t,u) \oplus (s,v) \coloneqq (t+s, \, u+v+t\bar{s}).
\]
For any $(t,u) \in \mathcal{A}(R)$, we define the short positive root elements by:
\begin{align*}
    x_2(t,u) &\coloneqq x_{\beta_2}(t,u) = I_5 + t E_{23} + \bar{t} E_{34} + u E_{24}, \\
    x_3(t,u) &\coloneqq x_{\beta_3}(t,u) = I_5 + t E_{13} - \bar{t} E_{35} - u E_{15}.
\end{align*}
The corresponding negative root elements $x_{-i}(\cdot)$ are obtained analogously by transposing the respective matrix units. 

Furthermore, for the long roots ($i \in \{1, 4\}$), we define the Weyl and torus elements by
\[
    w_i(t) \coloneqq w_{\beta_i}(t) = x_{i}(t) \, x_{-i}(-t^{-1}) \, x_{i}(t)
    \qquad \text{and} \qquad
    h_i(t) \coloneqq h_{\beta_i}(t) = w_{i}(t) \, w_{i}(-1),
\]
where $t \in R^{\times}$. For the short roots ($i \in \{2, 3\}$), we define
\[
    w_i(t,u) \coloneqq w_{\beta_i}(t,u) = x_{i}(t,u) \, x_{-i}(-\bar{t} \bar{u}^{-1}, \bar{u}^{-1}) \, x_{i}(tu^{-1}\bar{u}, u) 
\]
for all $(t,u) \in \mathcal{A}(R)^{\times} \coloneqq \{ (t,u) \in \mathcal{A}(R) \mid u \in R^{\times} \}$, and the corresponding torus elements by
\[
    h_i\big((t_1, u_1), (t_2, u_2)\big) = w_i(t_1, u_1) w_i(t_2, u_2)
\]
for all $(t_1, u_1), (t_2, u_2) \in \mathcal{A}(R)^{\times}$.

The elementary simply connected twisted Chevalley group is the subgroup generated by these relative root elements:
\[
    E_{\mathrm{sc}}^{(4)} \coloneqq E'_{\mathrm{sc},\sigma}(A_4,R)
    = \big\langle x_{\pm1}(t), \, x_{\pm4}(t), \, x_{\pm2}(t,u), \, x_{\pm3}(t,u) \mid t \in R, \ (t,u) \in \mathcal{A}(R) \big\rangle \subseteq G_{\mathrm{sc}}^{(4)}.
\]
For the adjoint group, the corresponding elementary twisted Chevalley group
\[
    E_{\mathrm{ad}}^{(4)} \coloneqq E'_{\mathrm{ad},\sigma}(A_4, R)
\]
is defined simply as the projective image of $E_{\mathrm{sc}}^{(4)}$ inside $G_{\mathrm{ad}}^{(4)}$.

We now record several fundamental relations that will be used frequently throughout the paper. To streamline our computations, we first simplify the notation for our most recurring elements by setting:
\[
    u_i \coloneqq x_i(1) \quad (i=1, 4) 
    \qquad \text{and} \qquad
    u_i \coloneqq x_i(1, 1/2) \quad (i=2,3).
\]
We also define the normalized Weyl elements
$w_i \coloneqq w_i(1)$ ($i=1, 4$) 
and single out the diagonal matrix
\[
    H \coloneqq h_1(-1) = h_4(-1) = \diag(-1, -1, 1, -1, -1).
\]
For the short roots, we set
\[
    v_i \coloneqq w_i(1, 1/2) \quad (i=2, 3). 
\]
Explicitly, these matrices take the form:
\begin{align*}
    w_1 &= \begin{pmatrix}
        0 & 1 & 0 & 0 & 0 \\
        -1 & 0 & 0 & 0 & 0 \\
        0 & 0 & 1 & 0 & 0 \\
        0 & 0 & 0 & 0 & 1 \\
        0 & 0 & 0 & -1 & 0 
    \end{pmatrix}, &
    w_4 &= \begin{pmatrix}
        0 & 0 & 0 & 1 & 0 \\
        0 & 0 & 0 & 0 & 1 \\
        0 & 0 & 1 & 0 & 0 \\
        -1 & 0 & 0 & 0 & 0 \\
        0 & -1 & 0 & 0 & 0 
    \end{pmatrix}, \\
    v_2 &= \begin{pmatrix}
        1 & 0 & 0 & 0 & 0 \\
        0 & 0 & 0 & 1/2 & 0 \\
        0 & 0 & -1 & 0 & 0 \\
        0 & 2 & 0 & 0 & 0 \\
        0 & 0 & 0 & 0 & 1 
    \end{pmatrix}, &
    v_3 &= \begin{pmatrix}
        0 & 0 & 0 & 0 & -1/2 \\
        0 & 1 & 0 & 0 & 0 \\
        0 & 0 & -1 & 0 & 0 \\
        0 & 0 & 0 & 1 & 0 \\
        -2 & 0 & 0 & 0 & 0 
    \end{pmatrix}.
\end{align*}
With this setup, we record the following basic relations:
\begin{gather*}
    w_1^2 = w_4^2 = H = \diag(-1,-1,1,-1,-1), \\
    v_2^2 = v_3^2 = I_5, \qquad w_1 w_4 = w_4 w_1, \qquad v_3 = w_1 v_2 w_1^{-1}, \\
    w_1 u_2 w_1^{-1} = u_3, \qquad v_2 u_1^2 v_2^{-1} = u_4.
\end{gather*}

It is convenient to unify our parameter domains by defining:
\[
    \mathcal{R}^{(4)}_i = \begin{cases}
        R & \text{if } i \in \{1, 4\}, \\
        \mathcal{A}(R) & \text{if } i \in \{2, 3\},
    \end{cases}
    \qquad \text{and} \qquad 
    \big(\mathcal{R}^{(4)}_i\big)^\times = \begin{cases}
        R^{\times} & \text{if } i \in \{1, 4\}, \\
        \mathcal{A}(R)^{\times} & \text{if } i \in \{2, 3\}.
    \end{cases}
\]

We conclude this section with a structural lemma that will be essential later.

\begin{lemma} \label{lemma:A4-generates-matrix-algebra}
    The matrices $w_1$, $w_4$, $v_2$, $u_1$, $u_2$ generate $M_5(R)$ as an $R$-algebra.
\end{lemma}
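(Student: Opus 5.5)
Let $\mathcal{B}$ denote the $R$-subalgebra of $M_5(R)$ generated by $w_1,w_4,v_2,u_1,u_2$. The plan mirrors the proof of Lemma~\ref{lemma:A3-generates-matrix-algebra}: we extract matrix units one by one, and the only real issue is to obtain a few diagonal idempotents, since products then give every $E_{ij}$. First, $w_1^2=H=\diag(-1,-1,1,-1,-1)\in\mathcal{B}$. Because $2\in R^\times$, we get
\[
    E_{33}=\tfrac12(I_5+H)\in\mathcal{B}
    \qquad\text{and}\qquad
    P:=I_5-E_{33}=E_{11}+E_{22}+E_{44}+E_{55}\in\mathcal{B}.
\]
Next, $u_2-I_5=E_{23}+E_{34}+\tfrac12E_{24}$. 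Multiplying by $E_{33}$ on either side gives
\[
    (u_2-I_5)E_{33}=E_{23}\in\mathcal{B},
    \qquad
    E_{33}(u_2-I_5)=E_{34}\in\mathcal{B}.
\]
Similarly, $v_2$ has entries $(2,4)=\tfrac12$, $(3,3)=-1$, $(4,2)=2$, $(1,1)=(5,5)=1$. Hence $E_{33}v_2=-E_{33}$, which gives nothing new, but $v_2E_{23}$ and $E_{34}v_2$ do:
\[
    v_2E_{23}=2E_{43}\in\mathcal{B},
    \qquad
    E_{34}v_2=2E_{32}\in\mathcal{B}.
\]
Since $2$ is invertible, $E_{43},E_{32}\in\mathcal{B}$.

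Products now give diagonal units. We have $E_{23}E_{32}=E_{22}$ and $E_{34}E_{43}=E_{33}$, while $E_{43}E_{34}=E_{44}$. So $E_{22},E_{33},E_{44}\in\mathcal{B}$, together with $E_{23},E_{32},E_{34},E_{43}$, and by products $E_{24}=E_{23}E_{34}$ and $E_{42}$. We then bring in indices $1$ and $5$. From $u_1-I_5=E_{12}+E_{45}$ we get
\[
    (u_1-I_5)E_{22}=E_{12},
    \qquad
    E_{44}(u_1-I_5)=E_{45}.
\]
To reverse these, we use $w_1$, whose entries are $(1,2)=1$, $(2,1)=-1$, $(4,5)=1$, $(5,4)=-1$. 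This gives
\[
    E_{22}w_1=-E_{21},
    \qquad
    w_1E_{44}=-E_{54}.
\]
Thus $E_{12},E_{21},E_{45},E_{54}$ all lie in $\mathcal{B}$. In fact $w_4$ is not even needed, though it could alternatively supply $E_{14}$, $E_{25}$ and so on.

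Finally, every $E_{ij}$ is a product along the chain $1\!-\!2\!-\!3\!-\!4\!-\!5$ of adjacent units, which are all now available in both directions. Hence $\mathcal{B}=M_5(R)$.

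The only step that needs care is the initial extraction of an idempotent. Unlike the ${}^2A_3$ case, $u_2-I_5$ is not a single matrix unit, so we must use $H=w_1^2$ and the invertibility of $2$ to isolate $E_{33}$. Everything after that is a routine matrix-unit check, which I would verify entry by entry against the explicit matrices for $w_1$ and $v_2$ displayed above.
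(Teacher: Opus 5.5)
Your proof is correct and follows essentially the same route as the paper: isolate $E_{33}=\frac{1}{2}(I_5+H)$ using $H=w_1^2$ and $2\in R^\times$, extract $E_{23},E_{34}$ from $u_2-I_5$, obtain $E_{43},E_{32}$ via $v_2$, and then reach indices $1$ and $5$ via $u_1$ and $w_1$. The only cosmetic difference is that you multiply by $v_2$ and $w_1$ on one side where the paper conjugates. This avoids needing $v_2^{-1}$ and $w_1^{-1}$ in the algebra, though that is harmless in the paper's argument too since $v_2^2=I_5$ and $w_1^4=I_5$.
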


\begin{proof}
    Set
    \[
        A \coloneqq u_2 - I_5 = E_{23} + E_{34} + \frac{1}{2} E_{24}.
    \]
    Since $H = w_1^2 = \operatorname{diag}(-1,-1,1,-1,-1)$, we have
    \[
        P_3 \coloneqq \frac{I_5 + H}{2} = E_{33}.
    \]
    Therefore $A P_3 = E_{23}$ and $P_3 A = E_{34}$. 
    Consequently, $E_{23}$, $E_{34}$ and $E_{24} = E_{23}E_{34}$ belong to the $R$-algebra generated by $w_1, w_4, v_2, u_1, u_2$.
    
    Conjugating these by $v_2$ yields
    \[
        v_2 E_{23} v_2^{-1} = -2 E_{43}
        \qquad \text{and} \qquad
        v_2 E_{34} v_2^{-1} = -2 E_{32}.
    \]
    Since $2 \in R^\times$, we also get $E_{43}$ and $E_{32}$.
    Hence $E_{22} = E_{23} E_{32}$, $E_{33} = E_{32} E_{23}$, and $E_{44} = E_{43} E_{34}$ belong to the generated algebra.
    
    Next, $u_1 - I_5 = E_{12} + E_{45}$.
    Multiplying by the diagonal matrix units already obtained, we get
    \[
        (u_1-I_5) E_{22} = E_{12},
        \qquad
        E_{44} (u_1-I_5) = E_{45}.
    \]
    Conjugating by $w_1$ gives $-E_{21}$ and $-E_{54}$, and hence also $E_{21}$ and $E_{54}$.
    Thus all adjacent matrix units $E_{ij}$, $|i-j|=1$, belong to the generated algebra. 
    Their products give all diagonal matrix units and then all matrix units $E_{ij}$. 
    Hence the generated algebra is $M_5(R)$.
\end{proof}


\section{The Exceptional Automorphisms}\label{sec:exceptional_aut}

In this section, we first introduce the exceptional parameter space and determine its intrinsic structure. 
For each parameter, we explicitly define a corresponding exceptional map on the elementary generators; these formulas arise naturally in the proof of Theorem~\ref{MT:aut_ele_adj_local}, more precisely in Subsection~\ref{subsec:parameter-maps-first-annihilators}. 
We then prove that these formulas define an automorphism of $E^{(n)}_{\mathrm{sc}}$ that fixes its center pointwise and therefore descends to every $E^{(n)}_{\pi}$. 
Finally, we show that every exceptional automorphism associated with a nonzero parameter is nonstandard.


\subsection{The exceptional parameter space}\label{subsec:expl_prm_spc}

Let $R$ be a commutative local ring with Jacobson radical $J$, and assume that $2\in R^\times$. Let $\theta\colon R\to R$, given by $r\mapsto\bar r$, be a non-trivial involution.

We define the \emph{exceptional parameter space} by
\[
    \mathcal E(R,\theta)
    \coloneqq
    \{\delta\in J_\theta^-\mid
      \delta R_\theta^-=0
      \text{ and }
      \delta I_\theta(R)=0\},
\]
where
\[
    I_\theta(R)
    \coloneqq
    \langle t^2-1\mid t\in R_\theta^\times\rangle_R.
\]

\begin{rmk}
    For every $\delta\in\mathcal E(R,\theta)$, the following hold:
    \begin{enumerate}[(a)]
        \item $\delta^2=0$, since $\delta\in R_\theta^-$ and $\delta R_\theta^-=0$;
        \item $3\delta=0$, since $2\in R_\theta^\times$ and
        \[
            3=2^2-1\in I_\theta(R);
        \]
        \item in the definition of $\mathcal E(R,\theta)$, the ideal $I_\theta(R)$ may equivalently be replaced by
        \[
            I(R)\coloneqq
            \langle t^2-1\mid t\in R^\times\rangle_R.
        \]
    \end{enumerate}

    Indeed, $I_\theta(R)\subseteq I(R)$, so only one implication requires proof. Suppose that
    \[
        \delta R_\theta^-=0,
        \qquad
        \delta I_\theta(R)=0.
    \]
    If $\delta=0$, there is nothing to prove. Otherwise, put
    \[
        \mathfrak a:=\operatorname{Ann}_R(\delta).
    \]
    Since $\mathfrak a$ is a proper ideal of the local ring $R$, we have $\mathfrak a\subseteq J$. For any $t\in R^\times$, write
    \[
        t=t^++t^-,
        \qquad
        t^+=\frac{t+\bar t}{2}\in R_\theta,
        \qquad
        t^-=\frac{t-\bar t}{2}\in R_\theta^-.
    \]
    Then $t^-\in\mathfrak a$, so $t^+\equiv t\pmod{\mathfrak a}$ and hence $t^+\in R_\theta^\times$. Moreover,
    \[
        \delta(t^2-1)
        =
        \delta\bigl((t^+)^2-1\bigr)
        =0.
    \]
    Thus $\delta I(R)=0$.
\end{rmk}

The following lemma gives an intrinsic description of $\mathcal E(R,\theta)$.

\begin{lemma}\label{lemma:exnl-prmr-space}
    Let $R$ be a commutative local ring with Jacobson radical $J$ such that $2\in R^\times$, and let $\theta$ be an involution on $R$. Then
    \[
        \mathcal E(R,\theta)=
        \begin{cases}
            \operatorname{Ann}_R(J)\cap R_\theta^-,
                & \text{if }R/J\cong\mathbf F_3,\\
            \{0\},
                & \text{if }R/J\not\cong\mathbf F_3.
        \end{cases}
    \]
    
    In particular, if $\mathcal E(R,\theta)$ contains a nonzero element $\delta$, then
    \[
        \operatorname{Ann}_R(\delta)=J
        \qquad\text{and}\qquad
        R/J\cong\mathbf F_3.
    \]
    Consequently, whenever $\mathcal E(R,\theta)$ is nontrivial, it naturally carries the structure of an $\mathbf F_3$-vector space.
\end{lemma}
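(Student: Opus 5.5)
We prove the two inclusions directly from the definition
\[
    \mathcal E(R,\theta)=\{\delta\in J_\theta^-\mid \delta R_\theta^-=0,\ \delta I_\theta(R)=0\},
\]
using part (c) of the preceding Remark to replace $I_\theta(R)$ by $I(R)=\langle t^2-1\mid t\in R^\times\rangle_R$.

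For the inclusion ``$\subseteq$'', fix a nonzero $\delta\in\mathcal E(R,\theta)$ and put $\mathfrak a=\operatorname{Ann}_R(\delta)$. As in the Remark, $\mathfrak a$ is a proper ideal, so $\mathfrak a\subseteq J$. We show the reverse containment $J\subseteq\mathfrak a$. Let $x\in J$. Then $1+x\in R^\times$, so
\[
    \delta\bigl((1+x)^2-1\bigr)=\delta(2x+x^2)=\delta x(2+x)=0.
\]
Since $2+x\in R^\times$ (because $2\in R^\times$ and $x\in J$), we get $\delta x=0$. Hence $\operatorname{Ann}_R(\delta)=J$, and in particular $\delta\in\operatorname{Ann}_R(J)\cap R_\theta^-$.

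Next we identify the residue field. For any $t\in R^\times$ we have $\delta(t^2-1)=0$, so $t^2-1\in\operatorname{Ann}_R(\delta)=J$. Thus every nonzero element of $k=R/J$ satisfies $\bar t^{\,2}=1$, i.e.\ $k^\times$ has exponent dividing $2$. This forces $|k^\times|\le 2$, so $k\in\{\mathbf F_2,\mathbf F_3\}$. Since $2\in R^\times$, the field $k$ has characteristic different from $2$, and therefore $k\cong\mathbf F_3$. This proves the ``in particular'' assertion, and it also shows that $\mathcal E(R,\theta)=\{0\}$ whenever $R/J\not\cong\mathbf F_3$.

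For the inclusion ``$\supseteq$'', assume $R/J\cong\mathbf F_3$ and let $\delta\in\operatorname{Ann}_R(J)\cap R_\theta^-$. We may assume $\delta\neq0$, so $\operatorname{Ann}_R(\delta)$ is proper and hence $\delta\in J$. Then $\delta\in J_\theta^-$, using that $J$ is $\theta$-invariant because $\theta$ is an automorphism of the local ring $R$.

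It remains to check the two annihilation conditions.

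\begin{enumerate}[(i)]
    \item \textbf{$\delta R_\theta^-=0$.} Let $r\in R_\theta^-$. If $r\in J$, then $\delta r=0$ immediately. If $r\notin J$, then $r$ is a unit, and its image in $k=\mathbf F_3$ is $\pm1$. Write the induced involution on $k$ as $\theta_k$. Since $\mathbf F_3$ has only the identity automorphism, $\theta_k=\mathrm{id}$. From $\bar r=-r$ we then get $r\equiv-r\pmod J$, so $2r\in J$, hence $r\in J$, a contradiction. So every element of $R_\theta^-$ lies in $J$, and $\delta R_\theta^-=0$.
    \item \textbf{$\delta I_\theta(R)=0$.} For $t\in R_\theta^\times$, the image of $t$ in $\mathbf F_3^\times=\{\pm1\}$ squares to $1$, so $t^2-1\in J$. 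Hence $\delta(t^2-1)=0$.
\end{enumerate}

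Finally, for the vector space structure: if $\mathcal E(R,\theta)\neq0$, then $\mathcal E(R,\theta)=\operatorname{Ann}_R(J)\cap R_\theta^-$ is an $R$-submodule killed by $J$. It is an $R$-submodule because $\theta(r\delta)=-\bar r\delta$, and $\bar r\equiv r\pmod J$ combined with $\delta J=0$ gives $-\bar r\delta=-r\delta$, so $r\delta\in R_\theta^-$. Being killed by $J$, it is a module over $R/J=\mathbf F_3$, i.e.\ an $\mathbf F_3$-vector space.

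The main obstacle is the step showing $\operatorname{Ann}_R(\delta)=J$. The key idea there is to test the condition $\delta I(R)=0$ on the units $1+x$ with $x\in J$, where invertibility of $2$ is what isolates $\delta x$. The remaining steps reduce to elementary facts about $\mathbf F_3$.
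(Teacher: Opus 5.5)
Your argument is correct and is essentially the paper's. Both show $\operatorname{Ann}_R(\delta)=J$ by applying the unit-square condition to $1+x$ with $x\in J$ and using $2\in R^\times$; you factor $(1+x)^2-1=x(2+x)$ directly in $R$ after invoking part~(c) of the preceding Remark, whereas the paper passes to $R/\operatorname{Ann}_R(\delta)$ and subtracts $(1+x)^2=1=(1-x)^2$. Both then read off $R/J\cong\mathbf F_3$ from $t^2\equiv1$ on units, and prove the converse via triviality of the induced involution on $\mathbf F_3$.

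One slip to fix: in the converse, ``$\operatorname{Ann}_R(\delta)$ is proper, hence $\delta\in J$'' is a non sequitur, since any nonzero element of a field has proper annihilator but does not lie in $J=0$. Instead, deduce $\delta\in J$ from $R_\theta^-\subseteq J$, which you prove in item~(i).
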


\begin{proof}
    Suppose first that $\mathcal E(R,\theta)\neq\{0\}$, and choose $0\neq\delta\in\mathcal E(R,\theta)$.

    Put
    \[
        \mathfrak a\coloneqq\operatorname{Ann}_R(\delta).
    \]
    The ideal $\mathfrak a$ is proper and $\theta$-invariant. Since $R$ is local, we have $\mathfrak a\subseteq J$.

    Consider the local quotient ring
    \[
        A\coloneqq R/\mathfrak a.
    \]
    Since $3\delta=0$, we have $3\in\mathfrak a$, so $A$ has characteristic $3$. The induced involution $\theta_A$ on $A$ is trivial. Indeed, for every $r\in R$, we have
    $r-\bar r\in R_\theta^-$,
    and hence
    $\delta(r-\bar r)=0$.
    
    Thus
    \[
        r\equiv\bar r\pmod{\mathfrak a}.
    \]

    We claim that every unit of $A$ has square $1$. Let $u\in A^\times$, and choose a lift $r\in R$. Since $\mathfrak a\subseteq J$, the element $r$ is a unit. Put
    \[
        r^+\coloneqq\frac{r+\bar r}{2}\in R_\theta.
    \]
    Since $\theta_A$ is trivial,
    \[
        r^+\equiv r\pmod{\mathfrak a}.
    \]
    Therefore $r^+$ is a unit representing $u$. Since
    \[
        (r^+)^2-1\in I_\theta(R)
    \]
    and $\delta I_\theta(R)=0$, we obtain
    \[
        u^2=1
        \qquad\text{in }A.
    \]

    Let $\mathfrak m$ be the maximal ideal of $A$. For every $x\in\mathfrak m$, the elements $1+x$ and $1-x$ are units, and therefore
    \[
        (1+x)^2=1=(1-x)^2.
    \]
    Subtracting these equalities gives
    $4x=0$.
    Since $2$ is invertible in $A$, we obtain $x=0$. Thus $\mathfrak m=0$, so $A$ is a field.

    Every nonzero element of $A$ is a root of $T^2-1$. Since $A$ has characteristic $3$, it follows that
    $A\cong\mathbf F_3$.
    Consequently, $\mathfrak a$ is maximal, and therefore
    $\mathfrak a=J$.

    Thus, for every nonzero $\delta\in\mathcal E(R,\theta)$,
    \[
        \operatorname{Ann}_R(\delta)=J
        \qquad\text{and}\qquad
        R/J\cong\mathbf F_3.
    \]
    In particular, if $R/J\not\cong\mathbf F_3$, then
    \[
        \mathcal E(R,\theta)=\{0\}.
    \]

    Conversely, suppose that
    $R/J\cong\mathbf F_3$,
    and let
    $\delta\in\operatorname{Ann}_R(J)\cap R_\theta^-$.
    
    The involution $\theta$ induces an involution $\theta_k$ on
    \[
        k\coloneqq R/J\cong\mathbf F_3.
    \]
    Since $\mathbf F_3$ has no non-trivial automorphisms, $\theta_k$ is the identity. Hence
    $R_\theta^-\subseteq J$.
    Indeed, if $\xi\in R_\theta^-$, then modulo $J$ we have both
    \[
        \bar\xi=\xi
        \qquad\text{and}\qquad
        \bar\xi=-\xi,
    \]
    so $2\xi\in J$, and therefore $\xi\in J$.

    Since $\delta$ annihilates $J$, it follows that
    $\delta R_\theta^-=0$.
    
    Moreover, for every $t\in R_\theta^\times$, the image of $t$ in $\mathbf F_3^\times$ is $\pm1$. Hence
    $t^2-1\in J$,
    so
    \[
        I_\theta(R)\subseteq J
        \qquad\text{and}\qquad
        \delta I_\theta(R)=0.
    \]
    Thus
    \[
        \operatorname{Ann}_R(J)\cap R_\theta^-
        \subseteq
        \mathcal E(R,\theta).
    \]

    The reverse inclusion follows from the first part: every nonzero element of $\mathcal E(R,\theta)$ annihilates $J$, and the zero element does so trivially. Therefore,
    \[
        \mathcal E(R,\theta)
        =
        \operatorname{Ann}_R(J)\cap R_\theta^-.
    \]

    Finally, suppose that $\mathcal E(R,\theta)\neq\{0\}$. Then
    \[
        R/J\cong\mathbf F_3
        \qquad\text{and}\qquad
        J\mathcal E(R,\theta)=0.
    \]
    If $r+J=a\in\mathbf F_3$ and $\delta\in\mathcal E(R,\theta)$, then
    $r\delta=a\delta$.
    Hence scalar multiplication on $\mathcal E(R,\theta)$ factors through $R/J\cong\mathbf F_3$, giving it the natural structure of an $\mathbf F_3$-vector space.
\end{proof}

\begin{rmk}
    For every $\delta\in\mathcal E(R,\theta)$ and every $t\in R$, one has
    \[
        t(t^2-1)\delta=0.
    \]
    Indeed, the assertion is trivial for $\delta=0$. If $\delta\neq0$, then Lemma~\ref{lemma:exnl-prmr-space} gives
    \[
        R/J\cong\mathbf F_3
        \qquad\text{and}\qquad
        \delta J=0.
    \]
    Since the polynomial $T^3-T$ vanishes identically on $\mathbf F_3$, we have
    $t(t^2-1)\in J$
    for every $t\in R$, and the result follows.
\end{rmk}

The preceding lemma also gives a simple example for which the exceptional parameter space is nonzero.

\begin{ex}
    Let
    $R=\mathbf F_3[x]/\langle x^2\rangle$.
    This is a local ring with $2\in R^\times$ and Jacobson radical
    $J=\langle x\rangle$.
    
    Define an involution $\theta$ on $R$ by
    $x\longmapsto -x$.

    Then
    \[
        \operatorname{Ann}_R(J)=J
        \qquad\text{and}\qquad
        R_\theta^-=J.
    \]
    Hence, by Lemma~\ref{lemma:exnl-prmr-space},
    $\mathcal E(R,\theta)=J\neq0$.
\end{ex}


\subsection{A recipe for the exceptional maps}\label{subsec:expl_elt}

In this subsection, we explicitly define the exceptional maps on the simple relative root elements of $E^{(n)}_{\mathrm{sc}}$. 
The motivation for these formulas arises naturally in the proof of Theorem~\ref{MT:aut_ele_adj_local}, more precisely in Subsection~\ref{subsec:parameter-maps-first-annihilators}. 
In the next subsection, we prove that these prescriptions extend to well-defined automorphisms of $E^{(n)}_{\mathrm{sc}}$.

Since $2 \in R^{\times}$, we have the decomposition $R = R_{\theta} \oplus R_{\theta}^{-}$. More precisely, every $t \in R$ can be written as $t = t^{+} + t^{-}$, where
\[
    t^{+}:= \frac{t+\bar{t}}{2} \in R_{\theta}
    \qquad \text{and} \qquad
    t^{-}:= \frac{t-\bar{t}}{2} \in R_{\theta}^{-}.
\]
Fix $\delta \in \mathcal{E}(R, \theta)$. Since $\delta R_{\theta}^{-} = 0$, we have
$t\delta = \bar{t}\delta = t^{+}\delta$
for every $t \in R$.

We treat the cases ${}^2 A_3$ and ${}^2 A_4$ separately.

\subsubsection{\textbf{The case $\mathbf{{}^2 A_3}$}}

Recall that $E^{(3)}_{\mathrm{sc}} \subseteq \SU_4(R)$ is generated by the simple relative root elements
\[
    x_{\pm 1}(t) \quad (t \in R)
    \qquad \text{and} \qquad
    x_{\pm 2}(s) \quad (s \in R_{\theta}).
\]

We define the action of $\Theta_{\delta}^{(3)}$ on these generators by
\begin{align*}
    \Theta_{\delta}^{(3)} \bigl(x_{1}(t)\bigr) &:= \begin{pmatrix}
        1 & t & -t^{+} \, \delta & -(t^{+})^2 \, \delta \\
        0 & 1 & 0 & -t^{+} \, \delta \\
        -t^{+} \, \delta & -(t^{+})^2 \, \delta & 1 & \bar{t} \\
        0 & -t^{+} \, \delta & 0 & 1
    \end{pmatrix}, \\
    \Theta_{\delta}^{(3)} \bigl(x_{-1}(t)\bigr) &:= \begin{pmatrix}
        1 & 0 & t^{+} \, \delta & 0 \\
        t & 1 & (t^{+})^2 \, \delta & t^{+} \, \delta \\
        t^{+} \, \delta & 0 & 1 & 0 \\
        (t^{+})^2 \, \delta & t^{+} \, \delta & \bar{t} & 1
    \end{pmatrix},
\end{align*}
for every $t \in R$. For the long roots, we define
\begin{align*}
    \Theta_{\delta}^{(3)} \bigl(x_{2}(s)\bigr) &:= \begin{pmatrix}
        1 + 2 \, s \, \delta & 0 & 0 & 0 \\
        0 & 1 + s \, \delta & s + s^2 \, \delta & 0 \\
        0 & 0 & 1 + s \, \delta & 0 \\
        0 & 0 & 0 & 1 + 2 \, s \, \delta 
    \end{pmatrix}, \\
    \Theta_{\delta}^{(3)} \bigl(x_{-2}(s)\bigr) &:= \begin{pmatrix}
        1 - 2 \, s \, \delta & 0 & 0 & 0 \\
        0 & 1 - s \, \delta & 0 & 0 \\
        0 & s - s^2 \, \delta & 1 - s \, \delta & 0 \\
        0 & 0 & 0 & 1 - 2 \, s \, \delta 
    \end{pmatrix},
\end{align*}
for every $s \in R_\theta$.

\begin{lemma}\label{lemma:action_of_theta_3}
    The following identities hold:
    \begin{enumerate}[(a)]
        \item $\Theta_{\delta}^{(3)} \bigl(x_{1}(t)\bigr) = x_{-3}(t^{+}\delta) \, x_{1}(t) \, x_{3}(t^{+}\delta)$ for all $t \in R$.
        \item $\Theta_{\delta}^{(3)} \bigl(x_{-1}(t)\bigr) = x_{-3}(-t^{+}\delta) \, x_{-1}(t) \, x_{3}(-t^{+}\delta)$ for all $t \in R$.
        \item $\Theta_{\delta}^{(3)} \bigl(x_{2}(s)\bigr) = h_{1}(1 + 2s\delta) \, x_2(s)$ for all $s \in R_\theta$.
        \item $\Theta_{\delta}^{(3)} \bigl(x_{-2}(s)\bigr) = h_{1}(1 - 2s\delta) \, x_{-2}(s)$ for all $s \in R_\theta$.
    \end{enumerate}
    Consequently, all four displayed images belong to $E^{(3)}_{\mathrm{sc}}$.
\end{lemma}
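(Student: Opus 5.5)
The plan is a direct matrix computation in $M_4(R)$ for each of the four identities. Throughout, I will use the arithmetic of $\delta$ recorded in the remarks after Lemma~\ref{lemma:exnl-prmr-space}: $\delta^2=0$, $\delta R_\theta^-=0$ (so $t\delta=\bar t\delta=t^+\delta$ and $\overline{t^+\delta}=-t^+\delta$), and $3\delta=0$. These relations let me discard every product containing two factors of $\delta$. They also let me replace $\bar t\delta$ by $t^+\delta$ and $t\,t^+\delta$ by $(t^+)^2\delta$.

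For (a), set $a\coloneqq t^+\delta$, so that $\bar a=-a$ and $a^2=0$. Write $x_3(a)=I_4-aE_{13}+\bar aE_{24}=I_4-a(E_{13}+E_{24})$ and $x_{-3}(a)=I_4-aE_{31}+\bar aE_{42}=I_4-a(E_{31}+E_{42})$, using the transpose convention for negative roots. Then $x_{-3}(a)x_1(t)x_3(a)$ expands to first order in $a$ as
\[
x_1(t)-a(E_{31}+E_{42})x_1(t)-a\,x_1(t)(E_{13}+E_{24}),
\]
since every term quadratic in $a$ vanishes. Computing with $x_1(t)=I_4+tE_{12}+\bar tE_{34}$, the first-order terms are
\[
-a\bigl(E_{31}+tE_{32}+E_{42}\bigr)-a\bigl(E_{13}+E_{24}+tE_{14}\bigr).
\]
The $E_{34}$ part contributes $\bar tE_{34}E_{42}=\bar tE_{32}$ with $\bar t a=ta$, and $tE_{12}E_{24}=tE_{14}$. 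Here $ta=(t^+)^2\delta$. This matches the displayed matrix entrywise. Part (b) is the same computation with $x_{-1}(t)=I_4+tE_{21}+\bar tE_{43}$ and $a$ replaced by $-a$.

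For (c), note that $h_1(1+2s\delta)=\diag(c,c^{-1},\bar c,\bar c^{-1})$ with $c=1+2s\delta$. Then $c^{-1}=1-2s\delta$ and $\bar c=1-2s\delta$ because $s\delta\in R_\theta^-$. Since $3\delta=0$, we have $-2s\delta=s\delta$, so $h_1(c)=\diag(1+2s\delta,1+s\delta,1+s\delta,1+2s\delta)$. Multiplying on the right by $x_2(s)=I_4+sE_{23}$ yields the $(2,3)$-entry $(1+s\delta)s=s+s^2\delta$, which reproduces the stated matrix. Part (d) is identical with $\delta$ replaced by $-\delta$, using $x_{-2}(s)=I_4+sE_{32}$, for which the $(3,2)$-entry is $(1-s\delta)s$ since the relevant diagonal entry is $\bar c^{-1}=1-s\delta$ in position $3$.

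Membership in $E^{(3)}_{\mathrm{sc}}$ then follows. The right-hand sides of (a) and (b) are visibly products of generators. For (c) and (d), the unit $1\pm2s\delta$ lies in $R^\times$, and $h_1(\cdot)$ is by definition a product of $w_1$-elements, which are products of $x_{\pm1}$'s. The only real care point, and hence the main obstacle, is the sign bookkeeping: the conjugation $\bar a=-a$, the sign conventions $-tE_{13}$ versus $\bar tE_{24}$ in $x_{\pm3}$, and the use of $3\delta=0$ to convert $-2s\delta$ into $s\delta$. I will check these entries individually.
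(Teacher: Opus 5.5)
Your proposal is correct and follows the paper's route: a direct matrix multiplication using $\delta^2=0$, $3\delta=0$, $\bar\delta=-\delta$ and $t\delta=t^+\delta$. There is one small slip in your justification for (a): the $-a\,tE_{32}$ term comes from $E_{31}\cdot tE_{12}$, not from $\bar tE_{34}E_{42}$, which never occurs because $E_{42}E_{34}=0$; your displayed first-order expression is nevertheless right.
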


\begin{proof}
    The identities follow by direct matrix multiplication, using the defining properties of $\delta \in \mathcal{E}(R,\theta)$.
\end{proof}

Applying these prescriptions to the defining words for the Weyl and torus elements, we set
\[
    \Theta_{\delta}^{(3)} \bigl(w_{i}(t)\bigr)
    := \Theta_{\delta}^{(3)} \bigl(x_{i}(t)\bigr) \,
    \Theta_{\delta}^{(3)} \bigl(x_{-i}(-t^{-1})\bigr) \,
    \Theta_{\delta}^{(3)} \bigl(x_{i}(t)\bigr),
\]
and
\[
    \Theta_{\delta}^{(3)} \bigl(h_{i}(t)\bigr)
    := \Theta_{\delta}^{(3)} \bigl(w_{i}(t)\bigr) \,
    \Theta_{\delta}^{(3)} \bigl(w_{i}(-1)\bigr)
\]
for all $t \in \big(\mathcal{R}^{(3)}_i\big)^\times$ and $i \in \{1,2\}$.

\begin{lemma}
    For every $i \in \{1,2\}$ and $t \in \big(\mathcal{R}^{(3)}_{i}\big)^{\times}$, one has
    \[
        \Theta_{\delta}^{(3)} \bigl(w_{i}(t)\bigr) = w_i(t)
        \qquad \text{and} \qquad
        \Theta_{\delta}^{(3)} \bigl(h_{i}(t)\bigr) = h_i(t).
    \]
\end{lemma}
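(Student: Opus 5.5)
We prove the identity for $w_i(t)$ by direct matrix computation, using Lemma~\ref{lemma:action_of_theta_3} to write each factor as a short word in root elements. The identity for $h_i(t)$ then follows at once. Indeed, $\Theta^{(3)}_\delta(h_i(t))$ is by definition $\Theta^{(3)}_\delta(w_i(t))\,\Theta^{(3)}_\delta(w_i(-1))$. Once the Weyl elements are shown to be fixed, this product equals $w_i(t)w_i(-1)=h_i(t)$. Throughout we use the relations $\delta^2=0$, $\delta R_\theta^-=0$, $3\delta=0$, $t\delta=\bar t\delta=t^+\delta$, and $t(t^2-1)\delta=0$ from the remarks following Lemma~\ref{lemma:exnl-prmr-space}.

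\emph{Case $i=2$.} Let $s\in R_\theta^\times$. By Lemma~\ref{lemma:action_of_theta_3}(c),(d),
\[
\Theta^{(3)}_\delta(w_2(s)) = h_1(1+2s\delta)\,x_2(s)\,h_1(1+2s^{-1}\delta)\,x_{-2}(-s^{-1})\,h_1(1+2s\delta)\,x_2(s).
\]
We move all the $h_1$-factors to the left. The element $h_1(a)$ acts on $x_{\pm2}$ by the character $\mp\langle\beta_2,\beta_1\rangle$ of $a$. Here $a=1+2c\delta$ with $c\in R_\theta$, and $a\bar a^{-1}$-type factors are trivial because $\bar\delta=-\delta$ is killed against symmetric parameters. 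So $h_1(a)x_{\pm2}(r)h_1(a)^{-1}=x_{\pm2}(a^{\mp2}r)=x_{\pm 2}(r\mp4cr\delta)$. Concretely, $h_1(a)=\diag(a,a^{-1},\bar a,\bar a^{-1})$, and the $(2,3)$ entry scales by $a^{-1}\bar a^{-1}=1$ modulo $\delta^2$. In fact the scaling is exactly $1$, since $\bar a=1-2c\delta=a^{-1}$. Hence the $h_1$ factors commute with $x_{\pm2}$ outright.

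The product therefore becomes $h_1\bigl((1+2s\delta)^2(1+2s^{-1}\delta)\bigr)\,w_2(s)$. The exponent is $1+2(2s+s^{-1})\delta=1+2s^{-1}(2s^2+1)\delta$. Now $2s^2+1\equiv -(s^2-1)\pmod 3$, and $3\delta=0$, $s(s^2-1)\delta=0$. Multiplying by $s^{-2}$ gives $s^{-1}(s^2-1)\delta=0$, so the scalar is $1$ and $\Theta^{(3)}_\delta(w_2(s))=w_2(s)$.

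\emph{Case $i=1$.} Let $t\in R^\times$. By (a),(b),
\[
\Theta^{(3)}_\delta(w_1(t))=x_{-3}(a)x_1(t)x_3(a)\cdot x_{-3}(b)x_{-1}(-t^{-1})x_3(b)\cdot x_{-3}(a)x_1(t)x_3(a),
\]
with $a=t^+\delta$ and $b=(t^{-1})^+\delta=t^{-1}\delta$ (up to the identification $t\delta=t^+\delta$). Since all $\delta$-terms are square-zero and mutually orthogonal, I would compute the product as $w_1(t)+\delta N$ with $N$ linear in $\delta$. Equivalently, we write it as $w_1(t)\,(I+\delta M)$ and show $M\delta=0$.

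The cleanest route is to conjugate: $w_1(t)x_{\pm3}(c)w_1(t)^{-1}=x_{\mp3}(\pm\epsilon c)$ with explicit units. This moves all $x_{\pm3}$ factors to one side, and they commute modulo $\delta^2$. Their parameters then add, giving a total of the form $x_3(\lambda\delta)x_{-3}(\mu\delta)$. Here $\lambda,\mu$ are polynomials in $t,t^{-1}$, for instance $t+t^{-1}+t$ up to signs, together with commutator corrections from $x_1$ passing $x_{\pm3}$. Those corrections produce $x_{\pm4}$ and $x_{\pm2}$ terms, but they are quadratic in $\delta$ or involve $t\delta-\bar t\delta=0$. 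We must then verify $\lambda\delta=\mu\delta=0$; these reduce to expressions such as $(2t+t^{-1})\delta=t^{-1}(2t^2+1)\delta=0$, by the same mod-$3$ argument as in the case $i=2$.

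\textbf{Main obstacle.} The hardest step is the bookkeeping in the case $i=1$: tracking the signs from the Chevalley commutator formulas between $x_{\pm1}$ and $x_{\pm3}$, and confirming that every surviving coefficient is a multiple of $t(t^2-1)\delta$ or of $3\delta$. If the conjugation approach becomes messy, I would fall back on multiplying the explicit $4\times4$ matrices given in the definition. In that computation we discard $\delta^2$ and use $(t^+)^2\delta\cdot t^{-1}$-type simplifications, and compare the result entrywise with the displayed $w_1(t)$.
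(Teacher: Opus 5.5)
Your argument is correct in substance, but it organizes the computation differently from the paper, which simply multiplies the displayed $4\times4$ matrices. You instead substitute the factorizations of Lemma~\ref{lemma:action_of_theta_3} into the defining words.

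For $i=2$, the torus factors $h_1(a)$ with $a=1+2c\delta$, $c\in R_\theta$, satisfy $\bar a=a^{-1}$ and therefore commute with $X_{\pm2}$. The product then collapses to $h_1\bigl(1+2(2s+s^{-1})\delta\bigr)w_2(s)$.

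For $i=1$, done correctly, the product collapses to $x_{-3}(\lambda)x_3(\lambda)w_1(t)$ with $\lambda=(2t+t^{-1})\delta$.

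Both deviations vanish for the same reason. We have $2t+t^{-1}=t^{-1}\bigl(3t^2-(t^2-1)\bigr)$, and $3\delta=0$ and $(t^2-1)\delta=0$ for every unit $t$. Your route shows exactly where the defining conditions of $\mathcal E(R,\theta)$ enter. The paper's brute-force check requires no bookkeeping at all.

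Two statements in your write-up should be corrected, though neither is load-bearing.

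First, in the case $i=2$, the formula $h_1(a)x_{\pm2}(r)h_1(a)^{-1}=x_{\pm2}(a^{\mp2}r)$ is wrong. The torus element $h_1(a)$ acts on $X_{\pm2}$ through $(a\bar a)^{\mp1}$, which equals $1$ here, as your next sentence in effect observes.

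Second, in the case $i=1$, $w_1(t)$ does not interchange $X_3$ and $X_{-3}$. Since $\beta_1\perp\beta_3$, it normalizes each of them; for instance, $w_1x_3(c)w_1^{-1}=x_3(-\bar c)$. No conjugation by $w_1(t)$ is needed anyway. For all $r,\gamma\in R$ and all sign choices, the commutator $[x_{\pm1}(r),x_{\pm3}(\gamma\delta)]$ lies in $X_{\pm2}$ or $X_{\pm4}$ with parameter $\pm(x-\bar x)\delta$ for some $x\in R$. It is therefore trivial, because $\delta R_\theta^-=0$. The factors $x_{\pm3}(\gamma\delta)$ commute with one another because $\delta^2=0$.

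Hence you can move all six $\delta$-factors to the left with no corrections. There is also no sign ambiguity: after substituting $u=-t^{-1}$ in part~(b), all six parameters are $+t\delta$ or $+t^{-1}\delta$. This gives $\lambda=(2t+t^{-1})\delta$ for both $x_3$ and $x_{-3}$, as required.
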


\begin{proof}
    The identities follow by direct matrix multiplication.
\end{proof}

Finally, we define the action of $\Theta_{\delta}^{(3)}$ on the remaining relative root elements by the fixed Weyl conjugations:
\[
    \Theta_{\delta}^{(3)} \bigl(x_{\pm 3}(t)\bigr)
    := w_2 \, \Theta_{\delta}^{(3)} \bigl(x_{\pm 1}(t)\bigr) \, w_2^{-1}
    \qquad (t \in R),
\]
and 
\[
    \Theta_{\delta}^{(3)} \bigl(x_{\pm 4}(s)\bigr)
    := w_1 \, \Theta_{\delta}^{(3)} \bigl(x_{\pm 2}(s)\bigr) \, w_1^{-1}
    \qquad (s \in R_\theta).
\]
The corresponding Weyl and torus elements are fixed as well.

\subsubsection{\textbf{The case $\mathbf{{}^2 A_4}$}}

In this case, $E^{(4)}_{\mathrm{sc}} \subseteq \SU_5(R)$ is generated by the simple relative root elements
\[
    x_{\pm 1}(t) \quad (t \in R)
    \qquad \text{and} \qquad
    x_{\pm 2}(t,u) \quad ((t,u) \in \mathcal{A}(R)).
\]
For the long roots, we define
\begin{align*}
    \Theta_{\delta}^{(4)} \bigl(x_{1}(t)\bigr) &:= \begin{pmatrix}
        1 + t^{+} \, \delta & t (1 + t^{+}\delta) & 0 & 0 & 0 \\
        0 & 1 + t^{+} \, \delta & 0 & 0 & 0 \\
        0 & 0 & 1 - t^{+} \, \delta & 0 & 0 \\
        0 & 0 & 0 & 1 + t^{+} \, \delta & \bar{t} (1 + t^{+}\delta) \\
        0 & 0 & 0 & 0 & 1 + t^{+} \, \delta
    \end{pmatrix}, \\
    \Theta_{\delta}^{(4)} \bigl(x_{-1}(t)\bigr) &:= \begin{pmatrix}
        1 - t^{+} \, \delta & 0 & 0 & 0 & 0 \\
        t (1 - t^{+}\delta) & 1 - t^{+} \, \delta & 0 & 0 & 0 \\
        0 & 0 & 1 + t^{+} \, \delta & 0 & 0 \\
        0 & 0 & 0 & 1 - t^{+} \, \delta & 0 \\
        0 & 0 & 0 & \bar{t} (1 - t^{+}\delta) & 1 - t^{+} \, \delta
    \end{pmatrix}.
\end{align*}
For the short roots, we define
\begin{align*}
    \Theta_{\delta}^{(4)} \bigl(x_{2}(t,u)\bigr) &:= \begin{pmatrix}
        1 & 0 & -t^{+}\delta & (t^{+})^2\delta & 0 \\
        (t^{+})^2\delta & 1 & t & u & (t^{+})^2\delta \\
        -t^{+}\delta & 0 & 1 & \bar{t} & -t^{+}\delta \\
        0 & 0 & 0 & 1 & 0 \\
        0 & 0 & -t^{+}\delta & (t^{+})^2\delta & 1
    \end{pmatrix}, \\
    \Theta_{\delta}^{(4)} \bigl(x_{-2}(t,u)\bigr) &:= \begin{pmatrix}
        1 & -(t^{+})^2\delta & t^{+}\delta & 0 & 0 \\
        0 & 1 & 0 & 0 & 0 \\
        t^{+}\delta & t & 1 & 0 & t^{+}\delta \\
        -(t^{+})^2\delta & u & \bar{t} & 1 & -(t^{+})^2\delta \\
        0 & -(t^{+})^2\delta & t^{+}\delta & 0 & 1
    \end{pmatrix}.
\end{align*}

\smallskip

For $c \in J_{\theta}^{-}$, put
$$
    D(c)
    := h_1(1+c) \, w_2\Bigl(1,\frac{1-c}{2}\Bigr) \, w_2(1,1/2)
    = \diag\left(1+c,\frac{1-c}{1+c},\frac{1+c}{1-c},
    \frac{1-c}{1+c},\frac{1}{1-c}\right) \in E_{\mathrm{sc}}^{(4)}. 
$$
If $c \in \mathcal{E}(R,\theta)$, then $c^2=0$ and $3c = 0$, and hence
\[
    D(c)=\diag(1+c,1+c,1-c,1+c,1+c).
\]

\begin{lemma}\label{lemma:action_of_theta_4}
    The following identities hold:
    \begin{enumerate}[(a)]
        \item $\Theta_{\delta}^{(4)} \bigl(x_{1}(t)\bigr) = D(t^{+}\delta) \, x_{1}(t)$ for every $t \in R$.
        \item $\Theta_{\delta}^{(4)} \bigl(x_{-1}(t)\bigr) = D(-t^{+}\delta) \, x_{-1}(t)$ for every $t \in R$.
        \item $\Theta_{\delta}^{(4)} \bigl(x_{2}(t,u)\bigr) = x_{-3}(-t^{+}\delta,0) \, x_{-1}\bigl((t^{+})^2\delta\bigr) \, x_2(t,u) \, x_3(-t^{+}\delta,0) \, x_4\bigl((t^{+})^2\delta\bigr)$ for every $(t,u) \in \mathcal{A}(R)$.
        \item $\Theta_{\delta}^{(4)} \bigl(x_{-2}(t,u)\bigr) = x_{-4}\bigl(-(t^{+})^2\delta\bigr) \, x_{-3}(t^{+}\delta,0) \, x_{-2}(t,u) \, x_1\bigl(-(t^{+})^2\delta\bigr) \, x_{3}(t^{+}\delta,0)$ for every $(t,u) \in \mathcal{A}(R)$.
    \end{enumerate}
    Consequently, all four displayed images belong to $E^{(4)}_{\mathrm{sc}}$.
\end{lemma}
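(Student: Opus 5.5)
The plan is to verify each identity by direct matrix multiplication in $\SU_5(R)$. Throughout we simplify with the arithmetic of $\delta\in\mathcal E(R,\theta)$ recorded in the remarks after the definition of $\mathcal E(R,\theta)$: $\delta^2=0$, $3\delta=0$, $\bar\delta=-\delta$, and $t\delta=\bar t\delta=t^+\delta$ for every $t\in R$. Any product of two or more factors carrying $\delta$ vanishes. So every product we meet is ``first order'': it equals the undeformed matrix plus a sum of terms linear in $\delta$.

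\textbf{Parts (a) and (b).} Since $t^+\delta\in\mathcal E(R,\theta)$ (it is skew because $t^+$ is symmetric and $\delta$ skew, and it still annihilates $J$), the displayed formula for $D(c)$ with $c=t^+\delta$ applies. It gives $D(t^+\delta)=\diag(1+c,1+c,1-c,1+c,1+c)$. Left multiplication of $x_1(t)=I_5+tE_{12}+\bar tE_{45}$ by this diagonal matrix scales row $i$ by the $i$-th entry. This produces exactly the displayed matrix, since the $(1,2)$ entry is $t(1+c)$ and the $(4,5)$ entry is $\bar t(1+c)$. Part (b) is identical, with $c=-t^+\delta$ and lower triangular matrix units.

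\textbf{Parts (c) and (d).} Here we expand the five-fold product. Each outer factor has the form $I_5+N$ with $N$ carrying a $\delta$:
\[
x_{-3}(-c,0)=I+(-c)E_{31}+\overline{(-c)}(-E_{53})\ (\text{suitably}),\qquad x_{-1}(c t^+)=I+ct^+E_{21}+\overline{ct^+}E_{54},
\]
and similarly for the others. We must also check that $(\pm c,0)\in\mathcal A(R)$, which holds because $c\bar c=-c^2=0=0+\bar0$, so these factors are legitimate generators. Modulo $\delta^2$, the product equals
\[
x_2(t,u)+N_{-3}x_2+N_{-1}x_2+x_2N_3+x_2N_4 .
\]
We then compute these four correction terms entrywise, using $\bar c=-c$ and $c\,t=c\,\bar t=c\,t^+$. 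For example, $c\,u=c\,u^+$ and $u+\bar u=t\bar t$, so $c\,u=\tfrac12 c\,(t^+)^2\cdot 2/2$, which is absorbed. We compare the result with the displayed matrix; the coefficient $(t^+)^2\delta$ should arise from combining $c\,t$ with $t^+$, and the relation $3\delta=0$ may be needed to identify $-2$ with $1$ in some entries. Part (d) is the transpose-analogous computation. Membership in $E^{(4)}_{\mathrm{sc}}$ is then immediate, because every factor is a relative root element (and $D(c)$ was noted to lie in $E^{(4)}_{\mathrm{sc}}$).

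\textbf{Main obstacle.} The hard step is the bookkeeping in (c) and (d): getting the sign conventions for $x_{\pm3}$, including the $-\bar t E_{35}$ and $-uE_{15}$ terms and their transposes, right, and correctly reducing entries like $c\,u$ and $c\bar t$ using $u+\bar u=t\bar t$ together with $c\,R^-_\theta=0$. I would organize this by first tabulating the products $E_{ij}\,x_2(t,u)$ and $x_2(t,u)\,E_{ij}$ for the needed units, and then summing the contributions.
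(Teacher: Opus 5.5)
Your proposal is correct and is the same argument as the paper's: a direct first-order matrix multiplication using $\delta^2=0$, $3\delta=0$ and $t\delta=\bar t\delta=t^+\delta$. Carrying it out confirms that in (c) the only entries receiving two contributions are $(5,4)$ and $(2,5)$, and in (d) they are $(5,2)$ and $(4,5)$; there $\pm2(t^+)^2\delta=\mp(t^+)^2\delta$, which is exactly where $3\delta=0$ is used. Your aside about $c\,u$ is unnecessary, since no first-order correction ever multiplies the $(2,4)$-entry $u$, while your check that $(\pm c,0)\in\mathcal A(R)$ is a worthwhile detail the paper leaves implicit.
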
 

\begin{proof}
    The identities follow by direct matrix multiplication, using the defining properties of $\delta \in \mathcal{E}(R,\theta)$.
\end{proof}

Applying these prescriptions to the defining words for the Weyl elements, we set
\[
    \Theta_{\delta}^{(4)} \bigl(w_{1}(t)\bigr)
    := \Theta_{\delta}^{(4)} \bigl(x_{1}(t)\bigr) \,
    \Theta_{\delta}^{(4)} \bigl(x_{-1}(-t^{-1})\bigr) \,
    \Theta_{\delta}^{(4)} \bigl(x_{1}(t)\bigr)
\]
for all $t \in R^\times$, and
\[
    \Theta_{\delta}^{(4)} \bigl(w_{2}(t,u)\bigr)
    := \Theta_{\delta}^{(4)} \bigl(x_{2}(t,u)\bigr) \,
    \Theta_{\delta}^{(4)} \bigl(x_{-2}(-\bar{t}\bar{u}^{-1},\bar{u}^{-1})\bigr) \,
    \Theta_{\delta}^{(4)} \bigl(x_{2}(tu^{-1}\bar{u},u)\bigr)
\]
for all $(t,u) \in \mathcal{A}(R)^\times$. 

Similarly, applying the prescriptions to the defining words for the torus elements, we set
\[
    \Theta_{\delta}^{(4)} \bigl(h_{1}(t)\bigr)
    := \Theta_{\delta}^{(4)} \bigl(w_{1}(t)\bigr) \,
    \Theta_{\delta}^{(4)} \bigl(w_{1}(-1)\bigr)
\]
for all $t \in R^{\times}$, and
\[
    \Theta_{\delta}^{(4)} \bigl(h_{2}((t_1,u_1),(t_2,u_2))\bigr)
    := \Theta_{\delta}^{(4)} \bigl(w_{2}(t_1,u_1)\bigr) \,
    \Theta_{\delta}^{(4)} \bigl(w_{2}(t_2,u_2)\bigr)
\]
for all $(t_1,u_1),(t_2,u_2) \in \mathcal{A}(R)^\times$. 

\begin{lemma}
    The following identities hold:
    \begin{enumerate}[(a)]
        \item $\Theta_{\delta}^{(4)} \bigl(w_{1}(t)\bigr) = w_1(t)$ for all $t \in R^{\times}$;
        \item $\Theta_{\delta}^{(4)} \bigl(w_{2}(t,u)\bigr) = w_2(t,u)$ for all $(t,u) \in \mathcal{A}(R)^{\times}$;
        \item $\Theta_{\delta}^{(4)} \bigl(h_{1}(t)\bigr) = h_1(t)$ for all $t \in R^{\times}$;
        \item $\Theta_{\delta}^{(4)} \bigl(h_{2}((t_1,u_1),(t_2,u_2))\bigr) = h_2((t_1,u_1),(t_2,u_2))$ for all $(t_1,u_1),(t_2,u_2) \in \mathcal{A}(R)^{\times}$.
    \end{enumerate}
\end{lemma}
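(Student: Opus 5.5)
The four identities can be verified directly, using the factorizations in Lemma~\ref{lemma:action_of_theta_4} together with the arithmetic of $\delta$: $\delta^2=0$, $3\delta=0$, $\delta R_\theta^-=0$ (so $t\delta=\bar t\delta=t^+\delta$), and $t(t^2-1)\delta=0$ for all $t\in R$. Every correction term is a multiple of $\delta$, so in any product only terms linear in $\delta$ survive. Hence $\Theta_\delta^{(4)}(w)$ for a word $w$ equals $w$ plus a first-order correction, and the task is to show that this correction vanishes.

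For (a), I use $\Theta_\delta^{(4)}(x_{\pm1}(t))=D(\pm t^+\delta)\,x_{\pm1}(t)$. Since $\delta^2=0$, we have $D(c)D(c')=D(c+c')$ and $D(0)=I_5$. Moreover, $D(c)=\diag(1+c,1+c,1-c,1+c,1+c)$ commutes with $x_{\pm1}(\cdot)$, because $x_{\pm1}$ only involves the index blocks $\{1,2\}$ and $\{4,5\}$, on which $D(c)$ is scalar. Therefore
\[
\Theta_\delta^{(4)}(w_1(t))=D\bigl(t^+\delta-(t^{-1})^+\delta+t^+\delta\bigr)\,w_1(t).
\]
Mod $J$, and hence after multiplying by $\delta$, we have $t^{-1}\equiv t$ because $t^2\equiv1$ in $\mathbf F_3$. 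So $(t^{-1})^+\delta=t^+\delta$, and the exponent becomes $t^+\delta=t\delta$. This is where the residue field $\mathbf F_3$ matters, and I need to recheck the signs carefully. Applying $\Theta_\delta^{(4)}(x_{-1}(s))=D(-s^+\delta)x_{-1}(s)$ with $s=-t^{-1}$ gives $D(+(t^{-1})^+\delta)$. The exponent is then $2t\delta+t\delta=3t\delta=0$. Thus $\Theta_\delta^{(4)}(w_1(t))=w_1(t)$, and (c) follows immediately, since $h_1(t)=w_1(t)w_1(-1)$ (the case $t=-1$ being covered by (a)).

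For (b), I work with the explicit $5\times5$ matrices of $\Theta_\delta^{(4)}(x_{\pm2})$. I write each as $x_{\pm2}(\cdot)+\delta N_\pm(\cdot)$, where $N_\pm$ is the displayed correction matrix with entries in $\{\pm t^+,\pm(t^+)^2\}$. The product of the three factors then equals $w_2(t,u)+\delta\,(N_1X_2X_3+X_1N_2X_3+X_1X_2N_3)$. In this expression all entries multiplying $\delta$ may be reduced mod $J$, so $t,u,\bar u^{-1}$ are replaced by their images in $\mathbf F_3$. The condition $t\bar t=u+\bar u$ becomes $t^2=2u=-u$ in $\mathbf F_3$, and $u\ne0$. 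This leaves only the cases $(t,u)\equiv(0,\pm1)$ with $t^2=-u$, so $(t,u)\equiv(0,\ldots)$ impossible unless $u\equiv0$. Hence $t\equiv\pm1$ and $u\equiv-1$. The first-order term is therefore determined by a finite check over $\mathbf F_3$: two cases, $t\equiv1$ and $t\equiv-1$ with $u\equiv-1$. Each is a short computation of three $5\times5$ products over $\mathbf F_3$, showing that the $\delta$-coefficient vanishes. I expect this verification to be the main obstacle. To reduce it, I would first treat $t\equiv1$ and then obtain $t\equiv-1$ by conjugating with the diagonal element $h_1(-1)=H$, or more naturally with $\diag(-1,\ldots)$-type torus elements. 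That step requires checking that $\Theta_\delta^{(4)}$ is compatible with this conjugation on the generators, i.e.\ that $N_\pm$ transforms correctly under $t\mapsto -t$. The corrections are odd in $t^+$ in the off-block entries and even in the $(t^+)^2$ entries, so I expect this compatibility to hold.

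Finally, (d) follows from (b), since $h_2$ is defined as the product $\Theta_\delta^{(4)}(w_2(t_1,u_1))\,\Theta_\delta^{(4)}(w_2(t_2,u_2))$.
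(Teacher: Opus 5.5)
Your proposal is correct and follows the paper's argument, which simply asserts the identities by direct matrix multiplication. You organize that computation well: in (a) the $D$-factors combine to $D\bigl(t^+\delta+(t^{-1})^+\delta+t^+\delta\bigr)=D(3t\delta)=I_5$, using your corrected sign rather than your first display (which would leave $D(t\delta)$), and (b) is reduced via $\delta J=0$ to the residual cases $(t_0,u_0)=(\pm1,-1)$ over $\mathbf F_3$.

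The check you left open does go through:
\begin{itemize}
\item For $t_0=1$ the residual word is $x_2(1,-1)\,x_{-2}(1,-1)\,x_2(1,-1)\equiv v_2$, and the $\delta$-coefficient $N_1X_2X_3+X_1N_2X_3+X_1X_2N_3$ vanishes over $\mathbf F_3$. This is precisely the residual statement $c_4(v_2)=0$ checked in the proof of Proposition~\ref{prop:exp_auto}.
\item Conjugation by $H$ does send $\Theta_\delta^{(4)}(x_{\pm2}(t,u))$ to $\Theta_\delta^{(4)}(x_{\pm2}(-t,u))$. The reason is that all $t^+\delta$-entries lie in row or column $3$, while none of the $(t^+)^2\delta$-entries do.
\end{itemize}
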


\begin{proof}
    The identities follow by direct matrix multiplication.
\end{proof}

Finally, we define the action of $\Theta_{\delta}^{(4)}$ on the remaining relative root elements by the fixed Weyl conjugations:
\[
    \Theta_{\delta}^{(4)} \bigl(x_{\pm 4}(t)\bigr)
    := v_2 \, \Theta_{\delta}^{(4)} \bigl(x_{\pm 1}(2t)\bigr) \, v_2^{-1}
    \qquad (t \in R),
\]
and 
\[
    \Theta_{\delta}^{(4)} \bigl(x_{\pm 3}(t,u)\bigr)
    := w_1 \, \Theta_{\delta}^{(4)} \bigl(x_{\pm 2}(t,u)\bigr) \, w_1^{-1}
    \qquad ((t,u) \in \mathcal{A}(R)).
\]
The corresponding Weyl and torus elements are fixed as well.


\subsection{The exceptional automorphisms}\label{subsec:expl_aut}

In the preceding subsection, we prescribed the action of $\Theta_{\delta}^{(n)}$ on the relative root elements of $E^{(n)}_{\mathrm{sc}}$. To prove that these prescriptions extend to a well-defined automorphism, we interpret them as first-order deformations arising from $1$-cocycles.

Recall that if a group $G$ acts on an abelian group $A$, a map $c\colon G\to A$ is a $1$-cocycle if
\[
    c(gh)=c(g)+g\cdot c(h)
\]
for all $g,h\in G$.

Put $m=n+1$ and $k:=R/J$. We consider the conjugation action of $E_{\mathrm{sc}}^{(n)}(k)$ on $M_m(k)$. Thus, a map
\[
    c_n\colon E_{\mathrm{sc}}^{(n)}(k)\longrightarrow M_m(k)
\]
is a $1$-cocycle if
\begin{equation}\label{eq:finite-cocycle}
    c_n(gh)=c_n(g)+g\,c_n(h)\,g^{-1}
\end{equation}
for all $g,h\in E_{\mathrm{sc}}^{(n)}(k)$.

\begin{lemma}\label{lemma:deformation_of_cocycle}
    Let $n\in\{3,4\}$, $m=n+1$, and let $\delta\in\operatorname{Ann}_R(J)$ satisfy $\delta^2=0$. Suppose that
    \[
        c_n\colon E_{\mathrm{sc}}^{(n)}(k)\longrightarrow M_m(k)
    \]
    is a $1$-cocycle for the conjugation action. Then
    \[
        \mathcal D_{n,\delta}(A)
        :=\bigl(I_m+\delta\widehat{c_n(A_0)}\bigr)A,
        \qquad A\in E_{\mathrm{sc}}^{(n)}(R),
    \]
    defines a well-defined group homomorphism
    \[
        \mathcal D_{n,\delta}\colon E_{\mathrm{sc}}^{(n)}(R)\longrightarrow\GL_m(R).
    \]
    Here $A_0$ is the reduction of $A$ modulo $J$, and $\widehat{c_n(A_0)}$ is any lift of $c_n(A_0)$ to $M_m(R)$.
\end{lemma}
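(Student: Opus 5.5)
We first check that $\mathcal D_{n,\delta}$ is well defined, then that it is multiplicative, and finally that its values are invertible.

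\emph{Well-definedness.} Two lifts $\widehat{c_n(A_0)}$ and $\widehat{c_n(A_0)}'$ of $c_n(A_0)$ differ by a matrix with entries in $J$. Since $\delta J=0$, we get $\delta\widehat{c_n(A_0)}=\delta\widehat{c_n(A_0)}'$. Thus $\mathcal D_{n,\delta}(A)$ does not depend on the choice of lift. The reduction $A\mapsto A_0$ sends $E^{(n)}_{\mathrm{sc}}(R)$ into $E^{(n)}_{\mathrm{sc}}(k)$, because elementary generators reduce to elementary generators. So $c_n(A_0)$ makes sense.

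\emph{Invertibility.} Set $N=\delta\widehat{c_n(A_0)}$. Then $N^2=\delta^2\widehat{c_n(A_0)}^2=0$, so $I_m+N$ is invertible with inverse $I_m-N$. Hence $\mathcal D_{n,\delta}(A)=(I_m+N)A\in\GL_m(R)$.

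\emph{Multiplicativity.} Take $A,B\in E^{(n)}_{\mathrm{sc}}(R)$ and write $X=\widehat{c_n(A_0)}$, $Y=\widehat{c_n(B_0)}$. Then
\[
    \mathcal D_{n,\delta}(A)\,\mathcal D_{n,\delta}(B)=(I_m+\delta X)A(I_m+\delta Y)B=\bigl(I_m+\delta X+\delta AYA^{-1}+\delta^2XAYA^{-1}\bigr)AB.
\]
The $\delta^2$ term vanishes.

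Next, $\delta$ kills $J$, so $\delta M=\delta M'$ whenever $M\equiv M'\pmod J$. Since $A\equiv A_0$ and $A^{-1}\equiv A_0^{-1}$ modulo $J$, the matrix $AYA^{-1}$ is a lift of $A_0c_n(B_0)A_0^{-1}$. It follows that $X+AYA^{-1}$ is a lift of
\[
    c_n(A_0)+A_0\,c_n(B_0)\,A_0^{-1}=c_n(A_0B_0)=c_n\bigl((AB)_0\bigr),
\]
using the cocycle identity~\eqref{eq:finite-cocycle}. Therefore $\delta(X+AYA^{-1})=\delta\widehat{c_n((AB)_0)}$, which gives $\mathcal D_{n,\delta}(A)\mathcal D_{n,\delta}(B)=\mathcal D_{n,\delta}(AB)$.

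The argument uses only two facts: $\delta$ annihilates $J$, and $\delta^2=0$. The one delicate point is the lift-independence of $\delta M$ modulo $J$, including for products with $A^{-1}$. This holds because reduction modulo $J$ is a ring homomorphism $M_m(R)\to M_m(k)$, and $\delta M$ depends only on $M\bmod J$ since $\delta$ kills $J$. No step here is a genuine obstacle. The real work, showing that the image lies in $E^{(n)}_{\mathrm{sc}}$ and that $\mathcal D_{n,\delta}$ is bijective, comes afterwards.
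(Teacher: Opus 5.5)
Your proposal is correct and follows essentially the same route as the paper. Both arguments get lift-independence from $\delta J=0$ and invertibility from $(I_m+\delta\widehat{c_n(A_0)})^{-1}=I_m-\delta\widehat{c_n(A_0)}$, and both prove multiplicativity by observing that $X+AYA^{-1}$ lifts $c_n(A_0B_0)$ via the cocycle identity, with $\delta^2=0$ killing the cross term.
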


\begin{proof}
    The definition is independent of the chosen lift, because two lifts differ by an element of $M_m(J)$ and $\delta J=0$. Moreover,
    \[
        \bigl(I_m+\delta\widehat{c_n(A_0)}\bigr)^{-1}
        =I_m-\delta\widehat{c_n(A_0)},
    \]
    so the image lies in $\GL_m(R)$.

    Let $A,B\in E_{\mathrm{sc}}^{(n)}(R)$, and choose lifts $C_A$ and $C_B$ of $c_n(A_0)$ and $c_n(B_0)$, respectively. Then $AC_BA^{-1}$ is a lift of $A_0c_n(B_0)A_0^{-1}$, and hence, by~\eqref{eq:finite-cocycle},
    \[
        C_A+AC_BA^{-1}
    \]
    is a lift of $c_n(A_0B_0)$. Using $\delta^2=0$, we obtain
    \begin{multline*}
        \mathcal D_{n,\delta}(A)\mathcal D_{n,\delta}(B)
        =\bigl(I_m+\delta C_A\bigr)
          \bigl(I_m+\delta AC_BA^{-1}\bigr)AB=\\
        =\bigl(I_m+\delta(C_A+AC_BA^{-1})\bigr)AB=\mathcal D_{n,\delta}(AB).
    \end{multline*}
\end{proof}

\begin{prop}\label{prop:exp_auto}
    Let $n\in\{3,4\}$ and let $\delta\in\mathcal E(R,\theta)$. Then the formulas of Subsection~\ref{subsec:expl_elt} define an automorphism
    \[
        \Theta^{(n)}_\delta\in\operatorname{Aut}(E^{(n)}_{\mathrm{sc}}).
    \]
    This automorphism fixes the center of $E^{(n)}_{\mathrm{sc}}$ pointwise. Hence, for every isogeny type $\pi$ considered above, it descends uniquely to an automorphism of $E^{(n)}_\pi$, denoted by the same symbol.

    Moreover, for all $\delta,\varepsilon\in\mathcal E(R,\theta)$,
    \begin{enumerate}[(a)]
        \item
        $\Theta^{(n)}_\delta\circ\Theta^{(n)}_\varepsilon
        =\Theta^{(n)}_\varepsilon\circ\Theta^{(n)}_\delta
        =\Theta^{(n)}_{\delta+\varepsilon}$;
        \item $\Theta^{(n)}_0=\operatorname{id}$;
        \item
        $\bigl(\Theta^{(n)}_\delta\bigr)^{-1}
        =\Theta^{(n)}_{-\delta}$.
    \end{enumerate}
    The same identities hold for the induced automorphisms of $E^{(n)}_\pi$.
\end{prop}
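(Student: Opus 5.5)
Our plan is to realize $\Theta^{(n)}_\delta$ as a map $\mathcal D_{n,\delta}$ from Lemma~\ref{lemma:deformation_of_cocycle}, for a suitable $1$-cocycle $c_n\colon E^{(n)}_{\mathrm{sc}}(k)\to M_m(k)$. This gives well-definedness and the homomorphism property at once, so we never have to check the Steinberg-type relations.

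\textbf{Constructing the cocycle.} If $\delta=0$ there is nothing to prove, so assume $\delta\neq0$. Then $k\cong\mathbf F_3$ by Lemma~\ref{lemma:exnl-prmr-space}, the involution on $k$ is trivial, and $E^{(n)}_{\mathrm{sc}}(k)$ is a finite group: $\SU_4$ or $\SU_5$ over $\mathbf F_3$ with trivial involution, i.e.\ the symplectic group $\operatorname{Sp}_4(\mathbf F_3)$ for $n=3$ and the orthogonal group $\operatorname{SO}_5(\mathbf F_3)$ for $n=4$. Since $\delta J=0$, write each prescribed image from Subsection~\ref{subsec:expl_elt} as $(I_m+\delta C)x$. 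For instance, $\Theta^{(3)}_\delta(x_1(t))\,x_1(t)^{-1}-I_4$ equals $\delta$ times a matrix whose reduction depends only on $t_0=t \bmod J\in\mathbf F_3$. We read off candidate values $c_n(x_{\pm i}(t_0))$ on the root generators, and $c_n(w)=0$ on the Weyl representatives, consistent with the lemmas showing the $w_i$ and $h_i$ are fixed.

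\textbf{The main obstacle: the cocycle condition.} We must show that these values extend to a genuine $1$-cocycle. A cocycle on a group is the same as a homomorphism $g\mapsto\begin{pmatrix} g & c(g)g\\ 0&g\end{pmatrix}$ into $\GL_m(k[\epsilon]/\epsilon^2)$. So it suffices to check that the prescribed images satisfy a defining presentation of $E^{(n)}_{\mathrm{sc}}(\mathbf F_3)$. Because the group is finite, we would use the Steinberg presentation of the finite twisted/classical group, valid for $\operatorname{Sp}_4(\mathbf F_3)$ and $\operatorname{SO}_5(\mathbf F_3)$ with the usual small exceptions handled via Curtis--Steinberg--Tits. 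We then verify the relations by direct matrix computation over $\mathbf F_3[\epsilon]$, and a computer check is acceptable here.

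An alternative is to exhibit $c_n$ directly as a cocycle. It is not a coboundary, since it gives a nonstandard map, but one can check it is consistent using Lemma~\ref{lemma:action_of_theta_3} and Lemma~\ref{lemma:action_of_theta_4}. These identify the images as elements of $E^{(n)}_{\mathrm{sc}}(R)$, so the relations reduce to explicit identities among root elements with $\delta$-coefficients, using $\delta^2=0$, $3\delta=0$ and $t(t^2-1)\delta=0$. The values on $x_{\pm3},x_{\pm4}$ are forced by the Weyl-conjugation definitions, and these are consistent because $c_n(w)=0$.

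\textbf{Automorphism, center and group laws.} Once $\Theta_\delta=\mathcal D_{n,\delta}$ is a homomorphism, its image lies in $E^{(n)}_{\mathrm{sc}}$ by Lemmas~\ref{lemma:action_of_theta_3} and~\ref{lemma:action_of_theta_4}. Next we show that $\mathcal D_{n,\delta}\circ\mathcal D_{n,\varepsilon}=\mathcal D_{n,\delta+\varepsilon}$. Since reduction modulo $J$ of $\mathcal D_{n,\varepsilon}(A)$ is $A_0$ and $\delta\varepsilon=0$ (both lie in $J$ and annihilate $J$), we get
\[(I+\delta C)(I+\varepsilon C)A=(I+(\delta+\varepsilon)C)A.\]
This yields (a) and (b), hence (c), and so bijectivity. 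Central elements are scalar matrices $\zeta I_m$ with $\zeta\in\mu_m$, and they lie in $E$. Here $c_n(\zeta_0 I)=0$, because the cocycle vanishes on the torus, and the scalars $\pm I$ are products of the $h_i$, which are fixed. Thus $\Theta_\delta$ fixes the center pointwise, preserves every central subgroup, and therefore descends to each quotient $E^{(n)}_\pi$, where the identities (a)–(c) persist.
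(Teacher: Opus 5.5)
Your architecture is the paper's: a residual cocycle $c_n$ on $E^{(n)}_{\mathrm{sc}}(\mathbf F_3)$ read off from the prescribed formulas, lifted by Lemma~\ref{lemma:deformation_of_cocycle}, images placed in $E^{(n)}_{\mathrm{sc}}$ by Lemmas~\ref{lemma:action_of_theta_3} and~\ref{lemma:action_of_theta_4}, the group law from $\delta\varepsilon=0$, and descent because the center is fixed. Your treatment of the center is a valid substitute for the paper's argument that a central cocycle value is a trace-zero scalar. Residually the central scalars are $\pm I_4$, resp.\ $I_5$, and $-I_4=h_1(-1)$ is fixed. The real difference is how you certify that $c_n$ exists. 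The paper uses no presentation. It walks the entire Cayley graph of $E^{(n)}_{\mathrm{sc}}(\mathbf F_3)$ with respect to $\{u_1,u_2,w_1,w_2\}$ (resp.\ $\{u_1,u_2,w_1,w_4\}$) and checks that the values forced by the cocycle identity agree along every edge. This certifies the cocycle on the actual finite group, with no input about its presentations.

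Your presentation route needs one repair. For $n=4$, $E^{(4)}_{\mathrm{sc}}(\mathbf F_3)$ is not $\operatorname{SO}_5(\mathbf F_3)$ but its elementary subgroup $\Omega_5(\mathbf F_3)$, of order $25920$. This is the \emph{adjoint} group of type $B_2$, with trivial center. The Steinberg relations hold in the simply connected group $\operatorname{Sp}_4(\mathbf F_3)$, so they can at best present this double cover. Checking them therefore yields a cocycle only on the cover, whereas Lemma~\ref{lemma:deformation_of_cocycle} needs one on $E^{(4)}_{\mathrm{sc}}(\mathbf F_3)$ itself. You must add that the nontrivial kernel element $z$ satisfies $c(z)=0$, which is exactly the paper's center argument:
\begin{itemize}
\item $c(z)$ commutes with $\Omega_5(\mathbf F_3)$, which generates $M_5(\mathbf F_3)$ by Lemma~\ref{lemma:A4-generates-matrix-algebra}, so $c(z)$ is scalar;
\item $\operatorname{tr}\circ c$ is additive and vanishes on the generators, so $\operatorname{tr}c(z)=0$;
\item $5\neq0$ in $\mathbf F_3$, so a trace-zero scalar in $M_5(\mathbf F_3)$ is zero.
\end{itemize}
Also, Curtis--Steinberg--Tits is vacuous in rank~$2$, so it cannot dispose of small-field exceptions. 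Any presentation you use should be certified directly, e.g.\ by coset enumeration to the correct order. Finally, your second alternative is not an argument: knowing that the images lie in $E^{(n)}_{\mathrm{sc}}(R)$ says nothing about whether the defining relations are preserved.
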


\begin{proof}
    If $\mathcal E(R,\theta)=0$, then $\delta=0$ and all assertions are immediate. Assume therefore that $\mathcal E(R,\theta)\neq0$. By Lemma~\ref{lemma:exnl-prmr-space},
    \[
        k:=R/J\cong\mathbf F_3,
        \qquad
        \mathcal E(R,\theta)
        =\operatorname{Ann}_R(J)\cap R^-_\theta.
    \]
    The involution induced on $k$ is trivial, so $R^-_\theta\subseteq J$. Consequently,
    \begin{equation}\label{eq:exceptional-parameter-products}
        J\mathcal E(R,\theta)=0,
        \qquad
        \mathcal E(R,\theta)^2=0,
        \qquad
        3\mathcal E(R,\theta)=0.
    \end{equation}

    Put $m:=n+1$ and
    $G_n:=E^{(n)}_{\mathrm{sc}}(\mathbf F_3)$.

    \smallskip
    \noindent
    \emph{The residual cocycle.}
    We use the generating sets
    \[
        \Sigma_3:=\{u_1,u_2,w_1,w_2\},
        \qquad
        \Sigma_4:=\{u_1,u_2,w_1,w_4\}.
    \]
    These sets generate $G_n$. In type ${}^{2}A_3$, the elements $u_1,u_2$ generate the two positive simple relative root subgroups over $\mathbf F_3$, while $w_1,w_2$ represent the two simple reflections. In type ${}^{2}A_4$,
    \[
        \mathcal A(\mathbf F_3)
        =\{(t,2t^2)\mid t\in\mathbf F_3\}
    \]
    is cyclic, so $u_2=x_2(1,1/2)=x_2(1,2)$ generates the positive simple short-root subgroup, while $u_1$ generates the positive simple long-root subgroup. Moreover,
    \begin{equation}\label{eq:A4-v2-residual-word}
        v_2
        =u_2\bigl(w_4w_1u_2w_1^{-1}w_4^{-1}\bigr)^2u_2.
    \end{equation}
    Hence the subgroup generated by $\Sigma_4$ contains the simple Weyl elements $w_1,v_2$, and Weyl conjugation gives all relative root subgroups.

    The formulas of Subsection~\ref{subsec:expl_elt} prescribe the following cocycle values. In type ${}^{2}A_3$,
    \[
        c_3(u_1)=
        \begin{pmatrix}
            0&0&-1&0\\
            0&0&0&-1\\
            -1&0&0&0\\
            0&-1&0&0
        \end{pmatrix},
        \qquad
        c_3(u_2)=
        \begin{pmatrix}
            -1&0&0&0\\
            0&1&0&0\\
            0&0&1&0\\
            0&0&0&-1
        \end{pmatrix},
    \]
    and
    $c_3(w_1)=c_3(w_2)=0$.
    
    In type ${}^{2}A_4$,
    \[
        c_4(u_1)=
        \begin{pmatrix}
            1&0&0&0&0\\
            0&1&0&0&0\\
            0&0&-1&0&0\\
            0&0&0&1&0\\
            0&0&0&0&1
        \end{pmatrix},
        \qquad
        c_4(u_2)=
        \begin{pmatrix}
            0&0&-1&-1&0\\
            1&0&0&0&1\\
            -1&0&0&0&-1\\
            0&0&0&0&0\\
            0&0&-1&-1&0
        \end{pmatrix},
    \]
    and
    $c_4(w_1)=c_4(w_4)=0$.
    
    All matrices are regarded as elements of $M_m(\mathbf F_3)$.

    A cocycle with these values is necessarily unique. Indeed,
    \[
        c_n(s^{-1})=-s^{-1}c_n(s)s,
    \]
    and for every word $g=s_1\cdots s_r$ in the generators and their inverses, the cocycle identity forces
    \begin{equation}\label{eq:forced-cocycle-word}
        c_n(g)
        ={}c_n(s_1)+s_1c_n(s_2)s_1^{-1}+\cdots
        +(s_1\cdots s_{r-1})c_n(s_r)
          (s_1\cdots s_{r-1})^{-1}.
    \end{equation}
    Thus it remains only to verify that this value is independent of the chosen word.

    This finite verification is carried out using a computer program written in Python. The complete source code is provided as the ancillary file \texttt{verify\_exceptional\_cocycles.py} accompanying the \LaTeX version of this paper.
    For each type, the program traverses the complete directed Cayley graph with respect to the four generators above and their inverses. Along every edge $g\xrightarrow{s}gs$, the cocycle identity forces the value
    \[
        c_n(g)+gc_n(s)g^{-1}
    \]
    at the endpoint, which is compared with the previously stored value whenever the endpoint has already been reached.

    In type ${}^{2}A_3$, the traversal enumerates $51{,}840$ elements and all $414{,}720$ directed edges, of which $362{,}881$ are consistency checks. In type ${}^{2}A_4$, it enumerates $25{,}920$ elements and all $207{,}360$ directed edges, of which $181{,}441$ are consistency checks. Every check succeeds. Hence the prescribed values extend uniquely to cocycles
    \[
        c_n\colon G_n\longrightarrow M_m(\mathbf F_3)
    \]
    satisfying~\eqref{eq:finite-cocycle}. The program also verifies~\eqref{eq:A4-v2-residual-word} and $c_4(v_2)=0$.

    The cocycle takes values in $\mathfrak{sl}_m(\mathbf F_3)$. Indeed, trace is invariant under conjugation, so
    \[
        \operatorname{tr}c_n(gh)
        =\operatorname{tr}c_n(g)+\operatorname{tr}c_n(h).
    \]
    Thus $\operatorname{tr}\circ c_n$ is a homomorphism to the additive group of $\mathbf F_3$. It vanishes on the generators in $\Sigma_n$, and therefore
    \begin{equation}\label{eq:exceptional-cocycle-trace-zero}
        \operatorname{tr}c_n(g)=0
        \qquad(g\in G_n).
    \end{equation}
    The program checks this identity on every enumerated element as well.

    \smallskip
    \noindent
    \emph{Lifting the cocycle and recovering the displayed formulas.}
    For $A\in E^{(n)}_{\mathrm{sc}}$, let $A_0\in G_n$ be its reduction, and let $\widehat{c_n(A_0)}$ be any lift to $M_m(R)$. By Lemma~\ref{lemma:deformation_of_cocycle},
    \begin{equation}\label{eq:exceptional-cocycle-deformation}
        \mathcal D_{n,\delta}(A)
        :=\bigl(I_m+\delta\widehat{c_n(A_0)}\bigr)A
    \end{equation}
    is a homomorphism to $\GL_m(R)$.

    Let $y$ be a positive or negative simple relative root element, and write its prescribed image as
    \[
        \Theta^{(n)}_\delta(y)=y+\delta B_y.
    \]
    For each residual root parameter, the ancillary program verifies
    \[
        c_n(y_0)=(B_y)_0y_0^{-1}.
    \]
    It performs twelve such checks in each type: for
    \[
        x_{\pm1}(a),\ x_{\pm2}(s)
        \qquad(a,s\in\mathbf F_3)
    \]
    in type ${}^{2}A_3$, and for
    \[
        x_{\pm1}(a),\ x_{\pm2}(t,2t^2)
        \qquad(a,t\in\mathbf F_3)
    \]
    in type ${}^{2}A_4$.

    For arbitrary parameters over $R$, the matrix $B_yy^{-1}$ is a lift of $c_n(y_0)$. Therefore,
    \[
        \mathcal D_{n,\delta}(y)
        =\bigl(I_m+\delta B_yy^{-1}\bigr)y
        =y+\delta B_y,
    \]
    exactly as prescribed in Subsection~\ref{subsec:expl_elt}. The formulas for the remaining relative root subgroups follow from the fixed Weyl conjugations: $c_3(w_1)=c_3(w_2)=0$ in type ${}^{2}A_3$, while in type ${}^{2}A_4$ we use $c_4(w_1)=c_4(w_4)=c_4(v_2)=0$.

    Hence $\mathcal D_{n,\delta}$ agrees with $\Theta^{(n)}_\delta$ on every relative root subgroup. By Lemmas~\ref{lemma:action_of_theta_3} and~\ref{lemma:action_of_theta_4}, all these images belong to $E^{(n)}_{\mathrm{sc}}$. Since the relative root subgroups generate $E^{(n)}_{\mathrm{sc}}$, we obtain an endomorphism
    \[
        \Theta^{(n)}_\delta\colon
        E^{(n)}_{\mathrm{sc}}\longrightarrow E^{(n)}_{\mathrm{sc}}.
    \]

    \smallskip
    \noindent
    \emph{Addition of the parameters.}
    The element $\Theta^{(n)}_\varepsilon(A)$ has the same reduction $A_0$ modulo $J$ as $A$. Using~\eqref{eq:exceptional-parameter-products},
    \begin{multline*}
        \Theta^{(n)}_\delta
        \bigl(\Theta^{(n)}_\varepsilon(A)\bigr)=
        \bigl(I_m+\delta\widehat{c_n(A_0)}\bigr)
        \bigl(I_m+\varepsilon\widehat{c_n(A_0)}\bigr)A=\\
        =\bigl(I_m+(\delta+\varepsilon)
        \widehat{c_n(A_0)}\bigr)A
        ={}\Theta^{(n)}_{\delta+\varepsilon}(A).
    \end{multline*}
    This proves~(a). Part~(b) is immediate, and taking $\varepsilon=-\delta$ gives~(c). Thus every $\Theta^{(n)}_\delta$ is an automorphism.

    \smallskip
    \noindent
    \emph{The center and the other isogeny types.}
    We first show that $c_n$ vanishes on $Z(G_n)$. By Lemmas~\ref{lemma:A3-generates-matrix-algebra} and~\ref{lemma:A4-generates-matrix-algebra}, respectively, the matrices belonging to $G_n$ generate $M_m(\mathbf F_3)$. Hence every matrix commuting with $G_n$ is scalar.

    Let $z\in Z(G_n)$. Then $z$ is scalar. For any $g\in G_n$, the cocycle identity and $zg=gz$ give
    \[
        c_n(z)+c_n(g)
        =c_n(zg)
        =c_n(gz)
        =c_n(g)+gc_n(z)g^{-1}.
    \]
    Thus $c_n(z)$ commutes with $G_n$ and is also scalar. By~\eqref{eq:exceptional-cocycle-trace-zero}, its trace is zero. Since $m\in\{4,5\}$ is nonzero in $\mathbf F_3$, we obtain
    $c_n(z)=0$.
    
    As a redundant check, the program enumerates the centers, of orders $2$ and $1$, respectively, and verifies that $c_n$ vanishes on them.

    Now let $z\in Z(E^{(n)}_{\mathrm{sc}})$. Since reduction is surjective on elementary groups, its reduction $z_0$ belongs to $Z(G_n)$. Hence
    \[
        \Theta^{(n)}_\delta(z)
        =\bigl(I_m+\delta\widehat{c_n(z_0)}\bigr)z
        =z.
    \]
    Therefore $\Theta^{(n)}_\delta$ fixes the center pointwise. For every isogeny type $\pi$, the natural projection
    \[
        q_\pi\colon E^{(n)}_{\mathrm{sc}}\longrightarrow E^{(n)}_\pi
    \]
    has central kernel. Since this kernel is fixed pointwise, $\Theta^{(n)}_\delta$ descends uniquely through $q_\pi$. Its inverse descends in the same way, so the induced map on $E^{(n)}_\pi$ is an automorphism, and the identities in~(a)--(c) pass to the quotient.
\end{proof}


\subsection{Non-standardness of the exceptional automorphisms}
\label{subsec:nonstandardness-exceptional}

We now show that $\Theta_\delta^{(n)}$ is nonstandard whenever $\delta\neq0$.

\begin{lemma}[Projective trace ratio invariant]
    \label{lemma:projective-trace-invariants}
    Let
    $R^m=M_+\oplus M_-$,
    and let $Y\in\operatorname{GL}_m(R)$ be block-diagonal with respect to this decomposition. Put
    \[
        \operatorname{tr}_+(Y)\coloneqq\operatorname{tr}(Y|_{M_+}),
        \qquad
        \operatorname{tr}_-(Y)\coloneqq\operatorname{tr}(Y|_{M_-}).
    \]
    If $\operatorname{tr}_-(Y)\in R^\times$, then
    \[
        \kappa([Y])
        \coloneqq
        \frac{\operatorname{tr}_+(Y)}{\operatorname{tr}_-(Y)}
    \]
    is well-defined on the projective class $[Y]\in\operatorname{PGL}_m(R)$. Moreover, if $G\in\operatorname{GL}_m(R)$ is block-diagonal with respect to the same decomposition, then
    \[
        \kappa([GYG^{-1}])=\kappa([Y]).
    \]
\end{lemma}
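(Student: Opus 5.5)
The statement has two parts: well-definedness of $\kappa$ on projective classes, and invariance under conjugation by block-diagonal matrices. Both follow from linearity and conjugation-invariance of the block traces. Throughout, I read $\operatorname{PGL}_m(R)$ as $\operatorname{GL}_m(R)/R^\times$, as recalled in Section~\ref{sec:matrix-realizations}. Since $R$ is local, $M_\pm$ are free, being direct summands of a free module, so block traces make sense.

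\emph{Well-definedness.} Let $Y'$ be another representative of $[Y]$, so $Y'=\lambda Y$ with $\lambda\in R^\times$. Then $Y'$ is again block-diagonal, and its restriction to $M_\pm$ is $\lambda\,(Y|_{M_\pm})$. By linearity of the trace, $\operatorname{tr}_\pm(Y')=\lambda\operatorname{tr}_\pm(Y)$. In particular $\operatorname{tr}_-(Y')\in R^\times$ holds exactly when $\operatorname{tr}_-(Y)\in R^\times$, so the hypothesis does not depend on the representative. The factor $\lambda$ then cancels in the quotient, giving $\kappa([Y'])=\kappa([Y])$.

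\emph{Conjugation invariance.} Let $G$ be block-diagonal. Then $G$ preserves $M_+$ and $M_-$, and since $G$ is invertible, $G|_{M_\pm}$ is an automorphism of $M_\pm$ with inverse $G^{-1}|_{M_\pm}$. Hence $GYG^{-1}$ is block-diagonal, with
\[
(GYG^{-1})|_{M_\pm}=G|_{M_\pm}\,Y|_{M_\pm}\,\bigl(G|_{M_\pm}\bigr)^{-1}.
\]
The trace of an endomorphism of a finitely generated free module is invariant under conjugation by an automorphism. Therefore $\operatorname{tr}_\pm(GYG^{-1})=\operatorname{tr}_\pm(Y)$. In particular $\operatorname{tr}_-(GYG^{-1})$ is a unit, and the ratios coincide. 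Since conjugation is compatible with scalars, $[GYG^{-1}]$ depends only on $[Y]$, so the identity holds at the projective level.

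No step is a real obstacle. The only point needing care is that $M_\pm$ are free, or at least finitely generated projective so that the trace is defined, and that $G$ restricts to automorphisms of each summand. Both follow from the block-diagonal hypothesis together with the invertibility of $G$ and $Y$.
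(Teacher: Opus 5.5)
Your proof is correct and follows essentially the same route as the paper: rescaling a representative by $\lambda\in R^\times$ multiplies both partial traces by $\lambda$, and conjugation by a block-diagonal $G$ restricts to conjugation on each summand, where the trace is invariant. Your extra remarks are refinements the paper leaves implicit: that $M_\pm$ are free over the local ring, and that the unit condition on $\operatorname{tr}_-$ does not depend on the chosen representative.
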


\begin{proof}
    Replacing $Y$ by $\alpha Y$, where $\alpha\in R^\times$, multiplies both partial traces by $\alpha$, so their ratio is unchanged. If $G$ is block-diagonal, then
    \[
        (GYG^{-1})|_{M_\pm}
        =
        (G|_{M_\pm})(Y|_{M_\pm})(G|_{M_\pm})^{-1}.
    \]
    Hence both partial traces, and therefore their ratio, are invariant under conjugation by $G$.
\end{proof}

\begin{prop}[Non-standardness]
    \label{prop:exceptional-nonstandard}
    Let $n\in\{3,4\}$ and $\delta\in\mathcal E(R,\theta)$. Then the exceptional automorphism $\Theta_\delta^{(n)}$ of $E_\pi^{(n)}$ is standard if and only if $\delta=0$.
\end{prop}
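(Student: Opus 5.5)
The converse direction is immediate, since $\Theta^{(n)}_0=\operatorname{id}$ by Proposition~\ref{prop:exp_auto}. For the other direction I would argue by contradiction: assume $\delta\neq0$ and that $\Theta^{(n)}_\delta$ is standard. Then I would detect the discrepancy with the projective trace ratio $\kappa$ of Lemma~\ref{lemma:projective-trace-invariants}, evaluated on a suitable relative root element.

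\emph{Reduction to projective matrices.} Every automorphism of $E^{(n)}_\pi$ preserves the center. Moreover, $\Theta^{(n)}_\delta$ is compatible with the central projections by Proposition~\ref{prop:exp_auto}. It therefore suffices to work in the image of $E^{(n)}_\pi$ in $\operatorname{PGL}_m(R)$, where $m=n+1$. There I would use that, by the definitions recalled from \cite[Section~2]{EB&DM1}, a standard automorphism acts on projective classes by
\[
[A]\longmapsto[\,g\,\mu(A)\,g^{-1}\,],\qquad g\in\operatorname{GL}_m(R),
\]
with $\mu$ applied entrywise. Strictly inner and diagonal automorphisms are conjugations by matrices preserving $Q_m$ up to a scalar. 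A ring automorphism acts entrywise. A central automorphism only multiplies by scalars, and these disappear in $\operatorname{PGL}_m$. I expect this bookkeeping, namely checking that every standard automorphism has exactly this shape on projective classes for every isogeny type, to be the main point needing care. The remaining steps are short computations.

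\emph{Pinning down $g$.} Set $Y_0:=h_2=\operatorname{diag}(1,-1,-1,1)$ for $n=3$ and $Y_0:=H=\operatorname{diag}(-1,-1,1,-1,-1)$ for $n=4$. Both are fixed by $\Theta^{(n)}_\delta$, since the torus elements are fixed. They are also fixed by $\mu$, because their entries are $\pm1$. The assumed equality therefore gives $gY_0g^{-1}=\lambda Y_0$ for some $\lambda\in R^\times$. Let $M_\pm$ be the $\pm1$-eigenspaces of $Y_0$.
\begin{itemize}
\item For $n=4$ the eigenspaces have ranks $4$ and $1$. Comparing characteristic polynomials over the residue field forces $\lambda=1$, so $g$ is block-diagonal.
\item For $n=3$ the eigenspaces have ranks $2$ and $2$, and $\lambda=\pm1$. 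So $g$ is either block-diagonal or interchanges $M_+$ and $M_-$. In the latter case $\kappa$ is replaced by $\kappa^{-1}$, which is harmless below because the reference value will be $\kappa=1$ for $n=3$.
\end{itemize}

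\emph{The invariant.} In type ${}^2A_3$ take $y=x_2(1)$, which is block-diagonal for $M_+=\langle e_1,e_4\rangle$ and $M_-=\langle e_2,e_3\rangle$.
\begin{itemize}
\item We have $\mu(y)=y$ and $\kappa([y])=2/2=1$. Consequently any standard image $[g\mu(y)g^{-1}]$ also has $\kappa=1$.
\item From the explicit matrix for $\Theta^{(3)}_\delta(x_2(1))$ we get $\operatorname{tr}_+=2+4\delta$ and $\operatorname{tr}_-=2+2\delta$.
\item Since $\delta^2=0$, this gives $\kappa=1+\delta\neq1$, a contradiction.
\end{itemize}
In type ${}^2A_4$ take $y=x_1(1)$, which is block-diagonal for the eigenspaces of $H$.
\begin{itemize}
\item Here $\kappa([y])=4$, and any standard image has the same value.
\item By Lemma~\ref{lemma:action_of_theta_4}, $\Theta^{(4)}_\delta(y)=D(\delta)y$. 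Its partial traces are $4(1+\delta)$ and $1-\delta$.
\item Hence $\kappa=4(1+2\delta)$. This differs from $4$ because $8\in R^\times$ and $\delta\neq0$, again a contradiction.
\end{itemize}
This shows $\Theta^{(n)}_\delta$ is standard only for $\delta=0$.
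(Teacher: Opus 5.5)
Your argument is correct and is essentially the paper's proof. Both reduce to the adjoint quotient, write the standard automorphism as $[A]\mapsto[g\mu(A)g^{-1}]$, make $g$ block-diagonal for the eigenspaces of $h_2$ (resp.\ $H$), and compare the partial-trace ratio on $x_2(1)$ (resp.\ $x_1(1)$); your value $4(1+2\delta)$ in type ${}^2A_4$ is just the reciprocal of the paper's $(1-2\delta)/4$.

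The only difference is how $g$ is pinned down. The paper conjugates $w_2$ (resp.\ $w_1$), whose trace $2$ (resp.\ $1$) is a unit, so $\lambda=1$ at once. You conjugate $h_2$ (resp.\ $H$) and must rule out $\lambda=-1$. You handle this correctly for $n=3$. For $n=4$ your residue-field comparison gives only $\lambda\equiv1 \pmod J$, so add $\lambda^2=1$ (from $H^2=I_5$) and Lemma~\ref{lem:scalar-normalization} to get $\lambda=1$ exactly.
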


\begin{proof}
    By Proposition~\ref{prop:exp_auto}, the exceptional automorphisms descend compatibly to all isogeny types. If $\Theta_\delta^{(n)}$ were standard on $E_\pi^{(n)}$, then its induced automorphism on the adjoint quotient would also be standard. It is therefore enough to consider $E_{\mathrm{ad}}^{(n)}$.

    The assertion is immediate for $\delta=0$, since
    $\Theta_0^{(n)}=\operatorname{id}$.
    
    Suppose conversely that $\Theta_\delta^{(n)}$ is standard. On the adjoint group, the inner and diagonal factors combine into conjugation by an element of $G_{\mathrm{ad}}^{(n)}$, while the central factor disappears. Hence
    \[
        \Theta_\delta^{(n)}=i_g\circ\mu,
    \]
    where $g\in G_{\mathrm{ad}}^{(n)}$ and $\mu\in\operatorname{Aut}(R)$ commutes with $\theta$.

    Assume first that $n=3$. The automorphisms $\mu$ and $\Theta_\delta^{(3)}$ both fix $w_2$. Thus, if $G$ is a matrix representative of $g$, then
    \[
        Gw_2G^{-1}=\lambda w_2
    \]
    for some $\lambda\in R^\times$. Since
    $\operatorname{tr}(w_2)=2\in R^\times$,
    taking traces gives $\lambda=1$. Hence $G$ commutes with
    \[
        h_2=w_2^2=\operatorname{diag}(1,-1,-1,1)
    \]
    and therefore preserves the decomposition
    \[
        R^4=M_+\oplus M_-
        \coloneqq
        \langle e_1,e_4\rangle
        \oplus
        \langle e_2,e_3\rangle.
    \]

    For matrices block-diagonal with respect to this decomposition, define
    \[
        \kappa_3([Y])
        \coloneqq
        \frac{\operatorname{tr}_+(Y)}{\operatorname{tr}_-(Y)}.
    \]
    By Lemma~\ref{lemma:projective-trace-invariants}, conjugation by $g$ preserves $\kappa_3$.

    Let $u_2=x_2(1)$. Since $\mu(u_2)=u_2$, we have
    \[
        \Theta_\delta^{(3)}(u_2)=g u_2 g^{-1}.
    \]
    Now
    $\kappa_3([u_2])=1$,
    whereas
    \[
        \Theta_\delta^{(3)}(u_2)=
        \begin{pmatrix}
            1+2\delta&0&0&0\\
            0&1+\delta&1+\delta&0\\
            0&0&1+\delta&0\\
            0&0&0&1+2\delta
        \end{pmatrix}.
    \]
    Hence
    \[
        \kappa_3\bigl([\Theta_\delta^{(3)}(u_2)]\bigr)
        =
        \frac{2(1+2\delta)}{2(1+\delta)}
        =
        (1+2\delta)(1-\delta)
        =
        1+\delta,
    \]
    where we used $\delta^2=0$. Since $\kappa_3$ is preserved by conjugation,
    $1+\delta=1$,
    and therefore $\delta=0$.

    \smallskip

    Assume now that $n=4$. The automorphisms $\mu$ and $\Theta_\delta^{(4)}$ both fix $w_1$. Hence
    \[
        Gw_1G^{-1}=\lambda w_1
    \]
    for some $\lambda\in R^\times$. Since $\operatorname{tr}(w_1)=1$, we obtain $\lambda=1$. Thus $G$ commutes with
    \[
        H=w_1^2=\operatorname{diag}(-1,-1,1,-1,-1)
    \]
    and preserves the decomposition
    \[
        R^5=M_+\oplus M_-
        \coloneqq
        \langle e_3\rangle
        \oplus
        \langle e_1,e_2,e_4,e_5\rangle.
    \]

    For matrices block-diagonal with respect to this decomposition, define
    \[
        \kappa_4([Y])
        \coloneqq
        \frac{\operatorname{tr}_+(Y)}{\operatorname{tr}_-(Y)}.
    \]
    Again, conjugation by $g$ preserves $\kappa_4$.

    Let $u_1=x_1(1)$. Since $\mu(u_1)=u_1$, we have
    \[
        \Theta_\delta^{(4)}(u_1)=g u_1 g^{-1}.
    \]
    For the standard element,
    \[
        \kappa_4([u_1])=\frac14.
    \]
    On the other hand,
    \[
        \Theta_\delta^{(4)}(u_1)=
        \begin{pmatrix}
            1+\delta&1+\delta&0&0&0\\
            0&1+\delta&0&0&0\\
            0&0&1-\delta&0&0\\
            0&0&0&1+\delta&1+\delta\\
            0&0&0&0&1+\delta
        \end{pmatrix},
    \]
    so
    \[
        \kappa_4\bigl([\Theta_\delta^{(4)}(u_1)]\bigr)
        =
        \frac{1-\delta}{4(1+\delta)}
        =
        \frac{(1-\delta)^2}{4}
        =
        \frac{1-2\delta}{4}.
    \]
    Since $\kappa_4$ is preserved by conjugation,
    \[
        \frac{1-2\delta}{4}=\frac14.
    \]
    Thus $2\delta=0$, and since $2\in R^\times$, we conclude that $\delta=0$.
\end{proof}


\subsection{The action of standard automorphisms on \texorpdfstring{$\Theta_{\delta}^{(n)}$}{Theta\textasciicircum(n)\_\{delta\}}}
\label{subsec:action_of_stnd_on_exp}

We conclude this section by describing how the exceptional automorphisms behave under conjugation by standard automorphisms.

\begin{prop}\label{prop:action_of_stnrd_aut_on_exp}
    Let $n\in\{3,4\}$ and $\delta\in\mathcal E(R,\theta)$. Then:
    \begin{enumerate}
        \item If $\mu\in\operatorname{Aut}(R)$ commutes with $\theta$, then
        \[
            \mu\bigl(\mathcal E(R,\theta)\bigr)=\mathcal E(R,\theta),
        \]
        and
        \[
            \mu\circ\Theta_{\delta}^{(n)}\circ\mu^{-1}
            =
            \Theta_{\mu(\delta)}^{(n)}
        \]
        as automorphisms of $E_{\pi}^{(n)}$.

        \item For every $g\in E_{\pi}^{(n)}$, we have
        \[
            \Theta_{\delta}^{(n)}
            \circ i_g
            \circ\bigl(\Theta_{\delta}^{(n)}\bigr)^{-1}
            =
            i_{\Theta_{\delta}^{(n)}(g)}.
        \]

        \item Let $d=i_{h(\chi)}$ be a diagonal automorphism of
        $E_{\pi}^{(n)}$, where
        \[
            \chi\in\operatorname{Hom}_1(\Lambda_{\pi},S^\times),
            \qquad
            \chi|_{\Lambda_r}
            \in\operatorname{Hom}_1(\Lambda_r,R^\times),
        \]
        for some ring extension $S$ of $R$. Then
        \[
            d\circ\Theta_{\delta}^{(n)}\circ d^{-1}
            =
            \Theta_{\delta'}^{(n)},
        \]
        where $\delta'\in\mathcal E(R,\theta)$ is given by
        \[
            \delta'=
            \begin{cases}
                \chi(\alpha_2)\delta, & n=3,\\
                \chi(\alpha_1)\delta, & n=4.
            \end{cases}
        \]
    \end{enumerate}
\end{prop}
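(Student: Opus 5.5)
Throughout, I would work on $E^{(n)}_{\mathrm{sc}}$ and then descend to $E^{(n)}_\pi$ exactly as in Proposition~\ref{prop:exp_auto}. The descent is legitimate because all automorphisms involved preserve the central kernel of $q_\pi$. The main tool is the description from the proof of Proposition~\ref{prop:exp_auto}:
\[
    \Theta^{(n)}_\delta(A)=\bigl(I_m+\delta\widehat{c_n(A_0)}\bigr)A,
\]
where $c_n$ is the residual cocycle over $\mathbf F_3$. I would also use the observations $J\delta=0$ and $\delta^2=0$. Finally, any unit $r$ satisfies $r\delta=\pm\delta$ and $r^{-1}\delta=r\delta$, because its image in $\mathbf F_3^\times$ is $\pm1$.

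\emph{Part (2)} is formal and holds for any automorphism $\varphi$. For $x\in E^{(n)}_\pi$ one computes
\[
    \varphi\bigl(g\,\varphi^{-1}(x)\,g^{-1}\bigr)=\varphi(g)\,x\,\varphi(g)^{-1}.
\]
Applying this with $\varphi=\Theta^{(n)}_\delta$ gives the claim.

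\emph{Part (1).} Since $\mu$ commutes with $\theta$, it preserves $R^-_\theta$. Being a ring automorphism of a local ring, it also preserves $J$, and hence $\operatorname{Ann}_R(J)$. By Lemma~\ref{lemma:exnl-prmr-space}, this gives $\mu(\mathcal E(R,\theta))=\mathcal E(R,\theta)$. Next, $\mu$ induces the identity on $k\cong\mathbf F_3$, so $\mu^{-1}(A)$ has the same reduction $A_0$ as $A$. I would choose the lift $\widehat{c_n(A_0)}$ with entries in $\{0,\pm1\}$, which is fixed by $\mu$. Then
\[
    \mu\bigl(\Theta^{(n)}_\delta(\mu^{-1}A)\bigr)=\mu\bigl((I_m+\delta\widehat{c_n(A_0)})\mu^{-1}(A)\bigr)=(I_m+\mu(\delta)\widehat{c_n(A_0)})A=\Theta^{(n)}_{\mu(\delta)}(A).
\]
If $\mathcal E(R,\theta)=0$, both sides are the identity and there is nothing to prove.

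\emph{Part (3).} Both sides are homomorphisms, so it suffices to compare them on the generators $x_{\pm1}(\cdot)$ and $x_{\pm2}(\cdot)$. These generate $E^{(n)}_{\mathrm{sc}}$, since the Weyl elements are words in them. The conjugation $d$ rescales each relative root element by the corresponding values of $\chi$ (twisted appropriately); for example $d(x_2(s))=x_2(\chi(\alpha_2)s)$ when $n=3$. I would then apply Lemmas~\ref{lemma:action_of_theta_3} and~\ref{lemma:action_of_theta_4}; here $d$ fixes the torus factors $h_1(\cdot)$ and $D(\cdot)$.

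For $n=3$, the computation on $x_2(s)$ gives $d\Theta_\delta d^{-1}(x_2(s))=h_1(1+2\chi(\alpha_2)^{-1}s\delta)\,x_2(s)$. On $x_1(t)$, the parameter $t^+\delta$ of the outer factors $x_{\pm3}$ gets multiplied by a monomial in $\chi(\alpha_1)^{\pm1}$ and $\chi(\alpha_2)^{\pm1}$. All of these are units, and only their images in $\mathbf F_3^\times=\{\pm1\}$ matter against $\delta$. Squares act trivially, so the net factor is $\chi(\alpha_2)$ in both cases, and $\chi(\alpha_2)^{-1}\delta=\chi(\alpha_2)\delta$. Here one also uses that $(\chi(\alpha_1)^{-1}t)^+\delta=\chi(\alpha_1)^{-1}t^+\delta$, because $\delta$ only sees residues and the residual involution is trivial. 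Thus on all generators the result has the form $\Theta_{\delta'}$ with $\delta'=\chi(\alpha_2)\delta=\pm\delta\in\mathcal E(R,\theta)$.

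For $n=4$ the same bookkeeping applies. On $x_{\pm1}$ the factor $D(\pm t^+\delta)$ acquires $\chi(\alpha_1)^{-1}$. On $x_{\pm2}$ the parameters $t^+\delta$ and $(t^+)^2\delta$ in the correction factors $x_{\pm3}$, $x_{\pm1}$, $x_{\pm4}$ collect monomials that reduce to $\chi(\alpha_1)^{\pm1}$. This yields $\delta'=\chi(\alpha_1)\delta$.

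The main obstacle is this last bookkeeping: the values of $\chi$ on the twisted relative roots, including the $\mathcal A(R)$-parametrized short roots of ${}^2A_4$, and the possibility that $\chi$ takes values in $S$ rather than $R$. I would handle the latter by noting that only $\chi|_{\Lambda_r}$, which is $R^\times$-valued, enters the action on root elements. I would then reduce every coefficient modulo $J$, so that all the exponent checks collapse to signs in $\mathbf F_3^\times$.
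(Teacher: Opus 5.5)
Your proposal is correct. Parts (2) and (3) follow the paper's proof essentially verbatim. Part (2) is the same formal computation. Part (3) is the same generator-by-generator bookkeeping through Lemmas~\ref{lemma:action_of_theta_3} and~\ref{lemma:action_of_theta_4}. The paper records your ``only signs in $\mathbf F_3^\times$ matter'' observation as $a^{-1}\delta=a\delta$ and $\bar a\delta=a\delta$ for $a\in R^\times$. For $n=3$ it obtains the factors $a^{-2}b^{-1}$ and $b$ on the $x_{-3}$ and $x_3$ parts. For $n=4$ it obtains $a^{-1}b^{-2}$, $a$ and $ab^{-1}\bar b$. These collapse to $\chi(\alpha_2)\delta$ and $\chi(\alpha_1)\delta$ respectively, exactly as you predict.

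Where you genuinely differ is part (1). The paper applies $\mu$, $\Theta_\delta^{(n)}$, $\mu^{-1}$ directly to the simple relative root elements and uses $\mu(t^+)=\mu(t)^+$ in the explicit formulas. It gets $\mu(\mathcal E(R,\theta))=\mathcal E(R,\theta)$ from the original definition, since $\mu$ preserves $J$, $R_\theta^-$ and $I_\theta(R)$.

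You instead use the global description $\Theta_\delta^{(n)}(A)=(I_m+\delta\widehat{c_n(A_0)})A$ from the proof of Proposition~\ref{prop:exp_auto}. This works together with two facts. First, $\mu$ induces the identity on $k\cong\mathbf F_3$, so $\mu^{-1}(A)$ has the same reduction as $A$. Second, a lift of $c_n(A_0)$ with entries in $\{0,\pm1\}$ is $\mu$-fixed.

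Your route is uniform in $n$ and in the root subgroup and requires no explicit matrices. However, it rests on the machine-verified cocycle and forces you to split off the case $\mathcal E(R,\theta)=0$. The paper's route is a short direct check on generators that never needs to identify the residue field.
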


\begin{proof}
    Since $\mu$ commutes with $\theta$, it preserves $J$, $R_\theta^-$, and
    $I_\theta(R)$. Hence
    \[
        \mu\bigl(\mathcal E(R,\theta)\bigr)
        =
        \mathcal E(R,\theta).
    \]
    Applying $\mu$, $\Theta_\delta^{(n)}$, and $\mu^{-1}$ successively to
    the simple relative root elements, and using
    $\mu(t^+)=\mu(t)^+$,
    gives
    \[
        \mu\circ\Theta_{\delta}^{(n)}\circ\mu^{-1}
        =
        \Theta_{\mu(\delta)}^{(n)}.
    \]

    For the second assertion, let $x\in E_\pi^{(n)}$. Then
   $$
        \Bigl(
            \Theta_{\delta}^{(n)}
            \circ i_g
            \circ\bigl(\Theta_{\delta}^{(n)}\bigr)^{-1}
        \Bigr)(x)
        =
        \Theta_{\delta}^{(n)}
        \Bigl(
            g\bigl(\Theta_{\delta}^{(n)}\bigr)^{-1}(x)g^{-1}
        \Bigr)
        =
        \Theta_{\delta}^{(n)}(g)\,
        x\,
        \Theta_{\delta}^{(n)}(g)^{-1},
   $$
    which is precisely $i_{\Theta_{\delta}^{(n)}(g)}(x)$.

    It remains to prove the third assertion. We first record two identities
    that will be used repeatedly. For every $a\in R^\times$,
    \begin{equation}\label{eq:exceptional-unit-scaling}
        a^{-1}\delta=a\delta,
        \qquad
        \bar a\,\delta=a\delta.
    \end{equation}
    Indeed, the first equality follows from
    $(a^2-1)\delta=0$,
    while the second follows from
    $(a-\bar a)\delta=0$.

    Moreover, $a\delta\in\mathcal E(R,\theta)$. This is immediate for
    $\delta=0$; otherwise, Lemma~\ref{lemma:exnl-prmr-space} gives
    \[
        \mathcal E(R,\theta)
        =
        \operatorname{Ann}_R(J)\cap R_\theta^-,
    \]
    and
    \[
        \overline{a\delta}
        =
        -\bar a\,\delta
        =
        -a\delta.
    \]

    We first consider type ${}^2A_3$. Put
    \[
        a\coloneqq\chi(\alpha_1),
        \qquad
        b\coloneqq\chi(\alpha_2).
    \]
    Since $\chi$ is self-conjugating and $\rho(\alpha_2)=\alpha_2$, one has
    $b=\bar b$. Using Lemma~\ref{lemma:action_of_theta_3} and the identity
    $r^+\delta=r\delta$, we obtain
    \begin{multline*}
        (d\circ\Theta_{\delta}^{(3)}\circ d^{-1})
        \bigl(x_1(t)\bigr)
        =
        d\Bigl(
            x_{-3}(a^{-1}t\delta)\,
            x_1(a^{-1}t)\,
            x_3(a^{-1}t\delta)
        \Bigr)=\\
        =
        x_{-3}(a^{-2}b^{-1}t\delta)\,
        x_1(t)\,
        x_3(bt\delta)=
        x_{-3}(bt\delta)\,
        x_1(t)\,
        x_3(bt\delta)=
        \Theta_{b\delta}^{(3)}\bigl(x_1(t)\bigr)
    \end{multline*}
    for every $t\in R$, where
    \eqref{eq:exceptional-unit-scaling} was used in the third equality.

    Similarly, for $s\in R_\theta$,
    \begin{multline*}
        (d\circ\Theta_{\delta}^{(3)}\circ d^{-1})
        \bigl(x_2(s)\bigr)
        =
        d\Bigl(
            h_1(1+2b^{-1}s\delta)\,
            x_2(b^{-1}s)
        \Bigr)=\\
        =
        h_1(1+2b^{-1}s\delta)\,x_2(s)=
        h_1(1+2bs\delta)\,x_2(s)=
        \Theta_{b\delta}^{(3)}\bigl(x_2(s)\bigr).
    \end{multline*}
    The calculations for $x_{-1}(t)$ and $x_{-2}(s)$ are identical.
    Hence
    \[
        d\circ\Theta_{\delta}^{(3)}\circ d^{-1}
        =
        \Theta_{\chi(\alpha_2)\delta}^{(3)}.
    \]

    We now consider type ${}^2A_4$. Put
    \[
        a\coloneqq\chi(\alpha_1),
        \qquad
        b\coloneqq\chi(\alpha_2).
    \]
    The self-conjugating property of $\chi$ gives
    $\chi(\alpha_3)=\bar b$.

    Since $h(\chi)$ commutes with diagonal matrices, Lemma~\ref{lemma:action_of_theta_4}
    gives
    \begin{multline*}
        (d\circ\Theta_{\delta}^{(4)}\circ d^{-1})
        \bigl(x_1(t)\bigr)
        =
        d\Bigl(
            D(a^{-1}t\delta)\,
            x_1(a^{-1}t)
        \Bigr)=\\
        =
        D(a^{-1}t\delta)\,x_1(t)=
        D(at\delta)\,x_1(t)=
        \Theta_{a\delta}^{(4)}\bigl(x_1(t)\bigr)
    \end{multline*}
    for every $t\in R$.

    For $(t,u)\in\mathcal A(R)$, we similarly obtain
    \begin{multline*}
        (d\circ\Theta_{\delta}^{(4)}\circ d^{-1})
        \bigl(x_2(t,u)\bigr)=
        d\Bigl(
            \Theta_{\delta}^{(4)}
            \bigl(
                x_2(b^{-1}t,b^{-1}\bar b^{-1}u)
            \bigr)
        \Bigr)=\\
       =
        x_{-3}(-a^{-1}b^{-2}t\delta,0)\,
        x_{-1}(a^{-1}b^{-2}t^2\delta)\,
        x_2(t,u)\cdot
        x_3(-at\delta,0)\,
        x_4(ab^{-1}\bar b\,t^2\delta)=\\
        =
        x_{-3}(-at\delta,0)\,
        x_{-1}(at^2\delta)\,
        x_2(t,u)\,
        x_3(-at\delta,0)\,
        x_4(at^2\delta)=
        \Theta_{a\delta}^{(4)}\bigl(x_2(t,u)\bigr).
    \end{multline*}
    Here we used
    \[
        b^{-2}\delta=\delta,
        \qquad
        a^{-1}\delta=a\delta,
        \qquad
        \bar b\,\delta=b\delta.
    \]
    The calculations for $x_{-1}(t)$ and $x_{-2}(t,u)$ are analogous.
    Therefore,
    \[
        d\circ\Theta_{\delta}^{(4)}\circ d^{-1}
        =
        \Theta_{\chi(\alpha_1)\delta}^{(4)}.
    \]
    This completes the proof.
\end{proof}


\section{Automorphisms of the Adjoint Elementary Group \texorpdfstring{$E^{(n)}_{\mathrm{ad}}$}{E-ad}}\label{sec:aut_ele_adj_local}

In this section, we classify the automorphisms of the adjoint elementary group $E_{\mathrm{ad}}^{(n)}$. 
Since this group appears frequently below, we use the abbreviated notation $E^{(n)}$ for $E_{\mathrm{ad}}^{(n)}$. 
Our main objective is to prove the following theorem.

\begin{thm}\label{MT:aut_ele_adj_local}
    Let $n\in\{3,4\}$. Every automorphism $\varphi\in\operatorname{Aut}(E^{(n)})$ has the form
    \[
        \varphi=i_g\circ\mu\circ\Theta_\delta^{(n)},
    \]
    where $i_g$ is a strictly inner automorphism, $\mu$ is a ring automorphism satisfying $\mu\circ\theta=\theta\circ\mu,$ and $\delta\in\mathcal E(R,\theta)$.
\end{thm}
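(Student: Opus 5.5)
We follow the Bunina strategy used in \cite{EB&DM1} for ${}^2A_\ell$, $\ell\ge5$, but we now allow for the possibility that the final normalization step leaves a first-order residue. Let $\varphi\in\operatorname{Aut}(E^{(n)})$.

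\emph{Step 1: normalize involutions.} We first look at the images of the involutions. In type ${}^2A_3$ these are the central-mod-scalar classes $h_2$ and $h_4$ (and $[h_2h_4]$); in type ${}^2A_4$ it is $H$ and its conjugates. Their images are again involutions of $E^{(n)}$. Reducing modulo $J$ and using that $2\in R^\times$, we diagonalize them over the local ring by idempotent lifting. This gives a strictly inner automorphism $i_{g_1}$ for which $i_{g_1}^{-1}\varphi$ fixes the classes $[h_i]$ (respectively $[H]$ and $[h_1(-1)h_2(\cdot)]$-type involutions), up to sign.

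\emph{Step 2: normalize Weyl elements.} We pass to centralizers of these involutions. There $\varphi$ preserves the block decomposition, so the images of $w_1,w_2$ (respectively $w_1,w_4,v_2$) are block-monomial modulo $J$. We then conjugate by a further element of $E^{(n)}$, or by the torus, to arrange $\varphi(w_i)=w_i$ for the simple Weyl elements.

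\emph{Step 3: the root elements $u_i$.} The image $\varphi(u_i)$ commutes with the appropriate centralizer, e.g.\ with all $x_j(\cdot)$ for roots $\beta_j$ orthogonal or sum-compatible with $\beta_i$. Working first modulo $J$, where the residue-field results give a standard form, and then lifting, we write
\[
\varphi(u_i)=x_i(a_i)\cdot(\text{correction in }M(J)).
\]
A diagonal-type conjugation (still inside $E^{(n)}$ after adjustment, or absorbed into $i_g$) reduces to $a_i=1$.

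\emph{Step 4: recover the ring automorphism.} Using the commutator relations of $B_2$ and the Weyl action, we define $\mu$ on $R$ (and on $\mathcal A(R)$ in type $A_4$) by $\varphi(x_1(t))\equiv x_1(\mu(t))$ modulo corrections. The standard arguments then show that $\mu$ is a ring automorphism commuting with $\theta$. After composing with $\mu^{-1}$, we may assume that $\varphi$ fixes all $w_i$ and acts trivially on root elements modulo the correction terms.

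\emph{Step 5: the main obstacle, classifying residual corrections.} Here the low rank bites. A priori $\varphi(x_i(t))=(I+E_i(t))x_i(t)$ with $E_i(t)$ having entries in $J$, and the relations available in rank $2$ do not force $E_i=0$. We expect this to be the hardest step, and we plan to handle it as follows.
\begin{enumerate}[(i)]
\item Use the commutation relations with the generators to show first that the corrections are linear in $t^+$ (or $(t^+)^2$), with coefficients annihilated by everything in $J$ that acts. This gives a parameter $\delta$ with $\delta J=0$, $\delta^2=0$, and $\delta$ skew, since the $\sigma$-fixedness forces $\bar\delta=-\delta$.
\item Derive $\delta I_\theta(R)=0$ from applying $\varphi$ to torus relations $h(t)x(s)h(t)^{-1}=x(t^2s)$.
\end{enumerate}
This places $\delta\in\mathcal E(R,\theta)$. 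We then match the resulting formulas against Lemmas~\ref{lemma:action_of_theta_3} and~\ref{lemma:action_of_theta_4}, so that $\varphi=\Theta^{(n)}_\delta$ on the simple root elements, and hence on all of $E^{(n)}$.

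The main danger lies in the first linearization: showing that the off-block corrections can be killed by a further inner conjugation except for the one-dimensional $\Theta_\delta$ direction. For this we would compare with the residue-field cocycle computation of Proposition~\ref{prop:exp_auto}, i.e.\ show that $H^1(E^{(n)}_{\mathrm{sc}}(k),\mathfrak{sl}_m(k))$ is spanned by the class of $c_n$ when $k=\mathbf F_3$ and vanishes otherwise.

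Finally, we collect the factors and use Proposition~\ref{prop:action_of_stnrd_aut_on_exp} to reorder them into the form $i_g\circ\mu\circ\Theta_\delta^{(n)}$.
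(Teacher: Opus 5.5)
Steps 1--4 broadly parallel the paper's normalization. The decisive Step 5 does not work as planned, however.

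\textbf{The residue-field $H^1$ argument.} It cannot classify the corrections $E_i(t)$, for three reasons.
\begin{enumerate}[(a)]
\item $J$ is not square-zero, so $x\mapsto\varphi(x)x^{-1}-I$ is a cocycle only modulo products of two elements of $J$. Since $R$ need not be Noetherian, you also cannot iterate over $J^k/J^{k+1}$: for a valuation ring with value group $\mathbf Q$ one has $J=J^2$, and the first-order layer carries no information.
\item $E_i(t)$ depends on $t\in R$, not on $t+J$, so it is not even a function on $E^{(n)}_{\mathrm{sc}}(k)$. Lemma~\ref{lemma:deformation_of_cocycle} only goes in the constructive direction, under $\delta J=0$ and $\delta^2=0$.
\item The vanishing you want for $k\neq\mathbf F_3$ is false for these abstract groups. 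If $k$ has a nonzero derivation $D$ (e.g.\ $k=\mathbf Q(x)$), then $g\mapsto D(g)g^{-1}$ is a cocycle that is not a coboundary, since on a root subgroup it is not polynomial in the parameter. Such classes are infinitesimal ring automorphisms. For the same reason, ``define $\mu$ by $\varphi(x_1(t))\equiv x_1(\mu(t))$ modulo corrections'' in Step 4 is ill-posed: $\mu(t)-t$ and the correction both lie in $J$.
\end{enumerate}

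\textbf{What the paper does instead.} It normalizes only the finitely many elements $u_1,u_2$, and does so \emph{exactly}. It conjugates inside the centralizer of the fixed Weyl elements ($I+\tau Q_4$ and $I+\gamma Q_4$ in type ${}^2A_3$; Cayley transforms $U(s,u,t,\omega)$ in type ${}^2A_4$) and then solves the relations exactly. In type ${}^2A_3$, the relations $(a-1)\xi=0$ and $be=a$ give $\xi^2=0$ outright. In type ${}^2A_4$, it takes the finitely generated involution-stable ideal $I$ generated by the deviation entries of $X_1,X_2$ together with $\delta^2$ and $3\delta$, shows $I/J_SI=0$, and applies Nakayama. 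Only once $\psi(u_i)=X_i^\delta$ holds exactly are $\nu$ and $\eta$ read off as exact entry ratios and assembled into $\mu$.

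\textbf{Your conditions on $\delta$ do not give $\delta\in\mathcal E(R,\theta)$.} Even granting a parameter $\delta$, the list $\delta J=0$, $\delta^2=0$, $\bar\delta=-\delta$, $\delta I_\theta(R)=0$ is not enough. Take $R=\mathbf F_9[x]/(x^2)$, with $\theta$ the Frobenius on $\mathbf F_9$ and $x\mapsto -x$. Then $I_\theta(R)\subseteq J$, so $\delta=x$ satisfies every condition on your list, yet $\mathcal E(R,\theta)=0$ by Lemma~\ref{lemma:exnl-prmr-space}. The missing condition is $\delta R^-_\theta=0$. The paper extracts it from the anti-invariant parameter map: $\psi(x_1(\xi))=x_1(\eta(\xi))$ for $\xi\in R^-_\theta$. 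The relation $\delta\eta(\xi)=0$ is forced by $[x_1(\xi),x_{-2}(1)]=1$ in type ${}^2A_3$, and by $[x_1(\xi),x_4(1)]=[x_1(1),x_4(-\xi)]$ in type ${}^2A_4$. It is then transported to all of $R^-_\theta$ via $R=\mu(R)+\delta\mu(R)$. Likewise, $\delta J=0$ is not read off from commutators; it follows from $\delta R^-_\theta=0$ and $\delta I_\theta(R)=0$ through Lemma~\ref{lemma:exnl-prmr-space}.

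\textbf{The normalizing conjugations are not shown to be strictly inner.} Steps 1--2 assume they are, but idempotent lifting and Weyl normalization only produce $C\in\operatorname{GL}_m(R,J)$. Making $[C]$ a unitary similitude requires solving equations such as $vc^2-2uc-v=0$ with $\bar c=-c$, or $L^2=M^{-1}$ in the centralizer of the Weyl elements. This is why the paper passes to faithfully flat local extensions and afterwards descends $[C]$ to $G^{(n)}(R)$ by Lemma~\ref{lem:matrix-algebra-descent}. Your proposal needs some substitute for this, for example a normalizer theorem applied at the very end.
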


The proof is organized in several stages. 
We begin with an arbitrary automorphism $\varphi\in\operatorname{Aut}(E^{(n)})$. 
Reducing modulo the Jacobson radical $J$, we obtain an induced automorphism of the residual group $E^{(n)}(k)$ over the field $k:=R/J$. 
Using the classical automorphism theory over fields, we normalize $\varphi$ by a strictly inner automorphism so that the resulting map induces a field automorphism on $E^{(n)}(k)$.

We then apply a sequence of base changes to normalize the images of the distinguished Weyl elements and elementary generators $u_i$. 
The cumulative effect of these base changes will be shown to be conjugation by an element of the adjoint group. 
After this normalization, the automorphism fixes the relevant Weyl elements and maps each $u_i$ to its exceptional image $\Theta_\delta^{(n)}(u_i)$ for some $\delta\in\mathcal E(R,\theta)$.

Finally, we prove that an automorphism with these normalized images is of the form
$\Theta_\delta^{(n)}\circ\mu$,
where $\mu$ is a ring automorphism commuting with $\theta$. 
Proposition~\ref{prop:action_of_stnrd_aut_on_exp} then allows us to interchange the ring and exceptional factors, after changing the exceptional parameter, and obtain the form stated in Theorem~\ref{MT:aut_ele_adj_local}.


\subsection{Reduction modulo the Jacobson radical}\label{subsec:reduction-mod-J}

Let $R$ be a local ring with $1/2\in R$. 
Let $J$ be its Jacobson radical, and let $k:=R/J$ be the residue field. 
Assume that $R$ is equipped with a non-trivial involution $\theta$. 
Since $J$ is $\theta$-invariant, $\theta$ induces an involution $\theta_k$ on $k$.

Set $\sigma_k:=\rho\circ\theta_k$. For $n\in\{3,4\}$, define
\[
    G^{(n)}(k):=G_{\mathrm{ad},\sigma_k}(A_n,k),
    \qquad
    E^{(n)}(k):=E'_{\mathrm{ad},\sigma_k}(A_n,k).
\]

For every $\theta$-invariant ideal $I\subseteq R$, consider the natural projection
\[
    \lambda_I\colon G^{(n)}(R)\longrightarrow G^{(n)}(R/I),
\]
and put
$G^{(n)}(I):=\ker(\lambda_I)$.

\begin{lemma}[\text{\cite[Lemma~4.1]{EB&DM1}}]\label{lemma:maximal-normal-common}
    Set $N_J^{(n)}:=G^{(n)}(J)\cap E^{(n)}$.
    Then, for $n\in\{3,4\}$:
    \begin{enumerate}[(a)]
        \item $N_J^{(n)}$ is the unique greatest proper normal subgroup of $E^{(n)}$;
        \item $E^{(n)}/N_J^{(n)}\cong E^{(n)}(k)$.
    \end{enumerate}
\end{lemma}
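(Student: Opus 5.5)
The statement to prove is Lemma~\ref{lemma:maximal-normal-common}, cited from \cite[Lemma~4.1]{EB&DM1}. The plan is to reproduce that argument for $n\in\{3,4\}$: the rank restriction there was only needed for the later normalization steps, not for the normal subgroup structure. Part~(b) comes first. The reduction map $\lambda_J$ sends relative root elements over $R$ to relative root elements over $k$. Every root parameter over $k$ lifts: for $R_\theta\to k_{\theta_k}$ use the symmetrization $t\mapsto (t+\bar t)/2$, and for $\mathcal A(R)\to\mathcal A(k)$ lift $(t_0,u_0)$ to $(t,\tfrac12 t\bar t+v)$ with $v\in R^-_\theta$ lifting $u_0-\tfrac12t_0\bar t_0$. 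Hence $\lambda_J(E^{(n)})=E^{(n)}(k)$. The kernel of this restriction is exactly $G^{(n)}(J)\cap E^{(n)}=N_J^{(n)}$, which gives $E^{(n)}/N_J^{(n)}\cong E^{(n)}(k)$.

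For part~(a), $N_J^{(n)}$ is normal and proper, since $u_1\notin N_J^{(n)}$. It remains to show that every normal subgroup $N\trianglelefteq E^{(n)}$ with $N\not\subseteq N_J^{(n)}$ equals $E^{(n)}$. There are two steps.

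\emph{Step 1.} The image $\lambda_J(N)$ is a nontrivial normal subgroup of $E^{(n)}(k)$. Over a field, the adjoint elementary twisted group of type ${}^2A_3$ or ${}^2A_4$ is simple: it is $\mathrm{PSU}_4(k)$ or $\mathrm{PSU}_5(k)$, and the small exceptions ($\mathrm{PSU}_3(2)$, etc.) do not occur since $\operatorname{char}k\neq2$ and the rank is at least $2$. Therefore $\lambda_J(N)=E^{(n)}(k)$, that is, $NN_J^{(n)}=E^{(n)}$.

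\emph{Step 2.} We show $N\supseteq E^{(n)}$. Here I would invoke the sandwich classification from \cite{SG&DM1}: for the normal subgroup $N$ there is a $\theta$-invariant (form) ideal $I$ with $E'_\sigma(R,I)\le N\le C_\sigma(R,I)$, where $C_\sigma(R,I)$ is the full congruence preimage of the center modulo $I$. If $I\neq R$ then $I\subseteq J$, so $N$ maps into the center of $E^{(n)}(k)$, which is trivial in the adjoint case. This contradicts Step~1. Hence $I=R$ and $N\supseteq E'_\sigma(R,R)=E^{(n)}$.

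The main obstacle is Step~2, specifically whether the normal subgroup theorem of \cite{SG&DM1} covers these low ranks under the hypothesis $1/2\in R$. If it does not, I would argue directly with the local ring. Take $g\in N$ whose reduction is noncentral, and form commutators $[g,x_i(t)]$ to produce relative root elements $x_\alpha(a)\in N$ with $a$ a unit. This is done first over $k$ by simplicity, then lifted by multiplying with suitable elements of $N_J^{(n)}$ and using the Chevalley commutator formulas. Unit parameters generate all of the root subgroup, so the root subgroups of $E^{(n)}$ lie in $N$.
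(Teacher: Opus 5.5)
Your argument is correct, and it is the standard route to this statement: surjectivity of reduction on the root parameters gives (b), and simplicity of the residual group plus the sandwich classification of normal subgroups from \cite{SG&DM1} gives (a); the paper does not reprove the statement but quotes it from \cite[Lemma~4.1]{EB&DM1}. Two small corrections: $\theta_k$ may be trivial even though $\theta$ is not (always so when $R/J\cong\mathbf F_3$, exactly the case carrying the exceptional automorphisms), and then $E^{(n)}(k)$ is the split adjoint elementary group of type $C_2$, resp.\ $B_2$, rather than $\mathrm{PSU}_{n+1}(k)$, which is still simple since $\operatorname{char}k\neq2$; and in your fallback, multiplying by elements of $N_J^{(n)}$ takes you outside $N$, so that sketch does not work as stated, though the sandwich route makes it unnecessary.
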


Let $\varphi\in\operatorname{Aut}(E^{(n)})$. By Lemma~\ref{lemma:maximal-normal-common}, the subgroup $N_J^{(n)}$ is characteristic in $E^{(n)}$. 
Hence $\varphi$ induces an automorphism
$\overline\varphi\in\operatorname{Aut}(E^{(n)}(k))$.

Although $\theta$ is non-trivial, the induced involution $\theta_k$ may be trivial. 
If $\theta_k\neq\operatorname{id}_k$, then $E^{(n)}(k)$ is a twisted Chevalley group in the classical sense of Steinberg. 
By \cite[Theorem~36]{RS}, every automorphism of $E^{(n)}(k)$ is a product of a strictly inner, a diagonal, and a field automorphism.

If $\theta_k=\operatorname{id}_k$, then $E^{(3)}(k)$ is the split adjoint elementary Chevalley group of type $C_2$, whereas $E^{(4)}(k)$ is the split adjoint elementary Chevalley group of type $B_2$. 
By \cite[Theorem~34]{RS}, every automorphism of $E^{(n)}(k)$ is a product of a strictly inner, a diagonal, a graph, and a field automorphism. 
Since $2\in R^\times$, we have $\operatorname{char}k\neq2$, and the graph factor is trivial by \cite[Corollary to Theorem~29]{RS}.

Thus, in both cases, every automorphism of $E^{(n)}(k)$ is a product of a strictly inner, a diagonal, and a field automorphism. 
The field factor is induced by an automorphism
\[
    \mu_k\in\operatorname{Aut}(k)
    \qquad\text{such that}\qquad
    \mu_k\circ\theta_k=\theta_k\circ\mu_k.
\]
When $\theta_k$ is trivial, this condition is automatic.

Since the groups are adjoint, the diagonal factor is given by conjugation by an element of the standard projective torus
\[
    T^{(n)}(k)\subseteq G^{(n)}(k).
\]
Combining the strictly inner and diagonal factors, we obtain an element
$\overline g\in G^{(n)}(k)$
and a field automorphism $\mu_k$ commuting with $\theta_k$ such that
\[
    \overline\varphi=i_{\overline g}\circ\mu_k
    \qquad\text{on }E^{(n)}(k).
\]

We next lift $\overline g$ from $k$ to $R$. Write
\[
    \overline g=\overline e\,\overline t,
    \qquad
    \overline e\in E^{(n)}(k),
    \quad
    \overline t\in T^{(n)}(k).
\]
The reduction homomorphism
\[
    E^{(n)}(R)\longrightarrow E^{(n)}(k)
\]
is surjective, so $\overline e$ admits a lift
$e_1\in E^{(n)}(R)$.

It remains to lift $\overline t$.

In type ${}^{2}A_3$, every element of $T^{(3)}(k)$ has a diagonal representative of the form
\[
    \operatorname{diag}
    \left(
        a_0,\,
        b_0,\,
        \lambda_0\theta_k(b_0)^{-1},\,
        \lambda_0\theta_k(a_0)^{-1}
    \right),
\]
where
$a_0,b_0\in k^\times$,
$\lambda_0\in k_{\theta_k}^\times$.

Choose unit lifts $a,b\in R^\times$ of $a_0,b_0$, and choose any lift $c\in R$ of $\lambda_0$. Put
\[
    \lambda:=\frac{c+\theta(c)}{2}.
\]
Then
$\lambda\in R_\theta$ and 
$\lambda+J=\lambda_0$.

Since $\lambda_0\in k^\times$ and $R$ is local, $\lambda$ is a unit. Therefore,
\[
    t_1:=
    \operatorname{diag}
    \left(
        a,\,
        b,\,
        \lambda\theta(b)^{-1},\,
        \lambda\theta(a)^{-1}
    \right)
    \in\operatorname{GL}_4(R)
\]
is well-defined. Direct multiplication gives
\[
    t_1Q_4\overline{t_1}^{\,t}
    =
    \lambda Q_4.
\]
Hence
$[t_1]\in G^{(3)}(R)$,
and its reduction modulo $J$ is $\overline t$.

Similarly, in type ${}^{2}A_4$, after multiplying a diagonal representative by a scalar, every element of $T^{(4)}(k)$ has a representative of the form
\[
    \operatorname{diag}
    \left(
        a_0,\,
        b_0,\,
        1,\,
        \theta_k(b_0)^{-1},\,
        \theta_k(a_0)^{-1}
    \right),
    \qquad
    a_0,b_0\in k^\times.
\]
Choose unit lifts $a,b\in R^\times$ and put
\[
    t_1:=
    \operatorname{diag}
    \left(
        a,\,
        b,\,
        1,\,
        \theta(b)^{-1},\,
        \theta(a)^{-1}
    \right)
    \in\operatorname{GL}_5(R).
\]
Then
\[
    t_1Q_5\overline{t_1}^{\,t}=Q_5.
\]
Thus
$[t_1]\in G^{(4)}(R)$,
and its reduction modulo $J$ is $\overline t$.

Consequently, in both cases,
\[
    g_1:=e_1[t_1]\in G^{(n)}(R)
\]
is a lift of $\overline g$. Since $E^{(n)}(R)$ is normal in $G^{(n)}(R)$ by \cite[Corollary~4.3]{SG&DM1}, conjugation by $g_1$ restricts to an automorphism of $E^{(n)}$.

Define
$\varphi_1:=i_{g_1}^{-1}\circ\varphi$.
Then the automorphism induced by $\varphi_1$ on $E^{(n)}(k)$ is precisely $\mu_k$.


\subsubsection{Remark on the lifting procedure}\label{subsubsec:lifting_remark}

Since every field automorphism fixes the prime subfield, it fixes every element of $k$ represented by an element of $\mathbb Z[1/2]$.

Therefore, in type ${}^{2}A_3$, we have the projective congruences
\[
    \varphi_1(h_i)\equiv h_i\pmod J,
    \qquad
    \varphi_1(w_i)\equiv w_i\pmod J,
    \qquad
    \varphi_1(u_{\pm i})\equiv u_{\pm i}\pmod J
\]
for all $i\in\{1,2,3,4\}$.

Similarly, in type ${}^{2}A_4$,
\begin{align*}
    \varphi_1(H)&\equiv H\pmod J,
    &\qquad
    \varphi_1(w_i)&\equiv w_i\pmod J,\\
    \varphi_1(v_j)&\equiv v_j\pmod J,
    &\qquad
    \varphi_1(u_{\pm k})&\equiv u_{\pm k}\pmod J,
\end{align*}
where
$i\in\{1,4\}$,
$j\in\{2,3\}$,
$k\in\{1,2,3,4\}$.

All these congruences are understood at the level of projective classes.

We use the following convention when choosing matrix representatives. 
Let $e\in E^{(n)}$ be represented by a determinant-one matrix $e_0$, and suppose that
\[
    \varphi_1(e)\equiv e\pmod J
\]
projectively. Then
$\varphi_1(e)e^{-1}\in N_J^{(n)}$.

By the congruence decomposition property over local rings \cite[Proposition~5.7]{SG&DM1}, the projective class of $\varphi_1(e)e^{-1}$ admits a determinant-one representative $b_e$ satisfying
\[
    b_e\equiv I_{n+1}\pmod J.
\]
Consequently,
$\widetilde e:=b_ee_0$
is a determinant-one matrix representative of $\varphi_1(e)$ satisfying
\[
    \widetilde e\equiv e_0\pmod J.
\]
Whenever we choose matrix representatives for images of elementary elements that are projectively congruent to their standard representatives modulo $J$, we use this convention.

For $n\in\{3,4\}$, define
\[
    \operatorname{GL}_{n+1}(R,J)
    :=
    \{C\in\operatorname{GL}_{n+1}(R)\mid
      C\equiv I_{n+1}\pmod J\}.
\]
The convention above is preserved under any subsequent normalization by a matrix
$C\in\operatorname{GL}_{n+1}(R,J)$
for which conjugation by $[C]$ defines an automorphism of $E^{(n)}$. Indeed, if
$\psi:=i_{[C]}^{-1}\circ\varphi_1$,
then $C\equiv I_{n+1}\pmod J$, so conjugation by $C$ induces the identity on the residual group. Hence $\psi$ satisfies the same projective congruences modulo $J$ as $\varphi_1$, and the same convention for choosing matrix representatives applies.


\subsubsection{A scalar normalization lemma}\label{subsubsec:scalar_normalization_lemma}

We record a lemma that will be used repeatedly below.

\begin{lemma}\label{lem:scalar-normalization}
    Let $S$ be a commutative local ring with $2\in S^\times$, and let
    $\mathfrak m:=\operatorname{Rad}(S)$.
    
    Suppose that $\lambda\in S^\times$ satisfies
    \[
        \lambda\equiv1\pmod{\mathfrak m}
        \qquad\text{and}\qquad
        \lambda^{2^r}=1
    \]
    for some integer $r\geq1$. Then $\lambda=1$.

    In particular, if a projective relation between matrix representatives holds up to a scalar factor $\lambda\equiv1\pmod{\mathfrak m}$ of $2$-power order, then the relation holds as an exact matrix equality.
\end{lemma}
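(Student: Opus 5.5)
The plan is to argue directly from the structure of the group of $2$-power roots of unity congruent to $1$ modulo $\mathfrak m$. Write $\lambda=1+x$ with $x\in\mathfrak m$. We proceed by induction on $r$, so the key case is $r=1$.

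For $r=1$, the relation $\lambda^2=1$ gives $(\lambda-1)(\lambda+1)=0$. Since $\lambda\equiv1\pmod{\mathfrak m}$, we have $\lambda+1\equiv2\pmod{\mathfrak m}$. Because $2\in S^\times$ and $S$ is local, $\lambda+1$ is a unit. Hence $\lambda-1=0$.

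For general $r\ge2$, put $\mu:=\lambda^{2^{r-1}}$. Then $\mu\equiv1\pmod{\mathfrak m}$ and $\mu^2=1$, so the case $r=1$ gives $\mu=1$. Thus $\lambda^{2^{r-1}}=1$, and induction on $r$ yields $\lambda=1$. Equivalently, one can factor
\[
    \lambda^{2^r}-1=(\lambda-1)\prod_{j=0}^{r-1}\bigl(\lambda^{2^j}+1\bigr),
\]
observe that each factor $\lambda^{2^j}+1\equiv2\pmod{\mathfrak m}$ is a unit, and conclude in one step.

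For the ``in particular'' assertion, suppose matrix representatives satisfy $X=\lambda Y$, where $\lambda\equiv1\pmod{\mathfrak m}$ and $\lambda$ has $2$-power order. A typical source of such scalars is a comparison of determinants or of squares of Weyl representatives, which gives $\lambda^{2^r}=1$. The first part then gives $\lambda=1$, so $X=Y$ exactly.

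There is no genuine obstacle in this argument. The only point requiring care is that the invertibility of $2$ is exactly what makes every $\lambda^{2^j}+1$ a unit. In later applications, the real work will be checking that the scalar discrepancy actually has $2$-power order and reduces to $1$, which the representative convention of \S\ref{subsubsec:lifting_remark} is designed to guarantee.
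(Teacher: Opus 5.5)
Your proof is correct and essentially the paper's: the paper uses exactly your one-step factorization $\lambda^{2^r}-1=(\lambda-1)\prod_{j=0}^{r-1}\bigl(\lambda^{2^j}+1\bigr)$, with each factor $\lambda^{2^j}+1\equiv 2\pmod{\mathfrak m}$ a unit. Your induction on $r$ is the same argument unrolled one factor at a time.
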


\begin{proof}
    We have
    \[
        \lambda^{2^r}-1
        =
        (\lambda-1)(\lambda+1)(\lambda^2+1)
        \cdots
        (\lambda^{2^{r-1}}+1).
    \]
    Since $\lambda\equiv1\pmod{\mathfrak m}$, every factor after $\lambda-1$ is congruent to $2$ modulo $\mathfrak m$ and is therefore a unit in $S$. 
    Since $\lambda^{2^r}=1$, it follows that
    $\lambda-1=0$.
    Thus $\lambda=1$.
\end{proof}


\subsection{Normalization of the images of the Weyl elements}\label{subsec:weyl-normalization}

We now normalize the images of the distinguished Weyl elements. 
In type ${}^2A_3$, we normalize $w_i$ for all $i\in\{1,\dots,4\}$ and hence also the torus elements $h_i=w_i^2$. 
In type ${}^2A_4$, we first normalize $w_1$ and $w_4$, together with their common square
$H=w_1^2=w_4^2$.


\subsubsection{Type \texorpdfstring{${}^{2}A_3$}{2A3}}
\label{subsubsec:weyl-normalization-A3}

\begin{prop}[Weyl normalization in type ${}^{2}A_3$]
\label{prop:weyl-normalization-A3}
    Let $\varphi_1\in\operatorname{Aut}(E^{(3)})$ be the normalized automorphism obtained in Subsection~\ref{subsec:reduction-mod-J}. There exists a matrix
    $C\in\operatorname{GL}_4(R,J)$
    such that
    \[
        \psi:=i_{[C]}^{-1}\circ\varphi_1
        \colon E^{(3)}\longrightarrow\operatorname{PGL}_4(R)
    \]
    satisfies
    \[
        \psi(h_2)=h_2,
        \qquad
        \psi(w_1)=w_1,
        \qquad
        \psi(w_2)=w_2.
    \]
\end{prop}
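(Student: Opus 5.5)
The strategy is the standard ``idempotent lifting'' argument of Bunina's method, carried out in three successive conjugations by matrices congruent to $I_4$ modulo $J$. Throughout, images are chosen with the representative convention of Subsection~\ref{subsubsec:lifting_remark}. Lemma~\ref{lem:scalar-normalization} turns projective relations of $2$-power order into exact matrix identities.

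\emph{Step 1: normalize $h_2$.} Let $\widetilde h_2$ be the chosen representative of $\varphi_1(h_2)$, so $\widetilde h_2\equiv h_2=\operatorname{diag}(1,-1,-1,1)\pmod J$. Since $h_2^2=I_4$, we get $\widetilde h_2^{\,2}=\lambda I_4$ with $\lambda\equiv1$. Taking a suitable power (or using $\varphi_1(h_2)^2=1$ projectively and $\lambda^{2}$ of $2$-power order after squaring determinants) and applying Lemma~\ref{lem:scalar-normalization}, I aim to obtain $\widetilde h_2^{\,2}=I_4$, possibly after rescaling by a square root of $\lambda$ congruent to $1$. The square root exists because $2\in R^\times$ and $\lambda\equiv 1$, at least after passing to $\lambda$ of $2$-power order. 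Then $P_\pm=(I_4\pm\widetilde h_2)/2$ are complementary idempotents lifting $E_{11}+E_{44}$ and $E_{22}+E_{33}$. Over a local ring their images are free of ranks $2,2$. Choosing bases lifting $e_1,e_4$ and $e_2,e_3$ gives $C_1\in\operatorname{GL}_4(R,J)$ with $C_1^{-1}\widetilde h_2C_1=h_2$.

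\emph{Step 2: normalize $w_2$.} After Step 1, $\widetilde w_2$ satisfies $\widetilde w_2^{\,2}=\nu h_2$ with $\nu\equiv1$, and $\widetilde w_2^{\,4}=\nu^2 I_4$. Since $w_2$ has order $4$ in $\operatorname{SL}_4$, Lemma~\ref{lem:scalar-normalization} should force $\widetilde w_2^{\,2}=h_2$ (after the determinant-one normalization). So $\widetilde w_2$ commutes with $h_2$ and is block diagonal: it is $\operatorname{diag}$-like on $\langle e_1,e_4\rangle$ (congruent to $I_2$ and squaring to $I_2$, hence equal to $I_2$ by the idempotent argument) and on $\langle e_2,e_3\rangle$ a lift $X$ of $\begin{pmatrix}0&1\\-1&0\end{pmatrix}$ with $X^2=-I_2$. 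A block matrix $C_2=\operatorname{diag}(1,B,1)$ with $B\equiv I_2$ conjugating $X$ to the standard form exists: take $B$ with columns $v, -Xv$, where $v$ lifts $e_2$. The relation $X^2=-I$ makes this work, and $B\equiv I_2$ is invertible. The matrix $C_2$ commutes with $h_2$, so Step 1 is preserved.

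\emph{Step 3: normalize $w_1$ (the main obstacle).} Now $\widetilde w_1$ must be normalized by a matrix centralizing both $h_2$ and $w_2$. The centralizer is $\{\operatorname{diag}(a,b,a',b'')\}$-type block matrices $\begin{pmatrix}A&0\\0&\beta\end{pmatrix}$ in the $(\{1,4\},\{2,3\})$ decomposition, with the $\{2,3\}$ block commuting with $\begin{pmatrix}0&1\\-1&0\end{pmatrix}$, i.e.\ of the form $\begin{pmatrix}p&q\\-q&p\end{pmatrix}$. I will use the relations $w_1^2=-I_4$ (exact, by Lemma~\ref{lem:scalar-normalization}), the braid relation $(w_1w_2)^4=\pm$ central, and $w_1h_2w_1^{-1}=h_4$ to find the constraints on $\widetilde w_1$. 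Here $h_4=-h_2$ commutes with $h_2$. The relation $w_1h_2w_1^{-1}=-h_2$ shows that $\widetilde w_1$ swaps the eigenspaces $M_+$ and $M_-$, so $\widetilde w_1=\begin{pmatrix}0&Y\\Z&0\end{pmatrix}$ with $YZ=ZY=-I$. Conjugating by $\operatorname{diag}(Y_0^{-1}Y\text{-correction},I)$ then makes $Y$ standard, and $Z$ follows from $YZ=-I$. The delicate point is that this correction must also preserve $w_2$; the available freedom is only the $A$-block, which is exactly enough to fix $Y$. So I would choose $A:=Y w_0^{-1}$, where $w_0$ is the standard block, which is $\equiv I_2$. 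Finally, I would set $C=C_1C_2C_3$ and check that $C\in\operatorname{GL}_4(R,J)$ and that all three identities hold.
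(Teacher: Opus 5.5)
Your proof is correct, but it is organized differently from the paper's. Both arguments use the same ingredients. Determinant-one representatives make each projective relation exact, namely $h_2^2=I_4$, $w_2^2=h_2$, $w_1^2=-I_4$ and $w_1h_2w_1^{-1}=-h_2$: the scalar $\lambda\equiv1\pmod J$ satisfies $\lambda^4=1$, so Lemma~\ref{lem:scalar-normalization} applies. The idempotents $P_\pm=(I_4\pm H_2)/2$ then split $R^4=M_+\oplus M_-$, $W_2$ acts as the identity on $M_+$, and $W_1$ swaps the two summands.

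The paper then finishes in one stroke. It takes the single vector $f_1=P_+e_1$ and generates $f_2=-W_1f_1$, $f_3=-W_2f_2$, $f_4=-W_1f_3$. On this basis $H_2$, $W_1$, $W_2$ automatically act by the standard matrices, so no intermediate choices or compatibility checks are needed.

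You instead normalize $h_2$, then $w_2$, then $w_1$ by three successive conjugations. Each is chosen in the centralizer of what has already been normalized, and in Step~3 you correct only the $M_+$-block, taking $A=YY_0^{-1}$. This is longer, but it makes the remaining freedom explicit at each stage. That is the same centralizer viewpoint the paper uses afterwards, when it corrects $C$ by elements of $S[Q_4]$.

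Two small repairs are needed. In Step~1, your claim that a square root of $\lambda\equiv1$ exists because $2\in R^\times$ is false over a non-Henselian local ring. It is also unnecessary: $\det\widetilde h_2=1$ already gives $\lambda^4=1$, hence $\lambda=1$. Second, you use $\widetilde w_1h_2\widetilde w_1^{-1}=-h_2$ as an exact identity to get the off-diagonal block form $\begin{pmatrix}0&Y\\ Z&0\end{pmatrix}$. This should be justified by the same determinant argument. The braid relation for $(w_1w_2)^4$ plays no role and can be dropped.
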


\begin{proof}
    Following the convention of Subsubsection~\ref{subsubsec:lifting_remark}, choose determinant-one matrix representatives
    $H_2,W_1,W_2\in\operatorname{SL}_4(R)$
    of $\varphi_1(h_2)$, $\varphi_1(w_1)$, and $\varphi_1(w_2)$, respectively, such that
    \[
        H_2\equiv h_2\pmod J,
        \qquad
        W_1\equiv w_1\pmod J,
        \qquad
        W_2\equiv w_2\pmod J.
    \]

    The standard representatives satisfy
    \[
        h_2^2=I_4,
        \qquad
        w_2^2=h_2,
        \qquad
        w_1^2=-I_4,
        \qquad
        w_1h_2=-h_2w_1.
    \]
    Since $\varphi_1$ preserves these relations projectively, there exist units
    \[
        \lambda_i\equiv1\pmod J
        \qquad(i=0,1,2,3)
    \]
    such that
    \[
        H_2^2=\lambda_0I_4,
        \qquad
        W_2^2=\lambda_1H_2,
        \qquad
        W_1^2=-\lambda_2I_4,
        \qquad
        W_1H_2=-\lambda_3H_2W_1.
    \]
    Taking determinants gives
    \[
        \lambda_i^4=1
        \qquad(i=0,1,2,3).
    \]
    By Lemma~\ref{lem:scalar-normalization}, all $\lambda_i$ are equal to $1$. Hence
    \[
        H_2^2=I_4,
        \qquad
        W_2^2=H_2,
        \qquad
        W_1^2=-I_4,
        \qquad
        W_1H_2=-H_2W_1.
    \]
    In particular, $W_2$ commutes with $H_2$.

    Define
    \[
        P_+:=\frac{I_4+H_2}{2},
        \qquad
        P_-:=\frac{I_4-H_2}{2}.
    \]
    These complementary idempotents give a decomposition
    \[
        R^4=M_+\oplus M_-,
        \qquad
        M_\pm:=P_\pm R^4.
    \]
    Modulo $J$,
    \[
        M_+\equiv\langle e_1,e_4\rangle,
        \qquad
        M_-\equiv\langle e_2,e_3\rangle.
    \]

    The relation $W_1H_2=-H_2W_1$ shows that $W_1$ interchanges $M_+$ and $M_-$, while $W_2$ preserves both summands. On $M_+$, we have
    \[
        W_2^2=I
        \qquad\text{and}\qquad
        W_2\equiv I\pmod J.
    \]
    Therefore
    \[
        (W_2-I)(W_2+I)=0
        \qquad\text{on }M_+.
    \]
    Since $W_2+I$ is invertible on $M_+$, it follows that $W_2$ acts as the identity on $M_+$.

    Set
    \[
        f_1:=P_+e_1,
        \qquad
        f_2:=-W_1f_1,
        \qquad
        f_3:=-W_2f_2,
        \qquad
        f_4:=-W_1f_3.
    \]
    Then
    \[
        f_1,f_4\in M_+,
        \qquad
        f_2,f_3\in M_-,
    \]
    and reduction modulo $J$ gives
    \[
        f_i\equiv e_i\pmod J
        \qquad(i=1,2,3,4).
    \]
    Thus $f_1,f_2,f_3,f_4$ form a basis of $R^4$.

    In this basis,
    \[
        H_2f_1=f_1,
        \qquad
        H_2f_2=-f_2,
        \qquad
        H_2f_3=-f_3,
        \qquad
        H_2f_4=f_4.
    \]
    Moreover,
    $$
        W_1f_1=-f_2, \quad
        W_1f_2=f_1,\quad
        W_1f_3=-f_4, \quad
        W_1f_4=f_3,
    $$
    while
    \[
        W_2f_1=f_1,
        \qquad
        W_2f_2=-f_3,
        \qquad
        W_2f_3=f_2,
        \qquad
        W_2f_4=f_4.
    \]
    Hence the matrices of $H_2,W_1,W_2$ in this basis are exactly $h_2,w_1,w_2$.

    Let $C$ be the matrix whose columns are $f_1,f_2,f_3,f_4$. Since
    \[
        C\equiv I_4\pmod J,
    \]
    we have $C\in\operatorname{GL}_4(R,J)$, and
    \[
        C^{-1}H_2C=h_2,
        \qquad
        C^{-1}W_1C=w_1,
        \qquad
        C^{-1}W_2C=w_2.
    \]
    Therefore, for
    $\psi:=i_{[C]}^{-1}\circ\varphi_1$,
    we have
    \[
        \psi(w_1)=w_1,
        \qquad
        \psi(w_2)=w_2,
        \qquad
        \psi(h_2)=h_2.
    \]
\end{proof}

Since $w_3$ and $w_4$ are fixed words in $w_1$ and $w_2$, this normalization also gives
\[
    \psi(w_i)=w_i
    \qquad
    \text{for all }i\in\{1,2,3,4\}.
\]


\subsubsection{Type \texorpdfstring{${}^{2}A_4$}{2A4}}
\label{subsubsec:weyl-normalization-A4}

\begin{prop}[Weyl normalization in type ${}^{2}A_4$]
\label{prop:weyl-normalization-A4}
    Let $\varphi_1\in\operatorname{Aut}(E^{(4)})$ be the normalized automorphism obtained in Subsection~\ref{subsec:reduction-mod-J}. There exists a matrix
    $C\in\operatorname{GL}_5(R,J)$
    such that
    \[
        \psi:=i_{[C]}^{-1}\circ\varphi_1
        \colon E^{(4)}\longrightarrow\operatorname{PGL}_5(R)
    \]
    satisfies
    \[
        \psi(H)=H,
        \qquad
        \psi(w_1)=w_1,
        \qquad
        \psi(w_4)=w_4.
    \]
\end{prop}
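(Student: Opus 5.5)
The plan mirrors the proof of Proposition~\ref{prop:weyl-normalization-A3}. Following the convention of Subsubsection~\ref{subsubsec:lifting_remark}, I would choose determinant-one representatives $H',W_1',W_4'\in\operatorname{SL}_5(R)$ of $\varphi_1(H),\varphi_1(w_1),\varphi_1(w_4)$ that are congruent to $H,w_1,w_4$ modulo $J$. The standard representatives satisfy
\[
    H^2=I_5,\qquad w_1^2=H,\qquad w_4^2=H,\qquad w_1w_4=w_4w_1 .
\]
Projectively these relations persist. So there are units $\lambda_0,\lambda_1,\lambda_4,\lambda_c\equiv1\pmod J$ with
\[
    H'^2=\lambda_0I_5,\qquad W_1'^2=\lambda_1H',\qquad W_4'^2=\lambda_4H',\qquad W_1'W_4'=\lambda_cW_4'W_1'.
\]

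The first difficulty, and the main obstacle, is that determinants now only give $\lambda_i^5=1$. Lemma~\ref{lem:scalar-normalization} does not apply to fifth roots of unity, for instance when $\operatorname{char}k=5$. I would treat the relations in two ways.

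\begin{itemize}
\item For the commutator scalar, conjugate $W_1'^2=\lambda_1H'$ by $W_4'$. Since $W_4'$ commutes with $H'$ (because $H'$ is a scalar multiple of $W_4'^2$), this gives $\lambda_c^2=1$. Lemma~\ref{lem:scalar-normalization} then yields $\lambda_c=1$.
\item For the remaining scalars I would give up determinant one and rescale by units congruent to $1$. If $\lambda^5=1$, then $\lambda=(\lambda^3)^2$. Replacing $H'$ by $\lambda_0^{-3}H'$ gives $H'^2=I_5$. The scalars $\lambda_1,\lambda_4$ are again fifth roots of unity, by determinants, and replacing $W_i'$ by $\lambda_i^{-3}W_i'$ for $i\in\{1,4\}$ gives $W_i'^2=H'$ exactly.
\end{itemize}

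These rescalings do not change projective classes, and the new matrices remain congruent to $H,w_1,w_4$ modulo $J$.

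Next I would decompose $R^5=M_+\oplus M_-$ using the idempotents $P_\pm=(I_5\pm H')/2$, so that modulo $J$ we have $M_+\equiv\langle e_3\rangle$ and $M_-\equiv\langle e_1,e_2,e_4,e_5\rangle$. Both $W_1'$ and $W_4'$ commute with $H'$, so they preserve $M_\pm$. On $M_+$ each satisfies $W^2=I$ and $W\equiv I\pmod J$, hence $(W-I)(W+I)=0$ with $W+I$ invertible, so $W=I$ on $M_+$. Reading off the columns of $w_1$ and $w_4$ gives $w_1e_1=-e_2$, $w_4e_1=-e_4$ and $w_4e_2=-e_5$. I would therefore set
\[
    f_1:=P_-e_1,\quad f_2:=-W_1'f_1,\quad f_3:=P_+e_3,\quad f_4:=-W_4'f_1,\quad f_5:=-W_4'f_2 .
\]
Then $f_i\equiv e_i\pmod J$, so these vectors form a basis.

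It remains to check that $H',W_1',W_4'$ act on this basis exactly as $H,w_1,w_4$ act on the standard basis. The checks use only $W_1'^2=W_4'^2=-1$ on $M_-$, the identity action on $M_+$, and the exact commutation $W_1'W_4'=W_4'W_1'$. For example,
\[
    W_1'f_4=-W_4'W_1'f_1=W_4'f_2=-f_5,
\]
which matches $w_1e_4=-e_5$. Taking $C$ to be the matrix with columns $f_1,\dots,f_5$ gives $C\in\operatorname{GL}_5(R,J)$ and
\[
    C^{-1}H'C=H,\qquad C^{-1}W_1'C=w_1,\qquad C^{-1}W_4'C=w_4 .
\]
So $\psi:=i_{[C]}^{-1}\circ\varphi_1$ has the required properties.
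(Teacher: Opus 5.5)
Your proposal is correct and follows essentially the same route as the paper. You rescale the representatives by fifth roots of unity congruent to $1$; the paper instead normalizes $W_i^4=I_5$ and sets $\widetilde H:=W_1^2$, rather than choosing an independent representative of $\varphi_1(H)$. You then remove the commutator scalar by a squaring argument with Lemma~\ref{lem:scalar-normalization}, and build the adapted basis from $P_\pm$, $W_1'$ and $W_4'$; your $f_5=-W_4'f_2$ equals the paper's $-W_1f_4$ by commutativity. Incidentally, a rescaling by $\lambda^{-3}$ with $\lambda^5=1$ multiplies the determinant by $\lambda^{-15}=1$, so you never actually give up determinant one.
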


\begin{proof}
    Following the convention of Subsubsection~\ref{subsubsec:lifting_remark}, choose determinant-one representatives
    $W_1,W_4\in\operatorname{SL}_5(R)$
    of $\varphi_1(w_1)$ and $\varphi_1(w_4)$ such that
    \[
        W_1\equiv w_1\pmod J,
        \qquad
        W_4\equiv w_4\pmod J.
    \]

    Since $\varphi_1$ preserves the relations
    $w_1^4=w_4^4=I_5$
    projectively, there exist units
    \[
        \alpha_i\equiv1\pmod J
        \qquad(i=1,4)
    \]
    such that
    $W_i^4=\alpha_iI_5$.
    Taking determinants gives
    $\alpha_i^5=1$.
    
    Replacing $W_i$ by $\alpha_iW_i$ does not change its projective class or its congruence modulo $J$, and its determinant remains equal to $1$. Moreover,
    \[
        (\alpha_iW_i)^4
        =
        \alpha_i^4W_i^4
        =
        \alpha_i^5I_5
        =
        I_5.
    \]
    Thus we may assume that
    $W_1^4=W_4^4=I_5$.

    Put
    $\widetilde H:=W_1^2$.
    Then
    \[
        \widetilde H^2=I_5,
        \qquad
        \widetilde H\equiv H\pmod J.
    \]
    Since $w_1^2=w_4^2=H$, there exists a unit
    \[
        \lambda\equiv1\pmod J
    \]
    such that
    $W_4^2=\lambda\widetilde H$.
    Squaring this relation gives $\lambda^2=1$, and Lemma~\ref{lem:scalar-normalization} yields $\lambda=1$. Hence
    \[
        W_1^2=W_4^2=\widetilde H,
        \qquad
        \widetilde H^2=I_5.
    \]

    Similarly, the projective commutation relation between $w_1$ and $w_4$ gives
    \[
        W_1W_4=\gamma W_4W_1
    \]
    for some unit $\gamma\equiv1\pmod J$. Therefore,
    \[
        W_1W_4^2=\gamma^2W_4^2W_1.
    \]
    Since $W_4^2=\widetilde H=W_1^2$ commutes with $W_1$, we obtain $\gamma^2=1$. Again by Lemma~\ref{lem:scalar-normalization},
    $\gamma=1$,
    so
    $W_1W_4=W_4W_1$.

    Define
    \[
        P_+:=\frac{I_5+\widetilde H}{2},
        \qquad
        P_-:=\frac{I_5-\widetilde H}{2}.
    \]
    These complementary idempotents give
    \[
        R^5=M_+\oplus M_-,
        \qquad
        M_\pm:=P_\pm R^5.
    \]
    Modulo $J$,
    \[
        M_+\equiv\langle e_3\rangle,
        \qquad
        M_-\equiv\langle e_1,e_2,e_4,e_5\rangle.
    \]

    Set
    $f_3:=P_+e_3$.
    Then $f_3\equiv e_3\pmod J$. On $M_+$, both $W_1$ and $W_4$ satisfy
    \[
        W_i^2=I
        \qquad\text{and}\qquad
        W_i\equiv I\pmod J.
    \]
    Hence
    $(W_i-I)(W_i+I)=0$ on $M_+$.
    
    Since $W_i+I$ is invertible there, both operators act as the identity on $M_+$. In particular,
    \[
        W_1f_3=f_3,
        \qquad
        W_4f_3=f_3.
    \]

    Set
    \[
        f_1:=P_-e_1,
        \qquad
        f_2:=-W_1f_1,
        \qquad
        f_4:=-W_4f_1,
        \qquad
        f_5:=-W_1f_4.
    \]
    Since $W_1$ and $W_4$ commute with $\widetilde H$, these vectors belong to $M_-$. Modulo $J$,
    \[
        f_i\equiv e_i\pmod J
        \qquad(i=1,2,3,4,5),
    \]
    so $f_1,f_2,f_3,f_4,f_5$ form a basis of $R^5$.

    On this basis,
    \begin{align*}
        W_1f_1&=-f_2, &
        W_1f_2&=f_1,\\
        W_1f_4&=-f_5, &
        W_1f_5&=f_4,
    \end{align*}
    and $W_1f_3=f_3$. Thus $W_1$ has the standard matrix $w_1$.

    Similarly,
    \[
        W_4f_1=-f_4,
        \qquad
        W_4f_4=f_1.
    \]
    Using $W_1W_4=W_4W_1$, we also get
    \begin{align*}
        W_4f_2
        &=-W_4W_1f_1
        =-W_1W_4f_1
        =W_1f_4
        =-f_5,\\
        W_4f_5
        &=-W_4W_1f_4
        =-W_1W_4f_4
        =-W_1f_1
        =f_2.
    \end{align*}
    Together with $W_4f_3=f_3$, this gives the standard matrix $w_4$.

    Finally, $\widetilde H$ acts as $-1$ on
    $\langle f_1,f_2,f_4,f_5\rangle$
    and as $1$ on $f_3$, so its matrix in this basis is
    \[
        H=\operatorname{diag}(-1,-1,1,-1,-1).
    \]

    Let $C$ be the matrix whose columns are $f_1,f_2,f_3,f_4,f_5$. Since
    \[
        C\equiv I_5\pmod J,
    \]
    we have $C\in\operatorname{GL}_5(R,J)$, and
    \[
        C^{-1}W_1C=w_1,
        \qquad
        C^{-1}W_4C=w_4,
        \qquad
        C^{-1}\widetilde HC=H.
    \]
    Therefore, for
    $\psi:=i_{[C]}^{-1}\circ\varphi_1$,
    we obtain
    \[
        \psi(w_1)=w_1,
        \qquad
        \psi(w_4)=w_4,
        \qquad
        \psi(H)=H.
    \]
\end{proof}


\subsection{Auxiliary local extension and unitary correction}
\label{subsec:elementary-unitary-correction}

For rings with involution, an extension $(S,\theta_S)$ of $(R,\theta)$ means a ring extension $S/R$ equipped with an involution $\theta_S$ whose restriction to $R$ is $\theta$.

Our goal in this subsection is to show that, after passing to a suitable faithfully flat local extension $(S,\theta_S)$, the change-of-basis matrix $C$ obtained in Proposition~\ref{prop:weyl-normalization-A3} or Proposition~\ref{prop:weyl-normalization-A4} can be corrected, without destroying the Weyl normalization, so that its projective class belongs to $G^{(n)}(S)$.

We briefly recall the terminology used below. A homomorphism of local rings $A\to B$ is called \emph{local} if the inverse image of the maximal ideal of $B$ is the maximal ideal of $A$. A local $A$-algebra $B$ is a \emph{faithfully flat local extension} if it is flat over $A$; every flat local homomorphism of local rings is faithfully flat, see~\cite[Tag~00HR]{StacksProject}. Such an extension \emph{preserves the residue field} if
\[
    A/\operatorname{Rad}(A)
    \longrightarrow
    B/\operatorname{Rad}(B)
\]
is an isomorphism.

We will repeatedly pass to auxiliary faithfully flat local extensions. To simplify the notation, throughout this subsection $S$ denotes the current local $R$-algebra, with $S=R$ initially, and
$J_S:=\operatorname{Rad}(S)$.

Whenever $S$ is replaced, the new ring is a faithfully flat local extension of $S$, the involution extends to it, and the residue field remains unchanged. Thus, at every stage,
\[
    S/J_S\cong R/J.
\]
All auxiliary conjugating matrices will be chosen congruent to the identity modulo $J_S$. We also write
\[
    S_\theta=\{s\in S\mid\bar s=s\},
    \qquad
    S_\theta^-=\{s\in S\mid\bar s=-s\}.
\]

We shall use the following standard construction. Suppose that
\[
    F_1(T_1,\ldots,T_r)=\cdots=F_r(T_1,\ldots,T_r)=0
\]
is a square system of polynomial equations over $S$ such that
\[
    F_i(0,\ldots,0)\in J_S
    \qquad(i=1,\ldots,r),
\]
and the Jacobian determinant
\[
    D(0):=
    \det\left(
        \frac{\partial F_i}{\partial T_j}(0,\ldots,0)
    \right)
\]
is a unit in $S$. Set
\[
    A:=S[T_1,\ldots,T_r]/(F_1,\ldots,F_r),
    \qquad
    \mathfrak n:=(J_S,T_1,\ldots,T_r)A,
    \qquad
    S':=A_{\mathfrak n}.
\]
Since
\[
    A/\mathfrak n\cong S/J_S,
\]
the ideal $\mathfrak n$ is maximal. The Jacobian determinant remains a unit in $S'$, so the Jacobian flatness criterion implies that $S'$ is flat over $S$; see~\cite[Tags~00T7 and~00TA]{StacksProject}. Moreover,
$\mathfrak n\cap S=J_S$,
so $S\to S'$ is local and therefore faithfully flat. The images of $T_1,\ldots,T_r$ lie in $\operatorname{Rad}(S')$, satisfy the prescribed equations, and
\[
    S'/\operatorname{Rad}(S')\cong S/J_S.
\]

Suppose, in addition, that the involution of $S$ extends to the polynomial ring by
\[
    \overline{T_i}=\varepsilon_iT_i,
    \qquad
    \varepsilon_i\in\{1,-1\},
\]
and preserves the ideal $(F_1,\ldots,F_r)$. Then it also preserves $\mathfrak n$ and extends to $S'$. Thus the resulting solution has the prescribed symmetry types.

After each such replacement, we retain the notation $S$ for the new ring and $J_S$ for its Jacobson radical. Since compositions of faithfully flat homomorphisms are faithfully flat, the current ring remains a faithfully flat local $R$-algebra with
\[
    S/J_S\cong R/J.
\]


\subsubsection{Type \texorpdfstring{${}^{2}A_3$}{2A3}}
\label{subsubsec:elementary-unitary-correction-A3}

We first consider type ${}^{2}A_3$. Following Proposition~\ref{prop:weyl-normalization-A3}, we have a matrix
$C\in\operatorname{GL}_4(S,J_S)$
such that
$\psi:=i_{[C]}^{-1}\circ\varphi_1$
satisfies
\[
    \psi(w_1)=w_1,
    \qquad
    \psi(w_2)=w_2,
    \qquad
    \psi(h_2)=h_2.
\]

For $A\in M_4(S)$, define
\[
    A^\dagger:=Q_4^{-1}\overline A^{\,t}Q_4.
\]
A projective class $[A]\in\operatorname{PGL}_4(S)$ belongs to
$G_{\mathrm{ad},\sigma}(A_3,S)$ if and only if $A^\dagger A$ is scalar.

The centralizer of the normalized Weyl representatives is
\[
    C_{M_4(S)}(w_1,w_2)
    =
    S[Q_4]
    =
    \{aI_4+bQ_4\mid a,b\in S\}.
\]

\begin{lemma}[Unitary correction in type ${}^{2}A_3$]
    \label{lemma:A3-elementary-unitary-correction}
    After replacing $S$ by a faithfully flat local extension as above, there exists
    \[
        L=I_4+cQ_4\in\operatorname{GL}_4(S,J_S),
        \qquad
        c\in J_S\cap S_\theta^-,
    \]
    such that
    \[
        Lw_1=w_1L,
        \qquad
        Lw_2=w_2L,
        \qquad
        [CL]\in G_{\mathrm{ad},\sigma}(A_3,S).
    \]
    Consequently, after replacing $C$ by $CL$ and $\psi$ by
    $i_{[L]}^{-1}\circ\psi$, the Weyl normalization is preserved, and the images of the elementary root elements admit determinant-one unitary representatives congruent to the corresponding standard matrices modulo $J_S$.
\end{lemma}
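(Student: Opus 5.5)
The plan is to measure how far $C$ is from being unitary through the Hermitian-type matrix
\[
    M:=C^\dagger C,
\]
show that $M$ lies in the centralizer $S[Q_4]$, and then remove its $Q_4$-component by a Hensel-type solution of a single quadratic equation, obtained through the Jacobian construction above.

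\emph{Step 1: $M$ lies in $S[Q_4]$.} Let $W_1,W_2$ be the determinant-one representatives of $\varphi_1(w_1),\varphi_1(w_2)$ from Proposition~\ref{prop:weyl-normalization-A3}, so that $C^{-1}W_iC=w_i$.
\begin{itemize}
    \item Since $\varphi_1(w_i)\in E^{(3)}\subseteq G_{\mathrm{ad},\sigma}(A_3,S)$, we have $W_i^\dagger W_i=\mu_iI_4$ for some unit $\mu_i$.
    \item Because $W_i\equiv w_i\pmod{J_S}$ and $w_i^\dagger w_i=I_4$, we get $\mu_i\equiv1\pmod{J_S}$.
    \item The standard Weyl representatives are genuinely unitary, so $w_i^\dagger=w_i^{-1}$. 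A direct computation gives
    \[
        w_i^{-1}Mw_i=w_i^\dagger Mw_i=C^\dagger W_i^\dagger W_iC=\mu_iM.
    \]
    \item Taking determinants gives $\mu_i^4=1$, and Lemma~\ref{lem:scalar-normalization} forces $\mu_i=1$.
\end{itemize}
Hence $M$ commutes with $w_1$ and $w_2$, so $M=aI_4+bQ_4$ by the centralizer description.

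\emph{Step 2: symmetry of $a$ and $b$.} One checks that $\dagger$ is an involutive anti-automorphism of $M_4(S)$: $(A^\dagger)^\dagger=A$, using $Q_4^t=-Q_4$. One also checks $Q_4^\dagger=-Q_4$. Since $M^\dagger=M$, this gives
\[
    \bar a=a,\qquad \bar b=-b.
\]
From $C\equiv I_4\pmod{J_S}$ we get $M\equiv I_4$, so $a\in S_\theta^\times$ with $a\equiv1$, and $b\in J_S\cap S_\theta^-$.

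\emph{Step 3: solving for $c$.} For $L=I_4+cQ_4$ with $c\in S_\theta^-$, we have $L^\dagger=I_4-\bar cQ_4=L$. Using $Q_4^2=-I_4$,
\[
    L^\dagger ML=\bigl(a(1-c^2)-2bc\bigr)I_4+\bigl(b(1-c^2)+2ac\bigr)Q_4 .
\]
So it suffices to find a skew element $c\in J_S$ with
\[
    F(c):=bc^2-2ac-b=0.
\]
\begin{itemize}
    \item $F(0)=-b\in J_S$.
    \item $F'(0)=-2a$ is a unit.
    \item Extending the involution by $\overline T=-T$ sends $F$ to $-F$, using $\bar a=a$ and $\bar b=-b$, so the ideal $(F)$ is preserved.
\end{itemize}
Thus the Jacobian construction yields a faithfully flat local extension, with the same residue field and extended involution, containing a root $c\in J_S\cap S_\theta^-$. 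Then:
\begin{itemize}
    \item $L\equiv I_4\pmod{J_S}$ and $L$ is invertible.
    \item $L$ commutes with $w_1,w_2$, since $Q_4$ lies in their centralizer.
    \item $(CL)^\dagger(CL)=L^\dagger ML$ equals the unit scalar $a(1-c^2)-2bc$, so $[CL]\in G_{\mathrm{ad},\sigma}(A_3,S)$.
\end{itemize}

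\emph{Step 4: consequences.} Replacing $C$ by $CL$ keeps $C^{-1}W_iC=w_i$, because $L$ centralizes $w_i$. Since $[CL]$ is projectively unitary, conjugation by it preserves $G_{\mathrm{ad},\sigma}$. Hence each image $\psi(x)$ of an elementary root element is projectively unitary. Its determinant-one representative congruent to the standard matrix, chosen by the convention of Subsubsection~\ref{subsubsec:lifting_remark}, satisfies $X^\dagger X=\nu I_4$ with $\nu\equiv1$ and $\nu^4=1$. Lemma~\ref{lem:scalar-normalization} then gives $\nu=1$, so these representatives are exactly unitary.

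The step I expect to be most delicate is Step 1 together with Step 2. These require exact, not merely projective, control of the scalars $\mu_i$, and verification that the symmetry of $M$ forces the correcting parameter to be skew. That skewness is what makes $L^\dagger=L$ and lets the quadratic be solved compatibly with the involution.
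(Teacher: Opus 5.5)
Your proposal is correct and follows the paper's proof essentially step for step. In both, one forms $M=C^\dagger C$, shows it centralizes $w_1,w_2$ so that $M=aI_4+bQ_4$ with $\bar a=a$ and $\bar b=-b$, and solves $bc^2-2ac-b=0$ by the Jacobian construction with $\overline T=-T$. The only differences are cosmetic: you kill the scalar via $\mu_i^4=1$ from the determinant-one representatives $W_i$ with $C^{-1}W_iC=w_i$, where the paper iterates through the $2$-power projective order of $w_i$, and you spell out the ``consequently'' clause, which the paper leaves implicit.
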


\begin{proof}
    Set
    $M:=C^\dagger C$.
    Since $C\equiv I_4\pmod{J_S}$, we have
    \[
        M\equiv I_4\pmod{J_S}.
    \]

    For $w=w_1,w_2$, the projective class
    \[
        [CwC^{-1}]=\varphi_1(w)
    \]
    belongs to $G_{\mathrm{ad},\sigma}(A_3,S)$. Hence
    \[
        w^{-1}Mw=\eta_wM
    \]
    for some unit $\eta_w\equiv1\pmod{J_S}$. Iterating this equality through the projective order of $w$, which is a power of $2$, shows that $\eta_w$ has $2$-power order. By Lemma~\ref{lem:scalar-normalization},
    $\eta_w=1$.
    
    Thus $M$ commutes with $w_1$ and $w_2$, and therefore
    \[
        M=uI_4+vQ_4,
    \]
    where
    \[
        u\equiv1\pmod{J_S},
        \qquad
        v\in J_S.
    \]

    Since $M=C^\dagger C$, we have $M^\dagger=M$. As
    $Q_4^\dagger=-Q_4$,
    it follows that
    $\bar u=u$,        $\bar v=-v$.
    
    We seek
    \[
        L=I_4+cQ_4,
        \qquad
        c\in J_S\cap S_\theta^-,
    \]
    such that $L^\dagger ML$ is scalar. Since $L^\dagger=L$ and $S[Q_4]$ is commutative, this is equivalent to requiring the $Q_4$-coefficient of $ML^2$ to vanish.

    Since $Q_4^2=-I_4$,
    \[
        L^2=(1-c^2)I_4+2cQ_4.
    \]
    Hence the $Q_4$-coefficient of $ML^2$ is
    \[
        2uc+v(1-c^2).
    \]
    We must therefore solve
    \[
        vc^2-2uc-v=0.
    \]

    Put
    \[
        f(T):=vT^2-2uT-v.
    \]
    Then
    \[
        f(0)=-v\in J_S,
        \qquad
        f'(0)=-2u\in S^\times.
    \]
    Extend the involution to $S[T]$ by setting
    $\overline T=-T$.

    Since $\bar u=u$ and $\bar v=-v$, we have
    \[
        \overline{f(T)}=-f(T).
    \]
    Thus both $(f(T))$ and $(J_S,T)$ are invariant under the involution.

    Applying the construction above to
    \[
        A=S[T]/(f(T)),
        \qquad
        \mathfrak n=(J_S,T)A,
    \]
    we obtain a faithfully flat local extension $S'/S$ with the same residue field and an extended involution. If $c$ is the image of $T$ in $S'$, then
    \[
        c\in\operatorname{Rad}(S'),
        \qquad
        \bar c=-c,
        \qquad
        vc^2-2uc-v=0.
    \]
    After replacing $S$ by $S'$, we therefore have
    \[
        c\in J_S\cap S_\theta^-.
    \]

    Since $c\in J_S$, the matrix $L$ is invertible. For this choice of $c$,
    \[
        (CL)^\dagger(CL)=L^\dagger ML
    \]
    is scalar, and hence
    \[
        [CL]\in G_{\mathrm{ad},\sigma}(A_3,S).
    \]
    Finally, $L\in S[Q_4]$, so it commutes with $w_1$ and $w_2$, and the Weyl normalization is preserved.
\end{proof}


\subsubsection{Type \texorpdfstring{${}^{2}A_4$}{2A4}}
\label{subsubsec:elementary-unitary-correction-A4}

We now consider type ${}^{2}A_4$. Following Proposition~\ref{prop:weyl-normalization-A4}, we have
\[
    C\in\operatorname{GL}_5(S,J_S),
    \qquad
    \psi=i_{[C]}^{-1}\circ\varphi_1,
\]
with
\[
    \psi(w_1)=w_1,
    \qquad
    \psi(w_4)=w_4,
    \qquad
    \psi(H)=H.
\]

For $A\in M_5(S)$, define
\[
    A^\dagger:=Q_5\overline A^{\,t}Q_5.
\]
Since $Q_5^2=I_5$, a projective class $[A]\in\operatorname{PGL}_5(S)$ belongs to
$G_{\mathrm{ad},\sigma}(A_4,S)$ if and only if $A^\dagger A$ is scalar.

The centralizer of the long Weyl representatives is
\[
    C_{M_5(S)}(w_1,w_4)
    =
    \left\{
    Z(r,s,u,t,q):=
    \begin{pmatrix}
        r&-s&0&-u&t\\
        s&r&0&-t&-u\\
        0&0&q&0&0\\
        u&-t&0&r&-s\\
        t&u&0&s&r
    \end{pmatrix}
    \ \middle|\ 
    r,s,u,t,q\in S
    \right\}.
\]

\begin{lemma}[Unitary correction in type ${}^{2}A_4$]
    \label{lemma:A4-elementary-unitary-correction}
    After replacing $S$ by a faithfully flat local extension as above, there exists
    \[
        L=Z(1+x,y,z,\tau,1+\omega)
        \in\operatorname{GL}_5(S,J_S),
    \]
    where
    \[
        x,\tau,\omega\in J_S\cap S_\theta,
        \qquad
        y,z\in J_S\cap S_\theta^-,
    \]
    such that
    \[
        Lw_1=w_1L,
        \qquad
        Lw_4=w_4L,
        \qquad
        (CL)^\dagger(CL)=I_5.
    \]
    Consequently, after replacing $C$ by $CL$ and $\psi$ by
    $i_{[L]}^{-1}\circ\psi$, the normalization of $w_1,w_4,H$ is preserved, and the images of the elementary root elements admit determinant-one unitary representatives congruent to the corresponding standard matrices modulo $J_S$.
\end{lemma}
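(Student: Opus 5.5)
I will follow the pattern of the ${}^2A_3$ case (Lemma~\ref{lemma:A3-elementary-unitary-correction}). Set $M:=C^\dagger C$, so that $M\equiv I_5\pmod{J_S}$ and $M^\dagger=M$.

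\textbf{Step 1: $M$ lies in the centralizer.} For $w\in\{w_1,w_4\}$, the class $[CwC^{-1}]=\varphi_1(w)$ lies in $G_{\mathrm{ad},\sigma}(A_4,S)$. The standard $w$ satisfies $w^\dagger w=I_5$ (this should be checked from $Q_5$), so $w^{-1}Mw=\eta_wM$ for some unit $\eta_w\equiv1\pmod{J_S}$. The projective order of $w$ is $4$, so iterating gives $\eta_w^4=1$, and Lemma~\ref{lem:scalar-normalization} yields $\eta_w=1$. Hence $M$ commutes with $w_1,w_4$, that is, $M=Z(r_0,s_0,u_0,t_0,q_0)$ with $r_0,q_0\equiv1$ and $s_0,u_0,t_0\in J_S$.

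\textbf{Step 2: symmetry types.} I will compute $Z(r,s,u,t,q)^\dagger$ explicitly. The expected outcome is that $\dagger$ maps the centralizer to itself via $(r,s,u,t,q)\mapsto(\bar r,-\bar s,-\bar u,\bar t,\bar q)$, possibly with signs on $u,t$ that I would fix by direct computation. Then $M^\dagger=M$ forces $r_0,t_0,q_0\in S_\theta$ and $s_0,u_0\in S_\theta^-$, which matches the symmetry types prescribed for $L$.

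\textbf{Step 3: solving the system.} The centralizer algebra splits as $S\times\mathcal B$, where $\mathcal B$ acts on $\langle e_1,e_2,e_4,e_5\rangle$ and is generated by $i=w_1|$, $j=w_4|$ with $i^2=j^2=-1$, $ij=ji$. Hence $\mathcal B\cong S[i,j]/(i^2+1,j^2+1)$ is commutative of rank $4$. Since $\dagger$ is an involutive anti-automorphism preserving this commutative algebra, $L^\dagger=\tau(L)$ for a semilinear involution $\tau$ described by Step~2. The equation $(CL)^\dagger(CL)=L^\dagger ML=I_5$ then becomes $\tau(L)\,M\,L=1$ in $S\times\mathcal B$. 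This is five scalar equations in the five unknowns $x,y,z,\tau,\omega$ (the third coordinate is the single equation $(1+\bar\omega)q_0(1+\omega)=1$).

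Linearizing at $0$ with $M\equiv1$, the equations read $\tau(\ell)+\ell\equiv -(M-1)$, where $L=1+\ell$. Since $\ell$ is chosen $\tau$-symmetric by the prescribed types, this gives $2\ell\equiv -(M-1)$ modulo quadratic terms. So I will split the system into its $\tau$-symmetric components: there are exactly five of them, namely the coefficients of $1$, $i$, $j$, $ij$ and the third coordinate, with the symmetry types listed. Taking $F(T)=\tau(1+T)M(1+T)-1$, the Jacobian at $0$ is $2\cdot\mathrm{id}$, a unit, and $F(0)=M-1\in J_S$. Assigning $\overline T$ signs by type makes the ideal $\theta$-stable, since $\overline{F}$ equals $\pm F$ componentwise by $M^\dagger=M$.

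The auxiliary-extension construction then produces, in a faithfully flat local extension with the same residue field and extended involution, a solution in $J_S$ with the right symmetry types. Then $L\equiv I_5$, $L$ commutes with $w_1,w_4$, and $(CL)^\dagger CL=I_5$. Replacing $C$ by $CL$ preserves the normalization of $w_1$, $w_4$ and $H=w_1^2$, and conjugates of unitary matrices by the now-unitary $CL$ are unitary. Determinant one is arranged exactly as in Subsubsection~\ref{subsubsec:lifting_remark}.

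\textbf{Main obstacle.} The main obstacle is Step~2/3 bookkeeping: identifying the precise form of $\dagger$ on $Z(\cdot)$, and verifying that the five equations really decouple into the stated symmetry types, so that $F$ has the required $\pm$-equivariance. I will settle this by an explicit matrix computation of $Z^\dagger$ using $Q_5$.
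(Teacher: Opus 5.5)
Your proposal is correct and is essentially the paper's argument: the paper also shows that $M=C^\dagger C$ lies in the commutative centralizer of $w_1,w_4$ with the stated symmetry types. Your expected formula $Z(r,s,u,t,q)^\dagger=Z(\bar r,-\bar s,-\bar u,\bar t,\bar q)$ is right, with no extra signs. The paper then solves for a Hermitian $L$ in that centralizer via the Jacobian/faithfully flat construction, as you do. The only difference is cosmetic: the paper writes the system as $L^2=M^{-1}$, whereas yours becomes $ML^2=1$ once $L$ is formally Hermitian (so that $L^\dagger ML=ML^2$), which is the same system multiplied by the invertible $M$.
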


\begin{proof}
    Set
    $M:=C^\dagger C$.
    Then
    \[
        M\equiv I_5\pmod{J_S}.
    \]

    For $w=w_1,w_4$, the projective class
    \[
        [CwC^{-1}]=\varphi_1(w)
    \]
    belongs to $G_{\mathrm{ad},\sigma}(A_4,S)$. Hence
    \[
        w^{-1}Mw=\eta_wM
    \]
    for some unit $\eta_w\equiv1\pmod{J_S}$. As in the preceding proof, $\eta_w$ has $2$-power order, so Lemma~\ref{lem:scalar-normalization} gives
    $\eta_w=1$.

    Thus $M$ commutes with $w_1$ and $w_4$, and therefore
    $M=Z(r,s,u,t,q)$,
    where
    \[
        r,q\equiv1\pmod{J_S},
        \qquad
        s,u,t\in J_S.
    \]
    Since $M=C^\dagger C$, we have $M^\dagger=M$. Direct calculation gives
    \[
        \bar r=r,
        \qquad
        \bar t=t,
        \qquad
        \bar q=q,
        \qquad
        \bar s=-s,
        \qquad
        \bar u=-u.
    \]

    The algebra $C_{M_5(S)}(w_1,w_4)$ is commutative. It is therefore enough to find a Hermitian element $L$ in this centralizer such that
    $L^2=M^{-1}$.

    Write $M^{-1}=Z(r_0,s_0,u_0,t_0,q_0)$,
    where
    $r_0,t_0,q_0\in S_\theta$,
        $s_0,u_0\in S_\theta^-$,
    and
    $r_0,q_0\equiv1\pmod{J_S}$,
    $s_0,u_0,t_0\in J_S$.

    We seek
    \[
        L=Z(1+x,y,z,\tau,1+\omega),\ 
    \text{ where }\
        x,\tau,\omega\in J_S\cap S_\theta,
        \quad
        y,z\in J_S\cap S_\theta^-.
    \]
    The equation $L^2=M^{-1}$ is equivalent to
    \begin{align*}
        (1+x)^2-y^2-z^2+\tau^2&=r_0,
        &
        2(1+x)y-2\tau z&=s_0,
        \\
        2(1+x)z-2y\tau&=u_0,
        &
        2(1+x)\tau+2yz&=t_0,
        \\
        (1+\omega)^2&=q_0.
    \end{align*}
    Modulo $J_S$, this system has the solution
    $x=y=z=\tau=\omega=0$.
    
    Its Jacobian matrix at this solution is diagonal with all diagonal entries equal to $2$, and is therefore invertible.

    Extend the involution to
    $S[x,y,z,\tau,\omega]$
    by setting
    \[
        \bar x=x,
        \qquad
        \bar\tau=\tau,
        \qquad
        \bar\omega=\omega,
        \qquad
        \bar y=-y,
        \qquad
        \bar z=-z.
    \]
    The first, fourth, and fifth equations are invariant under the involution, whereas the second and third are multiplied by $-1$. Hence their defining ideal is invariant.

    The construction above therefore yields, after replacing $S$ by a faithfully flat local extension with the same residue field, a solution satisfying
    \[
        x,\tau,\omega\in J_S\cap S_\theta,
        \qquad
        y,z\in J_S\cap S_\theta^-.
    \]
    For this solution,
    \[
        L^\dagger=L,
        \qquad
        L^2=M^{-1}.
    \]
    Since $L\equiv I_5\pmod{J_S}$, it is invertible. Moreover, $L$ and $M$ commute, so
    \[
        (CL)^\dagger(CL)
        =
        L^\dagger ML
        =
        LML
        =
        ML^2
        =
        I_5.
    \]
    Finally, $L$ centralizes $w_1$ and $w_4$, so replacing $C$ by $CL$ preserves the Weyl normalization.
\end{proof}

\subsection{Normalization of the images of \texorpdfstring{$u_i$}{u\_i}}\label{subsec:normalization_of_u_i}

Up to this point, we have shown that the map
\[
    \psi=i_{[C]}^{-1}\circ\varphi_1,
    \qquad
    C\in\operatorname{GL}_{n+1}(S,J_S),
    \qquad
    [C]\in G^{(n)}(S)
\]
is an isomorphism from $E^{(n)}$ onto a subgroup of $E^{(n)}(S)$ satisfying
\[
    \psi(w_i)=w_i,
    \qquad
    \psi(h_i)=h_i
    \qquad
    (i\in\{1,2,3,4\}),
\]
when $n=3$, and satisfying
\[
    \psi(w_1)=w_1,
    \qquad
    \psi(w_4)=w_4,
    \qquad
    \psi(H)=H,
\]
when $n=4$.

In this subsection, we normalize the images of the elementary generators $u_i$, $i\in\{1,2,3,4\}$. We use further conjugations and, when necessary, faithfully flat local extensions to put these images into the normal forms below. The cases $n=3$ and $n=4$ are treated separately.


\subsubsection{Type ${}^{2}A_3$}\label{subsubsec:image_of_u_i_in_A_3}

For $i\in\{1,2\}$, put
$X_i\coloneqq\psi(u_i)$.
By construction, these elements satisfy
\[
    X_i \equiv u_i \pmod{J_S}.
\]
We further define the auxiliary elements
\[
    X_3 \coloneqq w_2 X_1 w_2^{-1}, \qquad X_4 \coloneqq w_1 X_2 w_1^{-1}, \qquad X_{-2} \coloneqq w_2 X_2^{-1} w_2^{-1}.
\]

Following the convention of Subsubsection~\ref{subsubsec:lifting_remark}, we choose determinant-one representatives for $X_1$ and $X_2$. Consequently, the derived matrices $X_3$, $X_4$, and $X_{-2}$ also have determinant one. Since their projective classes are unitary, we have
\[
    X_i^\dagger X_i=\nu_i I_4
    \qquad(i=1,2)
\]
for some $\nu_i\in S^\times$ satisfying $\nu_i\equiv1\pmod{J_S}$. Taking determinants gives $\nu_i^4=1$, and Lemma~\ref{lem:scalar-normalization} yields $\nu_i=1$. Hence
\[
    X_1^\dagger X_1=I_4
    \qquad\text{and}\qquad
    X_2^\dagger X_2=I_4.
\]

More generally, every scalar factor $\lambda$ occurring in the projective relations below is congruent to $1$ modulo $J_S$. Since the chosen representatives have determinant one, taking determinants gives $\lambda^4=1$. Lemma~\ref{lem:scalar-normalization} therefore gives $\lambda=1$, so all these relations may be treated as exact matrix equations.

\begin{lemma}[Preliminary normal form in type ${}^{2}A_3$]
    \label{lemma:A3-elementary-preliminary-normal-form}
    After possibly replacing $S$ by a further faithfully flat local extension with the same residue field, there exists a matrix 
    \[
        L=I_4+\tau Q_4, \qquad \tau\in J_S\cap S_\theta,
    \]
    which commutes with $w_1$ and $w_2$ and whose projective class is unitary. By replacing $C$ with $CL^{-1}$ and $\psi$ with $i_{[L]}\circ\psi$, we may assume that
    \begin{equation} \label{eq:A3-elementary-X2-preliminary}
        X_2 = \begin{pmatrix}
            a & 0 & 0 & 0 \\
            0 & b & a^{-1} & 0 \\
            0 & 0 & e & 0 \\
            0 & 0 & 0 & a
        \end{pmatrix},
        \qquad a, b, e \equiv 1 \pmod{J_S},
    \end{equation}
    where the parameters satisfy the relations
    \begin{equation} \label{eq:A3-elementary-abe-relations}
        a^3 = 1, \qquad be = a, \qquad b+e = a+1, \qquad (a-1)(a-e) = 0, \qquad \text{and} \qquad a = \bar{a}^{-1}.
    \end{equation}
    The unitary condition further imposes
    \begin{equation}\label{eq:A3-elementary-unitary-be}
        b \bar{e} = 1 \qquad \text{and} \qquad \bar{b} e = 1.
    \end{equation}
    
    Under the same normalization, the element $X_1$ takes the explicit form
    \begin{equation} \label{eq:A3-elementary-X1-preliminary}
        X_1 = \begin{pmatrix}
            p & q & r & s \\
            0 & v & 0 & n \\
            r & s & p & q \\
            0 & n & 0 & v
        \end{pmatrix},
    \end{equation}
    with its entries given by the formulas
    \begin{equation} \label{eq:A3-elementary-pqrs-formulas}
        \begin{aligned}
            q &= \frac{a^2+a}{2}, & s &= \frac{a-a^2}{2}, & p &= \frac{a^2 e + a - e + 1}{2}, \\ 
            v &= \frac{a^2 + e + 1 - a^2 e}{2}, & r &= \frac{1+a^2}{2}(e-a), & n &= \frac{1+a^2}{2}(1-e).
        \end{aligned}
    \end{equation}
    In particular, if $a=e=1$, then $X_1 = u_1$ and $X_2 = u_2$.
\end{lemma}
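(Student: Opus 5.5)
We first determine $X_2$ from the Weyl relations and then recover $X_1$ from $X_2$. Since $\psi$ fixes $w_1,w_2,h_2$ and preserves every relation of $E^{(3)}$, and every scalar ambiguity has been removed as explained above, the defining relations hold as exact matrix equations.

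\emph{Step 1: the shape of $X_2$.} We use the following exact relations: $u_2$ commutes with $h_2=\operatorname{diag}(1,-1,-1,1)$ up to the sign pattern of $E_{23}$; $u_2$ commutes with $u_4=w_1u_2w_1^{-1}$; and $w_2u_2w_2^{-1}=u_{-2}^{-1}$. From these we read off the block structure of $X_2$.
\begin{itemize}
\item Since $h_2$ centralizes $x_2(s)$, we get $h_2X_2=X_2h_2$. Hence $X_2$ preserves $\langle e_1,e_4\rangle$ and $\langle e_2,e_3\rangle$.
\item $X_2$ commutes with the conjugate $w_2X_2w_2^{-1}$ up to the Steinberg relation between $x_2$ and $x_{-2}$, and with $X_4=w_1X_2w_1^{-1}$. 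On $\langle e_1,e_4\rangle$, the long root $\beta_4$ is orthogonal to $\beta_2$, so $x_2$ and $x_4$ commute. This forces the $\langle e_1,e_4\rangle$-block of $X_2$ to commute with the $\langle e_2,e_3\rangle$-block of $X_2$ transported by $w_1$.
\item Using $X_2\equiv u_2 \pmod{J_S}$, these two conditions show that the $(e_1,e_4)$-block is scalar, equal to some $a$, and that the $(e_2,e_3)$-block is upper triangular, $\begin{pmatrix} b&f\\0&e\end{pmatrix}$. For triangularity we use that $X_2$ commutes with $x_2(s)$-type elements, namely with $X_2$ itself conjugated by $h_i$.
\end{itemize}

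\emph{Step 2: the conjugation $L$ and the relations on $a,b,e$.} Conjugating by $L=I_4+\tau Q_4$ with $\tau\in J_S\cap S_\theta$ (which centralizes $w_1,w_2$) rescales $f$. We solve a one-variable equation for $\tau$ making $f=a^{-1}$. Its derivative at $0$ is a unit, so the Jacobian construction of Subsection~\ref{subsec:elementary-unitary-correction} supplies $\tau$, after a faithfully flat extension that respects the symmetry $\bar T=T$. This is where I am least sure of the exact form $I+\tau Q_4$ versus a torus factor; if needed, we also use the centralizer $S[Q_4]$ fully. The relations \eqref{eq:A3-elementary-abe-relations} then come from the following checks.
\begin{itemize}
\item $\det X_2=1$ gives $a^2be=1$.
\item The braid relation $(w_2X_2)^3$-type identity, or equivalently $w_2=u_2u_{-2}^{-1}u_2$ applied to the normalized images, yields $be=a$ and $b+e=a+1$.
\item Combining these gives $a^3=1$ and $(a-1)(a-e)=0$.
\end{itemize}
The unitary condition $X_2^\dagger X_2=I_4$ gives $a\bar a=1$, $b\bar e=\bar b e=1$, which is \eqref{eq:A3-elementary-unitary-be}.

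\emph{Step 3: determining $X_1$.} We impose the remaining relations on $X_1$:
\begin{itemize}
\item $w_1X_1w_1^{-1}=X_{-1}^{-1}$-type and the $\rho$-symmetry. The relation $w_1^2=-I$, together with the fact that $h_1=-I$ centralizes everything, forces the block form \eqref{eq:A3-elementary-X1-preliminary}, with rows 3 and 4 copying rows 1 and 2.
\item $X_1$ must satisfy the Chevalley commutator formula with $X_2$: $[x_1(1),x_2(1)]=x_3(\pm1)x_4(\pm1)$, where $X_3,X_4$ are Weyl conjugates of $X_1,X_2$.
\item The $B_2$ braid relation $w_1=u_1u_{-1}u_1$ with $u_{-1}=w_1^{-1}u_1^{-1}w_1$ (up to sign), together with the commutator identity, gives a linear system in $p,q,r,s,v,n$ whose coefficients involve $a,e$.
\end{itemize}
Since $2$ is invertible, this system solves uniquely to \eqref{eq:A3-elementary-pqrs-formulas}. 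The main obstacle is this step: the system is nonlinear a priori. I would first extract the linear consequences, namely the $E_{23}$-entry of the commutator, and then verify the quadratic ones using $a^3=1$ and $(a-1)(a-e)=0$. Setting $a=e=1$ in the formulas gives $q=1$, $p=v=1$, and $r=s=n=0$, which recovers $u_1,u_2$.
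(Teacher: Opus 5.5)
Your outline follows the paper's skeleton: the shape of $X_2$ from commutation relations, a correction inside the centralizer $S[Q_4]$, then $X_1$ from the remaining relations. The core of the lemma is missing, however.

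\textbf{Step 1: triangularity cannot be derived.} The upper-triangularity of the middle block cannot follow from any relation. Since $h_1=h_3=-I_4$ and $h_4=-h_2$, conjugating $X_2$ by $h_i$ gives nothing. More decisively, conjugation by $L=I_4+\tau Q_4$ with $\tau\in J_S\cap S_\theta$ fixes $w_1,w_2,h_2$ and preserves projective unitarity, so it preserves every relation you could impose. Yet it moves the $(3,2)$-entry to $\bigl(d+(b-e)\tau-c\tau^2\bigr)/(1+\tau^2)$; starting from $u_2$ itself this is $-\tau^2/(1+\tau^2)$. So the $L$-freedom has to be spent on killing $d$. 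Once $d=0$, nothing needs to be normalized about $c$:
\begin{itemize}
\item $(w_2X_2)^3=I_4$ gives $a^3=1$, $be=c^2$, $c^3=1$;
\item the determinant gives $a^2be=1$, hence $(ac)^2=1$;
\item Lemma~\ref{lem:scalar-normalization} then gives $c=a^{-1}$, and so $be=a$.
\end{itemize}
Getting the outer block scalar also needs $X_2w_4=w_4X_2$. Combined with $[X_2,X_4]=1$, this yields $z(c+d)=0$, hence $z=0$.

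\textbf{Step 2: the Jacobian construction does not apply.} Your claim that the equation for $\tau$ has unit derivative at $0$ is false. Whether you normalize $c$ or $d$, the polynomial is congruent to $-T^2$ modulo $J_S$, with derivative $b-e\in J_S$ at $0$. So the Jacobian construction of Subsection~\ref{subsec:elementary-unitary-correction} cannot be used. The missing idea is the paper's workaround. Since $c$ is a unit, divide $f(T)=d+(b-e)T-cT^2$ by $-c$ to get a monic quadratic $g(T)$. Then $S[T]/(g)$ is free of rank two, hence flat, and its localization at $(J_S,T)$ is a faithfully flat local extension with the same residue field. The involution extends with $\overline T=T$ because $X_2^\dagger X_2=I_4$ gives $\overline{f(T)}=\Delta^{-1}f(T)$ with $\Delta=be-cd$.

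\textbf{The relations on $a,b,e$.} $(w_2X_2)^3=I_4$ does not yield $b+e=a+1$. Nor does $(a-1)(a-e)=0$ follow from $be=a$, $b+e=a+1$, $a^3=1$: these only give $(e-1)(e-a)=0$, which leaves $(a-1)(a-e)=(a-e)^2$ undetermined. In the paper, both relations and the formulas for $p,q,r,s,v,n$ come from the $X_1$-relations $(X_1w_1)^3=I_4$ and $[X_2,X_1]=X_3X_4^{-1}$.

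\textbf{Step 3: the block form of $X_1$.} The constraints you list impose nothing on the unknown matrix $X_1$. These are the $\rho$-symmetry, $h_1=-I_4$ being central, and the definition of $X_{-1}$ by $w_1$-conjugation. The block form actually comes from $h_2X_1h_2^{-1}=X_1^{-1}$, $w_3X_1w_3^{-1}=X_1^{-1}$, $[X_1,X_{-2}]=1$ and $[X_1,X_4]=1$, and you would need to use these.
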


\begin{proof}
    First, consider the element $X_2$. The standard element $u_2 = x_2(1)$ commutes with both $h_2$ and $w_4$. Transporting these relations via $\psi$ yields
    \[
        X_2 h_2 = h_2 X_2 \qquad \text{and} \qquad X_2 w_4 = w_4 X_2.
    \]
    Solving these two linear matrix equations constrains $X_2$ to the block form
    \[
        X_2 =
        \begin{pmatrix}
            a & 0 & 0 & z \\
            0 & b & c & 0 \\
            0 & d & e & 0 \\
            -z & 0 & 0 & a
        \end{pmatrix},
        \qquad \text{where} \quad a,b,c,e \equiv 1 \pmod{J_S} \quad \text{and} \quad z,d \in J_S.
    \]
    Furthermore, since the standard elements $x_2(1)$ and $x_4(1)$ commute, we have $[X_2, X_4] = 1$. Direct substitution into this commutator relation yields the condition $z(c+d) = 0$. Since $c \equiv 1 \pmod{J_S}$ and $d \equiv 0 \pmod{J_S}$ (due to $X_2 \equiv x_2(1) \pmod{J_S}$), the sum satisfies $c+d \equiv 1 \pmod{J_S}$. Thus, $c+d$ is an invertible element in $S$, which forces $z=0$.
    
    This reduces the matrix to
    \[
        X_2 =
        \begin{pmatrix}
            a & 0 & 0 & 0 \\
            0 & b & c & 0 \\
            0 & d & e & 0 \\
            0 & 0 & 0 & a
        \end{pmatrix}.
    \]
    
    We use a further conjugation by a matrix of the form
    \[
        L=I_4+\tau Q_4,
        \qquad
        \tau\in J_S\cap S_\theta.
    \]
    Indeed, since $L^\dagger L = (1+\tau^2)I_4$, the projective class of $L$ is unitary.
    
    We now replace $\psi$ by the conjugated map $i_{[L]} \circ \psi$. Examining the middle $2 \times 2$ block of $X_2$, a direct calculation shows that after this conjugation, the new lower-left entry becomes
    \[
        d_\tau = \frac{d + (b-e)\tau - c\tau^2}{1+\tau^2}.
    \]
    To eliminate this entry, we must find a root $\tau$ for the equation
    \[
        d + (b-e)\tau - c\tau^2 = 0.
    \]
    
    We must verify that this equation is compatible with the underlying involution. Define the determinant of the middle block as $\Delta \coloneqq be-cd$. Because $\det(X_2) = a^2\Delta = 1$, the element $\Delta$ is necessarily a unit in $S$. Utilizing the exact matrix identity $X_2^\dagger X_2 = I_4$, a comparison of the entries in the middle $2 \times 2$ block yields
    \[
        \bar{b} = \Delta^{-1}b, \qquad \bar{c} = \Delta^{-1}c, \qquad \bar{d} = \Delta^{-1}d, \qquad \bar{e} = \Delta^{-1}e.
    \]
    Consequently, for an indeterminate $T$ fixed by the involution ($\bar{T} = T$), we have
    \[
        \overline{d + (b-e)T - cT^2} = \Delta^{-1}\bigl(d + (b-e)T - cT^2\bigr).
    \]
    Thus the ideal generated by this polynomial is invariant under the involution.
    
    Here, the Jacobian construction of Subsection~\ref{subsec:elementary-unitary-correction} does not apply. The polynomial $d+(b-e)T-cT^2$ is congruent to $-T^2$ modulo $J_S$, meaning $T=0$ is a multiple residual root. Instead, we define $f(T) \coloneqq d+(b-e)T-cT^2$ and set
    \[
        g(T) \coloneqq -c^{-1}f(T) = T^2 - c^{-1}(b-e)T - c^{-1}d.
    \]
    Since $c \equiv 1 \pmod{J_S}$, $c$ is a unit, making $g(T)$ a well-defined monic polynomial of degree two that satisfies $g(T) \equiv T^2 \pmod{J_S[T]}$.
    
    Let $A \coloneqq S[T]/(g(T))$, define the ideal $\mathfrak{n} \coloneqq (J_S,T)A$, and let $S' \coloneqq A_{\mathfrak{n}}$ be the corresponding localization. Because $g(T)$ is monic of degree two, $A$ is a free $S$-module with basis $\{1,T\}$. Therefore, $A$ is flat over $S$, and this flatness extends to its localization $S'$. Furthermore, we have an isomorphism $A/\mathfrak{n} \cong S/J_S$, establishing that $\mathfrak{n}$ is a maximal ideal with $\mathfrak{n} \cap S = J_S$. This ensures that the natural ring homomorphism $S \to S'$ is local. Consequently, the extension is faithfully flat, and the residue field is preserved: $S'/\operatorname{Rad}(S') \cong S/J_S$.
    
    Because $(g(T)) = (f(T))$, the previously established relation guarantees that the defining ideal of $A$ is invariant under the involution extending to $S[T]$ via $\bar{T} = T$. The maximal ideal $\mathfrak{n}$ is similarly invariant, allowing the involution to extend canonically to the localization $S'$. Let $\tau$ denote the canonical image of $T$ in $S'$. Then $\tau \in \operatorname{Rad}(S')$, $\bar{\tau} = \tau$, and it satisfies $d+(b-e)\tau-c\tau^2 = 0$. Because $1+\tau^2$ is a unit in $S'$, we obtain $d_\tau=0$. By replacing $S$ with $S'$ and retaining the original notation, we now have $\tau \in J_S \cap S_\theta$.
    
    Renaming the matrices after this conjugation, $X_2$ assumes the simplified block-diagonal form
    \[
        X_2 =
        \begin{pmatrix}
            a & 0 & 0 & 0 \\
            0 & b & c & 0 \\
            0 & 0 & e & 0 \\
            0 & 0 & 0 & a
        \end{pmatrix}.
    \]
    Applying $\psi$ to the standard rank-one relation $(w_2 x_2(1))^3 = I_4$ yields $(w_2 X_2)^3 = I_4$. Direct substitution gives
    \[
        a^3 = 1, \qquad be = c^2, \qquad c^3 = 1.
    \]
    
    Because $X_2^\dagger X_2=I_4$, examining the upper-left block gives $a\bar{a} = 1$. The determinant-one normalization also gives $a^2 be = 1$. Combining this with $be=c^2$ results in $a^2 c^2 = 1$. Knowing that $ac \equiv 1 \pmod{J_S}$ (since $X_2 \equiv u_2 \pmod{J_S}$), Lemma~\ref{lem:scalar-normalization} gives $ac = 1$, yielding $c = a^{-1}$. Using $a^3=1$, we then obtain $be=a$.
    
    This puts $X_2$ into the preliminary form specified in \eqref{eq:A3-elementary-X2-preliminary}. The exact unitary identity $X_2^\dagger X_2 = I_4$ immediately translates to the relations
    \[
        a = \bar{a}^{-1}, \qquad b\bar{e} = 1, \qquad \bar{b}e = 1.
    \]
    
    We next determine the structure of $X_1$. The standard element $u_1 = x_1(1)$ is inverted by both $h_2$ and $w_3$, and it commutes with both $x_{-2}(1)$ and $x_4(1)$. Transporting these relations via $\psi$ yields
    \[
        h_2 X_1 h_2^{-1} = X_1^{-1}, \qquad w_3 X_1 w_3^{-1} = X_1^{-1}, \qquad [X_1, X_{-2}] = 1, \qquad [X_1, X_4] = 1.
    \]
    Solving this system of structural equations—while utilizing the newly derived form for $X_2$—constrains $X_1$ to
    \[
        X_1 =
        \begin{pmatrix}
            p & q & aq(e-a) & s \\
            0 & v & 0 & aq(b-a) \\
            aq(e-a) & s & p & q \\
            0 & aq(b-a) & 0 & v
        \end{pmatrix}.
    \]
    To finalize the form of $X_1$, we invoke the remaining defining relations
    \[
        (X_1 w_1)^3 = I_4 \qquad \text{and} \qquad [X_2, X_1] = X_3 X_4^{-1}.
    \]
    A direct entry-by-entry comparison from these relations imposes the condition $b+e = a+1$ and gives the parameter formulas in \eqref{eq:A3-elementary-pqrs-formulas}. Substituting $b = a+1-e$ into our working matrix for $X_1$ yields the exact matrix structure specified in \eqref{eq:A3-elementary-X1-preliminary}.
    
    Finally, this same comparison of entries enforces the constraint $(a-1)(a-e) = 0$. Together with the relations already obtained, this proves the system of equations \eqref{eq:A3-elementary-abe-relations}, completing the proof.
\end{proof}

\begin{lemma}[Reduction to one residual parameter]
    \label{lemma:A3-elementary-delta-normal-form}
    After a further residual conjugation by a matrix of the form $L = I_4+\gamma Q_4$, with $\gamma \in J_S \cap S_\theta$, which centralizes $w_1$ and $w_2$ and preserves the projective unitary condition, we may assume that there exists a parameter $\delta \in J_S \cap S_\theta^-$ such that
    \begin{equation}\label{eq:A3-elementary-delta-basic}
        \delta^2 = 0 \qquad \text{and} \qquad 3\delta = 0,
    \end{equation}
    and the matrix $X_2$ takes the explicit normal form
    \begin{equation} \label{eq:A3-X2-delta-normal-form}
        X_2 = X_2^\delta \coloneqq 
        \begin{pmatrix}
            1+2\delta & 0 & 0 & 0 \\
            0 & 1+\delta & 1-2\delta & 0 \\
            0 & 0 & 1+\delta & 0 \\
            0 & 0 & 0 & 1+2\delta
        \end{pmatrix}.
    \end{equation}
    Under the same normalization, $X_1$ assumes the form
    \begin{equation}\label{eq:A3-X1-delta-normal-form}
        X_1 = X_1^\delta \coloneqq 
        \begin{pmatrix}
            1 & 1 & -\delta & -\delta \\
            0 & 1 & 0 & -\delta \\
            -\delta & -\delta & 1 & 1 \\
            0 & -\delta & 0 & 1
        \end{pmatrix}.
    \end{equation}
    Consequently, the residual elementary deformation in type ${}^{2}A_3$ is determined by the single anti-invariant parameter $\delta$.
\end{lemma}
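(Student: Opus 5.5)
The plan is to start from the preliminary normal form of Lemma~\ref{lemma:A3-elementary-preliminary-normal-form} and show that one more conjugation by some $L=I_4+\gamma Q_4$, $\gamma\in J_S\cap S_\theta$, forces $b=e$; after that, everything is bookkeeping. As in the proof of Lemma~\ref{lemma:A3-elementary-preliminary-normal-form}, such an $L$ commutes with $w_1,w_2$ and satisfies $L^\dagger L=(1+\gamma^2)I_4$, so it preserves the Weyl normalization and the projective unitary condition. Since $X_2$ is already upper triangular in its middle block (lower-left entry $0$), the formula for $d_\tau$ from that proof, with $d=0$, shows that the middle block stays upper triangular iff
\[
(b-e)\gamma-c\gamma^2=0 .
\]
This equation has the root $\gamma=0$ and the root $\gamma_1=(b-e)/c=a(b-e)$. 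The root $\gamma_1$ lies in $J_S$ because $b\equiv e$. It is $\theta$-fixed by the rules $\bar b=\Delta^{-1}b$, $\bar c=\Delta^{-1}c$, $\bar e=\Delta^{-1}e$ recorded in that proof. Conjugating by $I_4+\gamma_1Q_4$ moves to the other $X_2$-stable line in $\langle e_2,e_3\rangle$, so it interchanges the diagonal entries $b$ and $e$. It also keeps the outer entries equal to $a$, since $L$ acts as a rotation on $\langle e_1,e_4\rangle$ and $X_2$ is scalar there.

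The main obstacle is extracting $b=e$ from the relations \eqref{eq:A3-elementary-abe-relations} with the two conjugates in hand. Write $\varepsilon=a-1$, $x=b-1$, $y=e-1$, all in $J_S$. From $b+e=a+1$ and $be=a$ we get $x+y=\varepsilon$ and $xy=0$.

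First, since the preliminary form was derived purely from relations that $\psi$ preserves, the constraint $(a-1)(a-e)=0$ must hold for both conjugates. This gives $\varepsilon(\varepsilon-y)=0$ and $\varepsilon(\varepsilon-x)=0$. Adding yields $2\varepsilon^2=\varepsilon(x+y)=\varepsilon^2$, hence $\varepsilon^2=0$ and $\varepsilon x=\varepsilon y=0$. Then $x^2=x(\varepsilon-y)=0$, and likewise $y^2=0$.

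It remains to kill $x-y$. I would first try an intermediate conjugation: solve the quadratic for a $\gamma$ that makes the two diagonal entries equal. Should that fail, I would instead feed the nilpotency information into the relation $[X_2,X_1]=X_3X_4^{-1}$ and the unitary identities $b\bar e=1$, $\bar b e=1$. In particular $b\bar e=1$ together with $\bar b=\Delta^{-1}b$ gives $x+\bar y=0$, which relates $x$ and $y$.

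Once $b=e$, define $\delta:=(a-1)/2\in J_S$. Then $2b=a+1$ gives $b=e=1+\delta$, and $b^2=a$ gives $(a-1)^2=0$, so $\delta^2=0$. Next, $a^3=1$ reduces to $6\delta=0$, so $3\delta=0$. From $a\bar a=1$ we get $(1+2\delta)(1+2\bar\delta)=1$, hence $\bar\delta=-\delta$, so $\delta\in J_S\cap S_\theta^-$. Also $c=a^{-1}=1-2\delta$, which is \eqref{eq:A3-X2-delta-normal-form}.

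Finally, substituting $a=1+2\delta$, $e=1+\delta$ and $a^2=1+4\delta=1+\delta$ into \eqref{eq:A3-elementary-pqrs-formulas} gives
\[
q=p=v=1,\qquad s=r=n=-\delta,
\]
which is exactly \eqref{eq:A3-X1-delta-normal-form}. The condition $(a-1)(a-e)=2\delta\cdot\delta=0$ is then automatic.
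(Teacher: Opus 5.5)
Your preliminary steps are sound. The element $\gamma_1=a(b-e)$ is indeed $\theta$-fixed, and conjugation by $I_4+\gamma_1Q_4$ does swap $b$ and $e$ while fixing $a$ and $c$. Applying $(a-1)(a-e)=0$ to the swapped pair is legitimate: it uses the \emph{proof} of Lemma~\ref{lemma:A3-elementary-preliminary-normal-form}, whose derivation relies only on relations preserved by $\psi$ and on the triangular shape of $X_2$. This gives $x^2=y^2=xy=0$.

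The gap is the step you flag yourself, ``it remains to kill $x-y$.'' You never produce a conjugation that achieves $b=e$, and everything after ``Once $b=e$'' depends on it. Your fallback cannot close the gap, because $b=e$ does not follow from the relations. Suppose $\gamma\in J_S\cap S_\theta$ satisfies $\gamma^2=0$ and $\gamma(b-e)=0$. Then $I_4+\gamma Q_4$ commutes with $w_1,w_2$ and is exactly unitary. It keeps the lower-left middle entry $\gamma(b-e)-c\gamma^2$ equal to $0$, and it replaces $(b,e)$ by $(b-\gamma c,\,e+\gamma c)$. Start from a pair with $b=e$, and take any nonzero such $\gamma$ (whenever one exists). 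The result is an equally valid normalization with $b\neq e$. So no relation-chasing with $[X_2,X_1]=X_3X_4^{-1}$ or $b\bar e=1$ can give $x=y$; the equality must be imposed by a conjugation.

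With what you already have, that conjugation is immediate, and it is exactly the one the paper uses. Since $(b-e)^2=(x-y)^2=0$, put $\gamma:=\gamma_1/2=a(b-e)/2$. The same computation as for $\gamma_1$ shows $\gamma\in J_S\cap S_\theta$, and moreover $\gamma^2=\gamma(b-e)=0$. By the formula above, the lower-left entry stays $0$ and $a$, $c$ are unchanged. Because $\gamma c=(b-e)/2$, both diagonal entries become $(b+e)/2$.

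The paper reaches the same $\gamma$ more directly. It writes $b=1+\xi$, gets $(a-1)\xi=0$ from $(a-1)(a-e)=0$, and then $\xi^2=0$ from $be=a$. It then uses $b\bar e=1$ to get $e=1-\bar\xi$, so $y=-\bar x$, which gives your $y^2=0$ without the swap. Finally it conjugates by $I_4+\gamma Q_4$ with $\gamma=(\xi+\bar\xi)/2$, which equals $a(b-e)/2$.

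Once this step is inserted, the rest of your argument is correct: $\delta=(a-1)/2$, $\delta^2=3\delta=0$, $\bar\delta=-\delta$, and the values $p=q=v=1$, $r=s=n=-\delta$.
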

    
\begin{proof}
    We use the relations established in Lemma~\ref{lemma:A3-elementary-preliminary-normal-form}.
    Define $\xi \coloneqq b-1$, so that $b = 1+\xi$. Since $b+e = a+1$, we immediately obtain $e = a-\xi$, which implies $a-e = \xi$. The established relation $(a-1)(a-e) = 0$ therefore simplifies to $(a-1)\xi = 0$.
    
    Substituting these expressions into the relation $be=a$ gives
    $(1+\xi)(a-\xi) = a$.
    Expanding this product gives $a - \xi + a\xi - \xi^2 = a$, or equivalently, $(a-1)\xi - \xi^2 = 0$. Since we just deduced that $(a-1)\xi = 0$, this gives $\xi^2=0$.
    
    Next, we use the unitary relation $b\bar{e} = 1$. Because $b = 1+\xi$ and $\xi^2 = 0$, its inverse is $b^{-1} = 1-\xi$. Thus, we have $\bar{e} = 1-\xi$, or equivalently by applying the involution, $e = 1-\bar{\xi}$. Using the identity $a = b+e-1$, we find
    $a = 1+\xi-\bar{\xi}$.
    
    Substituting $b = 1+\xi$ and $e = 1-\bar{\xi}$ back into the relation $be = a$ yields $(1+\xi)(1-\bar{\xi}) = 1+\xi-\bar{\xi}$, which gives $\xi\bar{\xi} = 0$.
    
    We now decompose $\xi$ into its invariant and anti-invariant components by defining
    \[
        \gamma \coloneqq \frac{\xi+\bar{\xi}}{2} \in J_S \cap S_\theta \qquad \text{and} \qquad \delta \coloneqq \frac{\xi-\bar{\xi}}{2} \in J_S \cap S_\theta^-.
    \]
    From $\xi^2 = \bar{\xi}^2 = \xi\bar{\xi} = 0$, we obtain
    \[
        \gamma^2 = 0, \qquad \delta^2 = 0, \qquad \text{and} \qquad \gamma\delta = 0.
    \]
    
    Let $L \coloneqq I_4 + \gamma Q_4$. We replace $C$ with $CL^{-1}$ and $\psi$ with $i_{[L]} \circ \psi$. 
    The matrix $L$ centralizes $w_1$ and $w_2$, and is unitary because
    \[
        L^\dagger L = (1+\gamma^2)I_4 = I_4.
    \]
    A direct calculation gives
    \[
        L X_2 L^{-1} =
        \begin{pmatrix}
            1+2\delta & 0 & 0 & 0 \\
            0 & 1+\delta & 1-2\delta & 0 \\
            0 & 0 & 1+\delta & 0 \\
            0 & 0 & 0 & 1+2\delta
        \end{pmatrix}.
    \]
    Thus, after renaming the conjugated matrices to absorb this transformation, $X_2$ has the form presented in \eqref{eq:A3-X2-delta-normal-form}.
    
    To verify the condition $3\delta = 0$, we use $a^3 = 1$. Since $a = 1+2\delta$ and $\delta^2 = 0$, expanding the cube yields
    \[
        1 = a^3 = (1+2\delta)^3 = 1+6\delta.
    \]
    This forces $6\delta = 0$. Because $2 \in S^\times$ (as $1/2 \in R \subseteq S$), we can divide by $2$ to obtain $3\delta = 0$.
    
    Following this conjugation, the pair $(X_1, X_2)$ still satisfies the preliminary normal form derived in Lemma~\ref{lemma:A3-elementary-preliminary-normal-form}, but now specifically parameterized by $a = 1+2\delta$ and $e = 1+\delta$. Substituting these values into the parameter formulas for $X_1$ from \eqref{eq:A3-elementary-pqrs-formulas}, and using $\delta^2 = 0$ and $3\delta = 0$, we compute:
    \[
        p = v = 1, \qquad q = 1, \qquad \text{and} \qquad r = s = n = -\delta.
    \]
    Inserting these parameter values into the general preliminary matrix for $X_1$ reproduces exactly the matrix $X_1^\delta$ given in \eqref{eq:A3-X1-delta-normal-form}, completing the proof.
\end{proof}

Consequently, following the residual normalization in type ${}^{2}A_3$, the images of the distinguished positive elementary generators have the explicit one-parameter form
\[
    \psi(u_1) = X_1^\delta \qquad \text{and} \qquad \psi(u_2) = X_2^\delta,
\]
where the parameter $\delta$ satisfies
\[
    \delta \in J_S \cap S_\theta^-, \qquad \delta^2 = 0, \qquad \text{and} \qquad 3\delta = 0.
\]
Moreover, the images of the remaining positive generators are determined by conjugation by the standard Weyl elements:
\[
    \psi(u_3) = w_2 X_1^\delta w_2^{-1} \qquad \text{and} \qquad \psi(u_4) = w_1 X_2^\delta w_1^{-1}.
\]
In particular, when $\delta = 0$, the exceptional deformation vanishes, yielding $\psi(u_i) = u_i$ for all $i \in \{1, 2, 3, 4\}$.


\subsubsection{Type ${}^{2}A_4$}\label{subsubsec:image_of_u_i_in_A_4}

We now consider type ${}^{2}A_4$. Following the Weyl normalization and the unitary correction of Subsection~\ref{subsec:elementary-unitary-correction}, we work over a local ring $S$ with $S/J_S\cong R/J$ such that $\psi(w_1)=w_1$, $\psi(w_4)=w_4$, and $\psi(H)=H$. The images of the elementary root elements have representatives congruent to the corresponding standard matrices modulo $J_S$.

Put $X_1\coloneqq\psi(u_1)=\psi(x_1(1))$ and $X_2\coloneqq\psi(u_2)=\psi(x_2(1,1/2))$. Following the convention of Subsubsection~\ref{subsubsec:lifting_remark}, we choose determinant-one representatives satisfying
\[
    X_1 \equiv u_1 \qquad \text{and} \qquad X_2 \equiv u_2 \pmod{J_S}.
\]

Since their projective classes are unitary, there exist $\nu_1,\nu_2\in S^\times$, with $\nu_i\equiv1\pmod{J_S}$, such that
\[
    X_i^\dagger X_i=\nu_iI_5
    \qquad(i=1,2).
\]
Applying $\dagger$ gives $\bar\nu_i=\nu_i$, while taking determinants gives $\nu_i^5=1$. Replacing $X_i$ by $\nu_i^2X_i$ preserves its projective class, determinant, and congruence modulo $J_S$, and gives
\[
    (\nu_i^2X_i)^\dagger(\nu_i^2X_i)=\nu_i^5I_5=I_5.
\]
We may therefore assume that
\[
    X_1^\dagger X_1=I_5
    \qquad\text{and}\qquad
    X_2^\dagger X_2=I_5,
\]
and retain the notation $X_1,X_2$ for these representatives.

We also define the auxiliary element $X_3 \coloneqq w_1 X_2 w_1^{-1}$. Rather than normalizing the image of $v_2$ independently, we recover it directly from the image of $u_2$ via the elementary Weyl word:
\begin{equation}\label{eq:A4-elementary-V2-word}
    V_2 \coloneqq X_2 \left( w_4 w_1 X_2 w_1^{-1} w_4^{-1} \right)^2 X_2.
\end{equation}
We subsequently set $V_3 \coloneqq w_1 V_2 w_1^{-1}$.

Before the residual coordinate normalization, we fix scalar representatives for two finite words used in the rigidity calculation.

Since $(X_1w_1)^3$ is projectively trivial, there exists $\alpha\in S^\times$, with $\alpha\equiv1\pmod{J_S}$, such that
\[
    (X_1w_1)^3=\alpha I_5.
\]
The determinant and unitary conditions give $\alpha^5=1$ and $\alpha\bar\alpha=1$. Replacing $X_1$ by $\alpha^3X_1$ preserves its projective class, determinant, unitarity, and congruence modulo $J_S$, and gives
\[
    (\alpha^3 X_1 w_1)^3 = \alpha^{10} I_5 = I_5.
\]
Thus we may assume that $(X_1w_1)^3=I_5$.

Similarly, $V_2^2=\beta I_5$ for some $\beta\equiv1\pmod{J_S}$ satisfying $\beta^5=1$ and $\beta\bar\beta=1$. If $X_2$ is replaced by $\beta^3X_2$, then the defining word~\eqref{eq:A4-elementary-V2-word}, which is homogeneous of degree four in $X_2$, scales $V_2$ by $\beta^{12}=\beta^2$. Hence
\[
    (\beta^2 V_2)^2 = \beta^5 I_5 = I_5.
\]
This replacement preserves the projective class, determinant, unitarity, and congruence modulo $J_S$. Hence we may assume
\[
    (X_1 w_1)^3 = I_5 \qquad \text{and} \qquad V_2^2 = I_5.
\]

The residual centralizer of the two distinguished Weyl elements $w_1$ and $w_4$ within the full matrix algebra is given by
\[
    C_{M_5(S)}(w_1, w_4) =
    \left\{
    Z(r,s,u,t,q) \coloneqq
    \begin{pmatrix}
        r & -s & 0 & -u & t \\
        s & r & 0 & -t & -u \\
        0 & 0 & q & 0 & 0 \\
        u & -t & 0 & r & -s \\
        t & u & 0 & s & r
    \end{pmatrix}
    \ \middle|\
    r, s, u, t, q \in S
    \right\}.
\]
For parameters $s, u \in J_S \cap S_\theta$ and $t, \omega \in J_S \cap S_\theta^-$, we define the matrix
\[
    B(s,u,t,\omega) \coloneqq Z(0,s,u,t,\omega).
\]
Using the Cayley transform, we further define
\[
    U(s,u,t,\omega) \coloneqq \bigl(I_5 + B(s,u,t,\omega)\bigr) \bigl(I_5 - B(s,u,t,\omega)\bigr)^{-1}.
\]
Because $B(s,u,t,\omega) \in M_5(J_S)$, the matrix $I_5-B(s,u,t,\omega)$ is invertible. Moreover, $B(s,u,t,\omega)^\dagger = -B(s,u,t,\omega)$. Therefore $U(s,u,t,\omega)$ is unitary:
\[
    U(s,u,t,\omega)^\dagger U(s,u,t,\omega) = I_5.
\]
By construction, it also satisfies
\[
    U(s,u,t,\omega) \in C_{\operatorname{GL}_5(S)}(w_1, w_4) \qquad \text{and} \qquad U(s,u,t,\omega) \equiv I_5 \pmod{J_S}.
\]

\begin{lemma}[Initial normalization of the third column of $X_2$] \label{lemma:A4-elementary-residual-column-normalization}
    After possibly replacing $S$ by a further faithfully flat local extension with the same residue field, there exists a matrix $U = U(s,u,t,\omega)$ as defined above such that, after replacing $C$ with $CU^{-1}$ and $\psi$ with $i_{[U]} \circ \psi$, we may assume that
    \[
        (X_2)_{23} = 1, \qquad (X_2)_{43} = 0, \qquad (X_2)_{13} = (X_2)_{53} = -\delta, \qquad \delta \coloneqq 1-(X_1)_{33}.
    \]
\end{lemma}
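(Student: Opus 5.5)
Conjugation by $U=U(s,u,t,\omega)$ fixes $w_1$, $w_4$, $H$ and keeps everything unitary and congruent to the identity modulo $J_S$. So the task is to pick the four parameters $s,u,t,\omega$ so that the third column of $X_2$ takes the required values. Because $U$ commutes with $H$, it preserves the decomposition $M_+\oplus M_-=\langle e_3\rangle\oplus\langle e_1,e_2,e_4,e_5\rangle$. Conjugation therefore acts on the third column of $X_2$ as follows: the $M_+$-entry $(X_2)_{33}$ is rescaled, and the $M_-$-part $(x_{13},x_{23},x_{43},x_{53})^t$ is multiplied on the left by the $4\times4$ block of $U$ and divided by the $(3,3)$-entry of $U$.

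First I would constrain $X_2$ before conjugating. I transport relations of $u_2$ through $\psi$, for instance the commutation of $u_2$ with appropriate long-root elements, together with the exact identities $X_2^\dagger X_2=I_5$, $V_2^2=I_5$ and $(X_1w_1)^3=I_5$. From these I expect to get symmetry constraints on the column, namely $\overline{x_{13}}$ related to $x_{53}$ and $\overline{x_{23}}$ related to $x_{43}$ via $Q_5$, and $x_{33}\equiv1$. The point is that the target equations are compatible with the involution in the same way the parameter types $s,u\in S_\theta$ and $t,\omega\in S_\theta^-$ are. Separately, I would use $(X_1w_1)^3=I_5$ and the centralizer relations to check that $\delta:=1-(X_1)_{33}$ lies in $J_S$. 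Anti-invariance of $\delta$ will come out later, but here only its definition is needed.

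Next I set up the square system
\[
(UX_2U^{-1})_{23}=1,\quad (UX_2U^{-1})_{43}=0,\quad (UX_2U^{-1})_{13}=-\delta,\quad (UX_2U^{-1})_{53}=-\delta
\]
in the unknowns $s,u,t,\omega$. Since $U=I_5+2B+O(B^2)$, the linearization at $0$ is computed from the $2[B,X_2]$ term evaluated at the residual $u_2$. Over the residue field, column $3$ of $u_2$ is $e_2+e_3$ plus (after $w_1$-twisting) $e_4$ components. I expect the Jacobian to be triangular with entries $\pm2$ and $\pm4$, in particular a unit. I would also check that the four residual equations hold at $0$, i.e.\ $F_i(0)\in J_S$; this follows from $X_2\equiv u_2$ and $\delta\in J_S$. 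If the naive system fails to be nondegenerate (as happened for $\tau$ in Lemma~\ref{lemma:A3-elementary-preliminary-normal-form}), I would first renormalize by the scalar $(X_2)_{23}$, which is a unit. I would then treat $(X_2)_{13}=(X_2)_{53}$ as a consequence of the symmetry constraints rather than as an independent equation, and use $\omega$ to fix $(X_2)_{23}=1$.

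Then I apply the Jacobian construction of Subsection~\ref{subsec:elementary-unitary-correction}. The involution is extended by $\bar s=s$, $\bar u=u$, $\bar t=-t$, $\bar\omega=-\omega$. I must check that the ideal of the system is involution-stable: each equation should be mapped to $\pm$ itself or to a unit multiple of another equation in the system. This uses the symmetry of the third column found in the first step and the identity $U^\dagger=U^{-1}$. The output is a faithfully flat local extension with the same residue field, carrying a solution of the prescribed symmetry types, so $U(s,u,t,\omega)$ is unitary and the normalization holds. I expect the main obstacle to be this involution-compatibility bookkeeping, and in particular verifying that the equation $x_{13}=-\delta$ behaves correctly under $\theta$ before $\delta$ is known to be anti-invariant. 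To handle it, I would first try replacing that equation by the combination $x_{13}+x_{53}=-2\delta$ and $x_{13}-x_{53}=0$, which are respectively invariant and anti-invariant.
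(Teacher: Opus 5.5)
Your outline is the paper's proof: conjugate by $U(s,u,t,\omega)$, impose the four coordinate equations, and check the Jacobian modulo $J_S$. Then run the lifting construction with $\bar s=s$, $\bar u=u$, $\bar t=-t$, $\bar\omega=-\omega$, after clearing denominators by a power of $d=\det(I_5-B)\det(I_5+B)$, which is $\theta$-fixed and a unit in $P_{\mathfrak m}$. Your own block description already settles the Jacobian, so the hedge and the fallback are unnecessary; the fallback would also leave three equations in four unknowns. Note that $(X_2)_{33}$ is unchanged, not rescaled. The $M_-$-part of the new third column is $\frac{1-\omega}{1+\omega}U_-c_-$ with $c_-\equiv e_2$. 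So, to first order, rows $1,2,4,5$ change by $(-2s,-2\omega,-2t,2u)$, and the Jacobian in $(\omega,t,s,u)$ is $\operatorname{diag}(-2,-2,-2,2)$, exactly as in the paper. Likewise $\delta\in J_S$ is immediate from $X_1\equiv u_1$, and $\delta$ is unchanged by the conjugation because $U$ is block-diagonal.

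The gap is the step you flag yourself, and the mechanism you propose for it is not what happens. Over $P=S[s,u,t,\omega]$ one has $U^\dagger=U^{-1}$ and $X_2^\dagger=X_2^{-1}$. Hence $Y:=UX_2U^{-1}$ satisfies $\overline{Y}=Q_5(Y^{-1})^tQ_5$, so $\overline{Y_{i3}}=\pm(Y^{-1})_{3,6-i}$, which by $HYH^{-1}=\lambda Y^{-1}$ equals $\mp\lambda^{-1}Y_{3,6-i}$. The involution therefore sends your column equations to the third-row equations $Y_{34}=\lambda$, $Y_{32}=0$, $Y_{31}=Y_{35}=\lambda\bar\delta$. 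It does not send them to $\pm$ themselves or to one another, and regrouping as $x_{13}\pm x_{53}$ does not change the ideal. What must be shown is that these row equations hold in $P_{\mathfrak m}/(F_1,\dots,F_4)$, and unitarity alone cannot give this. Take $X_1=u_1$ and $X_2=x_2(b,b^2/2)$ with $1\neq b\in S_\theta$, $b\equiv 1\pmod{J_S}$. These are unitary of determinant one and satisfy the $H$-relation, yet normalizing would force $\frac{1+\omega}{1-\omega}=b$ with $\bar\omega=-\omega$, hence $b^2=1$ and $b=1$. Such $X_2$ are excluded by relations like $V_2^2=I_5$, which fails when $b^2\neq1$. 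So your preliminary step is genuinely required: extract constraints from $V_2^2=I_5$, the long-root commutations, and $X_1^\dagger X_1=I_5$. The last gives $\bar\delta=-\delta/(1-\delta)$ on the invariant line, so anti-invariance of $\delta$ is never needed. The paper's proof covers this step with a single sentence citing only $X_1^\dagger X_1=X_2^\dagger X_2=I_5$ and $U^\dagger=U^{-1}$, which by the example is not sufficient on its own. Your proposal neither carries this step out nor predicts its shape correctly. To first order the needed conditions are $x_{23}+\bar x_{23}=2$, $x_{43}+\bar x_{43}=0$ and $x_{13}-\bar x_{13}=x_{53}-\bar x_{53}=\bar\delta-\delta$, not pairings of $x_{13}$ with $x_{53}$ and of $x_{23}$ with $x_{43}$.
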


\begin{proof}
    Every matrix $B(s,u,t,\omega)$ preserves the line $\langle e_3 \rangle$, and hence so does $U(s,u,t,\omega)$.
    More precisely,
    \[
        U(s,u,t,\omega) e_3 = \frac{1+\omega}{1-\omega} e_3.
    \]
    Since $H u_1 H^{-1} = u_1$, we have projectively
    $[H X_1 H^{-1}] = [X_1]$.
    
    Thus $H X_1 H^{-1} = \lambda X_1$ for some $\lambda\equiv1\pmod{J_S}$.
    Applying conjugation by $H$ twice gives $\lambda^2=1$, and hence $\lambda=1$ by Lemma~\ref{lem:scalar-normalization}.
    Therefore $X_1$ preserves the line $\langle e_3\rangle$.
    It follows that the eigenvalue $(X_1)_{33}$, and therefore $\delta$, is unchanged by the above conjugations.
    
    Put
    \[
        X_2^U := U(s,u,t,\omega) X_2 U(s,u,t,\omega)^{-1}
    \]
    and consider the system
    \[
        (X_2^U)_{23} - 1 = 0,
        \qquad
        (X_2^U)_{43} = 0,
        \qquad
        (X_2^U)_{13} + \delta = 0,
        \qquad
        (X_2^U)_{53} + \delta = 0.
    \]
    At $s=u=t=\omega=0$ all four left-hand sides belong to $J_S$.
    
    Modulo terms of degree at least two in $s,u,t,\omega$, one has
    \[
        U(s,u,t,\omega) = I_5 + 2B(s,u,t,\omega).
    \]
    Since $X_2\equiv u_2\pmod{J_S}$, a direct calculation gives
    \[
        \frac{\partial\bigl( (X_2^U)_{23}-1, \, (X_2^U)_{43}, \, (X_2^U)_{13} + \delta, \, (X_2^U)_{53} + \delta \bigr)}{\partial(\omega,t,s,u)}
        \equiv \operatorname{diag}(-2,-2,-2,2) \pmod{J_S}.
    \]
    Its determinant is $-16 \in S^\times$.
    
    We apply the lifting construction of Subsection~\ref{subsec:elementary-unitary-correction} to this system. 
    Let $P:=S[s,u,t,\omega]$ and $\mathfrak m:=(J_S,s,u,t,\omega)P$. 
    The entries of $X_2^U$ are rational functions whose denominators divide a power of $d:= \det(I_5-B(s,u,t,\omega)) \det(I_5+B(s,u,t,\omega))$.
    Since $d\equiv1\pmod{\mathfrak m}$, the element $d$ is a unit in $P_{\mathfrak m}$. 
    Hence, after multiplying the four equations by a common power of $d$, we obtain a square polynomial system which defines the same ideal in $P_{\mathfrak m}$. 
    Since the original left-hand sides belong to $J_S$ at the residual solution and $d \equiv 1$ there, the Jacobian of the resulting polynomial system is still congruent to $\operatorname{diag}(-2,-2,-2,2)$ modulo $J_S$.
    
    Extend the involution from $S$ to $P$ by setting $\bar s=s$, $\bar u=u$, $\bar t=-t$, and $\bar\omega=-\omega$.
    The ideal $\mathfrak m$ is invariant. 
    Moreover, $B(s,u,t,\omega)^\dagger = -B(s,u,t,\omega)$, so the involution interchanges the two factors of $d$, and hence fixes~$d$. 
    A direct calculation using $X_1^\dagger X_1 = X_2^\dagger X_2 = I_5$ and $U^\dagger=U^{-1}$ shows that applying the involution to each of the four rational equations gives an element of the ideal generated by these equations in $P_{\mathfrak m}$. 
    Hence this ideal is invariant under the involution. 
    Since $d$ is fixed by the involution, the polynomial ideal obtained after clearing the denominators is invariant as well. 
    Therefore the lifting construction of Subsection~\ref{subsec:elementary-unitary-correction} gives, after replacing $S$ by a faithfully flat local extension with the same residue field, a solution with
    \[
        s,u \in J_S \cap S_\theta,
        \qquad
        t, \omega \in J_S \cap S_\theta^-.
    \]
    
    For this solution, the four required coordinates are normalized. 
    Since $U$ commutes with $w_1,w_4$, the Weyl normalization is preserved. 
    Moreover,
    \[
        (CU^{-1})^\dagger(CU^{-1}) = U(C^\dagger C)U^{-1} = I_5,
    \]
    so the unitary condition on the total conjugating matrix is also preserved.
\end{proof}

We now show that, after this residual normalization, the images of the two distinguished positive elementary generators have a rigid one-parameter form. 

\begin{lemma}[One-parameter residual normal form in type ${}^{2}A_4$]
\label{lemma:A4-elementary-delta-normal-form}
Retain the normalization of
Lemma~\ref{lemma:A4-elementary-residual-column-normalization}, and put
$\delta:=1-(X_1)_{33}\in J_S$.

Then
\begin{equation}
\label{eq:A4-X1-delta-normal-form}
X_1=X_1^\delta:=
\begin{pmatrix}
1+\delta&1+\delta&0&0&0\\
0&1+\delta&0&0&0\\
0&0&1-\delta&0&0\\
0&0&0&1+\delta&1+\delta\\
0&0&0&0&1+\delta
\end{pmatrix},
\end{equation}
and
\begin{equation}
\label{eq:A4-X2-delta-normal-form}
X_2=X_2^\delta:=
\begin{pmatrix}
1&0&-\delta&-2\delta&0\\
\delta&1&1&1/2&\delta\\
-\delta&0&1&1&-\delta\\
0&0&0&1&0\\
0&0&-\delta&-2\delta&1
\end{pmatrix},
\end{equation}
where
\begin{equation}
\label{eq:A4-delta-relations}
    \delta\in J_S\cap S_\theta^{-},
    \qquad
    \delta^2=0,
    \qquad
    3\delta=0.
\end{equation}
Moreover,
$V_2=v_2$,
$V_3=v_3$,
and
\begin{equation}
\label{eq:A4-X3-delta-normal-form}
X_3=X_3^\delta:=
\begin{pmatrix}
1&-\delta&1&\delta&-1/2\\
0&1&\delta&0&-2\delta\\
0&\delta&1&-\delta&-1\\
0&0&-\delta&1&2\delta\\
0&0&0&0&1
\end{pmatrix},
\end{equation}
while
\begin{equation}
\label{eq:A4-X4-delta-normal-form}
X_4=X_4^\delta:=
\begin{pmatrix}
1+2\delta&0&0&1+2\delta&0\\
0&1+2\delta&0&0&1+2\delta\\
0&0&1-2\delta&0&0\\
0&0&0&1+2\delta&0\\
0&0&0&0&1+2\delta
\end{pmatrix}.
\end{equation}
\end{lemma}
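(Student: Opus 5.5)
The plan mirrors the ${}^{2}A_3$ argument of Lemmas~\ref{lemma:A3-elementary-preliminary-normal-form} and~\ref{lemma:A3-elementary-delta-normal-form}. First we pin down $X_1$ using linear relations, then $X_2$, and finally we extract the constraints on $\delta$ from the nonlinear rank-one relations. Throughout, every projective relation is treated as an exact matrix equation: the scalar factor is $\equiv 1\pmod{J_S}$, and after the scalings already performed it has $2$-power or $5$-th-power order. The $2$-power case is handled by Lemma~\ref{lem:scalar-normalization}. For a unit $\lambda\equiv1$ with $\lambda^5=1$, we use $\lambda^5-1=(\lambda-1)(1+\lambda+\dots+\lambda^4)$, whose second factor is $\equiv 5$; where $5\notin S^\times$, we avoid this case by comparing determinants and unitarity directly.

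\emph{Step 1: the form of $X_1$.} Transport the relations satisfied by $u_1=x_1(1)$ through $\psi$. These are: $u_1$ commutes with $H$ (already used in Lemma~\ref{lemma:A4-elementary-residual-column-normalization}), with $w_4$ or more precisely with $u_4$ and the long-root subgroup for $\beta_4$, and with $x_{-2}$-type elements as dictated by the commutator formula in $B_2$. These linear equations force $X_1$ to be block-diagonal with respect to $\langle e_1,e_2\rangle\oplus\langle e_3\rangle\oplus\langle e_4,e_5\rangle$, with the two $2\times2$ blocks related by $w_4$. Next use $X_1^\dagger X_1=I_5$, $\det X_1=1$, and the exact relation $(X_1w_1)^3=I_5$. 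The last relation forces each $2\times2$ block to be a scalar multiple $\kappa\begin{pmatrix}1&1\\0&1\end{pmatrix}$ modulo a lower-left entry, which one kills using the commutation with $H$ and $w_4$ together with $(X_1w_1)^3=I_5$. With $(X_1)_{33}=1-\delta$, the determinant condition gives $\kappa^4(1-\delta)=1$, and $(X_1w_1)^3=I_5$ on the middle line gives $(1-\delta)^3=1$. From these we expect to derive $\kappa=1+\delta$, $\delta^2=0$ and $3\delta=0$. Unitarity then gives $\bar\kappa\kappa=1$, so $\bar\delta=-\delta$.

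\emph{Step 2: the form of $X_2$.} Use that $u_2$ commutes with $u_4$, that $[u_1,u_2]$ lies in the product of the $\beta_3,\beta_4$ root subgroups (transported via $w_1$- and $v_2$-conjugates), and the structure of the short-root subgroup: $X_2$ is unitary and $(X_2)^{k}$ relations come from the Heisenberg law on $\mathcal A(S)$. The column normalization of Lemma~\ref{lemma:A4-elementary-residual-column-normalization} fixes the third column. The remaining entries are solved from the linear relations $[X_2,X_4]=1$ and $w_1w_4$-equivariance, then from $V_2^2=I_5$ and the rank-one relation between $X_1$ and $X_2$. The goal is to land exactly on $X_2^\delta$, using $\delta^2=3\delta=0$ freely.

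\emph{Step 3: the derived elements.} Compute $V_2$ from~\eqref{eq:A4-elementary-V2-word}; with $\delta^2=0$ the $\delta$-terms should cancel, giving $V_2=v_2$ (this mirrors $c_4(v_2)=0$ in Proposition~\ref{prop:exp_auto}). Then $V_3=w_1V_2w_1^{-1}=v_3$, $X_3=w_1X_2^\delta w_1^{-1}$ and $X_4=v_2(X_1^\delta)^2v_2^{-1}$ follow by multiplication, and with $3\delta=0$ they give the displayed matrices.

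The main obstacle is Step 2. The unknown $X_2$ has many entries, and only the nonlinear Weyl-word relation $V_2^2=I_5$ together with the mixed commutator relation cut it down. Ensuring that these relations leave no residual freedom beyond $\delta$, rather than a further $\gamma$-type parameter as in ${}^{2}A_3$, relies on the column normalization having already used up the centralizer freedom $U(s,u,t,\omega)$. I would first solve the linear constraints symbolically, then impose $V_2^2=I_5$ to first order in $J_S$, and only then handle the quadratic terms.
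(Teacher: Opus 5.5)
Your proposal has a genuine gap, and it sits exactly where the lemma has content: obtaining the \emph{exact} identities $\delta^2=0$, $3\delta=0$ and $X_i=X_i^\delta$.

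\textbf{Step~1 cannot work as stated.} The relations you impose on $X_1$ alone are commutation with $H$ and $w_4$, $X_1^\dagger X_1=I_5$, $\det X_1=1$, $(X_1w_1)^3=I_5$ and $(X_1)_{33}=1-\delta$. They do not imply $\kappa=1+\delta$, $\delta^2=0$ or $3\delta=0$.

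Here is a counterexample. Let $S$ be the ring of integers of $\mathbb Q_3(\zeta)$, where $\zeta^2+\zeta+1=0$, with complex conjugation. This ring is local, $2\in S^\times$, and its residue field is $\mathbf F_3$. Put
\[
X_1=\operatorname{diag}(\zeta,\zeta,\zeta^{-1},\zeta,\zeta)\,x_1(1)\equiv u_1\pmod{J_S}.
\]
The diagonal factor is scalar on $\langle e_1,e_2,e_4,e_5\rangle$ and on $\langle e_3\rangle$. Hence it commutes with $x_1(1)$, $w_1$, $w_4$ and $H$. It also has cube $I_5$, determinant $\zeta^3=1$, and is unitary because $\zeta\bar\zeta=1$. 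So every relation in your list holds exactly.

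Yet $\delta=1-\zeta^2$ satisfies $\delta^2=-3\zeta^2\neq0$ and $3\delta\neq0$, and $\kappa=\zeta\neq1+\delta$. Since $S$ is a domain, the lemma in fact forces $\delta=0$ there. So the vanishing of $\delta^2$ and $3\delta$ must come from the coupling with $X_2$. In the paper, $\delta^2$ is killed only at the very end, by the $(2,3)$-entry of $X_2^\dagger X_2=I_5$, which gives $\tfrac{65}{4}\delta^2=0$ in the relevant quotient.

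Relatedly, the relations you want in Step~1 with $u_4$ and $x_{-2}$-type elements involve $\psi(u_4)=(V_2X_1V_2^{-1})^2$. This depends on $X_2$ through $V_2$, so the $X_1$-system and the $X_2$-system cannot be solved one after the other.

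\textbf{The first-order strategy does not yield exact equalities.} Imposing the relations ``to first order in $J_S$'' and then handling quadratic terms gives no exact identities over an arbitrary local ring. Here $J_S$ need not be nilpotent and $\bigcap_k J_S^k$ may be nonzero. Moreover, $\delta$ is itself an element of $J_S$ whose square is one of the quantities to be shown zero, so it cannot be discarded as higher order.

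The missing idea is a Nakayama argument relative to the right ideal:
\begin{itemize}
\item Write $X_i=(I_5+A_i)X_i^\delta$. Let $I$ be the smallest involution-stable ideal containing the entries of $A_1,A_2$ together with $e=\delta^2$ and $q=3\delta$. It is finitely generated and contained in $J_S$.
\item In $M=I/J_SI$, all products of elements of $I$ vanish, and so do all terms $\delta a$ with $a\in I$. Hence every finite, determinant, coordinate and projective relation becomes a \emph{linear} system over $k$ in the $50$ deviation classes, with $e$ and $q$ as inhomogeneous terms.
\item Projective relations are written scalar-free as $F_{ij}G_{11}-G_{ij}F_{11}=0$. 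This also removes your $5$-th-power scalar difficulty, where ``comparing determinants'' only returns $\lambda^5=1$.
\item The paper verifies that this system has rank $50$, so the deviations are combinations of $e$ and $q$.
\item Two further equations, together with $3e=\delta q=0$ in $M$, then give $M=0$.
\item Nakayama yields $I=0$, which gives $A_1=A_2=0$, $\delta^2=0$ and $3\delta=0$ simultaneously.
\end{itemize}
Without this or an equivalent device, your Step~2 has no mechanism to ``land exactly'' on $X_2^\delta$. Step~3 is fine once the normal forms are established.
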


\begin{proof}
Write
\[
    X_1=(I_5+A_1)X_1^\delta,
    \qquad
    X_2=(I_5+A_2)X_2^\delta,
    \qquad
    A_1,A_2\in M_5(J_S),
\]
and set
$e:=\delta^2$,
$q:=3\delta$.

By the normalization in
Lemma~\ref{lemma:A4-elementary-residual-column-normalization},
the matrix $X_1$ preserves the line $Se_3$ and acts on it by
$1-\delta$. Hence the $(3,3)$-entry of the exact unitary relation
$X_1^\dagger X_1=I_5$ gives
\[
    (1-\bar\delta)(1-\delta)=1.
\]
Since $\delta\in J_S$, the element $1-\delta$ is a unit, and therefore
\begin{equation}
\label{eq:A4-bar-delta-exact}
    \bar\delta=-\frac{\delta}{1-\delta}.
\end{equation}

Let $I\subseteq S$ be the smallest involution-stable ideal containing all
entries of $A_1$ and $A_2$, together with $e$ and $q$. It is finitely
generated and contained in $J_S$. Consider the $k:=S/J_S$-vector
space
\[
    M:=I/J_SI.
\]
Because $I\subseteq J_S$, we have
$I^2\subseteq J_SI$.

Thus products of two elements of $I$ vanish in $M$, and every
coefficient from $J_S$ annihilates $M$. In particular, a term
$\delta a$ vanishes in $M$ for every $a\in I$, and
$\delta^3=\delta e$ vanishes as well.

Equation~\eqref{eq:A4-bar-delta-exact} gives, in $M$,
\begin{equation}
\label{eq:A4-defect-involution}
    \bar e=e,
    \qquad
    \bar q=-q.
\end{equation}
Indeed,
\[
    \bar e-e
    =e\left(\frac{1}{(1-\delta)^2}-1\right)\in J_SI,
\]
and
\[
    \bar q+q
    =-\frac{\delta q}{1-\delta}\in J_SI.
\]
Moreover, the exact identity $\delta q=3e$ yields
\begin{equation}
\label{eq:A4-intrinsic-3e}
    3e=0
    \qquad\text{in }M,
\end{equation}
because $\delta q\in J_SI$.

We now use the defining words
\begin{align*}
    X_3&:=w_1X_2w_1^{-1},&
    V_2&:=X_2
    \left(w_4w_1X_2w_1^{-1}w_4^{-1}\right)^2X_2,\\
    V_3&:=w_1V_2w_1^{-1},&
    Y&:=w_4V_2w_4^{-1},\\
    R&:=V_2X_1V_2^{-1},&
    X_4&:=R^2.
\end{align*}
Since the conjugation used in
Lemma~\ref{lemma:A4-elementary-residual-column-normalization}
commutes with $w_1$ and $w_4$, we have the exact relations
\begin{equation}
\label{eq:A4-exact-finite-relations}
    (X_1w_1)^3=I_5,
    \qquad
    V_2^2=I_5.
\end{equation}
The chosen representatives also satisfy
\[
    X_1^\dagger X_1=I_5,
    \qquad
    X_2^\dagger X_2=I_5,
    \qquad
    \det(X_1)=\det(X_2)=1.
\]

Let
\[
    \pi:\operatorname{GL}_5(S)\longrightarrow
    \operatorname{PGL}_5(S)
\]
be the canonical projection. The projective relations used below are
\begin{align*}
    \pi(HX_1H^{-1})&=\pi(X_1),&
    \pi(w_4X_1w_4^{-1})&=\pi(X_1),\\
    \pi(HX_2H^{-1})&=\pi(X_2^{-1}),&
    \pi(HV_2H^{-1})&=\pi(V_2),\\
    \pi([V_2,V_3])&=1,&
    \pi([V_2,Y])&=1,\\
    \pi(V_2X_3V_2^{-1})&=\pi(X_3^{-1}),&
    \pi(V_3X_2V_3^{-1})&=\pi(X_2^{-1}),\\
    \pi(YX_2Y^{-1})&=\pi(X_2^{-1}),&
    \pi([X_2,X_3])&=\pi(X_4^{-1}),\\
    \pi([X_1,R])&=1,&
    \pi([R,X_2])&=1,\\
    \pi([R,X_3])&=1,&
    \pi([X_1,X_3])&=1.
\end{align*}
For every equality $\pi(F)=\pi(G)$ in this list, one has
$G_{11}\equiv1\pmod{J_S}$, so $G_{11}\in S^\times$. Hence it is
equivalent to the scalar-free equations
\begin{equation}
\label{eq:A4-scalar-free-projective-equality}
    F_{ij}G_{11}-G_{ij}F_{11}=0,
    \qquad
    1\leq i,j\leq5.
\end{equation}
Similarly, $\pi(F)=1$ is equivalent to
\begin{equation}
\label{eq:A4-scalar-free-projective-identity}
    F_{ij}=0\quad(i\neq j),
    \qquad
    F_{ii}-F_{11}=0\quad(i=2,\ldots,5).
\end{equation}
Finally, we include the five coordinate conditions fixed in
Lemma~\ref{lemma:A4-elementary-residual-column-normalization}:
\begin{equation}
\label{eq:A4-fixed-residual-coordinates}
    (X_2-X_2^\delta)_{23}
    =(X_2-X_2^\delta)_{43}
    =(X_2-X_2^\delta)_{13}
    =(X_2-X_2^\delta)_{53}
    =(X_1-X_1^\delta)_{33}=0.
\end{equation}

We first pass the finite, projective, determinant, and coordinate
equations to $M$. Since $\delta I=0$ in $M$,
the coefficient of every entry of $A_1$ or $A_2$ is its value at
$\delta=0$. The remaining terms are linear combinations of $e$ and
$q$. Thus we obtain a linear system over $k$ in the fifty classes of
the entries of $A_1$ and $A_2$.

The accompanying script
\texttt{verify\_A4\_residual\_normal\_form.py}
constructs this system directly from
\eqref{eq:A4-exact-finite-relations},
\eqref{eq:A4-scalar-free-projective-equality},
\eqref{eq:A4-scalar-free-projective-identity},
the determinant identities, and
\eqref{eq:A4-fixed-residual-coordinates}, using exact arithmetic over
$\mathbb Z[1/2]$. It produces $401$ scalar equations. Of these,
$323$ have a nonzero coefficient in the entries of $A_1,A_2$, and the
resulting $323\times50$ coefficient matrix has rank $50$. The script
records an explicit $50\times50$ subsystem with determinant
$-2^{20}$.
Since $2$ is invertible in $k$, this subsystem is invertible. If
$\mathbf a$ is the column of the fifty classes of the entries of
$A_1,A_2$ in $M$, then
\begin{equation}
\label{eq:A4-defect-solution-span}
    \mathbf a=P_e e+P_q q
\end{equation}
for two explicit columns
$P_e,P_q\in\mathbb Z[1/2]^{50}$. Applying the involution to
\eqref{eq:A4-defect-solution-span} and using
\eqref{eq:A4-defect-involution}, we obtain the same conclusion for the
conjugates of all entries. Hence $M$ is generated by $e$ and $q$.

After substituting \eqref{eq:A4-defect-solution-span}, the script
checks the two remaining equations
\begin{align}
    \frac{65}{4}e&=0,
    &&\text{from }X_2^\dagger X_2=I_5,
      \text{ entry }(2,3),
    \label{eq:A4-final-unitary-defect}\\
    3e-q&=0,
    &&\text{from }(X_1w_1)^3=I_5,
      \text{ entry }(3,3).
    \label{eq:A4-final-rank-defect}
\end{align}
Together with \eqref{eq:A4-intrinsic-3e}, these imply
\[
    e
    =22(3e)-4\left(\frac{65}{4}e\right)
    =0,
\]
and then \eqref{eq:A4-final-rank-defect} gives $q=0$. Since $M$ is
generated by $e$ and $q$, we obtain $M=0$. Therefore
$I=J_SI$,
and Nakayama's lemma gives $I=0$. In particular,
\[
    A_1=A_2=0,
    \qquad
    \delta^2=e=0,
    \qquad
    3\delta=q=0.
\]
Thus $X_1=X_1^\delta$ and $X_2=X_2^\delta$. Finally,
\eqref{eq:A4-bar-delta-exact} now gives
\[
    \bar\delta
    =-\frac{\delta}{1-\delta}
    =-\delta(1+\delta)
    =-\delta,
\]
so $\delta\in J_S\cap S_\theta^{-}$.

The remaining matrices follow from their defining words. Namely,
\[
    X_3=w_1X_2^\delta w_1^{-1}=X_3^\delta.
\]
Direct multiplication, using $\delta^2=0$ and $3\delta=0$, gives
\[
    V_2
    =X_2^\delta
      \left(w_4w_1X_2^\delta w_1^{-1}w_4^{-1}\right)^2
      X_2^\delta
    =v_2,
\]
and hence $V_3=w_1V_2w_1^{-1}=v_3$. Finally,
\[
    X_4
    =R^2
    =V_2X_1^2V_2^{-1}
    =v_2(X_1^\delta)^2v_2^{-1}
    =X_4^\delta.
\]
\end{proof}

Consequently, after the residual normalization in type ${}^{2}A_4$,
\[
    \psi(u_i)=X_i^\delta\quad(i=1,2,3,4),
    \qquad
    \psi(v_2)=v_2,
    \qquad
    \psi(v_3)=v_3,
\]
where
$\delta\in J_S\cap S_\theta^-$,
$\delta^2=0$,
and
$3\delta=0$.


\subsection{Returning to the Original Ring}\label{subsec:descent-residual-parameter}

In Subsection~\ref{subsec:normalization_of_u_i}, we showed that the normalized map
\[
    \psi=i_{[C]}^{-1}\circ\varphi_1,
    \qquad
    C\in\operatorname{GL}_{n+1}(S,J_S),
    \qquad
    [C]\in G^{(n)}(S),
\]
is an isomorphism from $E^{(n)}(R)$ onto a subgroup of $E^{(n)}(S)$ with the following properties.

If $n=3$, then
\[
    \psi(h_i)=h_i,
    \qquad
    \psi(w_i)=w_i,
    \qquad
    \psi(u_i)=X_i^\delta
    \qquad
    (i\in\{1,2,3,4\}),
\]
where $X_i^\delta$ is defined in Subsubsection~\ref{subsubsec:image_of_u_i_in_A_3}. If $n=4$, then
\[
    \psi(H)=H,
    \qquad
    \psi(w_i)=w_i,
    \qquad
    \psi(v_j)=v_j,
    \qquad
    \psi(u_k)=X_k^\delta,
\]
for $i\in\{1,4\}$, $j\in\{2,3\}$, and $k\in\{1,2,3,4\}$, where $X_k^\delta$ is defined in Subsubsection~\ref{subsubsec:image_of_u_i_in_A_4}. In both cases,
$\delta\in J_S\cap S_\theta^-$,
$\delta^2=0$,
$3\delta=0$.

Our goal is to show that $\delta$ and the conjugating class $[C]$ descend to the original ring $R$. Thus the auxiliary extensions introduced during the normalization process are not needed in the final construction.

Throughout this subsection, $S$ denotes the current auxiliary local $R$-algebra. By construction, $S$ is obtained from $R$ by a finite sequence of faithfully flat local extensions, each preserving the residue field and extending the involution. Hence the composite homomorphism $R\to S$ is faithfully flat and local, and
\[
    S/J_S\cong R/J.
\]
Since faithful flatness implies injectivity, we identify $R$ with its image in $S$. Locality gives
$R\cap J_S=J$.

We begin with a descent lemma.

\begin{lemma}[Descent of a conjugating class]
    \label{lem:matrix-algebra-descent}
    Let $n\in\{3,4\}$ and put $m:=n+1$. Let $S$ be a faithfully flat local $R$-algebra, endowed with an extension of the involution, such that
    \[
        S/J_S\cong R/J,
        \qquad
        J_S=\operatorname{Rad}(S).
    \]
    Let
    \[
        C\in\operatorname{GL}_m(S,J_S),
        \qquad
        [C]\in G^{(n)}(S).
    \]
    Suppose that $V_1,\ldots,V_r\in\operatorname{GL}_m(R)$ generate $M_m(R)$ as an $R$-algebra and satisfy
    \[
        CV_iC^{-1}\in\operatorname{GL}_m(R)
        \qquad
        (i=1,\ldots,r).
    \]
    Then there exist
    \[
        C_0\in\operatorname{GL}_m(R,J),
        \qquad
        \eta\in S^\times,
    \]
    such that
    $C=\eta C_0$.

    Moreover,
    \[
        [C_0]\in G^{(n)}(R)
        \qquad\text{and}\qquad
        [C]=[C_0]
        \quad\text{in }\operatorname{PGL}_m(S).
    \]
\end{lemma}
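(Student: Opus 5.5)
The plan is to use faithfully flat descent for the $R$-linear map ``conjugation by $C$'' on $M_m$. Consider the $S$-algebra automorphism $\Phi\colon M_m(S)\to M_m(S)$, $X\mapsto CXC^{-1}$. By hypothesis $\Phi(V_i)\in M_m(R)$ for all $i$, and since $\Phi$ is multiplicative and $R$-linear while the $V_i$ generate $M_m(R)$ as an $R$-algebra, $\Phi$ maps every word in the $V_i$, and hence all of $M_m(R)$, into $M_m(R)$. Thus $\Phi$ restricts to an $R$-algebra endomorphism $\Phi_0$ of $M_m(R)$. Applying the same argument to $C^{-1}$ (the elements $CV_iC^{-1}$ lie in $\operatorname{GL}_m(R)$ and generate $M_m(R)$, since $\Phi_0$ is injective and $\Phi_0(M_m(R))\otimes_RS=M_m(S)$ by $S$-linearity, so faithful flatness forces $\Phi_0(M_m(R))=M_m(R)$) shows that $\Phi_0$ is an $R$-algebra automorphism of $M_m(R)$.

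Next we invoke the Skolem--Noether theorem over local rings: every $R$-algebra automorphism of $M_m(R)$ is inner, since $\operatorname{Pic}(R)=0$. Hence $\Phi_0=i_{C_1}$ for some $C_1\in\operatorname{GL}_m(R)$. Then $C_1^{-1}C$ centralizes $M_m(R)$, hence centralizes $M_m(S)=M_m(R)\otimes_RS$, so $C_1^{-1}C=\eta I_m$ with $\eta\in S^\times$. We now adjust by scalars so that the representative lies in $\operatorname{GL}_m(R,J)$. Reducing modulo $J_S$ gives $\bar C_1\bar\eta=I_m$ over $S/J_S\cong R/J$, so $\bar C_1$ is a residual scalar $\bar c$. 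Choose $c\in R^\times$ lifting it and set $C_0:=c^{-1}C_1\in\operatorname{GL}_m(R,J)$. Then $C=(c\eta)C_0$, and we rename $\eta$ accordingly.

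Finally, $[C]=[C_0]$ in $\operatorname{PGL}_m(S)$ is immediate. To check that $[C_0]\in G^{(n)}(R)$, we use the characterization via $\dagger$: $C^\dagger C=\lambda I$ for some $\lambda\in S^\times$. Since $C^\dagger C=\bar\eta\eta\,C_0^\dagger C_0$, the entries of $C_0^\dagger C_0\in M_m(R)$ satisfy $C_0^\dagger C_0=\mu I$ with $\mu=\lambda(\bar\eta\eta)^{-1}\in S$. In particular the off-diagonal entries vanish and the diagonal entries are all equal to $\mu$, so $\mu=(C_0^\dagger C_0)_{11}\in R$. Because $C_0\equiv I$, this $\mu$ is a unit, so $C_0^\dagger C_0$ is a scalar in $R^\times$ and $[C_0]\in G^{(n)}(R)$. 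This works uniformly for $m=4,5$ once $\dagger$ is defined with $Q_4$ or $Q_5$ respectively.

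The main obstacle is the descent step ensuring that $\Phi$ preserves $M_m(R)$ bijectively; surjectivity is exactly where faithful flatness is used. A fallback for that step, if it becomes delicate, is to argue directly: $\Phi_0$ is an injective $R$-endomorphism of a free module of rank $m^2$ whose base change to $S$ is bijective, so $\det\Phi_0$ becomes a unit in $S$ and is therefore a unit in $R$, since $R\to S$ is local.
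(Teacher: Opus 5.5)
Your proof is correct, but it takes a different route from the paper. The paper does not prove separately that $F\colon A\mapsto CAC^{-1}$ is surjective on $M_m(R)$, and it does not cite Skolem--Noether. It first observes that $F(A)\equiv A\pmod J$, which holds because $C\equiv I_m\pmod{J_S}$ and $R\cap J_S=J$. It then sets $\widetilde E_{ij}=F(E_{ij})$ and builds a basis directly: $v_1=\widetilde E_{11}e_1$ and $v_i=\widetilde E_{i1}v_1$. Since $v_i\equiv e_i\pmod J$, the matrix $C_0$ with these columns lies in $\operatorname{GL}_m(R,J)$ automatically, and it satisfies $F=i_{C_0}$, so no rescaling step is needed.

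In effect, the paper reproves the local Skolem--Noether theorem in this special case. The congruence hypothesis on $C$ does the work that, in your version, is done by the faithful-flatness surjectivity argument and the appeal to $\operatorname{Pic}(R)=0$. Your approach is more conceptual: the inner-ness step would work over any ring with trivial Picard group. The cost is importing the Rosenberg--Zelinsky/Skolem--Noether result and adding the normalization $C_0=c^{-1}C_1$. Faithful flatness is not actually needed for your surjectivity step: $\Phi_0\equiv\mathrm{id}\pmod J$ already gives surjectivity by Nakayama.

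The final step, descending the similitude condition, is the same in both arguments. You phrase it as $C^\dagger C=\lambda I$ and the paper as $CQ\overline C^{\,t}=\lambda Q$; these are equivalent because $Q^2=\pm I$.
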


\begin{proof}
    Since $V_1,\ldots,V_r$ generate $M_m(R)$ and their conjugates by $C$ have entries in $R$, conjugation by $C$ restricts to an $R$-algebra homomorphism
    \[
        F\colon M_m(R)\longrightarrow M_m(R),
        \qquad
        F(A)=CAC^{-1}.
    \]
    Since $C\equiv I_m\pmod{J_S}$ and $R\cap J_S=J$, we have
    \[
        F(A)\equiv A\pmod J
        \qquad
        (A\in M_m(R)).
    \]

    Put
    \[
        \widetilde E_{ij}:=F(E_{ij}),
        \qquad
        1\leq i,j\leq m.
    \]
    Then
    \[
        \widetilde E_{ij}\widetilde E_{kl}
        =
        \delta_{jk}\widetilde E_{il},
        \qquad
        \sum_{i=1}^m\widetilde E_{ii}=I_m,
        \qquad
        \widetilde E_{ij}\equiv E_{ij}\pmod J.
    \]

    Let $e_1,\ldots,e_m$ be the standard basis of $R^m$, and define
    \[
        v_1:=\widetilde E_{11}e_1,
        \qquad
        v_i:=\widetilde E_{i1}v_1
        \quad
        (i=2,\ldots,m).
    \]
    Then
    \[
        v_i\equiv e_i\pmod J
        \qquad
        (i=1,\ldots,m).
    \]
    Hence $v_1,\ldots,v_m$ form a basis of $R^m$. Let $C_0$ be the matrix whose columns are these vectors. Then
    $C_0\in\operatorname{GL}_m(R,J)$.

    The matrix-unit relations give
    $\widetilde E_{ij}C_0=C_0E_{ij}$.
    Consequently,
    \[
        F(A)=C_0AC_0^{-1}
        \qquad
        (A\in M_m(R)).
    \]
    Comparing this with $F(A)=CAC^{-1}$, we see that $C_0^{-1}C$ commutes with every matrix unit. Therefore
    \[
        C_0^{-1}C=\eta I_m
    \]
    for some $\eta\in S^\times$, and hence
    $C=\eta C_0$.

    It remains to descend the unitary similitude condition. Put
    \[
        Q=
        \begin{cases}
            Q_4,&n=3,\\
            Q_5,&n=4.
        \end{cases}
    \]
    Since $[C]\in G^{(n)}(S)$, there exists $\lambda\in S^\times$ such that
    $CQ\overline C^{\,t}=\lambda Q$.
    Substituting $C=\eta C_0$, we obtain
    \[
        C_0Q\overline{C_0}^{\,t}
        =
        \kappa Q,
        \qquad
        \kappa:=\lambda(\eta\bar\eta)^{-1}\in S^\times.
    \]
    The matrix on the left has entries in $R$. Comparing any entry corresponding to a nonzero entry of $Q$, we obtain $\kappa\in R$.

    Since $\kappa$ is invertible in $S$, it does not belong to $J_S$. Thus $\kappa\notin R\cap J_S=J$, and, since $R$ is local,
    $\kappa\in R^\times$.
    
    Therefore $[C_0]\in G^{(n)}(R)$. Finally, $C=\eta C_0$ gives
    $[C]=[C_0]$
    in $\operatorname{PGL}_m(S)$.
\end{proof}


We now show that the parameter $\delta$ belongs to $J\cap R_\theta^-$. We treat the two types separately.


\subsubsection{Type \texorpdfstring{${}^{2}A_3$}{2A3}}

In type ${}^{2}A_3$, we have
\[
    X_2=X_2^\delta=
    \begin{pmatrix}
        1+2\delta&0&0&0\\
        0&1+\delta&1-2\delta&0\\
        0&0&1+\delta&0\\
        0&0&0&1+2\delta
    \end{pmatrix},
\]
where
$\delta\in J_S\cap S_\theta^-$,
$\delta^2=0$,
$3\delta=0$.

Since $X_2$ commutes with
$h_2=\operatorname{diag}(1,-1,-1,1)$,
it preserves the decomposition
\[
    S^4
    =
    \langle e_1,e_4\rangle
    \oplus
    \langle e_2,e_3\rangle.
\]
Following Lemma~\ref{lemma:projective-trace-invariants}, define
\[
    \kappa_3([X_2])
    \coloneqq
    \frac{\operatorname{tr}_+(X_2)}
         {\operatorname{tr}_-(X_2)}.
\]
Since
\[
    \operatorname{tr}_-(X_2)\equiv2\pmod{J_S},
\]
the denominator is a unit.

All auxiliary conjugating matrices used after the Weyl normalization in type ${}^{2}A_3$ belong to $S[Q_4]$ and preserve both summands of this decomposition. Hence $\kappa_3$ is invariant under these conjugations.

For the displayed normal form,
\[
    \operatorname{tr}_+(X_2)=2(1+2\delta),
    \qquad
    \operatorname{tr}_-(X_2)=2(1+\delta),
\]
and therefore
\[
    \kappa_3([X_2])
    =
    \frac{1+2\delta}{1+\delta}
    =
    1+\delta.
\]
Thus
$\delta=\kappa_3([X_2])-1$.

Before passing to the auxiliary extensions, the corresponding normalized projective class was defined over $R$: the Weyl-normalizing matrix belongs to $\operatorname{GL}_4(R,J)$, and $\varphi_1(u_2)$ has a representative over $R$ congruent to $u_2$ modulo $J$. Hence the corresponding value of $\kappa_3$ belongs to $R$. Since $\kappa_3$ is unchanged by all subsequent conjugations, the displayed value $1+\delta$ also belongs to $R$. Therefore
$\delta\in R$.

Combining this with
$\delta\in J_S\cap S_\theta^-$,
$R\cap J_S=J$,
we obtain
$\delta\in J\cap R_\theta^-$.


\subsubsection{Type \texorpdfstring{${}^{2}A_4$}{2A4}}

In type ${}^{2}A_4$, we have
\[
    X_1=X_1^\delta=
    \begin{pmatrix}
        1+\delta&1+\delta&0&0&0\\
        0&1+\delta&0&0&0\\
        0&0&1-\delta&0&0\\
        0&0&0&1+\delta&1+\delta\\
        0&0&0&0&1+\delta
    \end{pmatrix},
\]
where
$\delta\in J_S\cap S_\theta^-$,
$\delta^2=0$,  $3\delta=0$.

Since $X_1$ commutes with
$H=\operatorname{diag}(-1,-1,1,-1,-1)$,
it preserves the decomposition
\[
    S^5
    =
    \langle e_3\rangle
    \oplus
    \langle e_1,e_2,e_4,e_5\rangle.
\]
Define
\[
    \kappa_4([X_1])
    \coloneqq
    \frac{\operatorname{tr}_+(X_1)}
         {\operatorname{tr}_-(X_1)}.
\]
Since
\[
    \operatorname{tr}_-(X_1)\equiv4\pmod{J_S},
\]
the denominator is a unit.

All auxiliary conjugating matrices used after the Weyl normalization in type ${}^{2}A_4$ commute with $w_1$ and $w_4$, and hence with
$H=w_1^2=w_4^2$.

Therefore they preserve both summands of the decomposition, and $\kappa_4$ is invariant under all subsequent conjugations.

For the displayed normal form,
\[
    \operatorname{tr}_+(X_1)=1-\delta,
    \qquad
    \operatorname{tr}_-(X_1)=4(1+\delta).
\]
Hence
\[
    \kappa_4([X_1])
    =
    \frac{1-\delta}{4(1+\delta)}
    =
    \frac{1-2\delta}{4},
\]
and therefore
\[
    \delta
    =
    \frac{1-4\kappa_4([X_1])}{2}.
\]

Before passing to the auxiliary extensions, the corresponding normalized projective class was defined over $R$: the Weyl-normalizing matrix belongs to $\operatorname{GL}_5(R,J)$, and $\varphi_1(u_1)$ has a representative over $R$ congruent to $u_1$ modulo $J$. Thus the corresponding value of $\kappa_4$ belongs to $R$. Since $\kappa_4$ is invariant under all subsequent conjugations, it follows that
$\delta\in R$.
Combining this with
$\delta\in J_S\cap S_\theta^-$ and $R\cap J_S=J$,
we obtain
$\delta\in J\cap R_\theta^-$.


\subsubsection{Descent to the base ring}
\label{subsubsec:descent-to-base-ring}

In both types, we have shown that
$\delta\in J\cap R_\theta^-$.
Since $R\to S$ is injective, the relations
$\delta^2=0$ and $3\delta=0$
established over $S$ also hold in $R$. Hence all residual normal forms $X_i^\delta$ from Subsection~\ref{subsec:normalization_of_u_i} are matrices over $R$.

We now descend the total conjugating class. Recall that, with $m=n+1$,
\[
    \psi=i_{[C]}^{-1}\circ\varphi_1,
    \qquad
    C\in\operatorname{GL}_m(S,J_S),
    \qquad
    [C]\in G^{(n)}(S).
\]

In type ${}^{2}A_3$, let
$\mathcal V_3
    \coloneqq
    \{V_1,V_{-1},V_2,V_{-2}\}$,
where $V_{\pm i}$ are the determinant-one representatives of
$\psi(x_{\pm i}(1))$
    ($i=1,2$).
These matrices belong to $\operatorname{GL}_4(R)$ and reduce modulo $J$ to the standard matrices $u_{\pm1},u_{\pm2}$. By Lemma~\ref{lemma:A3-generates-matrix-algebra} and Nakayama's lemma, $\mathcal V_3$ generates $M_4(R)$ as an $R$-algebra.

For each $V\in\mathcal V_3$, choose a representative
$A_V\in\operatorname{GL}_4(R)$
of the corresponding projective class under $\varphi_1$, with
$A_V\equiv V\pmod J$.

Since
$\psi=i_{[C]}^{-1}\circ\varphi_1$,
there exists $\xi_V\in S^\times$ such that
$CVC^{-1}=\xi_VA_V$.

Each $V\in\mathcal V_3$ has trace $4$, and therefore
\[
    4
    =
    \operatorname{tr}(V)
    =
    \xi_V\operatorname{tr}(A_V).
\]
Since
\[
    \operatorname{tr}(A_V)\equiv4\pmod J
\]
and $4\in R^\times$, we obtain
\[
    \xi_V
    =
    \frac{4}{\operatorname{tr}(A_V)}
    \in R^\times.
\]
Hence
\[
    CVC^{-1}\in\operatorname{GL}_4(R)
    \qquad
    (V\in\mathcal V_3).
\]

In type ${}^{2}A_4$, let
\[
    \mathcal V_4
    \coloneqq
    \{w_1,w_4,v_2,X_1^\delta,X_2^\delta\}.
\]
Its reduction modulo $J$ is
$\{w_1,w_4,v_2,u_1,u_2\}$.

By Lemma~\ref{lemma:A4-generates-matrix-algebra} and Nakayama's lemma, $\mathcal V_4$ generates $M_5(R)$ as an $R$-algebra.

For each $V\in\mathcal V_4$, choose a representative
$A_V\in\operatorname{GL}_5(R)$
of the corresponding projective class under $\varphi_1$, with
\[
    A_V\equiv V\pmod J.
\]
Again, there exists $\xi_V\in S^\times$ such that
$CVC^{-1}=\xi_VA_V$.

For
$V\in\{w_1,w_4,v_2\}$,
we have $\operatorname{tr}(V)=1$. Thus
$1
    =
    \xi_V\operatorname{tr}(A_V)$,
and, since $\operatorname{tr}(A_V)\equiv1\pmod J$,
\[
    \xi_V
    =
    \frac{1}{\operatorname{tr}(A_V)}
    \in R^\times.
\]

For $X_1^\delta$, we use
$\operatorname{tr}(X_1^\delta w_1v_2)=-2$.

Conjugating this product by $C$ and taking traces gives
\[
    -2
    =
    \xi_{X_1^\delta}\xi_{w_1}\xi_{v_2}
    \operatorname{tr}
    \bigl(
        A_{X_1^\delta}A_{w_1}A_{v_2}
    \bigr).
\]
The last trace is congruent to $-2$ modulo $J$ and is therefore a unit in $R$. Since $\xi_{w_1},\xi_{v_2}\in R^\times$, it follows that
$\xi_{X_1^\delta}\in R^\times$.

Similarly,
$\operatorname{tr}(X_2^\delta w_1)=1-\delta\in R^\times$.
Hence
\[
    1-\delta
    =
    \xi_{X_2^\delta}\xi_{w_1}
    \operatorname{tr}
    \bigl(
        A_{X_2^\delta}A_{w_1}
    \bigr).
\]
The last trace is congruent to $1$ modulo $J$ and is a unit in $R$. Therefore
$\xi_{X_2^\delta}\in R^\times$.
Thus
\[
    CVC^{-1}\in\operatorname{GL}_5(R)
    \qquad
    (V\in\mathcal V_4).
\]

In both types, the hypotheses of Lemma~\ref{lem:matrix-algebra-descent} are satisfied. Hence
$C=\eta C_0$
for some
\[
    C_0\in\operatorname{GL}_m(R,J),
    \qquad
    [C_0]\in G^{(n)}(R),
    \qquad
    \eta\in S^\times.
\]
Moreover,
$[C]=[C_0]$
in $\operatorname{PGL}_m(S)$.

We may therefore replace $C$ by $C_0$ and continue entirely over $R$. Since $E^{(n)}$ is normal in $G^{(n)}$ by \cite[Corollary~4.3]{SG&DM1}, conjugation by $[C_0]$ preserves $E^{(n)}$. Thus
\[
    \psi
    =
    i_{[C_0]}^{-1}\circ\varphi_1
    \in\operatorname{Aut}(E^{(n)}).
\]

\subsection{The image of \texorpdfstring{$x_{i}(t)$}{x(t)} and the parameter maps}
\label{subsec:parameter-maps-first-annihilators}

We now study the images of root elements with arbitrary symmetric and anti-invariant parameters.

Whenever we choose a determinant-one representative of a projectively unitary class below, we choose it to be unitary. In dimension four, this follows directly from Lemma~\ref{lem:scalar-normalization}. In dimension five, if
$Y^\dagger Y = \rho I_5$,
then $\bar{\rho}=\rho$, $\rho^5=1$, and $\rho\equiv1\pmod J$. Replacing $Y$ with $\rho^2Y$ preserves its projective class, determinant, and reduction modulo $J$. Retaining the same notation for the new representative, we have
$Y^\dagger Y=I_5$.


\subsubsection{Symmetric parameters}

We first consider symmetric parameters. The root subgroup used here is the one on which the corresponding elementary torus acts with weight~$t^2$: in type ${}^2A_3$, this is $x_2(R_\theta)$, whereas in type ${}^2A_4$, it is the restriction of $x_1(R)$ to $R_\theta$.

\begin{lemma}[The symmetric parameter map]
    \label{lemma:elementary-symmetric-parameter-map}
    There exists an additive map $\nu\colon R_\theta\to R_\theta$ satisfying
    \[
        \nu(1)=1
        \qquad\text{and}\qquad
        \nu(s)+J=\mu_k(s+J)
        \quad
        \text{for all }s\in R_\theta,
    \]
    such that the following residual normal forms hold.
    
    In type ${}^{2}A_3$, for every $s\in R_\theta$,
    \begin{equation}
    \label{eq:A3-symmetric-parameter-form}
        \psi(x_2(s))=X_2^\delta(\nu(s)),
    \end{equation}
    where
    \begin{equation}
    \label{eq:A3-X2-delta-nu}
        X_2^\delta(\nu)\coloneqq
        \begin{pmatrix}
            1+2\nu\delta&0&0&0\\
            0&1+\nu\delta&\nu-2\nu^2\delta&0\\
            0&0&1+\nu\delta&0\\
            0&0&0&1+2\nu\delta
        \end{pmatrix}.
    \end{equation}
    
    In type ${}^{2}A_4$, for every $s\in R_\theta$,
    \begin{equation}
    \label{eq:A4-symmetric-parameter-form}
        \psi(x_1(s))=X_1^\delta(\nu(s)),
    \end{equation}
    where
    \begin{equation}
    \label{eq:A4-X1-delta-nu}
        X_1^\delta(\nu)\coloneqq
        \begin{pmatrix}
            1+\nu\delta&\nu(1+\nu\delta)&0&0&0\\
            0&1+\nu\delta&0&0&0\\
            0&0&1-\nu\delta&0&0\\
            0&0&0&1+\nu\delta&\nu(1+\nu\delta)\\
            0&0&0&0&1+\nu\delta
        \end{pmatrix}.
    \end{equation}
\end{lemma}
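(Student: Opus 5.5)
We determine $\psi(x_2(s))$ (type ${}^2A_3$) or $\psi(x_1(s))$ (type ${}^2A_4$) for $s\in R_\theta$ by the same rigidity method that produced the normal forms for $s=1$, now using the normalized images $\psi(w_i)=w_i$, $\psi(h_i)$ or $\psi(H)$, and $\psi(u_i)=X_i^\delta$ as fixed anchors. For $s\in R_\theta$ put $Y_s:=\psi(x_2(s))$, respectively $Y_s:=\psi(x_1(s))$. We choose it as a determinant-one unitary representative congruent modulo $J$ to the standard matrix with parameter $\nu_0(s)$, where $\nu_0(s)$ is any lift in $R_\theta$ of $\mu_k(s+J)$. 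This is possible because $\varphi_1$ induces $\mu_k$ and the normalizing conjugations are trivial modulo $J$.

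\emph{Linear constraints.} We transport relations whose partner elements are already normalized. In type ${}^2A_3$, $x_2(s)$ commutes with $h_2$, $w_4$, $u_4$ and $u_{-1}$ (the latter via Weyl conjugation of $u_1$), and it commutes with $x_2(1)$. Since $\psi(x_2(1))=X_2^\delta$ is upper triangular on the middle block with distinct structure, the commutation $[Y_s,X_2^\delta]=1$ forces the middle block of $Y_s$ to lie in the centralizer of $X_2^\delta$. As in Lemma~\ref{lemma:A3-elementary-preliminary-normal-form}, we first obtain the block form with $z=0$. The equation $[Y_s,X_2^\delta]=1$ then kills the $(3,2)$ entry and forces $b=e$. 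The remaining diagonal entries are tied to the off-diagonal entry $\nu:=(Y_s)_{23}$ by the Chevalley commutator formula $[x_2(s),x_1(1)]=x_3(\ast)x_4(\ast)$ transported through $\psi$, where the right side is an explicit word in $X_1^\delta$, $X_2^\delta$ and Weyl conjugates. We expect this, together with unitarity and determinant one, to give exactly \eqref{eq:A3-X2-delta-nu}. In type ${}^2A_4$ we proceed the same way with $x_1(s)$: it commutes with $H$, with $w_4x_1(1)w_4^{-1}$-type elements, and with $X_1^\delta$, and it satisfies the commutator relation with $X_2^\delta$. This yields \eqref{eq:A4-X1-delta-nu} with $\nu:=(Y_s)_{12}$.

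\emph{Nonlinear residual terms.} Wherever the relations leave terms in $J$ undetermined, we repeat the Nakayama argument of Lemma~\ref{lemma:A4-elementary-delta-normal-form}. Write $Y_s=(I+A)X^\delta(\nu)$, let $I$ be the involution-stable ideal generated by the entries of $A$, and check that the linearized system in $I/JI$ has full rank with determinant a power of $2$. Then $I=0$. For type ${}^2A_3$ this is a small hand computation; for ${}^2A_4$ we expect to mirror the script computation. Note that $\nu\in R_\theta$ follows from unitarity, since the $(2,3)$ entry satisfies $\bar\nu=\nu$ up to $\delta$-terms, which vanish because $\delta\nu^-=0$. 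Also $\nu+J=\mu_k(s+J)$ by reduction, and $\nu(1)=1$ because $Y_1=X^\delta$.

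\emph{Additivity.} Since $x_2(s)x_2(s')=x_2(s+s')$, and likewise for $x_1$, we get $X^\delta(\nu(s))X^\delta(\nu(s'))=X^\delta(\nu(s+s'))$. A direct multiplication using $\delta^2=0$, $3\delta=0$ and $\nu\delta=\nu^+\delta$ shows $X^\delta(\nu)X^\delta(\nu')=X^\delta(\nu+\nu')$. For instance, in type ${}^2A_3$ the $(2,3)$ entry is $\nu+\nu'-2(\nu^2+\nu'^2)\delta+2\nu\nu'\delta=\nu+\nu'-2(\nu+\nu')^2\delta$ modulo $6\nu\nu'\delta=0$. Comparing the $(2,3)$ or $(1,2)$ entries then gives $\nu(s+s')=\nu(s)+\nu(s')$.

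The main obstacle is the nonlinear residual step: showing that no extra $J$-valued freedom survives in $Y_s$ beyond the parameter $\nu$. This is hardest in type ${}^2A_4$, where the relations available for arbitrary $s$ are fewer than for $s=1$, since order relations such as $(X_1w_1)^3=I_5$ are unavailable. We would first try to replace them with $w_1Y_sw_1^{-1}=\psi(x_{-1}(-s))$ combined with the Steinberg relation expressing $w_1(s)$ through $h_1(s)$, if needed after restricting to unit $s$ and using additivity to reach all of $R_\theta$.
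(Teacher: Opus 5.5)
Your overall strategy is the right one: pin down $Y_s$ by exact commutation relations with elements whose images are already normalized. In type ${}^2A_3$ you even list the paper's partners $h_2$, $w_4$, $X_2^\delta$, $X_4^\delta$, $X_{-1}^\delta$. The gap is in which relations you actually use to fix the remaining entries.

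\textbf{Type ${}^2A_3$.} You tie the outer diagonal entry $a$ to the middle block via $[x_2(s),x_1(1)]=x_3(\ast)x_4(\ast)$. Its right-hand side contains $\psi(x_3(\pm s))=w_2\psi(x_1(\pm s))w_2^{-1}$, which is not known for symmetric $s$ at this stage. In the paper, $\psi(x_1(s))$ for $s\in R_\theta$ is derived \emph{from} this lemma. What works is the exact relation $[Y_s,X_{-1}^\delta]=1$ with $X_{-1}^\delta=w_1(X_1^\delta)^{-1}w_1^{-1}$, which directly gives $a=b+c\delta$. Similarly, $z=0$ should come from commuting with the known $X_4^\delta$, whose off-diagonal entry $1-2\delta$ is a unit. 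The argument of Lemma~\ref{lemma:A3-elementary-preliminary-normal-form} needed $c+d\in S^\times$, which fails here when $s\in J$.

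\textbf{Type ${}^2A_4$.} The decisive relations are exact commutation with $X_4^\delta$ and $X_3^\delta$, since $x_1(s)$ commutes with $x_4(1)$ and $x_3(1,1/2)$. After $H$, $w_4$, $X_1^\delta$ you are left with a $w_4$-coupled off-diagonal block and a middle entry $q=(Y_s)_{33}$ not yet tied to $b$. Commuting with $X_4^\delta$ kills the coupled block, and commuting with $X_3^\delta$ forces $q=a+b\delta$. You never use $X_3^\delta$; note that $w_4x_1(1)w_4^{-1}=x_1(1)$ adds nothing beyond $X_1^\delta$. Your commutator with $X_2^\delta$ again brings in an unknown image, $\psi(x_3(s,s^2/2))$. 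This is why you think a Nakayama step or order relations are needed. With the right linear relations $Y_s$ is determined exactly, and ``we expect to mirror the script'' is not a proof.

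You also must justify that the projective relations hold exactly in dimension five. Determinants only give $\lambda^5=1$, which does not force $\lambda=1$ when $\operatorname{char}k=5$. The paper instead uses three devices:
\begin{enumerate}
\item square-one scalars for $H$, and for $w_4$ via $w_4^2=H$;
\item the eigenvalue on $\langle e_3\rangle$ for $X_1^\delta$ and $X_4^\delta$;
\item the $(1,1)$-entry for $X_3^\delta$.
\end{enumerate}

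\textbf{The parameter is mis-defined.} The raw entry $(Y_s)_{23}$ (resp.\ $(Y_s)_{12}$) is not $\nu$. For $s=1$ it equals $1-2\delta$ (resp.\ $1+\delta$), contradicting your own $\nu(1)=1$. It is also not symmetric: $\overline{\nu-2\nu^2\delta}-(\nu-2\nu^2\delta)=4\nu^2\delta=\nu^2\delta$, which need not vanish. Your appeal to $\delta\nu^-=0$ is circular, because $\delta R_\theta^-=0$ is only proved in Proposition~\ref{prop:elementary-coefficient-recovery-admissibility}, using this lemma.

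The correct parameter is the ratio $\nu(s)=(Y_s)_{23}(Y_s)_{22}^{-1}=cb^{-1}$ (resp.\ $(Y_s)_{12}(Y_s)_{11}^{-1}=ba^{-1}$). Its exact symmetry follows from the unitary identities $b\bar b=1$, $\bar b c=b\bar c$ (resp.\ $a\bar a=1$, $a\bar b=\bar a b$). Then $Y_s=\lambda X^\delta(\nu(s))$ with $\lambda=b(1-\nu\delta)$ (resp.\ $a(1-\nu\delta)$). In type ${}^2A_3$, $\lambda=1$ by Lemma~\ref{lem:scalar-normalization}; in type ${}^2A_4$ one simply replaces the representative. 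With $\nu$ defined this way, your additivity computation goes through.
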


\begin{proof}
    We first consider type ${}^{2}A_3$. Fix $s\in R_\theta$, and choose a determinant-one unitary representative $Y_s\in\operatorname{GL}_4(R)$ of the projective class $\psi(x_2(s))$ such that
    \[
        Y_s\equiv x_2\bigl(\mu_k(s+J)\bigr)\pmod J.
    \]
    
    The standard element $x_2(s)$ commutes with $h_2$, $w_4$, $x_2(1)$, and $x_4(1)$. We also use\\
    $[x_2(s),x_{-1}(1)]=1$.
    Since
    \[
        x_{-1}(1)=w_1x_1(-1)w_1^{-1}
    \]
    and $w_1$ is fixed by $\psi$, its image is already known:
    \[
        X_{-1}^\delta
        \coloneqq
        w_1(X_1^\delta)^{-1}w_1^{-1}
        =
        \begin{pmatrix}
            1&0&\delta&0\\
            1&1&\delta&\delta\\
            \delta&0&1&0\\
            \delta&\delta&1&1
        \end{pmatrix}.
    \]
    Thus, in addition to
    \[
        [Y_s,h_2]=[Y_s,w_4]=[Y_s,X_2^\delta]=[Y_s,X_4^\delta]=1,
    \]
    we have
    $[Y_s,X_{-1}^\delta]=1$.
    
    All these relations hold as exact matrix identities: every projective scalar is congruent to one modulo $J$, and its fourth power is one, so Lemma~\ref{lem:scalar-normalization} applies.
    
    The first two commutation relations constrain $Y_s$ to the form
    \[
        Y_s=
        \begin{pmatrix}
            a&0&0&-z\\
            0&b&c&0\\
            0&d&e&0\\
            z&0&0&a
        \end{pmatrix}.
    \]
    Since $1-2\delta$ is a unit, commutation with $X_2^\delta$ and $X_4^\delta$ gives
    $d=z=0$ and $e=b$.
    
    The relation with $X_{-1}^\delta$ then gives
    $a=b+c\delta$.
    Hence
    \[
        Y_s=
        \begin{pmatrix}
            b+c\delta&0&0&0\\
            0&b&c&0\\
            0&0&b&0\\
            0&0&0&b+c\delta
        \end{pmatrix}.
    \]
    
    The unitary condition gives
    $b\bar b=1$ and $\bar bc=b\bar c$.
    Since $b\equiv1\pmod J$, the element $b$ is a unit. Define
    $\nu(s)\coloneqq cb^{-1}$.
    
    The preceding identities imply
    \[
        \overline{\nu(s)}
        =\bar c\,\bar b^{-1}
        =\bar c\,b
        =c\bar b
        =cb^{-1}
        =\nu(s),
    \]
    so $\nu(s)\in R_\theta$.
    
    Put
    $\lambda\coloneqq b\bigl(1-\nu(s)\delta\bigr)$.
    Using $\delta^2=0$ and $3\delta=0$, direct substitution gives
    \[
        Y_s=\lambda X_2^\delta(\nu(s)).
    \]
    Moreover,
    \[
        \det X_2^\delta(\nu(s))
        =
        (1+2\nu(s)\delta)^2(1+\nu(s)\delta)^2
        =
        1.
    \]
    Hence $\lambda^4=1$. Since $\lambda\equiv1\pmod J$, Lemma~\ref{lem:scalar-normalization} gives $\lambda=1$. Therefore
    $Y_s=X_2^\delta(\nu(s))$.
    
    The parameter is unique and can be recovered projectively by
    \[
        \nu(s)=(Y_s)_{23}(Y_s)_{22}^{-1}.
    \]
    The congruence modulo $J$ gives
    \[
        \nu(s)+J=\mu_k(s+J),
    \]
    while the normalization $Y_1=X_2^\delta$ gives
    $\nu(1)=1$.
    
    \smallskip
    
    \noindent\emph{Type ${}^{2}A_4$.}
    Fix $s\in R_\theta$, and choose a determinant-one unitary representative $Y_s\in\operatorname{GL}_5(R)$ of the projective class $\psi(x_1(s))$ satisfying
    \[
        Y_s\equiv x_1\bigl(\mu_k(s+J)\bigr)\pmod J.
    \]
    
    The element $x_1(s)$ commutes with $H$, $w_4$, $x_1(1)$, and $x_4(1)$. We also use
    \[
        [x_1(s),x_3(1,1/2)]=1.
    \]
    The image of $x_3(1,1/2)$ is the matrix $X_3^\delta$ from Lemma~\ref{lemma:A4-elementary-delta-normal-form}. Therefore,
    \[
        [Y_s,H]
        =
        [Y_s,w_4]
        =
        [Y_s,X_1^\delta]
        =
        [Y_s,X_4^\delta]
        =
        [Y_s,X_3^\delta]
        =
        1
    \]
    in $\operatorname{PGL}_5(R)$.
    
    We may treat these as exact matrix relations. Indeed, the scalar in the relation with $H$ is congruent to one modulo $J$ and has square one, so it equals one. The same conclusion applies to $w_4$ after squaring the relation and using $w_4^2=H$.
    
    Exact commutation with $H$ shows that $Y_s$ preserves the line $\langle e_3\rangle$. The matrices $X_1^\delta$ and $X_4^\delta$ also preserve this line, and their eigenvalues on it are units. Comparing the actions on this line shows that the projective scalars in the relations with $X_1^\delta$ and $X_4^\delta$ are one. After imposing these four exact relations, comparison of the $(1,1)$-entries shows that the projective scalar in the relation with $X_3^\delta$ is also one.
    
    The relations with $H$ and $w_4$ first restrict $Y_s$ to
    \[
        Y_s=
        \begin{pmatrix}
            a&b&0&-c&-d\\
            e&f&0&-g&-h\\
            0&0&q&0&0\\
            c&d&0&a&b\\
            g&h&0&e&f
        \end{pmatrix}.
    \]
    Since $1+\delta$ and $1+2\delta$ are units, exact commutation with $X_1^\delta$ and $X_4^\delta$ gives
    \[
        e=g=c=d=h=0,
        \qquad
        f=a.
    \]
    Thus
    \[
        Y_s=
        \begin{pmatrix}
            a&b&0&0&0\\
            0&a&0&0&0\\
            0&0&q&0&0\\
            0&0&0&a&b\\
            0&0&0&0&a
        \end{pmatrix}.
    \]
    The remaining exact relation with $X_3^\delta$ gives
    $q=a+b\delta$.
    
    All additional equations arising from this commutator are multiples of $\delta^2$ or $3\delta$ and hence vanish.
    
    Since $a\equiv1\pmod J$, it is a unit. Define
    $\nu(s)\coloneqq ba^{-1}$.
    The unitary condition gives
    $a\bar a=1$ and $a\bar b=\bar ab$.

    Consequently,
    \[
        \overline{\nu(s)}
        =
        \bar b\,\bar a^{-1}
        =
        \bar b\,a
        =
        b\bar a
        =
        ba^{-1}
        =
        \nu(s),
    \]
    so $\nu(s)\in R_\theta$.
    
    Put
    $\lambda\coloneqq a\bigl(1-\nu(s)\delta\bigr)$.
    Using $\delta^2=0$ and $3\delta=0$, we obtain
    $Y_s=\lambda X_1^\delta(\nu(s))$.
    
    The identities
    \[
        a\bar a=1,
        \qquad
        \overline{\nu(s)}=\nu(s),
        \qquad
        \bar\delta=-\delta,
        \qquad
        \delta^2=0
    \]
    give $\lambda\bar\lambda=1$. Since $Y_s$ is unitary, so is $X_1^\delta(\nu(s))$. Moreover,
    \[
        \det X_1^\delta(\nu(s))
        =
        (1+\nu(s)\delta)^4(1-\nu(s)\delta)
        =
        1.
    \]
    We may therefore replace $Y_s$ by this representative and assume that
    $Y_s=X_1^\delta(\nu(s))$.

    The parameter is unique and can be recovered projectively by
    \[
        \nu(s)=(Y_s)_{12}(Y_s)_{11}^{-1}.
    \]
    Again,
    \[
        \nu(s)+J=\mu_k(s+J),
        \qquad
        \nu(1)=1.
    \]
    
    In both types, the additivity of $\nu$ follows from the root-subgroup law and the uniqueness of the parameter. In type ${}^{2}A_4$, the displayed matrices multiply according to the additive law. In type ${}^{2}A_3$, the only additional term is
    $6\nu(s)\nu(r)\delta$
    in the $(2,3)$-entry, and it vanishes because $3\delta=0$. Hence
    \[
        \nu(s+r)=\nu(s)+\nu(r)
        \qquad
        (s,r\in R_\theta).
    \]
\end{proof}

\begin{lemma}[First annihilator condition]
    \label{lemma:elementary-symmetric-unit-relation}
    For every $t\in R_\theta^\times$ and $s\in R_\theta$,
    \begin{equation}
    \label{eq:elementary-symmetric-unit-relation}
        \delta\bigl(\nu(t^2s)-\nu(s)\bigr)=0.
    \end{equation}
\end{lemma}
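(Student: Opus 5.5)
Our plan is to apply $\psi$ to the torus conjugation relation. For $t\in R_\theta^\times$ the standard torus element acting on the relevant root subgroup with weight $t^2$ gives
\[
h\,x(s)\,h^{-1}=x(t^2s),
\]
where in type ${}^2A_3$ we take $h=h_2(t)=\operatorname{diag}(1,t,1/t,1)$, so that $h_2(t)x_2(s)h_2(t)^{-1}=x_2(t^2s)$. In type ${}^2A_4$ we take $h=h_1(t)=\operatorname{diag}(t,1/t,\bar t,1/\bar t,\cdot)$ suitably normalized; for $t\in R_\theta$ its action on $x_1(s)$ multiplies by $t^2$, with the middle entry balancing the determinant. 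The torus element is a word in Weyl elements, $h_i(t)=w_i(t)w_i(-1)$. Since $\psi$ fixes $w_i=w_i(1)$ but not necessarily $w_i(t)$, we first need to determine $\psi(h)$.

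\textbf{Step 1: determine $\psi(h)$.} We have $w_i(t)=x_i(t)x_{-i}(-t^{-1})x_i(t)$. By Lemma~\ref{lemma:elementary-symmetric-parameter-map}, $\psi(x_i(t))=X^\delta(\nu(t))$. Also $\psi(x_{-i}(r))=w_i\,\psi(x_i(-r))\,w_i^{-1}$, which equals $w_iX^\delta(-\nu(r))w_i^{-1}$ by additivity of $\nu$. Hence $\psi(h)$ is an explicit product of such matrices in the parameters $\nu(t)$ and $\nu(t^{-1})$.

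Rather than multiplying this out, we argue structurally. $\psi(h)$ commutes with everything $h$ commutes with, for instance $h_2$, $w_4$ (type ${}^2A_3$), and $x_4(1)$, whose image $X_4^\delta$ is known. So, as in the proof of Lemma~\ref{lemma:elementary-symmetric-parameter-map}, $\psi(h)$ is forced to be diagonal up to a $\delta$-correction: $\psi(h)=D\,(I+\delta N)$ with $D=\operatorname{diag}(1,\tau,\tau^{-1},1)$. Here $\tau$ is a unit congruent to $\mu_k(t)$, and $N$ is a matrix over $R$, determined modulo $J$ (since $\delta J=0$).

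\textbf{Step 2: compare parameters.} Apply $\psi$ to the relation and compare with the normal form $X^\delta(\nu(\cdot))$. The non-$\delta$ part of the $(2,3)$ entry (respectively $(1,2)$ entry) gives
\[
\nu(t^2s)=\tau^2\nu(s)
\]
up to terms in $\delta$. The diagonal $\delta$-coefficients carry the extra information. In $X_2^\delta(\nu)$ the $(1,1)$ entry is $1+2\nu\delta$, and in $X_1^\delta(\nu)$ the $(3,3)$ entry is $1-\nu\delta$. These entries lie on blocks that $D$ preserves, and there $D$ acts by scalars, so conjugation leaves them unchanged apart from the commutator with $\delta N$, which we expect to be a $\delta\cdot(\text{residual})$ term. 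Equating $(1,1)$ entries then gives
\[
2\nu(t^2s)\delta=2\nu(s)\delta+\delta\,c(s,t),
\]
where $c$ depends only on residues. We must show $c\equiv0$. Since $\delta J=0$, everything reduces to $\mathbf F_3$ (or to a general $k$, where $\delta=0$ is forced anyway by Lemma~\ref{lemma:exnl-prmr-space} once we know $\delta\in\mathcal E$, but we cannot yet assume that). Residually $\tau\equiv\pm1$ when $k=\mathbf F_3$, so $h$ reduces to $h_i(\pm1)$, which is in the fixed group and commutes appropriately.

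\textbf{Main obstacle.} The hard part is controlling $N$, the first-order deformation of $\psi(h)$, well enough to see that it cannot contribute to the chosen diagonal entries. We would first try computing directly with the trace-ratio invariant of Lemma~\ref{lemma:projective-trace-invariants}. Conjugation by $\psi(h)$ commutes with $h_2$ (respectively $H$), so it is block-diagonal and preserves $\kappa$. Therefore
\[
\kappa\bigl(\psi(x(t^2s))\bigr)=\kappa\bigl(\psi(x(s))\bigr).
\]
In type ${}^2A_3$ this ratio equals $1+\nu\delta$ for $X_2^\delta(\nu)$, and in type ${}^2A_4$ it equals $(1-2\nu\delta)/4$. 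Either identity yields $\delta\nu(t^2s)=\delta\nu(s)$ immediately, with no need to analyse $N$ at all. This is the route we would commit to: the only point to verify is that a representative of $\psi(h)$ commutes exactly with $h_2$ (respectively $H$). That follows because $h$ commutes with $h_2$ (respectively $H$), the scalar has $2$-power order, and Lemma~\ref{lem:scalar-normalization} then applies.
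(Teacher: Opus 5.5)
Your committed route is the paper's proof. You show via Lemma~\ref{lem:scalar-normalization} that a representative of $\psi(h)$ commutes exactly with $h_2$ (resp.\ $H$), so conjugation by it preserves the trace ratio $\kappa$; you then compare $\kappa\bigl(X_2^\delta(\nu)\bigr)=1+\nu\delta$ (resp.\ $\kappa\bigl(X_1^\delta(\nu)\bigr)=(1-2\nu\delta)/4$) at $\nu(t^2s)$ and at $\nu(s)$. Steps~1--2 can be dropped, and the appeal there to $\delta J=0$ would not yet be justified. One small correction: in type ${}^2A_4$ the torus element is $h_1(t)=\operatorname{diag}(t,t^{-1},1,\bar t,\bar t^{-1})$, which equals $\operatorname{diag}(t,t^{-1},1,t,t^{-1})$ for $t\in R_\theta$, not the matrix you wrote. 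This does not affect the argument, which only uses that $h_1(t)$ is diagonal, lies in $E^{(4)}$, and conjugates $x_1(s)$ to $x_1(t^2s)$.
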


\begin{proof}
    In type ${}^{2}A_3$, let $h_2(t)$ be the elementary torus element corresponding to the relative root $\beta_2$. In the present realization,
    $h_2(t)=\operatorname{diag}(1,t,t^{-1},1)$,
    and
    \[
        h_2(t)x_2(s)h_2(t)^{-1}=x_2(t^2s).
    \]
    Put
    $H_t\coloneqq\psi(h_2(t))$.

    Since $\psi(h_2)=h_2$, the projective classes of $H_t$ and $h_2$ commute. For any representative of $H_t$, the scalar in their commutation relation is congruent to one modulo $J$ and has square one. Lemma~\ref{lem:scalar-normalization} therefore shows that it equals one. Thus $H_t$ preserves the eigenspace decomposition
    \[
        R^4
        =
        \langle e_1,e_4\rangle
        \oplus
        \langle e_2,e_3\rangle.
    \]
    Consequently, conjugation by $H_t$ preserves the projective trace ratio
    \[
        \kappa_3([Y])
        =
        \frac{\operatorname{tr}_+(Y)}
             {\operatorname{tr}_-(Y)}
    \]
    introduced in Subsection~\ref{subsec:descent-residual-parameter}.
    
    Applying $\psi$ to the torus relation and using~\eqref{eq:A3-X2-delta-nu}, we obtain
    \[
        \frac{1+2\nu(t^2s)\delta}
             {1+\nu(t^2s)\delta}
        =
        \frac{1+2\nu(s)\delta}
             {1+\nu(s)\delta}.
    \]
    Since $\delta^2=0$, this is equivalent to
    $\delta\bigl(\nu(t^2s)-\nu(s)\bigr)=0$.
    
    In type ${}^{2}A_4$, let
    \[
        h_1(t)=\operatorname{diag}(t,t^{-1},1,t,t^{-1}).
    \]
    It commutes with
    $H=\operatorname{diag}(-1,-1,1,-1,-1)$
    and satisfies
    \[
        h_1(t)x_1(s)h_1(t)^{-1}=x_1(t^2s).
    \]
    Put
    $H_t\coloneqq\psi(h_1(t))$.
    As above, the projective commutation relation with $H$ holds as an exact matrix equality. Hence $H_t$ preserves the decomposition
    \[
        R^5
        =
        \langle e_3\rangle
        \oplus
        \langle e_1,e_2,e_4,e_5\rangle.
    \]
    Conjugation by $H_t$ therefore preserves
    \[
        \kappa_4([Y])
        =
        \frac{\operatorname{tr}_+(Y)}
             {\operatorname{tr}_-(Y)}.
    \]
    
    Applying $\psi$ to the torus relation and using~\eqref{eq:A4-X1-delta-nu}, we obtain
    \[
        \frac{1-\nu(t^2s)\delta}
             {4(1+\nu(t^2s)\delta)}
        =
        \frac{1-\nu(s)\delta}
             {4(1+\nu(s)\delta)}.
    \]
    Since $\delta^2=0$, this is again equivalent to
    \[
        \delta\bigl(\nu(t^2s)-\nu(s)\bigr)=0.
    \]
\end{proof}


\subsubsection{Anti-invariant parameters}

We now consider anti-invariant parameters. In both types, we use the root subgroup $x_1(R)$.

\begin{lemma}[The anti-invariant parameter map]
    \label{lemma:elementary-anti-invariant-parameter-map}
    There exists an additive map
    \[
        \eta\colon R_\theta^-\longrightarrow R_\theta^-
    \]
    such that
    \[
        \eta(\xi)+J=\mu_k(\xi+J)
        \qquad\text{and}\qquad
        \psi(x_1(\xi))=x_1(\eta(\xi))
        \quad
        (\xi\in R_\theta^-).
    \]
    Moreover,
    \begin{equation}
    \label{eq:elementary-anti-invariant-annihilator}
        \delta\eta(\xi)=0
        \qquad
        (\xi\in R_\theta^-).
    \end{equation}
\end{lemma}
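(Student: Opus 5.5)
Fix $\xi\in R_\theta^-$ and choose a determinant-one unitary representative $Y_\xi\in\operatorname{GL}_{n+1}(R)$ of $\psi(x_1(\xi))$ with $Y_\xi\equiv x_1(\mu_k(\xi+J))\pmod J$. The first step is to pin down $Y_\xi$ by commutation relations against elements whose images are already known, exactly as in Lemma~\ref{lemma:elementary-symmetric-parameter-map}.

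In type ${}^2A_3$ we use the relations
\[
    h_2x_1(\xi)h_2^{-1}=x_1(\xi)^{-1},\qquad
    w_3x_1(\xi)w_3^{-1}=x_1(\xi)^{-1},\qquad
    [x_1(\xi),x_{-2}(1)]=[x_1(\xi),x_4(1)]=1.
\]
These are the same relations that fixed $X_1$ in Lemma~\ref{lemma:A3-elementary-preliminary-normal-form}, so they should give $Y_\xi$ the shape of $\Theta^{(3)}_\delta(x_1(t))$, with off-block entries that are multiples of $\delta$. In type ${}^2A_4$ we use commutation with $H$, $w_4$, $X_1^\delta$ and $X_4^\delta$, together with $[x_1(\xi),x_3(1,1/2)]=1$. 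As in that lemma, the projective scalars become exact by Lemma~\ref{lem:scalar-normalization}. This should force
\[
    Y_\xi=\begin{pmatrix} a&b&0&0&0\\ 0&a&0&0&0\\0&0&q&0&0\\0&0&0&a&b'\\0&0&0&0&a\end{pmatrix},
\]
with $q-a$ a multiple of $b\delta$. In both types we then set $\eta(\xi)\coloneqq (Y_\xi)_{12}(Y_\xi)_{11}^{-1}$.

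The second step checks that $\eta(\xi)$ is anti-invariant and that the $\delta$-corrections vanish. In ${}^2A_3$ the $(3,4)$-entry of $Y_\xi$ is forced to equal $\overline{(1,2)\text{-entry}}$ up to the unit $a$, because $\sigma$-fixedness is encoded in the unitary condition. In ${}^2A_4$ the unitary condition $Y_\xi^\dagger Y_\xi=I$ gives $b'=\bar b$ up to units.

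To prove $\eta(\xi)\in R_\theta^-$ we use that $x_1(\xi)$ for anti-invariant $\xi$ commutes with a different set of elements than for symmetric $\xi$. The cleanest handle is the torus: for $t\in R_\theta^\times$, $h_1(t)$ acts on $x_1(\xi)$ by a scalar $t^2$ (or $t^{\pm}$ combinations). More useful is that $x_1(\xi)$ and $x_1(s)$ commute, and the commutator $[x_1(\xi),x_{-1}(s)]$ for $s\in R_\theta$ lands in a torus whose trace-ratio invariant $\kappa$ (Lemma~\ref{lemma:projective-trace-invariants}) detects $\xi+\bar\xi$. I would first try to show the symmetric part of $\eta(\xi)$ lies in $\operatorname{Ann}(\delta)$ via $\kappa$, and then absorb it.

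Concretely, once the shape is known, the $\delta$-entries of $Y_\xi$ are $\eta(\xi)^+\delta$ (by comparison with $\Theta^{(n)}_\delta$). To get $\psi(x_1(\xi))=x_1(\eta(\xi))$ exactly, it suffices to show $\eta(\xi)^+=0$, equivalently $\eta(\xi)\in R_\theta^-$, and $\delta\eta(\xi)=0$.

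For anti-invariance I would use additivity together with the symmetric map. Let $t=s+\xi$ with $s\in R_\theta$. Then $\psi(x_1(t))=\psi(x_1(s))\psi(x_1(\xi))$ must satisfy the unitary relation of a root element, in which the $(1,2)$ and $(3,4)$ (respectively $(4,5)$) entries are conjugate. Comparing gives $\overline{\eta(\xi)}=-\eta(\xi)$ modulo $\delta$-terms, and the $\delta$-terms vanish since $\delta R_\theta^-=0$ will be proved simultaneously.

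The main obstacle is \eqref{eq:elementary-anti-invariant-annihilator}, namely $\delta\eta(\xi)=0$. I would take the rank-one relation $(x_1(\xi)w_1)$ with a unit parameter: for $t=1+\xi\in R^\times$, use
\[
    w_1(t)=x_1(t)\,x_{-1}(-t^{-1})\,x_1(t),
\]
whose image is known to be fixed ($\psi$ fixes $w_1(t)$ projectively after normalization, or at least $w_1(t)^2=h_1(t)$). Expanding with the $\Theta$-type formulas, the discrepancy is a multiple of $\delta\eta(\xi)$ in the diagonal blocks.

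The decisive identity is expected to be $\kappa(h_1(1+\xi))=\kappa(h_1(1))$. For ${}^2A_4$ this gives $\frac{1-\eta\delta}{1+\eta\delta}=1$, i.e.\ $2\eta(\xi)\delta=0$. For ${}^2A_3$ the analogous $\kappa_3$ on $\psi(x_2(\cdot))$ conjugated by $\psi(x_1(\xi))$ should give the same conclusion. Additivity of $\eta$ then follows as in Lemma~\ref{lemma:elementary-symmetric-parameter-map}.
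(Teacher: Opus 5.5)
There is a genuine gap, and it begins with the relation list. You reused the relations that pin down $u_1=x_1(1)$ (and, in type ${}^2A_4$, those used for symmetric parameters). Several of them change for $\xi\in R_\theta^-$. From the explicit matrices, $w_3x_1(t)w_3^{-1}=x_1(-\bar t)$, so $w_3$ \emph{centralizes} $x_1(\xi)$ instead of inverting it. In type ${}^2A_4$, $w_4x_1(t)w_4^{-1}=x_1(\bar t)$, so $w_4$ \emph{inverts} $x_1(\xi)$. Moreover, $x_1(\xi)$ does not commute with $x_4(1)$: their commutator is $I_5+2\xi E_{15}$, up to sign convention. Since $\psi$ is injective and fixes $w_3$ and $w_4$, the relations you impose on $Y_\xi$ are false whenever $\xi\neq0$, and the shapes you predict for $Y_\xi$ are not what the system yields.

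This matters beyond bookkeeping, because the correct Weyl relation is exactly what produces anti-invariance. $w_3$-invariance forces the $(3,4)$-entry of $Y_\xi$ to be minus its $(1,2)$-entry, while unitarity forces it to be the conjugate; hence $\bar b=-b$. In type ${}^2A_4$ the same conclusion comes from $w_4$-inversion together with the projective unitary identity. Your substitute argument cannot work: $x_1(b)$ is unitary for every $b\in R$, so no unitarity condition on $\psi(x_1(s+\xi))=\psi(x_1(s))\psi(x_1(\xi))$ can distinguish a symmetric $\eta(\xi)$ from an anti-invariant one.

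Your route to $\delta\eta(\xi)=0$ also fails. After normalization, $\psi$ is known to fix the specific elements $w_i=w_i(1)$ and $h_i$ (resp.\ $H$, $v_2$, $v_3$), but not $w_1(1+\xi)$ or $h_1(1+\xi)$. Their images depend on the parameter maps you are still constructing, so there is no transported identity $\kappa(\psi(h_1(1+\xi)))=\kappa(h_1(1))$. Nor can $\kappa_3$ be used after conjugating by $\psi(x_1(\xi))$, since that matrix does not preserve $\langle e_1,e_4\rangle\oplus\langle e_2,e_3\rangle$.

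The missing step is one you already listed. Once $Y_\xi=x_1(b)$, apply $\psi$ to $[x_1(\xi),x_{-2}(1)]=1$, using $\psi(x_{-2}(1))=X_{-2}^\delta=w_2(X_2^\delta)^{-1}w_2^{-1}$. Direct multiplication shows that $[x_1(b),X_{-2}^\delta]-I_4$ has $\delta b$ in positions $(1,2)$ and $(3,4)$ and zeros elsewhere, so $\delta b=0$.

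In type ${}^2A_4$ the correct relations (commutation with $H$, $X_1^\delta$, $X_3^\delta$, and inversion by $w_4$) leave an extra parameter $c\in J$. You need a further identity, $[x_1(\xi),x_4(1)]=[x_1(1),v_2x_1(\xi)^{-2}v_2^{-1}]$, which follows from $\bar\xi=-\xi$ and $x_4(-\xi)=v_2x_1(-2\xi)v_2^{-1}$. Comparing the $(1,5)$- and $(1,2)$-entries of the two transported commutators gives $c=0$ and then $\delta b=0$.
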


\begin{proof}
    Let $\xi\in R_\theta^-$.
    
    We first consider type ${}^{2}A_3$. Choose a determinant-one unitary representative $Y_\xi\in\operatorname{GL}_4(R)$ of the projective class $\psi(x_1(\xi))$ such that
    \[
        Y_\xi\equiv x_1\bigl(\mu_k(\xi+J)\bigr)\pmod J.
    \]
    
    Since $\xi$ is anti-invariant,
    \[
        h_2x_1(\xi)h_2^{-1}=x_1(\xi)^{-1},
        \qquad
        w_3x_1(\xi)w_3^{-1}=x_1(\xi),
    \]
    where
    $w_3=w_2w_1w_2^{-1}$.

    The element $x_1(\xi)$ also commutes with $x_1(1)$ and $x_4(1)$. Applying $\psi$ gives the exact relations
    \[
        h_2Y_\xi h_2^{-1}=Y_\xi^{-1},
        \qquad
        w_3Y_\xi w_3^{-1}=Y_\xi,
        \qquad
        [Y_\xi,X_1^\delta]=[Y_\xi,X_4^\delta]=1.
    \]
    Indeed, every projective scalar is congruent to one modulo $J$ and has fourth power one, so Lemma~\ref{lem:scalar-normalization} applies.
    
    Solving the three linear relations involving $w_3$, $X_1^\delta$, and $X_4^\delta$, using $\delta^2=0$ and $3\delta=0$, gives
    \[
        Y_\xi=
        \begin{pmatrix}
            a&b&0&c\\
            0&a&0&0\\
            0&c&a&-b\\
            0&0&0&a
        \end{pmatrix},
    \]
    where
    \[
        a\equiv1\pmod J,
        \qquad
        b+J=\mu_k(\xi+J),
        \qquad
        c\in J.
    \]
    The relation with $h_2$ gives
    $a^2=1$ and $2c=0$.
    Therefore $a=1$ and $c=0$. The unitary condition then gives
    $b+\bar b=0$.
    Hence
    $Y_\xi=x_1(b)$,
    $b\in R_\theta^-$.
    
    Define
    $\eta(\xi)\coloneqq b$.
    This parameter is unique and can be recovered projectively by
    \[
        \eta(\xi)=(Y_\xi)_{12}(Y_\xi)_{11}^{-1}.
    \]
    Moreover,
    $\eta(\xi)+J=\mu_k(\xi+J)$.
    
    We now use
    $[x_1(\xi),x_{-2}(1)]=1$.
    The image of $x_{-2}(1)$ is
    \[
        X_{-2}^\delta
        \coloneqq
        w_2(X_2^\delta)^{-1}w_2^{-1}.
    \]
    Thus
    \[
        [x_1(\eta(\xi)),X_{-2}^\delta]=1.
    \]
    Direct multiplication gives
    \[
        [x_1(\eta(\xi)),X_{-2}^\delta]-I_4
        =
        \begin{pmatrix}
            0&\delta\eta(\xi)&0&0\\
            0&0&0&0\\
            0&0&0&\delta\eta(\xi)\\
            0&0&0&0
        \end{pmatrix},
    \]
    and therefore
    $\delta\eta(\xi)=0$.
    
    \medskip
    
    \noindent\emph{Type ${}^{2}A_4$.}
    Choose a determinant-one unitary representative $Y_\xi\in\operatorname{GL}_5(R)$ of the projective class $\psi(x_1(\xi))$ satisfying
    \[
        Y_\xi\equiv x_1\bigl(\mu_k(\xi+J)\bigr)\pmod J.
    \]
    
    The standard element $x_1(\xi)$ commutes with $H$, $x_1(1)$, and $x_3(1,1/2)$, and satisfies
    \[
        w_4x_1(\xi)w_4^{-1}=x_1(\xi)^{-1}.
    \]
    Hence
    \[
        [Y_\xi,H]
        =
        [Y_\xi,X_1^\delta]
        =
        [Y_\xi,X_3^\delta]
        =
        1,
        \qquad
        w_4Y_\xi w_4^{-1}=Y_\xi^{-1}
    \]
    in $\operatorname{PGL}_5(R)$.
    
    The scalar in the relation with $H$ is congruent to one modulo $J$ and has square one, so it equals one. Thus $Y_\xi$ preserves the line $\langle e_3\rangle$. Since
    \[
        (Y_\xi)_{44}\equiv1\pmod J,
    \]
    this entry is a unit. Scaling the representative, we normalize
    $(Y_\xi)_{44}=1$.
    
    From this point on, the representative is not required to have determinant one or to be unitary; only its projective class and projective unitary identity will be used.
    
    Writing the remaining projective relations in scalar-free form and using $\delta^2=0$ and $3\delta=0$, direct calculation gives
    \[
        Y_\xi=
        \begin{pmatrix}
            1-2b\delta&b&0&-c\delta&c\\
            0&1-2b\delta&0&0&-c\delta\\
            0&0&1-b\delta&0&0\\
            -c\delta&c&0&1&-b+(b^2+c^2)\delta\\
            0&-c\delta&0&0&1
        \end{pmatrix}
    \]
    for some $b,c\in R$, where
    \[
        b+J=\mu_k(\xi+J),
        \qquad
        c\in J.
    \]
    
    We use one additional relation. Since $\bar\xi=-\xi$,
    \[
        [x_1(\xi),x_4(1)]
        =
        [x_1(1),x_4(-\xi)].
    \]
    Moreover,
    \[
        x_4(-\xi)
        =
        v_2x_1(-2\xi)v_2^{-1}
        =
        v_2x_1(\xi)^{-2}v_2^{-1}.
    \]
    Consequently,
    \[
        [x_1(\xi),x_4(1)]
        =
        [x_1(1),v_2x_1(\xi)^{-2}v_2^{-1}].
    \]
    Applying $\psi$ gives
    \[
        [Y_\xi,X_4^\delta]
        =
        [X_1^\delta,v_2Y_\xi^{-2}v_2^{-1}]
    \]
    in $\operatorname{PGL}_5(R)$.
    
    For the displayed representative, the $(3,3)$-entries of both commutators are one. Hence the projective scalar is one, and we may compare their entries directly. The $(1,5)$-entries give
    \[
        2b+c-bc\delta
        =
        2b+\frac c2+bc\delta.
    \]
    Thus
    \[
        c\left(\frac12-2b\delta\right)=0.
    \]
    The second factor is congruent to $1/2$ modulo $J$ and is therefore a unit. Hence
    $c=0$.
    
    Comparing the $(1,2)$-entries gives
    $-c+bc\delta=-(b+c)\delta$.
    
    Substituting $c=0$, we obtain
    $\delta b=0$.
    The displayed matrix therefore reduces to
    \[
        Y_\xi=I_5+bE_{12}-bE_{45}.
    \]
    
    It remains to determine the symmetry type of $b$. In the projective unitary identity
    \[
        Y_\xi Q_5\overline{Y_\xi}^{\,t}=\lambda Q_5,
    \]
    comparison of the $(3,3)$-entries gives $\lambda=1$, while the $(1,4)$-entry gives
    $b+\bar b=0$.
    Thus
    \[
        b\in R_\theta^-,
        \qquad
        \psi(x_1(\xi))=x_1(b).
    \]
    
    Define
    $\eta(\xi)\coloneqq b$.
    Again the parameter is unique, with
    \[
        \eta(\xi)
        =
        (Y_\xi)_{12}(Y_\xi)_{44}^{-1}\ \text{ and }\
        \eta(\xi)+J=\mu_k(\xi+J),
        \qquad
        \delta\eta(\xi)=0.
    \]
    
    In both types, additivity follows from
    \[
        x_1(\xi+\zeta)=x_1(\xi)x_1(\zeta)
        \qquad
        (\xi,\zeta\in R_\theta^-)
    \]
    and the uniqueness of the parameter.
\end{proof}


\subsubsection{Admissibility of the parameter map}
\label{subsubsec:elementary-coefficient-recovery-admissibility}

We now identify the two parameter maps with the restrictions of a single automorphism of the coefficient ring and derive the final annihilator conditions on the residual parameter.

We have constructed additive maps
$\nu\colon R_\theta\longrightarrow R_\theta$,
$\eta\colon R_\theta^-\longrightarrow R_\theta^-$,
satisfying
\[
    \nu(1)=1,
    \qquad
    \nu(s)+J=\mu_k(s+J),
    \qquad
    \eta(\xi)+J=\mu_k(\xi+J),
\]
and
\begin{equation}
\label{eq:elementary-delta-nu-eta-relations}
    \delta\bigl(\nu(t^2s)-\nu(s)\bigr)=0
    \quad
    (t\in R_\theta^\times,\ s\in R_\theta),
    \qquad
    \delta\eta(\xi)=0
    \quad
    (\xi\in R_\theta^-).
\end{equation}

\begin{prop}[Coefficient recovery and annihilator properties]
    \label{prop:elementary-coefficient-recovery-admissibility}
    There exists a unique ring automorphism
    $\mu\in\operatorname{Aut}(R)$
    commuting with $\theta$ and inducing $\mu_k$ on $R/J$, such that
    $\nu=\mu|_{R_\theta}$,
    $\eta=\mu|_{R_\theta^-}$.
    
    Moreover,
    $\delta R_\theta^-=0$,
    $\delta I_\theta(R)=0$.
    Thus
    $\delta\in\mathcal E(R,\theta)$.
\end{prop}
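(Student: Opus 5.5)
The plan is to define $\mu(s+\xi):=\nu(s)+\eta(\xi)$ for $s\in R_\theta$, $\xi\in R_\theta^-$, using $R=R_\theta\oplus R_\theta^-$. This map is additive, commutes with $\theta$ by construction, and induces $\mu_k$ modulo $J$. It is bijective because $\psi$ is an automorphism: running the same construction for $\psi^{-1}$, which is normalized in the same way, produces additive inverses of $\nu$ and $\eta$. The main work is multiplicativity.

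Multiplicativity should come from the torus action on root elements whose images we already know. We first handle products involving anti-invariant parameters. Take $x_1(t)$ for arbitrary $t\in R$, written as $x_1(t^+)x_1(t^-)$, with $\psi(x_1(t^-))=x_1(\eta(t^-))$ from Lemma~\ref{lemma:elementary-anti-invariant-parameter-map}. The symmetric part in type ${}^2A_3$ must also be computed; I expect it to give the $\Theta$-shape of Lemma~\ref{lemma:action_of_theta_3}(a) with $t^+$ replaced by $\nu(t^+)$. Then use the Chevalley commutator $[x_1(t),x_2(s)]$, which in type ${}^2A_3$ equals a product $x_3(\pm ts)\,x_4(\pm t\bar t s)$. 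Apply $\psi$ and compare, say, the $(1,3)$ and $(1,4)$ entries modulo the $\delta$-terms. This yields $\mu(ts)=\mu(t)\nu(s)$ and $\nu(t\bar t s)=\mu(t)\overline{\mu(t)}\nu(s)$ for $s\in R_\theta$. Setting $s=1$ and polarizing gives multiplicativity of $\mu$ on all products. In type ${}^2A_4$, use instead $[x_1(t),x_2(s,u)]$ landing in $x_3,x_4$, together with the $x_4$ images $X_4^\delta$. Alternatively, use the torus elements $h_1(t)$ and $h_3(t)$, whose images are controlled via $w_i(t)$ written as a word in $x_{\pm i}$. The entries in these comparisons are affected by $\delta$ only through terms of the form $\delta\cdot(\ldots)$. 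Because $\delta^2=0$ and $\delta J$ need not vanish yet, I will compare entries whose $\delta$-contribution sits in a different position, such as the superdiagonal block versus the corner entries. Uniqueness of $\mu$ follows because $\nu$ and $\eta$ are determined by $\psi$ via the projective recovery formulas.

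With $\mu$ a ring automorphism, the annihilator conditions follow. The condition $\delta R_\theta^-=0$ is immediate from \eqref{eq:elementary-delta-nu-eta-relations}, since $\eta=\mu|_{R_\theta^-}$ is surjective onto $R_\theta^-$. For $t\in R_\theta^\times$, Lemma~\ref{lemma:elementary-symmetric-unit-relation} with $s=1$ gives $\delta(\mu(t)^2-1)=\delta(\nu(t^2)-\nu(1))=0$. As $t$ ranges over $R_\theta^\times$, the element $\mu(t)$ ranges over $R_\theta^\times$, so $\delta$ kills every generator of $I_\theta(R)$. The ideal is generated $R$-linearly, but $\delta$ annihilates each generator, so $\delta I_\theta(R)=0$. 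We already know $\delta\in J\cap R_\theta^-$, so $\delta\in\mathcal E(R,\theta)$.

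The main obstacle I expect is the multiplicativity step. The commutator formulas acquire $\delta$-correction terms whose cancellation needs $\delta R_\theta^-=0$, which is not yet available. To avoid circularity, I will first derive multiplicativity modulo $\operatorname{Ann}(\delta)$-free entries: use the entries where $\delta$ does not appear, which are exactly the standard positions of $x_3$ and $x_4$. Only afterwards will I deduce the annihilator conditions.
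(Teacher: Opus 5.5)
\textbf{Gap: bijectivity of $\mu$.} Both of your annihilator arguments depend on it: $\delta R_\theta^-=0$ needs $\eta$ onto $R_\theta^-$, and $\delta I_\theta(R)=0$ needs $\mu(R_\theta^\times)=R_\theta^\times$. You justify it by saying $\psi^{-1}$ is ``normalized in the same way''. It is not: $\psi^{-1}$ inherits only the Weyl normalization ($\psi^{-1}(w_i)=w_i$, $\psi^{-1}(h_i)=h_i$).

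The normal forms $\psi(u_i)=X_i^\delta$ were reached only after further conjugations by elements centralizing the Weyl elements. These are $I_4+\tau Q_4$ and $I_4+\gamma Q_4$ in type ${}^2A_3$, and $U(s,u,t,\omega)$ in type ${}^2A_4$. A priori $\psi^{-1}(u_2)$ can still carry, e.g., the entry $d\neq 0$ of Lemma~\ref{lemma:A3-elementary-preliminary-normal-form}. Moreover, the parameter-map lemmas cannot even be started for $\psi^{-1}$, because they use the already known images $X^\delta_{\pm1},X^\delta_{\pm2},X^\delta_4$.

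If you renormalize $\psi^{-1}$ by such an $L'$, the composite $\psi\circ i_{[L']}$ is no longer computable on root subgroups. The reason is that $L'$ mixes $x_1$ with $x_{-1}$; for instance $Q_4x_1(b)Q_4^{-1}=x_{-1}(-\bar b)$. So no inverse relation between the parameter maps follows. That $\psi^{-1}$ is in normal form is true only a posteriori, since $\psi^{-1}=\Theta^{(n)}_{-\mu^{-1}(\delta)}\circ\mu^{-1}$, which is what you are proving. Some global input from the surjectivity of $\psi$ is genuinely needed: an injective endomorphism commuting with $\theta$ and lifting $\mu_k$ need not be onto (e.g.\ $x\mapsto x^3$ on $k[[x]]$ with $\bar x=-x$).

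\textbf{The paper's two-stage argument.} Injectivity is immediate: $\mu(a)=0$ forces $\psi(x_1(a))=1$.
\begin{enumerate}[(i)]
\item All computed root-element images and their inverses have entries in the subring $S_0=\mu(R)+\delta\mu(R)$. Hence every class $\psi(g)$ has such a representative $B$. Taking $\psi(g)=x_1(b)$ and reading $B_{11}$, $(B^{-1})_{11}$, $B_{12}$ gives only $R=\mu(R)+\delta\mu(R)$.
\item This already yields the annihilators. For $\xi\in R_\theta^-$, write $\xi=\mu(a)+\delta\mu(b)$ and take symmetric parts; with $\delta\mu(R_\theta^-)=0$ and $\delta^2=0$ this gives $\delta\xi=0$. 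For $u\in R_\theta^\times$, write $u=\mu(t)+\delta\mu(z)$ with $t\in R_\theta^\times$. Then $\delta(u^2-1)=\delta\mu(t^2-1)=0$ by Lemma~\ref{lemma:elementary-symmetric-unit-relation}.
\item Only now is $\delta\in\mathcal E(R,\theta)$, so $\Theta^{(n)}_{-\delta}$ exists. The same entry argument applied to $\Theta^{(n)}_{-\delta}\circ\psi$, which acts entrywise through $\mu$, gives $R=\mu(R)$.
\end{enumerate}

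\textbf{Multiplicativity.} Your plan via the commutators $[x_1(a),x_2(s)]$ is essentially the paper's. The $\delta$-correction you fear is not an obstacle. Assembling $\psi(x_1(a))$ from $a^\pm$ only needs $\delta\eta(a^-)=0$, which Lemma~\ref{lemma:elementary-anti-invariant-parameter-map} already gives. The symmetric part in type ${}^2A_3$ comes from $[x_2(s),x_{-3}(1)]=x_{-1}(-s)x_{-4}(s)$. In type ${}^2A_4$ you still need $\psi(x_2(t,u))$ for general $(t,u)$. The paper obtains it from $q(t)=w_1^{-1}\bigl([x_1(t),u_2]\,x_4(t/2)\bigr)w_1$ and $x_2(0,z)=[x_{-1}(z/2),x_4(1)]$.
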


\begin{proof}
    We first combine the two parameter maps. For
    $a=a^++a^-$, where $a^+\in R_\theta$ and 
    $a^-\in R_\theta^-$,
    define
    \begin{equation}
    \label{eq:mu-decomposition}
        \mu(a)\coloneqq\nu(a^+)+\eta(a^-).
    \end{equation}
    The map $\mu$ is additive and satisfies
    \[
        \mu(1)=1,
        \quad
        \mu|_{R_\theta}=\nu,
        \quad
        \mu|_{R_\theta^-}=\eta\ \text{ and }\
        \mu(a)+J=\mu_k(a+J)
        \quad
        (a\in R).
    \]
    Since $\mu$ preserves the symmetric and anti-invariant parts,
    $\mu(\bar a)=\overline{\mu(a)}$,
    so $\mu$ commutes with $\theta$.
    
    For $b\in R$, put
    \[
        X_1^{(3),\delta}(b)
        \coloneqq
        \begin{pmatrix}
            1&b&-b^+\delta&-(b^+)^2\delta\\
            0&1&0&-b^+\delta\\
            -b^+\delta&-(b^+)^2\delta&1&\bar b\\
            0&-b^+\delta&0&1
        \end{pmatrix},
        \qquad
        b^+\coloneqq\frac{b+\bar b}{2}.
    \]
    
    We first consider type ${}^{2}A_3$. For $s\in R_\theta$, direct matrix multiplication gives
    \[
        [x_2(s),x_{-3}(1)]
        =
        x_{-1}(-s)x_{-4}(s).
    \]
    The images of $x_{-3}(1)$ and $x_{-4}(s)$ are already known from the fixed Weyl conjugations, while
    \[
        x_{-1}(-s)=w_1x_1(s)w_1^{-1}.
    \]
    Applying $\psi$ to the relation and solving for the $x_{-1}$-factor gives
    \[
        \psi(x_1(s))
        =
        X_1^{(3),\delta}(\nu(s))
        \qquad
        (s\in R_\theta).
    \]
    
    Since
    \[
    x_1(a)=x_1(a^+)x_1(a^-),\quad 
        \psi(x_1(a^-))=x_1(\eta(a^-)),
        \quad
        \delta\eta(a^-)=0,
    \]
    direct multiplication gives
    \begin{equation}
    \label{eq:A3-long-root-residual-general}
        \psi(x_1(a))
        =
        X_1^{(3),\delta}(\mu(a)).
    \end{equation}
    
    In type ${}^{2}A_4$, put
    \[
        X_1^{(4),\delta}(b)
        \coloneqq
        D(b^+\delta)x_1(b),
    \]
    where
    \[
        D(c)
        \coloneqq
        \operatorname{diag}(1+c,1+c,1-c,1+c,1+c).
    \]
    Using again $x_1(a)=x_1(a^+)x_1(a^-)$, we obtain
    \[
        \psi(x_1(a))
        =
        X_1^\delta(\nu(a^+))x_1(\eta(a^-))=
        D(\nu(a^+)\delta)
        x_1\bigl(\nu(a^+)+\eta(a^-)\bigr).
    \]
    Since $\delta\eta(a^-)=0$, this gives
    \begin{equation}
    \label{eq:A4-long-root-residual-general}
        \psi(x_1(a))
        =
        X_1^{(4),\delta}(\mu(a)).
    \end{equation}
    
    We now prove multiplicativity. In type ${}^{2}A_3$, we use
    \[
        [x_1(a),x_2(s)]
        =
        x_3(-sa)x_4(sa\bar a),
        \qquad
        a\in R,\ s\in R_\theta,
    \]
    and
    \[
        [x_1(a),x_3(b)]
        =
        x_4\bigl(-(a\bar b+\bar ab)\bigr),
        \qquad
        a,b\in R.
    \]
    Substitution of the residual formulas gives
    \begin{equation}
    \label{eq53}
        \mu(sa)=\mu(s)\mu(a)
        \qquad
        (s\in R_\theta,\ a\in R),
    \end{equation}
    and
    \begin{equation}
    \label{eq54}
        \mu(\xi\zeta)=\mu(\xi)\mu(\zeta)
        \qquad
        (\xi,\zeta\in R_\theta^-).
    \end{equation}
    These identities imply multiplicativity on all of $R$.
    
    For type ${}^{2}A_4$, if $(t,u)\in\mathcal A(R)$, put
    \[
        p\coloneqq t^+=\frac{t+\bar t}{2}
    \]
    and define
    \begin{equation}
    \label{eq:A4-short-root-residual-general}
        X_2^{(4),\delta}(t,u)
        \coloneqq
        \begin{pmatrix}
            1&0&-p\delta&p^2\delta&0\\
            p^2\delta&1&t&u+p(p^2-1)\delta&p^2\delta\\
            -p\delta&0&1&\bar t&-p\delta\\
            0&0&0&1&0\\
            0&0&-p\delta&p^2\delta&1
        \end{pmatrix}.
    \end{equation}
    Put
    \[
        q(t)
        \coloneqq
        x_2\left(t,\frac{t\bar t}{2}\right).
    \]
    Direct multiplication gives
    \[
        x_2(0,z)
        =
        [x_{-1}(z/2),x_4(1)]
        \qquad
        (z\in R_\theta^-),
    \]
    and
    \[
        q(t)
        =
        w_1^{-1}
        \left(
            [x_1(t),u_2]x_4(t/2)
        \right)
        w_1.
    \]
    Using the known images of the long root elements and of $u_2$, we obtain
    \[
        \psi(q(t))
        =
        X_2^{(4),\delta}
        \left(
            \mu(t),
            \frac{\mu(t)\overline{\mu(t)}}2
        \right).
    \]
    
    Applying $\psi$ to
    \[
        [x_1(a),q(t)]
        =
        x_3\left(
            at,\frac{(at)\overline{(at)}}2
        \right)
        x_4\left(-\frac{at\bar t}{2}\right)
    \]
    and comparing the first short-root parameter gives
    \[
        \mu(at)=\mu(a)\mu(t)
        \qquad
        (a,t\in R).
    \]
    Hence $\mu$ is multiplicative also in type ${}^{2}A_4$.
    
    Thus, in both types, $\mu$ is a unital ring endomorphism commuting with $\theta$.
    
    We shall also need the image of an arbitrary short-root element in type ${}^{2}A_4$. For $(t,u)\in\mathcal A(R)$, put
    \[
        z\coloneqq u-\frac{t\bar t}{2}\in R_\theta^-.
    \]
    Then
    $x_2(t,u)=q(t)x_2(0,z)$.

    The first identity above, together with $\delta\mu(z)=0$, gives
    $\psi(x_2(0,z))=x_2(0,\mu(z))$.
    Therefore
    \[
        \psi(x_2(t,u))
        =
        X_2^{(4),\delta}(\mu(t),\mu(u)).
    \]
    
    We now prove the annihilator relations and the bijectivity of $\mu$. By Subsection~\ref{subsec:descent-residual-parameter},
    $\psi\in\operatorname{Aut}(E^{(n)})$.

    First, $\mu$ is injective. Indeed, if $\mu(a)=0$, then~\eqref{eq:A3-long-root-residual-general} or~\eqref{eq:A4-long-root-residual-general}, according to the type, gives
    $\psi(x_1(a))=1$.

    Since $\psi$ and the parametrization of $x_1(R)$ are injective, we obtain $a=0$.
    
    Put
    \[
        R_\mu\coloneqq\mu(R),
        \qquad
        S_0\coloneqq R_\mu+\delta R_\mu.
    \]
    Since $\mu$ is multiplicative and $\delta^2=0$, the set $S_0$ is a subring of $R$. All formulas obtained above for the images of the relative root elements have representatives with entries in $S_0$, and the same is true of their inverses. Consequently, for every $g\in E^{(n)}$, the projective class $\psi(g)$ has a representative $B$ such that both $B$ and $B^{-1}$ have entries in $S_0$.
    
    Let $b\in R$. Since $\psi$ is surjective, there exists $g\in E^{(n)}$ such that
    $\psi(g)=x_1(b)$.
    Thus
    $B=\lambda x_1(b)$
    for some $\lambda\in R^\times$. Since the $(1,1)$-entries of $x_1(b)$ and $x_1(-b)$ are one,
    \[
        \lambda=B_{11}\in S_0,
        \qquad
        \lambda^{-1}=(B^{-1})_{11}\in S_0.
    \]
    Moreover,
    $B_{12}=\lambda b$,
    so $b\in S_0$. Hence
    $R=R_\mu+\delta R_\mu$.
    
    We next derive the annihilator relations before proving that $\mu$ is surjective. From~\eqref{eq:elementary-delta-nu-eta-relations},
    $\delta\mu(\xi)=0$ for 
$\xi\in R_\theta^-$.
    
    Let $\xi\in R_\theta^-$. Write
    $\xi=\mu(a)+\delta\mu(b)$
    with $a,b\in R$. Taking symmetric parts gives
    $0=\mu(a^+)+\delta\mu(b^-)$.
    Therefore
    \[
        \delta\xi
        =
        \delta\mu(a^+)
        +\delta\mu(a^-)
        +\delta^2\mu(b)=
        -\delta^2\mu(b^-)
        +0+0
        =
        0.
    \]
    Thus
    $\delta R_\theta^-=0$.
    
    We shall also use the following consequence of
    $R=R_\mu+\delta R_\mu$.
    Every $s\in R_\theta$ can be written as
    \[
        s=\mu(r)+\delta\mu(z),
        \qquad
        r\in R_\theta,
        \quad
        z\in R_\theta^-.
    \]
    Indeed, if
    $s=\mu(a)+\delta\mu(b)$,
    then the vanishing of its anti-invariant part gives
    $\mu(a^-)+\delta\mu(b^+)=0$,
    and hence
    $s=\mu(a^+)+\delta\mu(b^-)$.
    If $s$ is a unit, then $r$ may be chosen to be a unit, because
    $s+J=\mu_k(r+J)$
    and $\mu_k$ is an automorphism of $R/J$.
    
    Let $u\in R_\theta^\times$ and $s\in R_\theta$. Write
    \[
        u=\mu(t)+\delta\mu(z),
        \quad
        s=\mu(r)+\delta\mu(w),
    \
    \text{ where }\
        t\in R_\theta^\times,
        \quad
        r\in R_\theta,
        \quad
        z,w\in R_\theta^-.
    \]
    Since $\delta^2=0$,
    \[
        \delta(u^2-1)s
        =
        \delta\mu\bigl((t^2-1)r\bigr).
    \]
    By~\eqref{eq:elementary-symmetric-unit-relation}, the right-hand side is zero. Therefore
    \begin{equation}
    \label{eq:delta-symmetric-generators}
        \delta(u^2-1)s=0
        \qquad
        (u\in R_\theta^\times,\ s\in R_\theta).
    \end{equation}
    
    It remains to pass from these symmetric generators to the ideal $I_\theta(R)$. Let
    \[
        g=(u^2-1)s,
        \qquad
        u\in R_\theta^\times,
        \quad
        s\in R_\theta.
    \]
    Then $g\in R_\theta$ and $\delta g=0$. For any $r=r^++r^-\in R$, we have 
    $\delta rg
        =
        \delta r^+g+\delta r^-g$.
    
    The first term vanishes because
    $r^+g=(u^2-1)(r^+s)$
    has the same form, while the second vanishes because
    $r^-g\in R_\theta^-$ and 
    $\delta R_\theta^-=0$.
    Hence
    $\delta I_\theta(R)=0$.
    
    We have therefore proved that
    $\delta\in\mathcal E(R,\theta)$.

    It remains to prove that $\mu$ is surjective. By Proposition~\ref{prop:exp_auto},
    $\Theta_{-\delta}^{(n)}
        =
        \bigl(\Theta_\delta^{(n)}\bigr)^{-1}$.

    Put
    \[
        \chi
        \coloneqq
        \Theta_{-\delta}^{(n)}\circ\psi.
    \]
    The formulas above show that $\chi$ acts entrywise through $\mu$ on every relative root subgroup.
    
    Let $b\in R$. Since $\chi$ is surjective, choose $g\in E^{(n)}$ such that
    $\chi(g)=x_1(b)$.
    Writing $g$ as a word in relative root elements, we obtain a representative $B$ of $\chi(g)$ such that both $B$ and $B^{-1}$ have entries in $R_\mu$. Since
    $[B]=[x_1(b)]$,
    there exists $\lambda\in R^\times$ such that
    $B=\lambda x_1(b)$.
    
    As above,
    \[
        \lambda=B_{11}\in R_\mu,
        \qquad
        \lambda^{-1}=(B^{-1})_{11}\in R_\mu,
    \]
    and therefore
    $b=\lambda^{-1}B_{12}\in R_\mu$.
    Thus $\mu$ is surjective.
    
    Together with injectivity, this proves
    $\mu\in\operatorname{Aut}(R)$.
    The uniqueness of $\mu$ follows from the decomposition
    $R=R_\theta\oplus R_\theta^-$.
\end{proof}


\subsubsection{The final form of $\psi$}

Since $\mu$ commutes with $\theta$, it induces an automorphism of $E^{(n)}$, which we also denote by $\mu$.

\begin{prop}[Identification of the residual part]
    \label{prop:identification-residual-part}
    For $n\in\{3,4\}$,
    $\psi=\Theta_\delta^{(n)}\circ\mu$
    on~$E^{(n)}$.
\end{prop}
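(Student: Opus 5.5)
The strategy is to compare $\psi$ and $\Theta_\delta^{(n)}\circ\mu$ on a generating set of $E^{(n)}$. Both are automorphisms of $E^{(n)}$: $\psi$ by Subsection~\ref{subsec:descent-residual-parameter}; $\Theta_\delta^{(n)}$ by Proposition~\ref{prop:exp_auto}, which applies since $\delta\in\mathcal E(R,\theta)$ by Proposition~\ref{prop:elementary-coefficient-recovery-admissibility}; and $\mu$ because it commutes with $\theta$. The group $E^{(n)}$ is generated by the images of the simple relative root elements $x_{\pm1}(\cdot)$, $x_{\pm2}(\cdot)$. It therefore suffices to check
\[
    \psi(y)=\Theta_\delta^{(n)}\bigl(\mu(y)\bigr)
\]
for $y$ in these four root subgroups.

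For the positive simple roots, the formulas are already in hand.

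In type ${}^2A_3$, \eqref{eq:A3-long-root-residual-general} gives $\psi(x_1(a))=X_1^{(3),\delta}(\mu(a))$. This is exactly the matrix $\Theta_\delta^{(3)}(x_1(\mu(a)))$ of Subsection~\ref{subsec:expl_elt}, because $\mu(a)^+=\mu(a^+)$. Next, Lemma~\ref{lemma:elementary-symmetric-parameter-map} with $\nu=\mu|_{R_\theta}$ gives $\psi(x_2(s))=X_2^\delta(\mu(s))$. Here one compares with $\Theta_\delta^{(3)}(x_2(\mu(s)))$: the $(2,3)$-entries are $\nu-2\nu^2\delta$ versus $\nu+\nu^2\delta$, and these agree since $3\delta=0$.

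In type ${}^2A_4$, \eqref{eq:A4-long-root-residual-general} and Lemma~\ref{lemma:action_of_theta_4}(a) match the $x_1$ images directly. For $x_2(t,u)$, the matrix $X_2^{(4),\delta}(\mu(t),\mu(u))$ from \eqref{eq:A4-short-root-residual-general} differs from $\Theta_\delta^{(4)}(x_2(\mu(t),\mu(u)))$ only by the term $p(p^2-1)\delta$. This term vanishes by the remark following Lemma~\ref{lemma:exnl-prmr-space}, since $t(t^2-1)\delta=0$.

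For the negative simple roots, use the fixed Weyl elements. We have $x_{-1}(a)=w_1x_1(-a)w_1^{-1}$ in both types, and $x_{-2}(\cdot)=w_2\,x_2(\cdot)^{-1}w_2^{-1}$ in type ${}^2A_3$. Now $\psi$ fixes $w_1,w_2$ (respectively $w_1,w_4,v_2$, and hence $v_3$); so does $\mu$, whose entries lie in $\mathbb Z[1/2]$; and so does $\Theta_\delta^{(n)}$ by the lemmas on Weyl elements in Subsection~\ref{subsec:expl_elt}. Therefore both sides agree on the conjugated elements as well. In type ${}^2A_4$, the short negative root $x_{-2}(t,u)$ should be obtained by conjugating a positive short root element by the Weyl element $v_3$ (or a product of $w_1,v_2$), using the standard identity $v_3x_3(\cdot)v_3^{-1}=x_{-2}(\cdot)$ with suitably twisted parameters. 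This requires $\psi$ and $\Theta_\delta^{(4)}\circ\mu$ to agree on $x_3$, which follows from the $x_2$ case by $w_1$-conjugation.

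The main obstacle I expect is this last bookkeeping step. One must check that the Weyl element used to reach $x_{-2}$ sends the parameter $(t,u)$ to something compatible with $\mu$, namely $\mu$ applied to $(\bar t\bar u^{-1},\bar u^{-1})$-type expressions, which is fine because $\mu$ is a ring automorphism commuting with $\theta$. One must also check that the matrices agree exactly rather than only projectively. If the Weyl route is awkward, the fallback is to compute directly: determine $\psi(x_{-2}(t,u))$ by the same centralizer argument as in Lemma~\ref{lemma:elementary-symmetric-parameter-map}, and compare it with Lemma~\ref{lemma:action_of_theta_4}(d).

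Once all simple root subgroups agree, the two automorphisms coincide on the generators, hence on all of $E^{(n)}$.
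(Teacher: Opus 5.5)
Your proposal is correct and follows the paper's proof. You verify agreement on the positive simple root subgroups using \eqref{eq:A3-long-root-residual-general}, Lemma~\ref{lemma:elementary-symmetric-parameter-map}, \eqref{eq:A4-long-root-residual-general} and \eqref{eq:A4-short-root-residual-general}, with $3\delta=0$ and $t(t^2-1)\delta=0$ absorbing the discrepancies. You then transport to the negative root subgroups by Weyl elements fixed by $\psi$, $\mu$ and $\Theta_\delta^{(n)}$.

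There is one slip in the ${}^2A_4$ step. The element $v_3x_3(t,u)v_3^{-1}$ lies in the $x_{-3}$-subgroup, not the $x_{-2}$-subgroup, since the reflection in $\beta_3$ sends $\beta_3$ to $-\beta_3$. Use instead $v_2x_2(t,u)v_2^{-1}=x_{-2}(-2\bar t,4u)$ or $w_4x_3(t,u)w_4^{-1}=x_{-2}(-\bar t,u)$. Both maps are homomorphisms fixing $v_2$ and $w_4$, and these conjugations carry the positive subgroup onto the negative one. So no parameter bookkeeping is needed. There is also no exact-versus-projective issue, because the asserted equality is one of projective classes in the adjoint group.
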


\begin{proof}
    It is enough to check the equality on the relative root subgroups.
    
    In type ${}^{2}A_3$, for every $a\in R$,
    \[
        \psi(x_1(a))
        =
        X_1^{(3),\delta}(\mu(a))
        =
        \Theta_\delta^{(3)}\bigl(\mu(x_1(a))\bigr).
    \]
    Similarly, for every $s\in R_\theta$,
    \[
        \psi(x_2(s))
        =
        X_2^{(3),\delta}(\mu(s))
        =
        \Theta_\delta^{(3)}\bigl(\mu(x_2(s))\bigr),
    \]
    where $3\delta=0$ identifies the two displayed forms. The formulas for $x_3(a)$ and $x_4(s)$ follow by conjugation with $w_2$ and $w_1$, respectively, and the negative root subgroups follow from the standard Weyl conjugation identities. Hence
    $\psi=\Theta_\delta^{(3)}\circ\mu$
    on~$E^{(3)}$.
    
    In type ${}^{2}A_4$, for every $a\in R$,
    \[
        \psi(x_1(a))
        =
        X_1^{(4),\delta}(\mu(a))
        =
        \Theta_\delta^{(4)}\bigl(\mu(x_1(a))\bigr).
    \]
    The formula for $x_4(a)$ follows by conjugation with $v_2$.
    
    For $(t,u)\in\mathcal A(R)$,
    \[
        \psi(x_2(t,u))
        =
        X_2^{(4),\delta}(\mu(t),\mu(u)).
    \]
    Since $\delta\in\mathcal E(R,\theta)$,
    \[
        r(r^2-1)\delta=0
        \qquad
        (r\in R).
    \]
    Hence the additional term in the $(2,4)$-entry vanishes, and
    \[
        \psi(x_2(t,u))
        =
        \Theta_\delta^{(4)}\bigl(\mu(x_2(t,u))\bigr).
    \]
    The formula for $x_3(t,u)$ follows by conjugation with $w_1$, and the negative root subgroups follow from the standard Weyl conjugation formulas. Therefore
    $\psi=\Theta_\delta^{(4)}\circ\mu$
    on~$E^{(4)}$.
\end{proof}


\subsection{Concluding the proof of Theorem~\ref{MT:aut_ele_adj_local}}
\label{subsec:concluding-proof-of-main-theorem-adjoint}

We are now in a position to complete the proof of Theorem~\ref{MT:aut_ele_adj_local}. 

Recall that we began with an arbitrary automorphism
$\varphi\in\operatorname{Aut}(E^{(n)})$.

In Subsection~\ref{subsec:reduction-mod-J}, we normalized this map by $i_{g_1}^{-1}$ and defined
$\varphi_1=i_{g_1}^{-1}\circ\varphi$,
where $g_1\in G^{(n)}$. Throughout Subsections~\ref{subsec:weyl-normalization}--\ref{subsec:descent-residual-parameter}, we further normalized $\varphi_1$ and obtained
\[
    \psi=i_{[C_0]}^{-1}\circ\varphi_1,
\ \text{ where }\ 
    C_0\in\operatorname{GL}_{n+1}(R,J),
    \quad
    [C_0]\in G^{(n)}(R),
\]
and
\[
    \psi(u_i)=X_i^\delta
    \qquad
    (i\in\{1,2,3,4\}).
\]

Finally, in Subsection~\ref{subsec:parameter-maps-first-annihilators}, we proved that
$\psi=\Theta_\delta^{(n)}\circ\mu$,
where $\mu$ is a ring automorphism commuting with $\theta$ and
$\delta\in\mathcal E(R,\theta)$.
Therefore,
\[
    \varphi
    =
    i_{g_1}\circ i_{[C_0]}\circ\Theta_\delta^{(n)}\circ\mu.
\]

Set
$g\coloneqq g_1[C_0]\in G^{(n)}$.
By Proposition~\ref{prop:action_of_stnrd_aut_on_exp},
\[
    \Theta_\delta^{(n)}\circ\mu
    =
    \mu\circ\Theta_{\mu^{-1}(\delta)}^{(n)}.
\]
Since $\mu$ preserves $\mathcal E(R,\theta)$, renaming the parameter gives
$\varphi=i_g\circ\mu\circ\Theta_\delta^{(n)}$,
where $i_g$ is strictly inner, $\mu$ is a ring automorphism commuting with $\theta$, and
$\delta\in\mathcal E(R,\theta)$. This completes the proof of
Theorem~\ref{MT:aut_ele_adj_local}.


\section{Automorphisms of \texorpdfstring{$E^{(n)}_{\pi}$}{E\textasciicircum(n)\_pi} and \texorpdfstring{$G^{(n)}_{\pi}$}{G\textasciicircum(n)\_pi} for an Arbitrary Lattice \texorpdfstring{$\Lambda_{\pi}$}{Lambda\_pi}}
\label{sec:arbitrary-isogeny-types}

Up to this point, we have considered only elementary groups of adjoint type. 
We now turn to arbitrary isogeny types, beginning with the elementary subgroups and then proceeding to the full groups.


\subsection{Classification of the automorphisms of \texorpdfstring{$E^{(n)}_{\pi}$}{E\textasciicircum(n)\_pi}}
\label{subsec:aut_of_E_pi}

Recall that the natural map
\[
    \lambda_{\pi}\colon E^{(n)}_{\pi}\longrightarrow E^{(n)}_{\mathrm{ad}}
\]
is surjective and has kernel $\ker(\lambda_{\pi})=Z(E^{(n)}_{\pi})$; see \cite[Lemma~2.7]{EB&DM1}.

We now restate and prove Theorem~A, which gives the complete classification of the automorphisms of the elementary subgroups for every isogeny type.

\begin{thm}\label{thm:exceptional-elementary-arbitrary-isogeny}
    Let $R$ be a commutative local ring with an involution $\theta$ such that $1/2\in R$. 
    Let $n\in\{3,4\}$, and let $\pi$ be any isogeny type described above. 
    Then every automorphism $\varphi\in\operatorname{Aut}(E_\pi^{(n)})$ has the form
    \[
        \varphi=i_g\circ d\circ\mu\circ\Theta_{\delta}^{(n)},
    \]
    where $i_g$ is a strictly inner automorphism, $d$ is a diagonal automorphism, $\mu$ is a ring automorphism commuting with $\theta$, and
    $\delta\in\mathcal E(R,\theta)$.
    
    Conversely, every composition of this form is an automorphism of $E_\pi^{(n)}$.
\end{thm}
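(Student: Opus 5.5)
The plan is to reduce to the adjoint case, Theorem~\ref{MT:aut_ele_adj_local}, through the central quotient map $\lambda_\pi\colon E^{(n)}_\pi\to E^{(n)}_{\mathrm{ad}}$, whose kernel is $Z(E^{(n)}_\pi)$ by \cite[Lemma~2.7]{EB&DM1}.

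\emph{Descending $\varphi$.} Let $\varphi\in\operatorname{Aut}(E^{(n)}_\pi)$. The center is characteristic, so $\varphi$ induces $\bar\varphi\in\operatorname{Aut}(E^{(n)}_{\mathrm{ad}})$. By Theorem~\ref{MT:aut_ele_adj_local}, $\bar\varphi=i_{\bar g}\circ\mu\circ\Theta^{(n)}_\delta$ with $\bar g\in G^{(n)}_{\mathrm{ad}}$. I would write $\bar g=\bar e\,\bar t$ with $\bar e\in E^{(n)}_{\mathrm{ad}}$ and $\bar t$ in the adjoint torus, exactly as in the lifting step of Subsection~\ref{subsec:reduction-mod-J}. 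The plan is to use \cite[Section~2]{EB&DM1} to realize conjugation by $\bar t$ on $E^{(n)}_\pi$ as a diagonal automorphism $d=i_{h(\chi)}$. Here the character $\chi$ is defined on $\Lambda_\pi$ with values in a ring extension $S$, and its restriction to $\Lambda_r$ is the given torus element. The element $\bar e$ lifts to some $e\in E^{(n)}_\pi$, and $i_e$ is strictly inner. Both $\mu$ and $\Theta^{(n)}_\delta$ are already defined on $E^{(n)}_\pi$: the latter by Proposition~\ref{prop:exp_auto}, and $\mu$ because it commutes with $\theta$.

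\emph{The residual central ambiguity.} Put
\[
    \varphi':=(i_e\circ d\circ\mu\circ\Theta^{(n)}_\delta)^{-1}\circ\varphi.
\]
Then $\varphi'$ induces the identity on $E^{(n)}_{\mathrm{ad}}$. Hence $\varphi'(x)=x\,c(x)$, where $c\colon E^{(n)}_\pi\to Z(E^{(n)}_\pi)$ is a homomorphism. The key step is to show that $c$ is trivial, so that central automorphisms do not occur on the elementary group. I would argue that $E^{(n)}_\pi$ is perfect, so every homomorphism to an abelian group vanishes.

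Perfectness follows from the relative Chevalley commutator formulas combined with torus actions. In type ${}^2A_3$, a short root $x_1(t)$ appears in $[x_2(s),x_{-3}(\cdot)]$ up to a long-root factor. Long roots appear as $[h,x_2(s)]=x_2((u^2-1)s)$ with $h$ an elementary torus element, using $2\in R^\times$ and locality: the element $u^2-1$ is a unit whenever the residue field has more than $3$ elements, and otherwise one uses commutators of short roots, $[x_1(a),x_3(b)]=x_4(-(a\bar b+\bar a b))$. In type ${}^2A_4$ one uses analogous relations with $\mathcal A(R)$. The case where the residue field is $\mathbf F_3$ is exactly where some care is needed. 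I expect this perfectness check for small residue fields to be the main obstacle.

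A fallback avoids perfectness. Root elements lying in a common unipotent root subgroup of $U$, conjugated by $w_i$ which have order dividing $4$, must map to central elements of bounded order. Moreover, $\varphi'$ restricted to each root subgroup is determined up to a central character $R\to Z$. Using the unitriangular structure, the central character must vanish, since $Z$ is scalar and the images are unipotent modulo $J$; Lemma~\ref{lem:scalar-normalization} applies here.

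\emph{Conclusion and converse.} Once $c=1$, we get $\varphi=i_e\circ d\circ\mu\circ\Theta^{(n)}_\delta$, which is the required form. For the converse, $i_g$, $d$ and $\mu$ preserve $E^{(n)}_\pi$: this is the normality result \cite[Corollary~4.3]{SG&DM1} together with the definitions in \cite{EB&DM1}. The automorphism $\Theta^{(n)}_\delta$ is an automorphism of $E^{(n)}_\pi$ by Proposition~\ref{prop:exp_auto}. Hence every composition of the stated form is an automorphism of $E^{(n)}_\pi$.
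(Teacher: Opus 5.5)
Your proposal is correct and follows the paper's proof essentially step for step. You pass to the adjoint quotient, apply Theorem~\ref{MT:aut_ele_adj_local}, lift $i_{\bar g}$ to a strictly inner automorphism composed with a diagonal automorphism of $E^{(n)}_\pi$, and kill the central homomorphism $x\mapsto x^{-1}\varphi'(x)$ using perfectness of $E^{(n)}_\pi$.

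The paper simply cites perfectness from \cite[Corollary~6.6]{SG&DM1}, so your fallback argument is not needed. The residue field $\mathbf F_3$ is also no obstacle. Since $2\in R^\times$, in type ${}^2A_3$ you already have $[x_1(s),x_3(1/2)]=x_4(-s)$ for every $s\in R_\theta$, which gives every long-root element. The short-root elements then follow from $[x_1(a),x_2(1)]=x_3(-a)x_4(a\bar a)$.
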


\begin{proof}
    Let
    $\varphi\in\operatorname{Aut}(E_\pi^{(n)})$.
    Since $Z(E_\pi^{(n)})$ is characteristic, $\varphi$ induces an automorphism
    $\overline\varphi\in\operatorname{Aut}(E_{\mathrm{ad}}^{(n)})$
    of the adjoint quotient.

    By Theorem~\ref{MT:aut_ele_adj_local},
    \[
        \overline\varphi
        =
        i_{\overline g}\circ\mu\circ\Theta_\delta^{(n)},
    \]
    where $i_{\overline g}$ is strictly inner, $\mu\in\operatorname{Aut}(R)$ commutes with $\theta$, and
    $\delta\in\mathcal E(R,\theta)$.

    The ring automorphism $\mu$ acts naturally on $E_\pi^{(n)}$, and Proposition~\ref{prop:exp_auto} provides the compatible exceptional automorphism $\Theta_\delta^{(n)}$ of $E_\pi^{(n)}$. By the lifting argument in \cite[Section~9, proof of Theorem~3.3]{EB&DM1}, the strictly inner automorphism $i_{\overline g}$ of $E_{\mathrm{ad}}^{(n)}$ lifts to a composition
    $i_g\circ d$
    of a strictly inner and a diagonal automorphism of $E_\pi^{(n)}$.

    Consequently,
    \[
        \alpha
        \coloneqq
        i_g\circ d\circ\mu\circ\Theta_\delta^{(n)}
    \]
    induces the same automorphism of $E_{\mathrm{ad}}^{(n)}$ as $\varphi$. Hence
    $\psi\coloneqq\alpha^{-1}\circ\varphi$
    induces the identity on the adjoint quotient.

    For $x\in E_\pi^{(n)}$, put
    $z(x)\coloneqq x^{-1}\psi(x)$.
    Then
    $z(x)\in Z(E_\pi^{(n)})$.
    
    Since the values of $z$ are central, for all $x,y\in E_\pi^{(n)}$ we have
    $z(xy)=z(x)z(y)$.
    Thus
    \[
        z\colon E_\pi^{(n)}
        \longrightarrow Z(E_\pi^{(n)})
    \]
    is a group homomorphism. The group $E_\pi^{(n)}$ is perfect by
    \cite[Corollary~6.6]{SG&DM1}, so every homomorphism from
    $E_\pi^{(n)}$ to an abelian group is trivial. Therefore $z=1$ and
    $\psi=\operatorname{id}$.
    
    Hence
    \[
        \varphi=i_g\circ d\circ\mu\circ\Theta_\delta^{(n)}.
    \]

    Conversely, $i_g$ and $d$ are automorphisms by definition, $\mu$ preserves the relative root subgroups because it commutes with $\theta$, and $\Theta_\delta^{(n)}$ is an automorphism by Proposition~\ref{prop:exp_auto}. Therefore every composition of the indicated form is an automorphism of $E_\pi^{(n)}$.
\end{proof}

\subsection{Classification of the automorphisms of \texorpdfstring{$G^{(n)}_{\pi}$}{G\textasciicircum(n)\_pi}}
\label{subsec:aut_of_G_pi}

We begin by defining certain special torus elements of $G_{\pi}^{(n)}$.

For type ${}^{2}A_3$, put
\[
    T^{(3)}_{0}\coloneqq\operatorname{diag}(1,1,-1,-1)\in\SL_4(R).
\]
Since
\[
    T^{(3)}_0Q_4(T^{(3)}_0)^t=-Q_4,
\]
we have
\[
    T^{(3)}_0\notin G^{(3)}_{\mathrm{sc}}
    =\operatorname{SU}_4(R,Q_4).
\]
On the other hand, for $\pi\in\{\mathrm{mid},\mathrm{ad}\}$, the central element $-I_4$ acts trivially in the corresponding representation. Hence the image
$t^{(3)}_{\pi}$
of $T^{(3)}_0$ in $G_{\pi}(A_3,R)$ belongs to $G_\pi^{(3)}$.

\begin{rmk}
    Although $T^{(3)}_0\notin G^{(3)}_{\mathrm{sc}}$, conjugation by $T^{(3)}_0$ defines an automorphism of both $G^{(3)}_{\mathrm{sc}}$ and $E^{(3)}_{\mathrm{sc}}$. Moreover, after passing to a suitable ring extension $(S,\theta_S)$ of $(R,\theta)$, there exists
    $t^{(3)}_{\mathrm{sc}}\in G^{(3)}_{\mathrm{sc}}(S)$
    such that
    $i_{T^{(3)}_0}=i_{t^{(3)}_{\mathrm{sc}}}$.
    
    We use the notation $i_{t^{(3)}_{\mathrm{sc}}}$ for this automorphism.
\end{rmk}

For type ${}^{2}A_4$, put
\[
    T^{(4)}_{0}\coloneqq\operatorname{diag}(1,-1,1,-1,1)\in\SL_5(R).
\]
Since
\[
    T^{(4)}_0Q_5(T^{(4)}_0)^t=Q_5,
\]
we have
\[
    T^{(4)}_0\in G_{\mathrm{sc}}^{(4)}
    =\operatorname{SU}_5(R,Q_5).
\]
For $\pi\in\{\mathrm{sc},\mathrm{ad}\}$, let $t^{(4)}_\pi$ denote the image of $T^{(4)}_0$ in $G_\pi^{(4)}$.

\medskip

\begin{lemma}[Change of sign of the exceptional parameter]
    \label{lem:exceptional-sign-change}
    Suppose that
    \[
        (n,\pi)\in
        \bigl\{
            (3,\mathrm{sc}),
            (3,\mathrm{mid}),
            (3,\mathrm{ad}),
            (4,\mathrm{sc}),
            (4,\mathrm{ad})
        \bigr\}.
    \]
    Then, for every $\delta\in\mathcal E(R,\theta)$,
    \[
        i_{t_{\pi}^{(n)}}
        \circ\Theta_{\delta}^{(n)}
        \circ i_{t_{\pi}^{(n)}}^{-1}
        =
        \Theta_{-\delta}^{(n)}
        \quad\text{on }E_\pi^{(n)}.
    \]
\end{lemma}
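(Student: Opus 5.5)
Both sides of the identity are automorphisms of $E_\pi^{(n)}$, so it suffices to compare them on a generating set. I will take the simple relative root elements $x_{\pm1}(\cdot)$, $x_{\pm2}(\cdot)$. The remaining root subgroups are obtained from these by the fixed Weyl conjugations by $w_1,w_2$ (type ${}^2A_3$) or $w_1,v_2$ (type ${}^2A_4$). These Weyl elements are fixed by every $\Theta^{(n)}_{\pm\delta}$, and since $T_0^{(n)}$ is diagonal they are mapped by $i_{t_\pi^{(n)}}$ to $\pm$ themselves up to a torus correction that commutes appropriately. The cleanest route is to verify the identity on $E^{(n)}_{\mathrm{sc}}$ using the matrix $T_0^{(n)}$ itself, since $i_{t_\pi^{(n)}}$ is induced by $i_{T_0^{(n)}}$ for every $\pi$. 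I then pass to $E_\pi^{(n)}$ through the projection $q_\pi$, exactly as in the proof of Proposition~\ref{prop:exp_auto}, since $\Theta^{(n)}_{\pm\delta}$ descend compatibly.

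\textbf{Type ${}^2A_3$.} Put $T=T^{(3)}_0=\operatorname{diag}(1,1,-1,-1)$. Direct computation gives
\[
Tx_{\pm1}(t)T^{-1}=x_{\pm1}(t),\qquad Tx_{\pm2}(s)T^{-1}=x_{\pm2}(-s),\qquad Tx_{\pm3}(t)T^{-1}=x_{\pm3}(-t),
\]
and $T$ commutes with $h_1(\cdot)$. By Lemma~\ref{lemma:action_of_theta_3}(a),
\[
i_T\Theta_\delta(x_1(t))=x_{-3}(-t^+\delta)\,x_1(t)\,x_3(-t^+\delta)=\Theta_{-\delta}(x_1(t)),
\]
and $x_{-1}$ is handled the same way. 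For $x_2(s)$, the identity amounts to $i_T\Theta_\delta i_T^{-1}(x_2(s))=i_T\bigl(h_1(1-2s\delta)x_2(-s)\bigr)=h_1(1-2s\delta)x_2(s)=\Theta_{-\delta}(x_2(s))$, and the same argument applies to $x_{-2}$. Hence the identity holds on generators. I must also check that $i_T$ is compatible with the Weyl-conjugation definitions of $\Theta$ on $x_{\pm3},x_{\pm4}$. This works because $i_T(w_2)=w_2^{-1}$ up to the factor $h_2$, and $h_2$ normalizes the relevant images. Alternatively, since the simple root elements already generate the group, agreement on them suffices.

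\textbf{Type ${}^2A_4$.} Put $T=\operatorname{diag}(1,-1,1,-1,1)$. Then $Tx_{\pm1}(t)T^{-1}=x_{\pm1}(-t)$, $Tx_{\pm4}(t)T^{-1}=x_{\pm4}(-t)$, $Tx_{\pm2}(t,u)T^{-1}=x_{\pm2}(-t,u)$ and $Tx_{\pm3}(t,u)T^{-1}=x_{\pm3}(t,u)$. Also $T$ commutes with $D(c)$. Lemma~\ref{lemma:action_of_theta_4}(a) gives
\[
i_T\Theta_\delta i_T^{-1}(x_1(t))=i_T\bigl(D(-t^+\delta)x_1(-t)\bigr)=D(-t^+\delta)x_1(t)=\Theta_{-\delta}(x_1(t)).
\]
For $x_2(t,u)$, use Lemma~\ref{lemma:action_of_theta_4}(c) with $t\mapsto -t$ and conjugate back. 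The factors $x_{\mp3}(\pm t^+\delta,0)$ keep their arguments, which equal $\mp(-t)^+\delta$, i.e.\ the $\Theta_{-\delta}$ values. The factors $x_{-1}((t^+)^2\delta)$ and $x_4((t^+)^2\delta)$ flip sign, matching $(t^+)^2(-\delta)$. The negative generators follow from (b) and (d) in the same way.

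The main obstacle is the sign bookkeeping in the ${}^2A_4$ short-root formula, and the remark that $t^{(3)}_{\mathrm{sc}}$ exists only over an extension: I handle the latter by working directly with $i_{T_0^{(3)}}$, which is an automorphism of $E^{(3)}_{\mathrm{sc}}$ defined over $R$.
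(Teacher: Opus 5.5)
Your proposal is correct and is essentially the paper's argument written out in full. The paper identifies $i_{t^{(n)}_\pi}$ as the diagonal automorphism with $\chi^{(3)}(\alpha_2)=-1$ and $\chi^{(4)}(\alpha_1)=-1$ and then cites Proposition~\ref{prop:action_of_stnrd_aut_on_exp}(3), whose proof is exactly your generator-by-generator check via Lemmas~\ref{lemma:action_of_theta_3} and~\ref{lemma:action_of_theta_4}, here specialized to these signs. Working directly with $i_{T^{(3)}_0}$ on $E^{(3)}_{\mathrm{sc}}$ and descending through $q_\pi$ neatly avoids the ring extension needed for $t^{(3)}_{\mathrm{sc}}$, and your remarks on compatibility with the Weyl elements are unnecessary, since $x_{\pm1},x_{\pm2}$ already generate the group.
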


\begin{proof}
    The automorphism $i_{t^{(n)}_{\pi}}$ is diagonal and is associated with a character
    \[
        \chi^{(n)}
        \in\operatorname{Hom}_1(\Lambda_\pi,S^\times),
        \qquad
        \chi^{(n)}|_{\Lambda_r}
        \in\operatorname{Hom}_1(\Lambda_r,R^\times),
    \]
    for a suitable ring extension $(S,\theta_S)$ of $(R,\theta)$.

    Directly from the defining matrices,
    $\chi^{(3)}(\alpha_2)=-1$,
    $\chi^{(4)}(\alpha_1)=-1$.
    The assertion now follows from Proposition~\ref{prop:action_of_stnrd_aut_on_exp}.
\end{proof}

\begin{prop}\label{prop:exceptional-does-not-extend}
    Suppose that
    $(n,\pi)\in
        \bigl\{
            (3,\mathrm{mid}),
            (3,\mathrm{ad}),
            (4,\mathrm{sc}),
            (4,\mathrm{ad})
        \bigr\}$.
    Then:
    \begin{enumerate}
        \item If $\mathcal E(R,\theta)\neq\{0\}$, then
        $t_\pi^{(n)}\notin E_\pi^{(n)}$.
        In particular,
        $E_\pi^{(n)}\neq G_\pi^{(n)}$.

        \item Let $\delta\in\mathcal E(R,\theta)$. If there exists
        $\varphi\in\operatorname{Aut}(G_\pi^{(n)})$
        such that
        \[
            \left.\varphi\right|_{E_\pi^{(n)}}
            =
            \Theta_{\delta}^{(n)},
        \]
        then $\delta=0$.
    \end{enumerate}
\end{prop}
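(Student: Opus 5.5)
The plan is to use the sign-change Lemma~\ref{lem:exceptional-sign-change} together with the fact that, if $t_\pi^{(n)}$ belonged to $E_\pi^{(n)}$ (or if $\Theta_\delta^{(n)}$ extended to $G_\pi^{(n)}$), then conjugation by $t_\pi^{(n)}$ would have to commute with $\Theta_\delta^{(n)}$ up to an inner correction. Part~(2) carries the main weight; part~(1) reduces to the same computation.

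For part~(2), let $\varphi\in\operatorname{Aut}(G_\pi^{(n)})$ restrict to $\Theta_\delta^{(n)}$ on $E_\pi^{(n)}$. For any $x\in E_\pi^{(n)}$, apply $\varphi$ to $t x t^{-1}$, where $t=t_\pi^{(n)}\in G_\pi^{(n)}$. This gives
\[
    \Theta_\delta^{(n)}\circ i_t=i_{\varphi(t)}\circ\Theta_\delta^{(n)}
\]
on $E_\pi^{(n)}$. By Lemma~\ref{lem:exceptional-sign-change}, $i_t\circ\Theta_\delta^{(n)}\circ i_t^{-1}=\Theta_{-\delta}^{(n)}$, so
\[
    i_{\varphi(t)t^{-1}}=\Theta_\delta^{(n)}\circ i_t\circ\bigl(\Theta_\delta^{(n)}\bigr)^{-1}\circ i_t^{-1}=\Theta_\delta^{(n)}\circ\Theta_{\delta}^{(n)}=\Theta_{2\delta}^{(n)}
\]
on $E_\pi^{(n)}$, using Proposition~\ref{prop:exp_auto}(a). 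Thus $\Theta_{2\delta}^{(n)}$ equals conjugation by the element $\varphi(t)t^{-1}\in G_\pi^{(n)}$. Its image in the adjoint group gives a standard automorphism of $E_{\mathrm{ad}}^{(n)}$: conjugation by an element of $G_{\mathrm{ad}}^{(n)}$ is the composite of a strictly inner and a diagonal automorphism. By Proposition~\ref{prop:exceptional-nonstandard}, applied on the adjoint quotient exactly as in its proof, $2\delta=0$. Since $2$ is a unit, $\delta=0$. The step I expect to need care is justifying that conjugation by an arbitrary element of $G_\pi^{(n)}$ descends to something the proof of Proposition~\ref{prop:exceptional-nonstandard} handles. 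That proof only uses that the automorphism is $i_g$ with $g$ a projective class in $G_{\mathrm{ad}}^{(n)}$ (the $\kappa$-invariant argument with $\mu$ trivial), so the image of $\varphi(t)t^{-1}$ in $G_{\mathrm{ad}}^{(n)}$ suffices.

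For part~(1), suppose $\delta\neq0$ in $\mathcal E(R,\theta)$ and $t\in E_\pi^{(n)}$. By Proposition~\ref{prop:action_of_stnrd_aut_on_exp}(2),
\[
    \Theta_\delta^{(n)}\circ i_t\circ\bigl(\Theta_\delta^{(n)}\bigr)^{-1}=i_{\Theta_\delta^{(n)}(t)}.
\]
Combining this with Lemma~\ref{lem:exceptional-sign-change} as above gives
\[
    \Theta_{2\delta}^{(n)}=i_{\Theta_\delta^{(n)}(t)t^{-1}},
\]
which is strictly inner. Proposition~\ref{prop:exceptional-nonstandard} then forces $2\delta=0$, so $\delta=0$, a contradiction. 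Hence $t_\pi^{(n)}\notin E_\pi^{(n)}$, and since $t_\pi^{(n)}\in G_\pi^{(n)}$ by construction, $E_\pi^{(n)}\neq G_\pi^{(n)}$.

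The only delicate bookkeeping is the order of composition when converting $\Theta_\delta\circ i_t=i_{\varphi(t)}\circ\Theta_\delta$ into a statement about $\Theta_{2\delta}$. Whichever order arises, the result is $\Theta_{\pm2\delta}^{(n)}$ being inner, and both signs lead to the same conclusion.
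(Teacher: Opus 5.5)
Your proposal is correct and follows the paper's proof essentially verbatim. In both parts you combine the identity $\Theta_\delta^{(n)}\circ i_t\circ\Theta_{-\delta}^{(n)}=i_{s}$ (with $s=\Theta^{(n)}_\delta(t)$ in part~(1) and $s=\varphi(t)$ in part~(2)) with Lemma~\ref{lem:exceptional-sign-change} to obtain $i_{st^{-1}}=\Theta^{(n)}_{2\delta}$. You then invoke Proposition~\ref{prop:exceptional-nonstandard} together with $2\in R^\times$.

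Your extra remark, that conjugation by an element of $G_\pi^{(n)}$ descends to conjugation by an element of $G_{\mathrm{ad}}^{(n)}$ and that this is exactly the case the $\kappa$-invariant argument handles, is a slightly more careful justification of the paper's one-line claim that the left-hand side is standard.
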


\begin{proof}
    Assume first that $\mathcal E(R,\theta)\neq\{0\}$, and choose
    $0\neq\delta\in\mathcal E(R,\theta)$.

    Suppose, for contradiction, that
    $t_\pi^{(n)}\in E_\pi^{(n)}$.
    Put
    \[
        s\coloneqq\Theta_{\delta}^{(n)}(t_\pi^{(n)})
        \in E_\pi^{(n)}.
    \]
    Then, on $E_\pi^{(n)}$,
    \[
        i_s
        =
        \Theta_{\delta}^{(n)}
        \circ i_{t_\pi^{(n)}}
        \circ\Theta_{-\delta}^{(n)}.
    \]
    Using Lemma~\ref{lem:exceptional-sign-change}, we obtain
$$
        i_{s(t_\pi^{(n)})^{-1}}
        =
        i_s\circ i_{t_\pi^{(n)}}^{-1}=
        \Theta_{\delta}^{(n)}
        \circ i_{t_\pi^{(n)}}
        \circ\Theta_{-\delta}^{(n)}
        \circ i_{t_\pi^{(n)}}^{-1}=
        \Theta_{\delta}^{(n)}
        \circ\Theta_{\delta}^{(n)}
        =
        \Theta_{2\delta}^{(n)}.
$$
    The left-hand side is an inner, and hence standard, automorphism of $E_\pi^{(n)}$. Proposition~\ref{prop:exceptional-nonstandard} therefore gives
    $2\delta=0$.
    Since $2\in R^\times$, this implies $\delta=0$, a contradiction. Hence
    $t_\pi^{(n)}\notin E_\pi^{(n)}$.

    For the second assertion, put
    \[
        s\coloneqq\varphi(t_\pi^{(n)})\in G_\pi^{(n)}.
    \]
    Since $\varphi$ restricts to $\Theta_\delta^{(n)}$ on $E_\pi^{(n)}$, the same calculation gives
    \[
        i_{s(t_\pi^{(n)})^{-1}}
        =
        \Theta_{2\delta}^{(n)}
        \qquad\text{on }E_\pi^{(n)}.
    \]
    The left-hand side is standard, so Proposition~\ref{prop:exceptional-nonstandard} gives
    $2\delta=0$.
    Therefore $\delta=0$.
\end{proof}

Finally, we restate and prove Theorem~B, completing the classification of the automorphisms of the full twisted Chevalley groups for all isogeny types.

\begin{thm}\label{thm:exceptional-full-arbitrary-isogeny}
    Let $R$, $n$, and $\pi$ be as in Theorem~\ref{thm:exceptional-elementary-arbitrary-isogeny}.
    \begin{enumerate}
        \item For the simply connected full group of type ${}^{2}A_3$, every automorphism
        $\varphi\in\operatorname{Aut}(G^{(3)}_{\mathrm{sc}})$
        has the form
        \[
            \varphi
            =
            i_g\circ d\circ\mu\circ\Theta_{\delta}^{(3)},
        \]
        where $i_g$ is a strictly inner automorphism, $d$ is a diagonal automorphism, $\mu$ is a ring automorphism commuting with $\theta$, and
        $\delta\in\mathcal E(R,\theta)$.

        \item For
        $(n,\pi)\in
            \bigl\{
                (3,\mathrm{mid}),
                (3,\mathrm{ad}),
                (4,\mathrm{sc}),
                (4,\mathrm{ad})
            \bigr\}$,
        every automorphism of $G_\pi^{(n)}$ is standard. More precisely, every
        $\varphi\in\operatorname{Aut}(G_\pi^{(n)})$
        has the form
        \[
            \varphi
            =
            i_g\circ d\circ\mu\circ\tau,
        \]
        where $i_g$ is a strictly inner automorphism, $d$ is a diagonal automorphism, $\mu$ is a ring automorphism commuting with $\theta$, and $\tau$ is a central automorphism.
    \end{enumerate}

    Conversely, every composition of the indicated form is an automorphism of the corresponding full group.
\end{thm}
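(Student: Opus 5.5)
The plan is to restrict to the elementary subgroup and then lift back. We use two facts. First, $E_\pi^{(n)}$ is normal in $G_\pi^{(n)}$ by \cite[Corollary~4.3]{SG&DM1}. Second, over a local ring $E_\pi^{(n)}$ should be characteristic in $G_\pi^{(n)}$. We would prove this via the commutator description $[G_\pi^{(n)},G_\pi^{(n)}]=E_\pi^{(n)}$, using that $E_\pi^{(n)}$ is perfect \cite[Corollary~6.6]{SG&DM1} and that $G_\pi^{(n)}/E_\pi^{(n)}$ is abelian. We would justify the latter by the decomposition $G_\pi^{(n)}=E_\pi^{(n)}T_\pi^{(n)}$ over local rings, as in \cite{EB&DM1}. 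Thus every $\varphi\in\operatorname{Aut}(G_\pi^{(n)})$ restricts to $\varphi|_{E}\in\operatorname{Aut}(E_\pi^{(n)})$. By Theorem~\ref{thm:exceptional-elementary-arbitrary-isogeny}, $\varphi|_E=i_g\circ d\circ\mu\circ\Theta^{(n)}_\delta$.

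\emph{Part (1).} Here $G^{(3)}_{\mathrm{sc}}=\operatorname{SU}_4(R,Q_4)$ coincides with $E^{(3)}_{\mathrm{sc}}$, as recorded in the introduction. This is the unitriangular/Gauss decomposition over local rings from \cite{SG&DM3}, combined with the fact that the torus $h_1(R^\times)h_2(R_\theta^\times)$ exhausts the diagonal part of $\operatorname{SU}_4$. Indeed, a diagonal element of $\operatorname{SU}_4$ is $\operatorname{diag}(a,b,\bar b^{-1},\bar a^{-1})$ with $a b\bar b^{-1}\bar a^{-1}=1$, and it lies in the elementary torus. Hence part (1) is literally Theorem~A for $(3,\mathrm{sc})$. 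The converse is also immediate, because $\Theta^{(3)}_\delta$ is then an automorphism of the full group.

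\emph{Part (2).} Write $\varphi|_E=i_g\circ d\circ\mu\circ\Theta_\delta^{(n)}$. The maps $i_g$, $d$ and $\mu$ all extend to automorphisms $\widetilde\alpha$ of $G_\pi^{(n)}$: $i_g$ and $d$ extend since they are conjugations by elements of $G_\pi^{(n)}(S)$ normalizing $G_\pi^{(n)}$, and $\mu$ extends since it acts on matrix entries. Put $\varphi':=\widetilde\alpha^{-1}\circ\varphi$. Then $\varphi'\in\operatorname{Aut}(G_\pi^{(n)})$ restricts to $\Theta^{(n)}_\delta$ on $E_\pi^{(n)}$, and Proposition~\ref{prop:exceptional-does-not-extend}(2) forces $\delta=0$. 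So $\varphi'$ is the identity on $E_\pi^{(n)}$.

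It remains to show that such a $\varphi'$ is central. For $x\in G_\pi^{(n)}$ and $e\in E_\pi^{(n)}$, apply $\varphi'$ to $xex^{-1}\in E_\pi^{(n)}$. This gives $\varphi'(x)e\varphi'(x)^{-1}=xex^{-1}$, so $x^{-1}\varphi'(x)$ centralizes $E_\pi^{(n)}$. The centralizer of $E_\pi^{(n)}$ in $G_\pi^{(n)}$ is the center $Z(G_\pi^{(n)})$. In the adjoint matrix realization this follows from Lemmas~\ref{lemma:A3-generates-matrix-algebra} and~\ref{lemma:A4-generates-matrix-algebra}: the elements $u_{\pm1},u_{\pm2}$ (respectively $w_1,w_4,v_2,u_1,u_2$) generate the full matrix algebra, so anything commuting with them is scalar. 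For the intermediate and simply connected types we would argue through the image in the adjoint group, where the element becomes trivial and hence lies in the kernel, which is central. Consequently $\tau(x):=x\cdot(x^{-1}\varphi'(x))$ has the form $x\mapsto x\,c(x)$ with $c\colon G_\pi^{(n)}\to Z(G_\pi^{(n)})$ a homomorphism, which is exactly a central automorphism. This yields $\varphi=i_g\circ d\circ\mu\circ\tau$.

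The converse statements are routine: each factor is an automorphism of $G_\pi^{(n)}$ by its definition in \cite{EB&DM1}, together with Proposition~\ref{prop:exp_auto} in part (1). The main obstacle is the first step, namely that $E_\pi^{(n)}$ is characteristic in $G_\pi^{(n)}$, specifically the commutativity of $G_\pi^{(n)}/E_\pi^{(n)}$ for the intermediate type. I would try to derive it from the decomposition $G=ET$ with $T$ abelian and $E$ normal.
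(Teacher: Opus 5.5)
Your proposal is correct and follows the paper's proof essentially step for step. Part~(1) is Theorem~A because $G^{(3)}_{\mathrm{sc}}=E^{(3)}_{\mathrm{sc}}$. In part~(2) you restrict to $E_\pi^{(n)}$, strip off the extendable factors $i_g$, $d$, $\mu$, force $\delta=0$ with Proposition~\ref{prop:exceptional-does-not-extend}(2), and deduce centrality from $C_{G_\pi^{(n)}}(E_\pi^{(n)})=Z(G_\pi^{(n)})$; the only difference is that the paper simply cites \cite[Corollary~A.2]{SG&DM1} and \cite[Theorem~4.4]{SG&DM1} for the two auxiliary facts you propose to derive via $E=[G,G]$ and via the matrix-generation lemmas.

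If you derive the centralizer yourself, note that an element $[X]$ of the adjoint group centralizing $E^{(n)}_{\mathrm{ad}}$ only gives $XgX^{-1}=\lambda_g g$ with scalars $\lambda_g$. These scalars form a homomorphism $E^{(n)}_{\mathrm{sc}}\to R^\times$, which is trivial by perfectness, and only then do the generation lemmas force $X$ to be scalar.
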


\begin{proof}
    Since
    $G^{(3)}_{\mathrm{sc}}=E^{(3)}_{\mathrm{sc}}$,
    the first assertion follows from Theorem~\ref{thm:exceptional-elementary-arbitrary-isogeny}.

    For the second assertion, let
    $(n,\pi)\in
        \bigl\{
            (3,\mathrm{mid}),
            (3,\mathrm{ad}),
            (4,\mathrm{sc}),
            (4,\mathrm{ad})
        \bigr\}$,
    and let
    $\varphi\in\operatorname{Aut}(G_\pi^{(n)})$.
    
    The subgroup $E_\pi^{(n)}$ is characteristic in $G_\pi^{(n)}$ by \cite[Corollary~A.2]{SG&DM1}. Hence Theorem~\ref{thm:exceptional-elementary-arbitrary-isogeny} gives
    \[
        \left.\varphi\right|_{E_\pi^{(n)}}
        =
        i_g\circ d\circ\mu\circ\Theta_{\delta}^{(n)},
    \]
    where $i_g$ is strictly inner, $d$ is diagonal, $\mu\in\operatorname{Aut}(R)$ commutes with $\theta$, and
    $\delta\in\mathcal E(R,\theta)$.

    The strictly inner and diagonal factors extend to the full group by the lifting argument in \cite[Section~9, proof of Theorem~3.4]{EB&DM1}, and the ring automorphism $\mu$ also acts on $G_\pi^{(n)}$. Therefore,
    \[
        \varphi_1
        \coloneqq
        \mu^{-1}\circ d^{-1}\circ i_g^{-1}\circ\varphi
    \]
    is an automorphism of $G_\pi^{(n)}$ whose restriction to $E_\pi^{(n)}$ is $\Theta_{\delta}^{(n)}$. Proposition~\ref{prop:exceptional-does-not-extend} gives
    $\delta=0$.
    
    Thus
    \[
        \left.\varphi\right|_{E_\pi^{(n)}}
        =
        i_g\circ d\circ\mu.
    \]

    Put
    \[
        \tau
        \coloneqq
        \mu^{-1}\circ d^{-1}\circ i_g^{-1}\circ\varphi.
    \]
    Then $\tau$ fixes $E_\pi^{(n)}$ pointwise. For $a\in G_\pi^{(n)}$ and $x\in E_\pi^{(n)}$, normality of $E_\pi^{(n)}$ gives
    $\tau(axa^{-1})=axa^{-1}$.
    
    Since $\tau(x)=x$, we also have
    \[
        \tau(axa^{-1})
        =
        \tau(a)x\tau(a)^{-1}.
    \]
    Therefore
    \[
        a^{-1}\tau(a)
        \in
        C_{G_\pi^{(n)}}(E_\pi^{(n)}).
    \]
    By \cite[Theorem~4.4]{SG&DM1},
    \[
        C_{G_\pi^{(n)}}(E_\pi^{(n)})
        =
        Z(G_\pi^{(n)}).
    \]
    Hence, if
    $z(a)\coloneqq a^{-1}\tau(a)$,
    then
    $z(a)\in Z(G_\pi^{(n)})$ for 
$a\in G_\pi^{(n)}$.
    
    Since these values are central,
    $z(ab)=z(a)z(b)$.
    
    Thus $\tau$ is a central automorphism, and
    \[
        \varphi
        =
        i_g\circ d\circ\mu\circ\tau.
    \]

    The converse follows from the definitions of strictly inner, diagonal, ring, central, and exceptional automorphisms.
\end{proof}


\end{document}